\documentclass[a4paper]{amsart}
\pdfoutput=1
\usepackage[utf8]{inputenc}
\usepackage[T1]{fontenc}
\usepackage{lmodern}
\usepackage{amssymb,amsxtra}
\usepackage{graphicx}
\usepackage{mathabx}
\usepackage{a4wide}
\usepackage{nicefrac,mathtools}
\usepackage[lite]{amsrefs}
\renewcommand*{\MR}[1]{ \href{http://www.ams.org/mathscinet-getitem?mr=#1}{MR \textbf{#1}}}
\newcommand*{\arxiv}[1]{\href{http://www.arxiv.org/abs/#1}{arXiv: #1}}
\newcommand*{\ZBmath}[1]{ \href{https://zbmath.org/#1}{zbMath \textbf{#1}}}

\renewcommand{\PrintDOI}[1]{\href{http://dx.doi.org/\detokenize{#1}}{doi: \detokenize{#1}}}

\BibSpec{book}{%
  +{}  {\PrintPrimary}                {transition}
  +{,} { \textit}                     {title}
  +{.} { }                            {part}
  +{:} { \textit}                     {subtitle}
  +{,} { \PrintEdition}               {edition}
  +{}  { \PrintEditorsB}              {editor}
  +{,} { \PrintTranslatorsC}          {translator}
  +{,} { \PrintContributions}         {contribution}
  +{,} { }                            {series}
  +{,} { \voltext}                    {volume}
  +{,} { }                            {publisher}
  +{,} { }                            {organization}
  +{,} { }                            {address}
  +{,} { \PrintDateB}                 {date}
  +{,} { }                            {status}
  +{}  { \parenthesize}               {language}
  +{}  { \PrintTranslation}           {translation}
  +{;} { \PrintReprint}               {reprint}
  +{.} { }                            {note}
  +{.} {}                             {transition}
  +{} { \PrintDOI}                   {doi}
  +{} { available at \url}            {eprint}
  +{}  {\SentenceSpace \PrintReviews} {review}
}

\usepackage{enumitem} 
\setlist[enumerate,1]{label=\textup{(\arabic*)}}
\usepackage[all]{xy}
\usepackage{tikz}
\usetikzlibrary{matrix,calc,arrows,decorations.pathmorphing}
\tikzset{node distance=2cm, auto}

\tikzset{cd/.style=matrix of math nodes,row sep=2em,column sep=2em, text height=1.5ex, text depth=0.5ex}
\tikzset{cdar/.style=->,auto}
\tikzset{mid/.style={anchor=mid}} 
\tikzset{narrowfill/.style={inner sep=1pt, fill=white}}
\tikzset{rndblock/.style={rounded corners,rectangle,draw,outer sep=0pt}}

\usepackage{tikz-cd}

\usepackage{microtype}
\usepackage[pdftitle={Irreducible regular C*-inclusions},   pdfauthor={Bartosz Kwa\'sniewski, Ralf Meyer},  pdfsubject={Mathematics}]{hyperref}

\theoremstyle{plain}
\newtheorem{theorem}{Theorem}
\newtheorem{thmx}{Theorem}

\newtheorem{lemma}[theorem]{Lemma}

\newtheorem{proposition}[theorem]{Proposition}
\newtheorem{corollary}[theorem]{Corollary}
\theoremstyle{definition}
\newtheorem{definition}[theorem]{Definition}
\theoremstyle{remark}
\newtheorem{remark}[theorem]{Remark}
\newtheorem{example}[theorem]{Example}

\numberwithin{theorem}{section}
\numberwithin{equation}{section}

\DeclareMathOperator{\Aut}{Aut}
\DeclareMathOperator{\Bis}{Bis}

\newcommand{\Her}{\mathbb H}

\DeclareMathOperator*{\clsp}{\overline{span}} 

\newcommand*{\nb}{\nobreakdash}
\newcommand*{\Star}{\(^*\)\nobreakdash-}

\newcommand*{\C}{\mathbb C}
\newcommand*{\Z}{\mathbb Z}

\newcommand*{\N}{\mathbb N}
\newcommand*{\T}{\mathbb T}

\newcommand*{\Null}{\mathcal N}
\newcommand*{\Bound}{\mathbb B}
\newcommand*{\Comp}{\mathbb K}
\newcommand*{\red}{\mathrm{r}}
\newcommand*{\ess}{\mathrm{e}}

\newcommand*{\Cst}{\textup C^*}
\newcommand*{\Wst}{\textup W^*}
\newcommand*{\Mult}{\mathcal M}
\newcommand*{\Locmult}{\mathcal{M}_{\mathrm{loc}}}
\newcommand*{\UMult}{\mathcal UM}

\newcommand*{\Cont}{\textup C}
\newcommand*{\Contc}{\Cont_\textup c} 

\newcommand*{\Slice}{\mathcal S}
\newcommand*{\Sub}{\textup{Sub}}
\newcommand*{\Id}{\textup{Id}}
\newcommand*{\Ad}{\textup{Ad}}

\newcommand*{\Hilm}[1][E]{\mathcal #1}

\newcommand*{\B}{\mathcal B}
\newcommand*{\A}{\mathcal A}
\renewcommand*{\L}{\mathcal L}
\newcommand*{\defeq}{\mathrel{\vcentcolon=}}
\newcommand*{\congto}{\xrightarrow\sim}

\DeclarePairedDelimiter{\abs}{\lvert}{\rvert}
\DeclarePairedDelimiter{\norm}{\lVert}{\rVert}
\DeclarePairedDelimiterX{\braket}[2]{\langle}{\rangle}{#1\,\delimsize\vert\,\mathopen{}#2}
\DeclarePairedDelimiterX{\BRAKET}[2]{\langle}{\rangle}{\!\delimsize\langle#1\,\delimsize\vert\,\mathopen{}#2\delimsize\rangle\!}
\DeclarePairedDelimiterX{\setgiven}[2]{\{}{\}}{#1\,{:}\,\mathopen{}#2}

\newcommand*{\Gr}{\mathcal{G}}
\newcommand*{\sr}{s}
\newcommand*{\rg}{r}

\newcommand*{\su}{\boldsymbol{s}}
\newcommand*{\pu}{\boldsymbol{p}}

\newcommand*{\onto}{\twoheadrightarrow}

\begin{document}
\title[C*-irreducible regular inclusions]{\(\Cst\)-irreducible regular inclusions,\\
Galois correspondence and aperiodicity}

\author{Bartosz Kosma Kwa\'sniewski}
\email{bartoszk@math.uwb.edu.pl}
 \address{Institute of Mathematics\\
   University  of Bia\l ystok\\
   ul.\@ K.~Cio\l kowskiego 1M\\
   15-245 Bia\l ystok\\
   Poland}

\author{Ralf Meyer}
\email{rmeyer2@uni-goettingen.de}
\address{Mathematisches Institut\\
 Georg-August-Universit\"at G\"ottingen\\
 Bunsenstra\ss e 3--5\\
 37073 G\"ottingen\\
 Germany}

\begin{abstract}
  We characterise \(\Cst\)\nb-irreducible regular
  \(\Cst\)\nb-inclusions \(A\subseteq B\) using a number of different
  conditions considered by different authors.
  In particular, we show that all \(\Cst\)\nb-irreducible regular
  \(\Cst\)\nb-inclusions \(A\subseteq B\) are modelled by outer Fell
  bundles \((B_{g})_{g\in G}\) over discrete groups with a simple unit
  fibre \(A=B_1\).
  In this case, we prove a bijection between intermediate
  \(\Cst\)\nb-algebras \(A\subseteq C \subseteq B\) and subgroups
  \(H\) of~\(G\).
  This extends the Galois correspondence for reduced crossed products
  by discrete group actions established by Cameron--Smith.
  We relate it to the Galois correspondences of Izumi and Mukohara
  for fixed-point algebras of actions of compact abelian groups, and
  the mixed inclusion of a fixed-point algebra in a reduced crossed
  product considered by Echterhoff--Rørdam.

  In addition, using a recent result of Geffen--Ursu, we show that
  the inclusion of a fixed-point subalgebra \(A\subseteq B\) of an
  action of \(\T\)
  or~\(\Z/p\) for a square-free number \(p>0\) is aperiodic if
  and only if~\(A\) detects ideals in~\(B\).
  We apply this to give examples of \(\Cst\)\nb-irreducible inclusions
  coming from Cuntz--Pimsner algebras, including crossed products by
  endomorphisms or transfer operators.
  In particular, we characterise when a core subalgebra of a graph
  \(\Cst\)\nb-algebra is \(\Cst\)\nb-irreducible.
  Lastly,  we show that a general regular topologically graded
  \(\Cst\)\nb-inclusion \(A\subseteq B\) is aperiodic and has a unique
  pseudo-expectation provided~\(A\) detects ideals in all intermediate
  \(\Cst\)\nb-algebras of~\(B\).
  This partially answers a question by Pitts--Zarikian.
\end{abstract}
\subjclass[2010]{46L55, 20M18, 22A22}
\maketitle
\setcounter{tocdepth}{1}

\section{Introduction}
\label{sec:introduction}

Understanding the structure of intermediate algebras arising from an
inclusion of operator algebras is a classical theme in the theory of
von Neumann algebras.
In particular, irreducible inclusions and their associated Galois
correspondences have played a central role in the study of subfactors
\cites{Jones:Index, Popa:Classification_2, Izumi-Longo-Popa:Galois}.
Motivated by this, Rørdam~\cite{Rordam:Irreducible_inclusions}
introduced \(\Cst\)\nb-irreducible inclusions as a
\(\Cst\)\nb-algebraic analogue.
He showed that important classes of inclusions have this property,
including those in \cites{Izumi:Inclusions_simple,
  Cameron-Smith:Galois_reduced, Amrutam:Intermediate_subalgebras,
  Amrutam_Kalantar:Simplicity_intermediate}.
This stimulated further research in several settings, see, for
instance, \cites{Bedos_Omland:C*-irreducibility,
  Li_Scarparo:C*-irreducibility, Echterhoff-Rordam:Inclusions,
  Mukohara:Inclusions, Zarikian:Unique_pseudo,
  Bakshi-Gupta:Regular_inclusions}.

In the present paper we show that any \(\Cst\)\nb-irreducible
inclusion which is regular in the sense of Kumjian and Renault
\cites{Kumjian:Diagonals, Renault:Cartan.Subalgebras} is Morita
equivalent to an inclusion coming from a crossed product of an outer
action of a discrete group by automorphisms on a simple
\(\Cst\)\nb-algebra.
In fact, there is a bijection between isomorphism classes of regular,
\(\Cst\)\nb-irreducible inclusions and isomorphism classes of outer
actions of discrete groups by Morita equivalence bimodules on simple
\(\Cst\)\nb-algebras.
Here an action by Hilbert bimodules is another name for a Fell bundle, see~\cite{Buss-Meyer:Actions_groupoids}.
In addition, we characterise regular, \(\Cst\)\nb-irreducible
inclusions by a wide variety of conditions.
Many of them have already been considered independently by other
authors.
So we are in an interesting, natural setting where many different
concepts converge.
The following theorem (proven on page \pageref{proof:main_theorem})
summarises these equivalent characterisations:

\begin{thmx}[Characterisations of regular \(\Cst\)\nb-irreducible inclusions]
  \label{thm:main_theorem}
  Let \(A\subseteq B\) be a regular \(\Cst\)\nb-inclusion.
  If~\(A\) is simple, then the following conditions are equivalent:
  \begin{enumerate}
  \item \label{enu:main1}%
    \(A\subseteq B\) is \(\Cst\)\nb-irreducible, that is, all
    intermediate \(\Cst\)\nb-algebras \(A\subseteq C\subseteq B\) are
    simple;
  \item \label{enu:main2}%
    \(A\) detects ideals in all intermediate \(\Cst\)\nb-algebras
    \(A\subseteq C \subseteq B\);
  \item \label{enu:main3}%
    \(A\) supports all intermediate \(\Cst\)\nb-algebras \(A\subseteq
    C \subseteq B\);
  \item \label{enu:main4}%
    \(A\subseteq B\) is aperiodic in the sense
    of\/~\cite{Kwasniewski-Meyer:Essential} and the unique
    pseudo-expectation \(E\colon B\to I(A)\) is almost faithful;
  \item \label{enu:main4.5}%
    \(A\subseteq B\) is aperiodic and~\(B\) is simple;
  \item \label{enu:main8}%
    \(B\) is simple and \(A\subseteq B\) is irreducible, that is, \(A'
    \cap \Mult(B) = \C\cdot 1\);
  \item \label{enu:main8.5}%
    \(A\subseteq B\) is irreducible and admits an almost faithful
    pseudo-expectation \(E\colon B\to I(A)\);
  \item \label{enu:main5}%
    there is a unique faithful conditional expectation \(E\colon B\to A\);
  \item \label{enu:main6}%
    there is a faithful conditional expectation \(E\colon B\to A\)
    which is outer in the sense of Osaka~\cite{Osaka:SP_property}, see
    also Izumi~\cite{Izumi:Inclusions_simple};
  \item \label{enu:main7}%
    there is a faithful conditional expectation \(E\colon
    B\to A\) which is  pinching in the sense of
    Rørdam~\cite{Rordam:Irreducible_inclusions};
  \item \label{enu:main9}%
    \(A\subseteq B\) is a noncommutative Cartan subalgebra in the
    sense of Exel~\cite{Exel:noncomm.cartan};
  \item \label{enu:main10}%
    \(B\cong \Cst_\red(\B)\) for an outer, saturated Fell bundle
    \(\B=(B_g)_{g\in G}\) over a discrete group~\(G\) with an
    isomorphism that restricts to the identity \(A=B_1\) on the unit fibre.
  \end{enumerate}
  If~\(A\) is separable, then the above conditions are further
  equivalent to
  \begin{enumerate}[resume]
  \item \label{enu:main11}%
    \(B\cong \Cst_\red(\B)\) for a topologically free, saturated
    Fell bundle \(\B=(B_g)_{g\in G}\) with an isomorphism that
    restricts to the identity \(A=B_1\).
  \end{enumerate}
  If~\(B\) is separable, then the above conditions are further
  equivalent to each of the following:
  \begin{enumerate}[resume]
  \item \label{enu:main12}%
    \(A\subseteq B\) has the almost extension property of
    Nagy--Reznikoff~\cite{Nagy-Reznikoff:Pseudo-diagonals} and~\(B\)
    is simple;
  \item \label{enu:main13}%
    \(A\subseteq B\) has the almost extension property and the unique
    pseudo-expectation \(E\colon B\to I(A)\) is almost faithful.
  \end{enumerate}
  In general, \ref{enu:main13}\(\Leftrightarrow
  \)\ref{enu:main12}\(\Rightarrow \)\ref{enu:main11}\(\Rightarrow
  \)\ref{enu:main1}--\ref{enu:main10}.
  The group~\(G\) and the Fell bundle \(\B=(B_g)_{g\in G}\)
  in~\ref{enu:main10} are uniquely determined up to isomorphism.
\end{thmx}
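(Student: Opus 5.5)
The plan is to make condition~\ref{enu:main10} the hub. First I show that every regular inclusion \(A\subseteq B\) with \(A\) simple and with some almost faithful pseudo-expectation (or any of the weaker hypotheses in \ref{enu:main1}--\ref{enu:main9}) comes from a Fell bundle over a discrete group with unit fibre~\(A\). Then I verify each condition against that model.

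For the first step I use the machinery from \cite{Kwasniewski-Meyer:Essential}. A regular inclusion is the essential (or reduced) section algebra of a Fell bundle over the inverse semigroup \(S\) of slices \(A\)\nb-bimodules \(\overline{AnA}\), where \(n\) runs through the normalisers. Since \(A\) is simple, every slice \(M\subseteq B\) with \(M^*M, MM^*\subseteq A\) has \(M^*M\) and \(MM^*\) equal to \(0\) or~\(A\). So each nonzero slice is an imprimitivity \(A\)\nb-bimodule, and two slices are either equal or their intersection is~\(0\); this uses simplicity together with the fact that \(\overline{M\cap N}\) is a slice whose source ideal is \(0\) or~\(A\). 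Hence the nonzero slices form a group \(G\) under multiplication. The dual groupoid \(\widehat A\rtimes S\) collapses to the group~\(G\), and \(B\) is a quotient of \(\Cst(\B)\) lying over \(\Cst_\red(\B)\). Uniqueness of \(G\) and~\(\B\) follows because \(B_g\) is recovered intrinsically as the set of slices. Outerness of \(\B\) is the statement that \(B_g\) is not isomorphic to the trivial bimodule \(A\) for \(g\neq1\), that is, that \(B_g\) contains no nonzero element commuting with~\(A\) in the appropriate sense.

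With the model in hand, I prove the equivalences by chaining known results. For an outer saturated Fell bundle over a group with simple~\(A\), I establish the following facts, which show \ref{enu:main10} implies \ref{enu:main1}--\ref{enu:main9}:
\begin{itemize}
\item The bundle is aperiodic, since Kishimoto's condition holds fibrewise for outer bimodules over a simple algebra.
\item Hence \(A\) detects ideals, and every intermediate algebra \(C\) is \(\Cst_\red\) of the restricted bundle over the subgroup \(H=\{g: B_g\subseteq C\}\). This gives the Galois correspondence \ref{enu:main1}--\ref{enu:main3}.
\item \(I(A)\)\nb-valued pseudo-expectations are unique, and the canonical expectation is faithful. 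This gives \ref{enu:main4}, \ref{enu:main4.5}, \ref{enu:main5}, \ref{enu:main8.5} and \ref{enu:main9}.
\item The relative commutant is trivial, because a central element of \(\Mult(B)\) would have Fourier coefficients in \(\Mult(B_g)\) commuting with~\(A\). Outerness kills these for \(g\neq1\), and simplicity of~\(A\) leaves scalars at \(g=1\). This gives \ref{enu:main8}.
\item The Osaka and Rørdam conditions \ref{enu:main6} and~\ref{enu:main7} follow from aperiodicity via the Kishimoto-type approximation \(\|a^*(b-E(b))a\|<\varepsilon\).
\end{itemize}

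For the converse implications back into \ref{enu:main10}, I argue as follows:
\begin{itemize}
\item If \(B_g\) were inner for some \(g\neq1\), it would give a unitary-like multiplier \(u\) commuting with~\(A\).
\item From \(u\) I build an intermediate algebra \(A+\overline{\mathrm{span}}\{\text{polynomials in }u\}A\). When \(g\) has finite order this algebra is non-simple, since it contains the ideal of a spectral projection. When \(g\) has infinite order it is \(A\otimes\Cont(\T)\)-like, which again is not simple.
\item The same \(u\) contradicts irreducibility in \ref{enu:main8}, produces two distinct faithful expectations in \ref{enu:main5}, breaks ideal detection in \ref{enu:main2}--\ref{enu:main3}, and violates aperiodicity in \ref{enu:main4}--\ref{enu:main4.5}.
\item Exel's noncommutative Cartan condition \ref{enu:main9} similarly forces the "virtual commutant" to be trivial.
\item I must also show that \(B\) is the reduced rather than some exotic section algebra. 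This follows from faithfulness of the expectation, or from aperiodicity together with simplicity of~\(B\).
\end{itemize}

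For the separable parts, I rely on the standard equivalence of outerness and topological freeness (\ref{enu:main11}) for bimodules over simple separable~\(A\), which holds because \(\widehat A\) is a point and outer means the partial homeomorphism has no fixed points. I use the Nagy--Reznikoff almost extension property, which for separable \(B\) is equivalent to topological freeness of the dual groupoid (a result of \cite{Kwasniewski-Meyer:Essential}-type). The general implications \ref{enu:main13}\(\Leftrightarrow\)\ref{enu:main12}\(\Rightarrow\)\ref{enu:main11} go through that equivalence without separability in one direction.

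The main obstacle is the first structural step. I need to show that, for regular \(A\subseteq B\) with \(A\) simple, the slices form a group and that \(B\) is a Fell bundle section algebra over it. Here the delicate points are that sums of distinct slices are "independent" and that the \(\Cst\)-completion is reduced. I would first try the argument via the unique pseudo-expectation: slices with \(M\cap A=0\) map to zero under~\(E\), which gives orthogonality. After that, the converse direction "inner fibre \(\Rightarrow\) failure" is routine but case-heavy.
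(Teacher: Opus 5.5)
\textbf{The structural step.} Your Step~1 claims more than regularity and simplicity of~\(A\) give, and the mechanism you propose to repair it is false. Simplicity makes \(G=\Slice_A(B)\setminus\{0\}\) a group and gives a surjection \(\Cst(\B)\onto B\). It does not give a factorisation \(B\onto\Cst_\red(\B)\), even when \(B\) has a faithful conditional expectation.

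Here is a counterexample. Take \(A\) simple, the regular inclusion \(A\subseteq B=A\otimes\Cont(\T)\), and \(w(z)=\exp(i\operatorname{Re} z)\). Then \(A\otimes w\) is a slice with \((A\otimes w)\cap A=0\). Yet the faithful expectation \(\mathrm{id}\otimes\text{Haar}\) sends \(a\otimes w\) to \(J_0(1)a\neq0\). So ``slices with \(M\cap A=0\) map to zero under~\(E\)'' is wrong, and \(B\) is not an exotic section algebra of its slice bundle.

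What forces a pseudo-expectation to vanish on the nontrivial slices is their aperiodicity. The order must therefore be reversed, as in Theorem~\ref{thm:regular_irreducible_for_simple}:
\begin{enumerate}
\item From the hypothesis, show that every nonzero slice \(M\neq A\) is outer. An inner slice \(M=Au\) yields a nonscalar unitary \(u\in A'\cap\Mult(B)\), and irreducibility follows from~\ref{enu:main1} via Lemma~\ref{C-irreducible_implies_irreducible}.
\item Since \(A\) is simple, outer slices are aperiodic. Hence \(A\subseteq\Cst(\B)\) and \(A\subseteq B\) are aperiodic and have unique pseudo-expectations.
\item The composite \(\Cst(\B)\onto B\to I(A)\) is then the canonical expectation, so \(\Cst(\B)\onto B\onto\Cst_\red(\B)\).
\item Only now does (almost) faithfulness give \(B=\Cst_\red(\B)\).
\end{enumerate}
Your converse bullets contain the needed ingredient. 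But Step~1 presupposes the topological grading that has to be proved, and your closing paragraph leaves exactly this point open.

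\textbf{The separable part.} This rests on a false claim. For simple~\(A\), the primitive ideal space \(\check{A}\) is a point, but \(\widehat{A}\) is not. For separable~\(A\), it is a point only when \(A\cong\Comp(H)\) (Glimm), and an outer bimodule may well have fixed points in~\(\widehat{A}\). The same confusion appears in ``\(\widehat{A}\rtimes S\) collapses to~\(G\)''; it is \(\check{A}\rtimes S\) that collapses. The equivalence \ref{enu:main10}\(\Leftrightarrow\)\ref{enu:main11} needs two genuine results (Figure~\ref{fig:diagram}):
\begin{itemize}
\item topologically free bimodules are aperiodic;
\item aperiodic bimodules are topologically free when \(A\) has an essential separable ideal, which rests on Kishimoto's work.
\end{itemize}

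\textbf{Smaller points.}
\begin{itemize}
\item You never derive anything from~\ref{enu:main7}. You need Lemma~\ref{lem:pinching_plus_faithful}: a faithful pinching expectation makes \(A\) support all intermediate algebras.
\item For~\ref{enu:main5}, ``two distinct faithful expectations'' is not automatic. In the example above, \(E\circ\Ad_w=E\) for every unitary \(w\in\Cont(\T)\). You need either the structure you have not yet established or the cited Cartan results.
\item Deducing \ref{enu:main1}--\ref{enu:main3} from the Galois correspondence is unproven at that stage and unnecessary. Aperiodicity plus faithfulness of the unique pseudo-expectation already gives this (Proposition~\ref{prop:aperiodic_implies_supports}).
\end{itemize}
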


As a consequence, there is a bijection between the sets of isomorphism
classes of regular, \(\Cst\)\nb-irreducible inclusions
\(A\subseteq B\) and of saturated, outer Fell bundles \(\B=(B_g)_{g\in
  G}\) where~\(G\) is a discrete group and \(B_1=A\).
If~\(B\) is separable and~\(A\) is stable, then \(\B=(B_g)_{g\in G}\)
must be given by an outer action~\(\alpha\) of~\(G\) on~\(A\), and the
action~\(\alpha\) is determined by the inclusion \(A\subseteq B\) up
to cocycle conjugacy (see Corollary~\ref{cor:Gabe_Szabo_actions}).
Restricting further to the case where~\(B\) is nuclear and~\(A\) is
purely infinite, cocycle conjugacy is the same as \(KK^G\)-equivalence
by the dynamical Kirchberg--Phillips Theorem of Gabe--Szab\'o
\cite{Gabe-Szabo:Dynamical_Kirchberg} (see
Corollary~\ref{cor:Gabe_Szabo_inclusions}).

Our second goal is to describe the intermediate
\(\Cst\)\nb-subalgebras for regular \(\Cst\)\nb-irreducible
inclusions.
The characterisation~\ref{enu:main10} above and a Morita globalisation
from~\cite{Kwasniewski-Meyer:Aperiodicity} allow us to reduce this
problem to the case where~\(B\) is the reduced crossed product
\(A\rtimes^\red_\alpha G\) for an outer action~\(\alpha\) by
automorphisms of~\(A\) (see Proposition
\ref{prop:Morita_globalisation}).
When~\(A\) is unital, the resulting Galois connection between
intermediate subalgebras of \(A\subseteq A\rtimes^\red_\alpha G\) and
subgroups of~\(G\) is already known, see
\cites{Cameron-Smith:Galois_reduced,
  Kennedy-Ursu:Intermediate_subalgebras_crossed,
  Rordam:Irreducible_inclusions}.
However, the Morita globalisation process usually produces nonunital
algebras and so the existing results do not apply.
Therefore, for general \(\Cst\)\nb-irreducible inclusions modelled by
Fell bundles, we establish the Galois correspondence directly.
Our approach follows a recent idea of
Kennedy--Ursu~\cite{Kennedy-Ursu:Intermediate_subalgebras_crossed} and
is based on Magajna's Separation Theorem.
In particular, Theorem~\ref{thm:Galois_correspondence_explained}
implies the following result:

\begin{thmx}[Galois correspondence]
  \label{Thm:Galois_correspondence}
  Let \(\B=(B_g)_{g\in G}\) be an outer, saturated Fell bundle over a
  discrete group~\(G\) with a simple unit fibre \(A\defeq B_1\).
  The map
  \(G\supseteq H \mapsto \Cst_\red(\B|_{H})\subseteq \Cst_\red(\B)\)
  is a bijection between the sets of subgroups of~\(G\)
  and of intermediate \(\Cst\)\nb-subalgebras for the inclusion
  \(A\subseteq \Cst_\red(\B)\).
\end{thmx}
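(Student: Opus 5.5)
The plan is to show that the map is injective and that every intermediate algebra arises from a subgroup. The main tool is the canonical faithful conditional expectation \(E\colon \Cst_\red(\B)\to A\) and its twisted versions, the Fourier coefficient maps \(E_g\colon \Cst_\red(\B)\to B_g\). First, for a subgroup \(H\subseteq G\) the restricted bundle \(\B|_H\) is again saturated and outer. Moreover, \(\Cst_\red(\B|_H)\) embeds in \(\Cst_\red(\B)\) as the closed span of the fibres \(B_h\) with \(h\in H\). This embedding is compatible with the conditional expectations, so \(E_g\) vanishes on \(\Cst_\red(\B|_H)\) for \(g\notin H\). Injectivity is then immediate. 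If \(g\in H\setminus K\), then \(B_g\neq 0\) by saturatedness and simplicity of~\(A\), so \(B_g\subseteq \Cst_\red(\B|_H)\). On the other hand \(B_g\cap \Cst_\red(\B|_K)=0\), because \(E_g\) is the identity on \(B_g\) and vanishes on \(\Cst_\red(\B|_K)\).

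For surjectivity, let \(A\subseteq C\subseteq \Cst_\red(\B)\) be intermediate and put \(H\defeq \setgiven{g\in G}{E_g(C)\neq 0}\). The key claim is that \(C\) contains the fibre \(B_g\) whenever \(E_g(C)\neq0\). Granting this, the claim yields the following.
\begin{enumerate}
\item \(H\) is a subgroup: it is closed under products since \(B_gB_h\) is dense in \(B_{gh}\) by saturatedness, and under inverses since \(B_g^*=B_{g^{-1}}\).
\item \(C\supseteq \Cst_\red(\B|_H)\).
\item Conversely \(C\subseteq \Cst_\red(\B|_H)\). An element \(c\in C\) has \(E_g(c)=0\) for all \(g\notin H\). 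An element of \(\Cst_\red(\B)\) whose Fourier coefficients vanish outside~\(H\) lies in \(\Cst_\red(\B|_H)\), by faithfulness of~\(E\) and an approximation argument. For that approximation I would use a conditional expectation \(\Cst_\red(\B)\to\Cst_\red(\B|_H)\) that kills \(B_g\) for \(g\notin H\); its existence and faithfulness is standard for reduced Fell bundle algebras.
\end{enumerate}

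The main obstacle is the key claim, that \(E_g(C)\subseteq C\), or at least that \(E_g(C)\ne0\) forces \(B_g\subseteq C\). Following Kennedy--Ursu, I would use Magajna's Separation Theorem. Suppose \(c\in C\) but \(E_g(c)\notin C\); it suffices to treat \(g=1\) after multiplying by elements of \(B_{g^{-1}}\), using saturatedness. Then, viewing \(C\) as an operator \(A\)-bimodule, Magajna produces a completely bounded \(A\)-bimodule map \(\phi\) into the injective envelope \(I(A)\) (or into \(A^{**}\)) that annihilates \(C\) but not \(E(c)\). Such \(A\)-bimodule maps \(\Cst_\red(\B)\to I(A)\) are rigid by outerness: restricted to a fibre \(B_h\) with \(h\neq1\), \(\phi\) gives an \(A\)-bimodule map \(B_h\to I(A)\), and outerness of \(B_h\) (no nonzero such intertwiners, equivalently \(B_h\) not partially inner in the local multiplier algebra) forces it to vanish. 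Hence \(\phi\) factors through \(E\), and \(\phi=\psi\circ E\) with \(\psi\colon A\to I(A)\) an \(A\)-bimodule map. Since \(A\) is simple, \(\psi\) is multiplication by a central element of \(I(A)\), a scalar. As \(A\subseteq C\) and \(\phi|_A=\psi\) vanishes, \(\psi=0\), so \(\phi(E(c))=0\), a contradiction. I expect the delicate points to be checking the hypotheses of Magajna's theorem in the nonunital setting and proving the vanishing \(\phi|_{B_h}=0\) rigorously from the paper's notion of outerness. The latter is where the earlier uniqueness-of-pseudo-expectation and aperiodicity results should be invoked.

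Finally, \(B_1=A\subseteq C\) and \(B_g\subseteq C\) follows from \(E_g(C)\neq0\) as follows. The set \(E_g(C)\) is a nonzero \(A\)-subbimodule of \(B_g\), since \(E_g\) is \(A\)-bilinear and \(A\subseteq C\). It is contained in \(C\) by the key claim. Because \(A\) is simple and \(B_g\) is an imprimitivity bimodule, its closed span is all of \(B_g\).
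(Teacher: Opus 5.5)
Your overall architecture is the paper's: Fourier coefficients \(E_g\), the subgroup \(H=\setgiven{g\in G}{E_g(C)\neq0}\), and the expectation onto \(\Cst_\red(\B|_H)\). Your injectivity argument and everything downstream of the key claim are fine. The gap is in the key claim \(E_g(C)\subseteq C\) itself, which is the heart of the theorem.

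\textbf{The reduction to \(g=1\) is empty.} We always have \(E(c)\in A\subseteq C\), so no map annihilating \(C\) can be nonzero on \(E(c)\). Moreover, multiplying \(c\in C\) by \(d\in B_{g^{-1}}\) leaves \(C\) unless \(B_{g^{-1}}\subseteq C\), which is exactly what you want to prove.

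\textbf{The rigidity step fails for the maps that separation provides.} Magajna's theorem (compare Proposition~\ref{thm:Magajna_Kennedy_Ursu}) yields a completely bounded \(\phi\colon\Cst_\red(\B)\to\Bound(H)\) that is a bimodule map over some representation \(\pi\) of~\(A\). It does not give an \(I(A)\)-valued map for the canonical bimodule structure. Outerness does not force such maps to vanish on~\(B_h\): any faithful representation \(\phi\) of \(\Cst_\red(\B)\), with \(\pi\defeq\phi|_A\), is a counterexample.

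A sanity check shows the plan cannot work. It uses nothing specific about \(E_g(c)\). The same reasoning applied to any \(x_0\notin C\) separates \(x_0\) from \(C\) and factors \(\phi=\psi\circ E\) with \(\psi=\phi|_A=0\). That would ``prove'' every intermediate algebra, including \(C=A\), equals \(\Cst_\red(\B)\).

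\textbf{The missing idea.} Separate from the much smaller bimodule generated by one element, and use aperiodicity quantitatively rather than as a vanishing statement.
\begin{enumerate}
\item Take arbitrary \(b\in\Cst_\red(\B)\) and suppose \(E(b)\notin\overline{AbA}\). Separate \(E(b)\in A\) from \(\overline{AbA}\). Then \(\pi\) is isometric because \(A\) is simple.
\item Put \(m\defeq E(b)^*(E(b)-b)\in\ker E\). For all \(a\in D^+_1\), for a suitable \(D\in\Her(A)\), one gets \(\norm{\phi(ama)}=\norm{\pi(aE(b)^*E(b)a)}\ge\norm{E(b)}^2/4\). This contradicts aperiodicity of \(A\subseteq\Cst_\red(\B)\), so \(E(b)\in\overline{AbA}\) (Proposition~\ref{prop:module_approximation}).
\item The passage to \(g\neq1\) works at this level, not at the level of membership in~\(C\). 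Choose \(c_1,\dotsc,c_n\in B_g\) with \(\sum_i c_ic_i^*E_g(b)\approx E_g(b)\) (Brown--Mingo--Shen). Since \(c_iAc_i^*\subseteq A\), you get \(c_ic_i^*E_g(b)=c_iE(c_i^*b)\in c_i\overline{Ac_i^*bA}\subseteq\overline{AbA}\).
\item Hence \(E_g(b)\in\overline{AbA}\) for all \(g\). Taking \(b\in C\) gives your key claim.
\end{enumerate}
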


The above result contains twisted outer actions \((\alpha,\sigma)\) of
a discrete group~\(G\) as a special case.
So we have generalised the Galois correspondence of
Cameron--Smith~\cite{Cameron-Smith:Galois_reduced} for inclusions
\(A\subseteq A\rtimes^\red_{\alpha,\sigma} G\) to the nonunital case
(see Corollary~\ref{cor:Cameron-Smith}).
By passing to spectral subspaces, our results also apply to the
inclusion \(A^\beta\subseteq A\) of a fixed-point algebra of faithful
action~\(\beta\) of a compact abelian group~\(K\) on a
\(\Cst\)\nb-algebra~\(A\).
We characterise when \(A^\beta\subseteq A\) is \(\Cst\)\nb-irreducible
by a number of conditions, one of which is a Galois correspondence
between closed subgroups of~\(K\) and intermediate
\(\Cst\)\nb-algebras of \(A^\beta\subseteq A\) (see
Theorem~\ref{thm:Mukohara_compact_abelian}).
This recovers and extends recent deep results of
Mukohara~\cite{Mukohara:Inclusions} and
Izumi~\cite{Izumi:Minimal_compact} in the special case where~\(K\) is
abelian.
In particular, the relevant conditions are related to quasi-product
actions and strongly prime inclusions, which were studied by Bratteli,
Elliott, Evans and Kishimoto in
\cite{Bratteli-Elliott-Evans-Kishimoto:Quasi-product},
\cite{Bratteli-Elliott-Kishimoto:Quasi-product}.
If the actions \(\alpha\) and~\(\beta\) above commute, then the
mixed inclusion \(A^\beta \subseteq A\rtimes^{\red}_{\alpha,\sigma}
G\) is regular as well, and so our results apply and lead to a
far-reaching generalisation of the corresponding Galois correspondence
obtained by Echterhoff--Rørdam~\cite{Echterhoff-Rordam:Inclusions}
(see Theorem~\ref{thm:Echterhoff-Rordam}).

The proofs of Theorems \ref{thm:main_theorem}
and~\ref{Thm:Galois_correspondence} rely crucially on the notion of
aperiodicity and its properties, which were developed by the authors
in \cites{Kwasniewski-Meyer:Aperiodicity, Kwasniewski-Meyer:Cartan,
  Kwasniewski-Meyer:Essential,
  Kwasniewski-Meyer:Aperiodicity_pseudo_expectations}.
We also take the opportunity to formulate some results in this theory
in a cleaner way, without any assumptions on the algebra~\(A\) such as
separability or Type~I.
In particular, combining a recent result of
Geffen--Ursu~\cite{Geffen-Ursu:Simple_crossed_products} and Morita
globalisation techniques, we prove the following (see
Theorem~\ref{the:special_groups}):

\begin{thmx}[Inclusions graded by \(\Z\) or square-free cyclic groups]
  \label{thmx:fixed_point_for special_compact}
  Let~\(A\) be the fixed-point algebra for a faithful action on~\(B\)
  of\/ \(\T\) or~\(\Z/p\) for a square-free number \(p>0\).
  Then~\(A\) detects ideals in~\(B\) if and only if \(A\subseteq B\)
  is aperiodic.
  In particular, \(A\subseteq B\) is \(\Cst\)\nb-irreducible if and
  only if \(A\) and~\(B\) are simple.
\end{thmx}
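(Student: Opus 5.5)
The plan is to translate the statement into a statement about Fell bundles over the dual group \(\hat K\), which is \(\Z\) or \(\Z/p\). The action of \(K\) on~\(B\) gives spectral subspaces \(B_n\defeq \setgiven{b\in B}{\beta_z(b)=\chi_n(z)b}\) for \(n\in\hat K\). These form a Fell bundle \(\B=(B_n)_{n\in \hat K}\) with \(B_0=A\). Since~\(K\) is compact and abelian, the Fejér-type averaging gives a faithful conditional expectation \(B\to A\), and \(\hat K\) is amenable. Hence \(B\cong \Cst_\red(\B)\cong \Cst(\B)\), and \(A\subseteq B\) is the unit-fibre inclusion of a topologically graded algebra.

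For such inclusions, the earlier theory of the authors identifies aperiodicity of \(A\subseteq \Cst_\red(\B)\) with aperiodicity of the Fell bundle, meaning each \(B_n\) for \(n\neq 0\) is an aperiodic \(A\)-bimodule. It is also standard that aperiodicity implies that~\(A\) detects ideals (this holds for aperiodic inclusions with a faithful or almost faithful expectation). So the direction ``aperiodic \(\Rightarrow\) detects ideals'' is immediate. The converse is the real content.

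For the converse I would use Morita globalisation (the proposition cited in the introduction) to replace~\(\B\) by a global action \(\alpha\) of \(\hat K\) on a Morita equivalent \(\Cst\)\nb-algebra \(\tilde A\). This should be chosen so that \(A\subseteq B\) is Morita equivalent to \(\tilde A\subseteq \tilde A\rtimes_\alpha \hat K\) in a way that preserves both ``\(A\) detects ideals'' and aperiodicity, since both are invariant under passing to full corners or hereditary subalgebras. In that setting aperiodicity of the bundle becomes the Kishimoto-type condition that each \(\alpha_n\), \(n\neq 0\), is properly outer, or aperiodic, on every invariant hereditary piece. Geffen--Ursu show that for the groups \(\Z\) and \(\Z/p\) with \(p\) square-free, \(\tilde A\) detecting ideals in \(\tilde A\rtimes \hat K\) forces this pointwise aperiodicity of the action. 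The main obstacle is the matching of hypotheses. Geffen--Ursu's result is likely stated for unital, or at least for some class of, \(\Cst\)\nb-algebras, and the globalised \(\tilde A\) is typically nonunital. Moreover, their notion of outerness must be checked to coincide with our aperiodicity. I would first try to reduce to their setting via hereditary subalgebras and the local nature of aperiodicity, working fibrewise for each \(n\neq 0\) and restricting to the subgroup generated by~\(n\), which is again \(\Z\) or a cyclic group of square-free order.

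For the final claim: if \(A\subseteq B\) is \(\Cst\)\nb-irreducible, then \(A\) and \(B\) are trivially simple. Conversely, if \(A\) and~\(B\) are simple, then~\(A\) detects ideals in~\(B\) trivially. By the first part the inclusion is aperiodic, so condition~\ref{enu:main4.5} of Theorem~\ref{thm:main_theorem} holds, because the inclusion is regular (graded by spectral subspaces). Theorem~\ref{thm:main_theorem} then gives \(\Cst\)\nb-irreducibility.
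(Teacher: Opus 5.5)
Your proposal is correct and follows the paper's proof. The spectral subspaces give a Fell bundle over \(\Z\) or \(\Z/p\) with \(B\cong\Cst(\B)=\Cst_\red(\B)\). Aperiodicity yields detection of ideals via the faithful expectation and Proposition~\ref{prop:aperiodic_implies_supports}, and the converse comes from passing to the Morita globalisation (which transfers both properties) and applying Geffen--Ursu's Corollary~8.6 there. Your hypothesis-matching worry does not arise in the paper: Corollary~8.6 is applied directly to the globalised action of the whole group. So your fallback of restricting to the subgroup generated by~\(n\) is unnecessary, and it would in any case need detection of ideals to pass to \(\Cst(\B|_H)\), which is not available before the theorem is proved. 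Deducing the last claim from condition~\ref{enu:main4.5} of Theorem~\ref{thm:main_theorem} is a valid minor variant of the paper's argument.
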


The last part of this theorem gives a very efficient criterion when
the inclusion of the fixed-point algebra of a circle action is
\(\Cst\)\nb-irreducible.
This applies, for instance, to the core inclusions of Cuntz--Pimsner
algebras (see Theorem~\ref{thm:irreducible_from_Cuntz_Pimsner}).
We also discuss some examples coming from crossed products by
endomorphisms and transfer operators, and we characterise when the
core inclusion associated to a directed graph is
\(\Cst\)\nb-irreducible (see
Theorem~\ref{thm:irreducible_graph_algebras}).

In \cite{Pitts-Zarikian:Unique_pseudoexpectation}*{Section 7.1, Q6},
Pitts and Zarikian asked if a \(\Cst\)\nb-inclusion \(A\subseteq B\)
must have a unique pseudo-expectation when~\(A\) detects ideals in all
intermediate \(\Cst\)\nb-algebras.
Zarikian showed in~\cite{Zarikian:Unique_pseudo} that this fails for
irregular inclusions and therefore restricted the question to regular
inclusions in \cite{Zarikian:Unique_pseudo}*{Question 3.2.1}.
We answer this question affirmatively for topologically graded regular
inclusions.
More specifically, regular inclusions are always inverse-semigroup
graded~\cite{Kwasniewski-Meyer:Stone_duality}.
We extend Exel's notion of a topological grading by a group
from~\cite{Exel:Partial_dynamical} to the setting of inverse-semigroup
gradings and show that the corresponding inclusions are equivalent to
exotic inverse-semigroup crossed products in the sense
of~\cite{Kwasniewski-Meyer:Essential}.
We use Theorem~\ref{thmx:fixed_point_for special_compact} to prove the
following result, which also improves some of the results
in~\cite{Kwasniewski-Meyer:Aperiodicity_pseudo_expectations} (see
Theorem~\ref{thm:topologically_graded_aperiodic} and
Proposition~\ref{prop:topological_grading_characterisation}):

\begin{thmx}[Topologically graded regular inclusions]
  \label{thmx:topologically_graded_aperiodic}
  Let \(A\subseteq B\) be a regular, topologically graded
  \(\Cst\)\nb-inclusion.
  The following are equivalent:
  \begin{enumerate}
  \item \label{enu:topologically_graded_aperiodic1}%
    \(A\subseteq B\) is aperiodic and admits a faithful
    pseudo-expectation;
  \item \label{enu:topologically_graded_aperiodic2}%
    \(A\subseteq B\) has a unique pseudo-expectation and this
    pseudo-expectation is faithul;
  \item \label{enu:topologically_graded_aperiodic3}%
    \(A\) supports all intermediate \(\Cst\)\nb-algebras \(A\subseteq
    C\subseteq B\);
  \item \label{enu:topologically_graded_aperiodic4}%
    \(A\) detects ideals in all intermediate \(\Cst\)\nb-algebras
    \(A\subseteq C \subseteq B\);
  \item \label{enu:topologically_graded_aperiodic5}%
    \(B\cong \Cst_\ess(\B)\) is the essential \(\Cst\)\nb-algebra for
    an aperiodic saturated Fell bundle \(\B=(B_s)_{s\in S}\) over a
    unital inverse semigroup~\(S\), and the isomorphism restricts to
    the identity \(A=B_1\).
  \end{enumerate}
  If \(A\subseteq B\) admits a conditional expectation~\(E\), then the
  above equivalent conditions hold if and only if~\(E\) is faithful
  and pinching.
\end{thmx}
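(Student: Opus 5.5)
The plan is to prove the cycle \ref{enu:topologically_graded_aperiodic5}\(\Rightarrow\)\ref{enu:topologically_graded_aperiodic1}\(\Rightarrow\)\ref{enu:topologically_graded_aperiodic2}\(\Rightarrow\)\ref{enu:topologically_graded_aperiodic3}\(\Rightarrow\)\ref{enu:topologically_graded_aperiodic4}\(\Rightarrow\)\ref{enu:topologically_graded_aperiodic5}. The starting point is the structure theory of regular inclusions. By~\cite{Kwasniewski-Meyer:Stone_duality}, a regular inclusion \(A\subseteq B\) is graded by a saturated Fell bundle \(\B=(B_s)_{s\in S}\) over a unital inverse semigroup with \(B_1=A\). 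Topological grading will mean that \(B\) is an exotic crossed product sitting between the full and the essential one, and that \(\Cst_\ess(\B)\) is its quotient by the kernel of the canonical essential expectation \(EL\colon B\to\Locmult(A)\subseteq I(A)\). I would use this identification, given by Proposition~\ref{prop:topological_grading_characterisation}, as a black box.

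For \ref{enu:topologically_graded_aperiodic5}\(\Rightarrow\)\ref{enu:topologically_graded_aperiodic1}\(\Rightarrow\)\ref{enu:topologically_graded_aperiodic2} I would quote~\cite{Kwasniewski-Meyer:Essential}. For an aperiodic Fell bundle, \(EL\) is the unique pseudo-expectation, and on \(\Cst_\ess(\B)\) it is faithful by construction. Conversely, if \(B\) admits a faithful pseudo-expectation, the topological grading forces \(B=\Cst_\ess(\B)\). The implication \ref{enu:topologically_graded_aperiodic2}\(\Rightarrow\)\ref{enu:topologically_graded_aperiodic3} follows the Pitts--Zarikian argument. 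Every intermediate \(C\) inherits a unique pseudo-expectation, since any pseudo-expectation on~\(C\) extends to~\(B\) by injectivity of \(I(A)\), and the extension must be the unique one. That pseudo-expectation is faithful, and a faithful pseudo-expectation implies that \(A\) supports~\(C\). Then \ref{enu:topologically_graded_aperiodic3}\(\Rightarrow\)\ref{enu:topologically_graded_aperiodic4} is the standard fact that supporting implies detecting ideals, because any nonzero ideal contains a nonzero positive element dominating a nonzero element of~\(A\).

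The main obstacle is \ref{enu:topologically_graded_aperiodic4}\(\Rightarrow\)\ref{enu:topologically_graded_aperiodic5}. Since \(A\) detects ideals in~\(B\) itself, the kernel of \(B\to\Cst_\ess(\B)\) meets \(A\) trivially, so it vanishes and \(B=\Cst_\ess(\B)\). Aperiodicity requires testing against intermediate algebras. By the criteria in~\cite{Kwasniewski-Meyer:Essential}, aperiodicity of~\(\B\) reduces to aperiodicity of the Hilbert bimodules \(B_s\), restricted to the ideals where \(s\) acts. Suppose some \(B_s\) fails aperiodicity. Then one expects to find an ideal \(J\triangleleft A\) and a unitary-like element implementing an inner, periodic structure on \(B_sJ\). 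Its powers generate a \(\Z\)- or \(\Z/n\)-graded subsystem, and after further localising, one with \(n\) square-free or a prime. The \(\Cst\)-algebra \(C\) generated by \(A\) and the powers \(B_{s^k}\) is intermediate and carries a faithful gauge action of \(\T\) or \(\Z/p\) with fixed-point algebra essentially~\(A\); this will need a corner or hereditary-subalgebra adjustment. Theorem~\ref{thmx:fixed_point_for special_compact} then says that \(A\) detects ideals in~\(C\) if and only if \(A\subseteq C\) is aperiodic. Since the restriction to~\(C\) is not aperiodic, \(A\) fails to detect ideals in~\(C\), a contradiction. The delicate step is making \(C\) genuinely gauge-graded with fixed-point algebra equal to~\(A\), or to a hereditary piece of~\(A\) where detection still transfers. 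Here I would pass to the hereditary subalgebra over the relevant ideal and use that detection of ideals passes to hereditary subalgebras and ideals.

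For the final claim: if a conditional expectation \(E\) exists, it is a pseudo-expectation. Under~\ref{enu:topologically_graded_aperiodic2}, \(E\) is the unique pseudo-expectation and is faithful, and pinching follows from aperiodicity as in~\cite{Kwasniewski-Meyer:Aperiodicity_pseudo_expectations}. Conversely, a faithful pinching expectation makes \(A\) support every intermediate algebra: given \(c\ge0\) in~\(C\), pinching yields \(x\in A\) with \(x^*cx\) close to \(x^*E(c)x\ne0\), which gives~\ref{enu:topologically_graded_aperiodic3}.
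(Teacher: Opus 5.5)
The parts of your argument that match the paper are: \ref{enu:topologically_graded_aperiodic5}$\Rightarrow$\ref{enu:topologically_graded_aperiodic1}$\Rightarrow$\ref{enu:topologically_graded_aperiodic2}, \ref{enu:topologically_graded_aperiodic3}$\Rightarrow$\ref{enu:topologically_graded_aperiodic4}, the observation that detection of ideals in~$B$ forces $\Null_E=0$ and hence $B=B_\ess$, and the clause about conditional expectations. Your cycle nevertheless breaks in two places.

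First, in \ref{enu:topologically_graded_aperiodic2}$\Rightarrow$\ref{enu:topologically_graded_aperiodic3} you use that a faithful pseudo-expectation makes $A$ support~$C$. This is false. For $\C 1\subseteq M_2$ the normalised trace is a faithful pseudo-expectation, but $1$ is not Cuntz below $e_{11}$.

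The Pitts--Zarikian argument gives only detection of ideals. Uniqueness plus faithfulness means every pseudo-expectation is faithful, so Proposition~\ref{prop:faithful_pseudo_expectations} yields \ref{enu:topologically_graded_aperiodic4}. Supporting is strictly stronger (Remark~\ref{rem:supporting_detecting}), and here it comes from aperiodicity via Proposition~\ref{prop:aperiodic_implies_supports}. So you must reroute as the paper does: \ref{enu:topologically_graded_aperiodic2}$\Rightarrow$\ref{enu:topologically_graded_aperiodic4} and \ref{enu:topologically_graded_aperiodic1}$\Rightarrow$\ref{enu:topologically_graded_aperiodic3}.

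Second, and more seriously, the hard direction (detection in all intermediate algebras implies aperiodicity) is not proved. You name the right target, an intermediate algebra graded over $\Z$ or $\Z/p$ to which Theorem~\ref{the:special_groups} applies, but the proposed mechanism does not work.

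\begin{enumerate}
\item Non-aperiodicity gives only $b$, $D\in\Her(A)$ and $\varepsilon>0$ with $\norm{aba}\ge\varepsilon$ for $a\in D^+_1$. Extracting an inner piece $B_sJ$ from this needs $A$ to be essentially simple or Type~I (Figure~\ref{fig:diagram}). Avoiding that is exactly why Geffen--Ursu is invoked.
\item Whole slices $B_{s^k}$ are the wrong objects, since they may meet~$A$. You need $M\in S$ with $M^\bot$ not aperiodic \cite{Kwasniewski-Meyer:Essential}*{Proposition~6.3}.
\item Localising can destroy non-aperiodicity. So you also need $N\subseteq M^\bot$ none of whose nonzero subbimodules is aperiodic (Lemma~\ref{lem:hereditarily_aperiodic}).
\end{enumerate}

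The real issue is injectivity of $\Cst\bigl((N^k)_{k\in\Z}\bigr)\to B_\ess$, where $N^{-k}\defeq(N^*)^k$. This holds precisely when the faithful canonical expectation kills all $N^k$ for $k\neq0$, i.e.\ when $N^k\subseteq(M^k)^\bot$. Otherwise:
\begin{enumerate}
\item take the minimal $k\ge2$ with $K\defeq N^k\cap A\neq0$;
\item form the $\Z/k$-bundle $(N^nK)_{0\le n<k}$;
\item pass to its sub-bundle over $\Z/p$ for a prime $p\mid k$.
\end{enumerate}
That this sub-bundle is still not aperiodic needs Corollary~\ref{cor:inductive_not_aperiodic}: powers of a non-aperiodic Hilbert bimodule are non-aperiodic, which the paper gets from Hamana's results. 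None of these steps appears in your sketch.

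No corner adjustment is needed, because Theorem~\ref{the:special_groups} handles non-saturated bundles whose unit fibre is exactly~$A$. Your proposed patch is also false without fullness. The algebra $A=\setgiven{(\mathrm{diag}(a,b),\mathrm{diag}(b,c))}{a,b,c\in\C}$ detects ideals in $M_2\oplus M_2$. But its ideal $\C e$, with $e=(\mathrm{diag}(0,1),\mathrm{diag}(1,0))$, does not detect ideals in $e(M_2\oplus M_2)e\cong\C^2$.
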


The paper is organised as follows.
In Section~\ref{sec:General inclusions and aperiodicity} we discuss
the nontriviality conditions in Theorem~\ref{thm:main_theorem} for
general \(\Cst\)\nb-inclusions.
In Section~\ref{sec:Topologically graded regular C*-inclusions} we
consider regular inclusions and the inverse-semigroup Fell bundles
that model them.
In particular, we prove Theorems \ref{thmx:fixed_point_for
  special_compact} and~\ref{thmx:topologically_graded_aperiodic}.
Section~\ref{sec:regular C*-irreducible inclusions} is devoted to
regular \(\Cst\)\nb-irreducible inclusions.
It contains proofs of Theorem~\ref{thm:main_theorem} and of a slightly
more general version of Theorem~\ref{Thm:Galois_correspondence}.
We also apply our results to study tensor products of
\(\Cst\)\nb-irreducible inclusions with simple \(\Cst\)\nb-algebras.
Section~\ref{sec:group_actions} discusses specific applications to
crossed products by twisted group actions and fixed-point algebras for
compact group actions.
Finally, in Section~\ref{sect:Cuntz-Pimnser}, we present a number of
examples of \(\Cst\)\nb-irreducible inclusions which are special cases
of core inclusions for Cuntz--Pimsner algebras.

\subsection*{Acknowledgments}

The first-named author would like to thank to Tristan Bice, Karen
Strung and R\'eamonn \'O Buachalla for the hospitality and invitation
to Prague, where Theorem~\ref{thm:main_theorem} saw the light of day
at a seminar, a recording of which is available on YouTube and cited
in~\cite{Zarikian:Unique_pseudo}.
He is grateful to Kang Li for the invitation to deliver a series of
lectures in Erlangen, where we managed to persuade Dan Ursu to include
Corollary~8.6 in~\cite{Geffen-Ursu:Simple_crossed_products}, which
then allows us to prove Theorems \ref{thmx:fixed_point_for
  special_compact} and~\ref{thmx:topologically_graded_aperiodic}.
The first-named author would further like to thank Matt Kennedy for
his request to read the
preprint~\cite{Kennedy-Ursu:Intermediate_subalgebras_crossed}, which
proved crucial to the proof of
Theorem~\ref{Thm:Galois_correspondence}.
He is grateful to Yemon Choi for suggesting that the Galois
correspondence be placed in the context of a Galois connection; this
led us to improve Theorem~\ref{Thm:Galois_correspondence} to
Theorem~\ref{thm:Galois_correspondence_explained}.
Finally, he would like to thank Siegfried Echterhoff for discussions
about the assumptions in the theorems
of~\cite{Echterhoff-Rordam:Inclusions}, and to David Pask for a
discussion concerning the right formulation of
\cite{Pask-Rho:simple_graph_core}*{Theorem~6.1}.
The research of Bartosz Kwa\'sniewski was supported by the National
Science Centre, Poland, through the WEAVE-UNISONO grant
no.~2023/05/Y/ST1/00046.
This work is part of the project Graph Algebras partially supported by
EU grant HORIZON-MSCA-SE-2021 Project 101086394.

\section{General C*-inclusions and aperiodicity}
\label{sec:General inclusions and aperiodicity}

\begin{definition}[\cite{Rordam:Irreducible_inclusions}*{Definition 3.1}]
  Let \(A\subseteq B\) be an inclusion of \(\Cst\)\nb-algebras.
  An \emph{intermediate \(\Cst\)\nb-algebra} for \(A\subseteq B\) is a
  \(\Cst\)\nb-subalgebra \(C\subseteq B\) that contains~\(A\), that
  is, \(A\subseteq C \subseteq B\).
  The \(\Cst\)\nb-inclusion \(A\subseteq B\) is
  \emph{\(\Cst\)\nb-irreducible} if all intermediate
  \(\Cst\)\nb-algebras are simple.
\end{definition}

Rørdam's original definition in~\cite{Rordam:Irreducible_inclusions}
assumed the inclusion to be unital.
Mukohara in \cite{Mukohara:Inclusions}*{Definition 3.8} still assumes
the inclusion \(A\subseteq B\) to be \emph{nondegenerate}, that is,
\(AB=B\) or, equivalently, \(A\) and \(B\) have a common approximate
unit.

\begin{definition}
  A \(\Cst\)\nb-inclusion \(A\subseteq B\) is \emph{irreducible} if
  the commutant of~\(A\) in the multiplier algebra of \(\Mult(B)\) is
  trivial, that is, \(A' \cap \Mult(B) = \C\cdot 1\).
\end{definition}

\begin{lemma}[see \cite{Rordam:Irreducible_inclusions}*{Remark~3.8}]
  \label{C-irreducible_implies_irreducible}
  Every nondegenerate \(\Cst\)\nb-irreducible inclusion is irreducible.
\end{lemma}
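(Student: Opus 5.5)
We argue by contraposition. Let \(A\subseteq B\) be nondegenerate and \(\Cst\)\nb-irreducible, and take \(x\in A'\cap\Mult(B)\). The plan is to build a non-simple intermediate \(\Cst\)\nb-algebra whenever \(x\notin\C\cdot 1\).

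\emph{Reduction to self-adjoint elements.} Since \(A\) is self-adjoint, \(A'\cap\Mult(B)\) is a unital \Star{}subalgebra of \(\Mult(B)\), and it is norm-closed. So it is a \(\Cst\)\nb-algebra, and it suffices to show that every self-adjoint element in it is scalar. Suppose some self-adjoint \(x\) is not scalar. Then its spectrum contains two points, and continuous functional calculus gives nonzero positive \(f(x),g(x)\in A'\cap\Mult(B)\) with \(f(x)g(x)=0\).

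\emph{The intermediate algebra.} Put \(p\defeq f(x)\) and let
\[
C\defeq \Cst(A\cup pA)\subseteq B .
\]
Here \(pA\subseteq B\) because \(B\) is an ideal in \(\Mult(B)\). Since \(p\) commutes with \(A\), the algebra \(C\) is the closure of \(A+\setgiven{h(x)a}{a\in A,\ h\in\Cont_0}\) with functions \(h\) vanishing at \(0\) chosen in the \(\Cst\)\nb-algebra generated by \(f\). Clearly \(A\subseteq C\subseteq B\). Now consider \(J\defeq \overline{pC}=\overline{pA}\). Because \(p\) is central relative to \(C\), the set \(J\) is a closed two-sided ideal of \(C\).

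\emph{\(J\) is a proper nonzero ideal.} Nondegeneracy gives \(J\neq 0\): if \(pA=0\), then \(pAB=pB=0\), so \(p=0\) in \(\Mult(B)\), which is false. It remains to show \(J\neq C\), and this is the main obstacle. Since \(C\) is simple by assumption, suppose \(J=C\). Then \(A\subseteq\overline{pA}\). Multiplying by \(q\defeq g(x)\), which satisfies \(qp=0\), gives \(qA=0\). By nondegeneracy, \(qB=\overline{qAB}=0\), so \(q=0\), a contradiction.

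Hence \(J\) is a proper nonzero ideal of \(C\), so \(C\) is not simple. This contradicts \(\Cst\)\nb-irreducibility, so every self-adjoint element of \(A'\cap\Mult(B)\) is scalar, and therefore \(A'\cap\Mult(B)=\C\cdot 1\).

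The nondegeneracy hypothesis is used twice: first to guarantee that a nonzero multiplier commuting with \(A\) acts nontrivially on \(A\), and second to turn \(qA=0\) into \(q=0\). In a degenerate inclusion, \(1-(\text{unit of }A)\) would be a counterexample.
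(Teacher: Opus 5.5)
Your proof is correct, and it runs on the same mechanism as the paper's. Both take a self-adjoint \(b\in A'\cap\Mult(B)\), pass to the intermediate algebra generated by \(A\) and \(bA\), note that \(b\) generates an ideal there that is nonzero by nondegeneracy, and use simplicity.

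Where you differ is in how the contradiction is reached. The paper argues that every nonzero self-adjoint \(b\) in the commutant acts surjectively, hence invertibly. It then uses the fact that \(\C\) is the only \(\Cst\)-algebra in which every nonzero self-adjoint element is invertible. You apply functional calculus first, obtaining orthogonal nonzero \(p,q\) in the commutant. You then use \(q\) to show directly that \(\overline{pC}\) is a proper ideal, which is exactly the proof of that last fact, folded into the argument.

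Your route costs a few extra lines but makes every step explicit. In particular, it covers the step the paper compresses into ``\(bA=A\), so \(b\) is an invertible multiplier of \(A\)''. A priori \(bA\) need not lie in \(A\), and the algebra generated by \(A\) and \(bA\) also contains \(b^2A, b^3A,\ldots\).

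One small slip: \(\overline{pC}=\overline{pA}\) is false in general. The element \(p^2a\in pC\) need not lie in \(\overline{pA}\), and \(\overline{pA}\) need not be an ideal of \(C\). Nothing depends on it, though. With \(J=\overline{pC}\), the assumption \(J=C\) gives \(A\subseteq\overline{pC}\) and hence \(qA\subseteq\overline{qpC}=0\), which is all you use.
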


\begin{proof}
  Assume that \(A\subseteq B\) is nondegenerate and that \(A\subseteq
  B\) is \(\Cst\)\nb-irreducible.
  Let \(b\in A' \cap \Mult(B)\) be self-adjoint and nonzero.
  Then \(C\defeq A+ bA\) is an intermediate \(\Cst\)\nb-algebra, which
  makes it simple, and~\(bA\) is a nonzero ideal in~\(C\).
  Thus \(bA=A\).
  As a self-adjoint and surjective multiplier of~\(A\), \(b\) must be
  invertible.
  The only \(\Cst\)\nb-algebra where all nonzero self-adjoint elements are
  invertible is~\(\C\).
  Hence \(A\subseteq B\) is irreducible.
\end{proof}

\begin{example}
  For an infinite-dimensional Hilbert space~\(\mathcal{H}\), the
  unital inclusion
  \(\Comp(\mathcal{H})+\C 1\subseteq \Bound(\mathcal{H})\) is
  irreducible but not \(\Cst\)\nb-irreducible.  In fact, there is no
  simple intermediate \(\Cst\)\nb-algebra in this case.
\end{example}

The following example shows, among other things, that there are unital
inclusions of simple \(\Cst\)\nb-algebras that are irreducible but not
\(\Cst\)\nb-irreducible.

\begin{example}[Crossed products by discrete groups]
  \label{ex:crossed_product_inclusions}
  Let \(B\defeq A\rtimes^\red_\alpha G\) be the reduced crossed
  product for an action~\(\alpha\) of a discrete group~\(G\) on a
  unital \(\Cst\)\nb-algebra~\(A\).
  Then both \(A\) and \(\Cst_\red(G)\) are unital
  \(\Cst\)\nb-subalgebras of~\(B\).
  Following Izumi~\cite{Izumi:Inclusions_simple} and
  Kishimoto~\cite{Kishimoto:Outer_crossed},
  Cameron--Smith~\cite{Cameron-Smith:Galois_reduced} proved that
  \(A\subseteq B\) is \(\Cst\)\nb-irreducible if and only if~\(A\) is
  simple and the action~\(\alpha\) is outer and that, in this case,
  any intermediate \(\Cst\)\nb-algebra is of the form
  \(C=A\rtimes^\red_\alpha H\) for a subgroup \(H\subseteq G\).
  The inclusion \(A\subseteq B\) is irreducible if and only if it is
  \(\Cst\)\nb-irreducible, see
  \cite{Rordam:Irreducible_inclusions}*{Theorem~5.8}.
  As shown in~\cite{Rordam:Irreducible_inclusions} (see
  also~\cite{Amrutam_Kalantar:Simplicity_intermediate}),
  \(\Cst_\red(G)\subseteq B\) is \(\Cst\)\nb-irreducible if and only
  if~\(G\) is \(\Cst\)\nb-simple and for every \(a\in
  A^+\setminus\{0\}\) there is a finite subset \(F\subseteq G\) with
  \(\sum_{g\in F} \alpha_g(a)\ge 1\).
  In some cases, intermediate \(\Cst\)\nb-algebras for
  \(\Cst_\red(G)\subseteq B\) are described as algebras of the form
  \(C=D\rtimes_\red G\) for some \(G\)-\(\Cst\)\nb-subalgebra
  \(D\subseteq B\), see~\cite{Amrutam:Intermediate_subalgebras}.
  Rørdam gave an example of an action on the Cuntz
  algebra~\(\mathcal{O}_\infty\) such that the corresponding
  inclusion \(\Cst_\red(G)\subseteq
  \mathcal{O}_\infty\rtimes_\red G\) is irreducible and
  \(\Cst_\red(G)\) and \(\mathcal{O}_\infty\rtimes_\red G\) are
  simple, but the inclusion is not \(\Cst\)\nb-irreducible, see
  \cite{Rordam:Irreducible_inclusions}*{Example~5.14}.
\end{example}

The main aim of this note is to generalise the first part of the above
example to regular inclusions and to provide some more general
applications of aperiodicity, which is a concept that generalises
outer actions on simple \(\Cst\)\nb-algebras to modules over general
\(\Cst\)\nb-algebras.

\begin{definition}[{\cite{Kwasniewski-Meyer:Essential}*{Definition~5.9}}]
  \label{def:aperiodic_bimodule}%
  Let~\(A\) be a \(\Cst\)\nb-algebra and let \(\Her(A)\) be the set of
  nonzero hereditary \(\Cst\)\nb-subalgebras \(D\subseteq A\).
  For \(D\in\Her(A)\), let
  \[
    D^+_1 \defeq \setgiven{a\in D}{a\ge 0,\, \norm{a}=1}.
  \]
  We call a normed \(A\)\nb-bimodule~\(M\) \emph{aperiodic} if for any
  \(m\in M\), \(D\in \Her(A)\), and \(\varepsilon>0\), there is \(a\in
  D^+_1\) with \(\norm{a \cdot m\cdot a}<\varepsilon\).
\end{definition}

\begin{example}
  \label{ex:conditions_automorphisms}
  Let \(\alpha\colon A\to A\) be a \Star{}automorphism.
  Define an \(A\)\nb-bimodule \(M_\alpha\defeq A\) with the operations
  \(a\cdot x=\alpha(a)x\) and \(x\cdot a=xa\) for all \(x\in
  M_{\alpha}\) and \(a,b\in A\).
  In~\cite{Kishimoto:Freely_acting}, Kishimoto calls~\(\alpha\)
  \emph{freely acting}, which he quickly renamed \emph{properly
    outer}, see \cite{Hamana:injective-C-dyn}*{p.~477}, if for every
  nonzero \(\alpha\)\nb-invariant ideal~\(I\) in~\(A\) the Borchers
  spectrum of the \(\Z\)\nb-action generated by~\(\alpha|_I\) is
  nontrivial.
  This terminology seems to now become standard, see
  \cites{Hamana:injective-C-dyn, Zarikian:Unique_expectations,
    Zarikian:Unique_pseudo,
    Kennedy-Ursu:Intermediate_subalgebras_crossed}, though some other
  names such as \emph{spectrally nontrivial}
  in~\cite{Pasnicu-Phillips:Spectrally_free} or \emph{Kishimoto
    condition} in~\cite{Kwasniewski-Meyer:Aperiodicity} occur
  sometimes.
  By~\cite{Kishimoto:Freely_acting}*{Theorem~2.1} (see also
  \cite{Kwasniewski-Meyer:Aperiodicity}*{Theorem~2.10}),
  \(\alpha\) is properly outer if and only if~\(M_{\alpha}\) is
  aperiodic, that is, if \(b\in B\) and \(D\in \Her(A)\), then
  \(\inf\setgiven{\norm{\alpha(a)ba}}{ a\in D^+_1}=0\).
  By a result of Olesen, \cite{Olesen:Inner}*{Corollary 4.3}, if~\(A\)
  is simple, then this is further equivalent to~\(\alpha\) being
  \emph{outer}, that is, there is no unitary \(u\in \Mult(A)\) with
  \(\alpha(a)= u a u^*\) for all \(a\in A\).
  We call~\(\alpha\) \emph{topologically free} if the set of fixed
  points of the dual homeomorphism \(\widehat{\alpha}\colon
  \widehat{A}\to \widehat{A}\) on the space of isomorphism classes of
  irreducible representations of~\(A\) has empty interior
  in~\(\widehat{A}\).
  By \cite{Kwasniewski-Meyer:Aperiodicity}*{Theorem~8.1},
  topologically free homeomorphisms are properly outer, and the converse
  holds whenever~\(A\) contains an essential ideal which is either
  separable or of Type~I.
\end{example}

The bimodule~\(M_\alpha\) in the above example becomes a Hilbert
bimodule with the left and right inner products \({}_A\braket{x}{y}
\defeq \alpha^{-1}(xy^*)\) and \(\braket{x}{y}_A\defeq x^*y\) for all
\(x,y\in M_{\alpha}=A\).
The last part of the above example generalises to Hilbert bimodules as
follows:

\begin{example}
  Let~\(M\) be a Hilbert \(A\)\nb-bimodule.
  We call~\(M\) \emph{outer} if it is not isomorphic to the trivial
  Hilbert \(A\)\nb-bimodule~\(A\), and \emph{purely outer} if there
  is no nonzero \(M\)\nb-invariant ideal~\(I\) in~\(A\) for which the
  restriction \(M|_I = M\cdot I = I\cdot M\cdot I\) is outer.
  We call \(M\) \emph{topologically free} (or \emph{topologically
    nontrivial}) if the set of fixed points for the induced partial
  homeomorphism~\(\widehat{M}\) of~\(\widehat{A}\) has empty interior.
  The known implications between these conditions and the aperiodicity
  of~\(M\) are presented in Figure~\ref{fig:diagram},
  \begin{figure}[htbp]
  $
  \xymatrix{
  \text{$\left(\begin{array}{c}
  \text{topologically}
  \\
  \text{free}\end{array}\right)
  $}
  &  &  \ar@{->}@/^2pc/[ll]_{\text{$A$ ess. separable}}  \ar@{<=}[ll]
  \text{$\left(\begin{array}{c}
  \text{aperiodic}
  \\
  \end{array}\right)
  $}
  &  & \ar@{->}@/^4pc/[llll]_{\text{$A$ ess. Type I}}
  \ar@{<=}[ll]
  \ar@{->}@/^2pc/[ll]_{\text{$A$ ess. simple}}
  \text{$\left(\begin{array}{c}
  \text{purely}
  \\
  \text{outer}
  \end{array}\right)$}
  }
  $
  \caption{Implications among properties of Hilbert $\Cst$-bimodules over~$A$.}
  \label{fig:diagram}
  \end{figure}
  where an implication marked by a thin arrow holds under the
  assumption that~$A$ contains an essential ideal with the given
  property (is separable, Type~I or simple).
  These implications follow from
  \cite{Kwasniewski-Meyer:Aperiodicity_pseudo_expectations}*{Theorem~3.6}
  and \cite{Kwasniewski-Meyer:Aperiodicity}*{Theorem~8.1}.
  In particular, if \(A\) is simple, then~\(M\) is aperiodic if and
  only if it is outer.
\end{example}

For any \(\Cst\)\nb-inclusion \(A\subseteq B\), the
\(\Cst\)\nb-algebra~\(B\) is an \(A\)\nb-bimodule, and this bimodule
structure descends to the quotient Banach space~\(B/A\).
This allows us to define aperiodicity for \(\Cst\)\nb-inclusions:

\begin{definition}[\cite{Kwasniewski-Meyer:Essential}*{Definition~5.14}]
  A \(\Cst\)\nb-inclusion \(A\subseteq B\) is \emph{aperiodic} if the
  Banach \(A\)\nb-bimodule~\(B/A\) is aperiodic, that is, if for any
  \(b\in B\), \(D\in \Her(A)\) and \(\varepsilon>0\), there are \(a\in
  D^+_1\) and \(c\in A\) with \(\norm{a \cdot b\cdot a
    -c}<\varepsilon\).
\end{definition}

Aperiodic inclusions are closely related to the outer expectations
introduced by Osaka~\cite{Osaka:SP_property} for unital
\(\Cst\)\nb-inclusions.
We generalise the latter to general \(\Cst\)\nb-inclusions:

\begin{definition}[\cite{Osaka:SP_property}*{Definition~2.2}]
  Let \(A\subseteq B\) be a \(\Cst\)\nb-inclusion.
  A conditional expectation \(E\colon B\to A\subseteq B\) is
  \emph{outer} if \(\inf\setgiven{\norm{aba}}{a\in D^+_1}=0\) for
  any \(b\in \ker E\) and \(D\in \Her(A)\).
\end{definition}

\begin{remark}
  \label{rem:outer_expectation_means_aperiodic}
  For any conditional expectation \(E\colon B\to A\), the map \(\ker E
  \ni b-E(b) \mapsto b+A \in B/A\) is a bijective linear contraction
  and hence a linear homeomorphism \(\ker E\cong B/A\).
  Thus~\(E\) is outer if and only if \(A\subseteq B\) is aperiodic.
  Therefore, Osaka's outerness is not a property of a conditional
  expectation but of the inclusion.
\end{remark}

If~\(E\) is outer, then a standard argument shows the following
pinching property due to Rørdam, which has appeared in several
papers:

\begin{definition}[\cite{Rordam:Irreducible_inclusions}*{Definition~3.13}]
  A conditional expectation \(E\colon B\to A\) is \emph{pinching} if
  for each \(b\in B^+\) and \(\varepsilon >0\) there is a contraction
  \(a\in A^+\) such that
  \[
    \norm{ab a- aE(b)a}<\varepsilon
    \quad\text{ and }\quad
    \norm{aE(b)a}\ge \norm{E(b)}- \varepsilon.
  \]
\end{definition}

\begin{lemma}
  \label{lem:aperiodic_implies_pinching}
  Assume that a conditional expectation \(E\colon A\to B\) is outer
  or, equivalently, the \(\Cst\)\nb-inclusion \(A\subseteq B\) is
  aperiodic.
  Then~\(E\) is pinching.
\end{lemma}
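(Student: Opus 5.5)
Fix \(b\in B^+\) and \(\varepsilon>0\); we may assume \(E(b)\neq 0\), since otherwise \(a=0\) works. The idea is to choose a hereditary subalgebra \(D\in\Her(A)\) on which \(E(b)\) is essentially maximal, so that compressing by any \(a\in D^+_1\) nearly preserves the norm \(\norm{E(b)}\). Outerness of~\(E\) (equivalently aperiodicity, by Remark~\ref{rem:outer_expectation_means_aperiodic}) then lets us choose such an \(a\) that almost kills \(b-E(b)\in\ker E\).

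Write \(x\defeq E(b)\in A^+\). Take \(\delta>0\) small and a continuous function \(f\colon[0,\infty)\to[0,1]\) vanishing on \([0,\norm{x}-\delta]\) and equal to~\(1\) at \(\norm{x}\). Then \(f(x)\neq0\). Let \(D\defeq\overline{f(x)Af(x)}\in\Her(A)\). For every \(d\in D\) we have \(d=g(x)d\), where \(g\) is continuous, supported in \([\norm{x}-\delta,\infty)\), and equals~\(1\) on the support of~\(f\). Taking \(g\) supported in \([\norm{x}-2\delta,\infty)\) and equal to~\(1\) on \([\norm{x}-\delta,\infty)\), the functional calculus gives \(xg(x)\ge(\norm{x}-2\delta)g(x)^2\). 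Hence for \(a\in D^+_1\),
\[
axa=ag(x)xg(x)a\ge(\norm{x}-2\delta)\,ag(x)^2a=(\norm{x}-2\delta)a^2,
\]
and therefore \(\norm{axa}\ge(\norm{x}-2\delta)\norm{a^2}=\norm{x}-2\delta\).

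Now apply outerness of~\(E\) to \(b-E(b)\in\ker E\) and the subalgebra~\(D\). This gives \(a\in D^+_1\) with \(\norm{a(b-E(b))a}<\varepsilon\), that is, \(\norm{aba-aE(b)a}<\varepsilon\). Such an \(a\) is a positive contraction in~\(A\). By the previous paragraph, \(\norm{aE(b)a}\ge\norm{E(b)}-2\delta\), and choosing \(2\delta\le\varepsilon\) yields both pinching inequalities.

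The only step needing care is the norm lower bound in the second paragraph. The point is that \(D\) must be the hereditary subalgebra generated by a spectral function of \(x\) near its top, so that every \(d\in D\) is absorbed by \(g(x)\). One could alternatively write \(a=hf(x)\ldots\), but the identity \(d=g(x)d\) for \(d\in\overline{f(x)Af(x)}\) (valid because \(g f=f\)) is the cleanest route. Note also that the statement's ``\(E\colon A\to B\)'' should be read as \(E\colon B\to A\).
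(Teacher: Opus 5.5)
Your proof is correct, and it is the standard argument the paper relies on; the paper gives no details and only refers to the proof of Osaka's Corollary~2.3. That proof likewise chooses a hereditary subalgebra \(D\) on which compression nearly preserves \(\norm{E(b)}\), which you build by functional calculus, and then applies outerness to \(b-E(b)\in\ker E\). One small point: the estimate you actually need is \(g(x)xg(x)\ge(\norm{x}-2\delta)g(x)^2\) (or \(axa=a\,xg(x)\,a\) together with your stated inequality), and it follows from the same functional-calculus bound.
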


\begin{proof}
  See, for instance, the proof of \cite{Osaka:SP_property}*{Corollary~2.3}.
\end{proof}

\begin{remark}
  We do not know an example of a pinching conditional expectation
  \(E\colon A\to B\) which is not outer.
\end{remark}

\begin{definition}[\cite{Kwasniewski-Meyer:Aperiodicity}*{Definition~2.2},
  \cite{Kwasniewski:Crossed_products}*{Definition~2.39}]
  \label{def:Cuntz_preorder}
  Let \(A\subseteq B\) be a \(\Cst\)\nb-inclusion.
  We say that~\(A\) \emph{detects ideals} in~\(B\) if \(J\cap A\neq
  0\) for all nonzero ideals~\(J\) in~\(B\).
  We say that~\(A\) \emph{supports}~\(B\) if for each \(b\in
  B^+\setminus\{0\}\) there is \(a\in A^+\setminus\{0\}\) which is
  Cuntz below~\(b\) in~\(B\), written, \(a\precsim b\).
\end{definition}

\begin{remark}
  \label{rem:supporting_detecting}
  Detection of ideals is often called the (ideal) \emph{intersection
    property} (see~\cite{Sierakowski:IdealStructureCrossedProducts}).
  Pitts and Zarikian prefer to speak of \emph{essential inclusions}
  instead (see \cites{Pitts-Zarikian:Unique_pseudoexpectation,
    Zarikian:Unique_pseudo}).
  If~\(A\) supports~\(B\), then it detects ideals in~\(B\) because
  Cuntz's below-relation \(a\precsim b\) implies that~\(a\) is in the
  ideal generated by~\(b\).
  However, the converse is not true.
  Indeed, \cite{Rordam:finite_and_infinite}*{Corollary~7.1
    and Proposition~2.1} give a unital \(\Cst\)\nb-inclusion
  \(\mathcal{O}_\infty\subseteq B\) where~\(B\) is a nuclear, unital,
  separable, simple \(\Cst\)\nb-algebra satisfying the UCT and there
  is a nonzero finite projection \(q_0\in B\).
  Then~\(\mathcal{O}_\infty\) trivially detects ideals in~\(B\)
  as~\(B\) is simple, but it does not support~\(B\) because for any
  nonzero \(a\in \mathcal{O}_\infty^+\), there is a properly infinite
  projection \(p\in \mathcal{O}_\infty^+\) with \(p\precsim a\), and
  so \(a\precsim q_0\) would imply \(p\le q_0\), which is impossible
  because~\(q_0\) is finite.
\end{remark}

The following lemma generalises the part of
\cite{Rordam:Irreducible_inclusions}*{Proposition 3.15} about the
pinching property.

\begin{lemma}
  \label{lem:pinching_plus_faithful}
  Let \(E\colon A\to B\) be a pinching, faithful conditional expectation.
  Then~\(A\) supports all intermediate \(\Cst\)\nb-algebras, and hence
  it also detects ideals in all intermediate \(\Cst\)\nb-algebras.
  In particular, if~\(A\) is simple then \(A\subseteq B\) is
  \(\Cst\)\nb-irreducible.
\end{lemma}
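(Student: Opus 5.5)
Note that the statement writes \(E\colon A\to B\); the intended meaning is a conditional expectation \(E\colon B\to A\) onto \(A\subseteq B\).

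The plan is to fix an intermediate \(\Cst\)\nb-algebra \(A\subseteq C\subseteq B\) and a nonzero \(c\in C^+\), and to produce a nonzero \(a'\in A^+\) with \(a'\precsim c\) in~\(C\). The idea is to use the pinching property to compress~\(c\) by an element of~\(A\) until it is close to an element of~\(A\), and then transfer Cuntz subequivalence across this small perturbation. Once \(A\) supports~\(C\), Remark~\ref{rem:supporting_detecting} shows that \(A\) detects ideals in~\(C\). If~\(A\) is simple, any nonzero ideal \(J\) of~\(C\) satisfies \(J\cap A=A\), so \(A\subseteq J\), and hence \(J\) contains \(A\) and is a nonzero ideal; I then need to deduce \(J=C\).

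The key steps are as follows.

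\textbf{Step 1.} Since~\(E\) is faithful and \(c\neq0\), we have \(E(c)\neq0\). Normalise so that \(\norm{E(c)}=1\) and choose a small \(\varepsilon>0\), say \(\varepsilon<1/4\).

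\textbf{Step 2.} Apply the pinching property to \(c\in B^+\). This gives a positive contraction \(a\in A\) with
\[
  \norm{aca-aE(c)a}<\varepsilon
  \quad\text{and}\quad
  \norm{aE(c)a}\ge 1-\varepsilon.
\]
Put \(x\defeq aE(c)a\in A^+\) and \(y\defeq aca\in C^+\). Note that \(y\in C\) because \(a\in A\subseteq C\).

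\textbf{Step 3.} Use the standard Kirchberg--Rørdam perturbation lemma: if \(\norm{x-y}<\varepsilon\) for positive elements, then \((x-\varepsilon)_+\precsim y\) in the \(\Cst\)\nb-algebra generated by both, here~\(C\). Also \(y=aca\precsim c\), because \(aca=(c^{1/2}a)^*(c^{1/2}a)\sim c^{1/2}a^2c^{1/2}\le c\), and all of these elements lie in~\(C\). Thus \(a'\defeq(x-\varepsilon)_+\in A^+\) satisfies \(a'\precsim c\) in~\(C\). Moreover \(a'\neq0\) because \(\norm{x}\ge1-\varepsilon>\varepsilon\).

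\textbf{Step 4.} This proves that \(A\) supports~\(C\). The detection of ideals and the simplicity conclusion then follow as outlined above.

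I expect the only delicate points to be these. First, all Cuntz comparisons must happen inside~\(C\), not in~\(B\); the argument above is designed for this. Second, for the last claim I must show that an ideal \(J\supseteq A\) of~\(C\) equals~\(C\). Here \(A\subseteq J\) gives \(ACA\subseteq J\). For \(c\in C\), and \(u_\lambda\) an approximate unit of~\(A\), we have \(E(c-u_\lambda cu_\lambda)\to0\), but faithfulness alone does not give convergence in norm. So I would argue instead by applying the detection of ideals to the quotient: if \(J\ne C\), apply support/detection to \(C/J\)? That may fail, because \(A\) maps to zero in \(C/J\).

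The cleaner route for the last claim is the following. If \(J\) is a proper ideal containing~\(A\), take \(c\in C^+\setminus J\) and consider the image of~\(c\) in the quotient. By Steps 1--3, every nonzero positive element \(c\in C^+\) has some nonzero \(a'\in A^+\) with \(a'\precsim c\). This gives a lower bound tied to~\(A\), but I still need the reverse, namely that \(c\) lies in the ideal generated by~\(A\). I would get this from pinching applied to arbitrary \(c\) together with \(A\subseteq J\): the elements \(aca\) approximate \(aE(c)a\in J\), which is only approximate. This identification of the ideal generated by \(A\) with all of~\(C\) is the step I expect to be the main obstacle, and I would first check whether the intended setting assumes nondegeneracy of \(A\subseteq B\), under which \(C=\overline{ACA}\subseteq J\) is immediate.
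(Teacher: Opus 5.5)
Your Steps 1--3 are essentially the paper's proof, which follows R\o{}rdam--Sierakowski. Faithfulness gives \(E(b)\neq0\). Pinching gives a contraction \(h\in A^+\) with \(hbh\) close to \(hE(b)h\). R\o{}rdam's perturbation lemma then gives \(0\neq(hE(b)h-\varepsilon)_+\precsim hbh\precsim b\). The paper handles intermediate algebras by noting that \(E|_C\) is still pinching, which is the same as your observation that \(aca\in C\). Your choice \(\varepsilon<1/2\) is slightly more careful than the paper's \(\varepsilon=1/2\): that choice only yields \(\norm{hE(b)h}\ge 1/2\), not the strict inequality needed for \((hE(b)h-1/2)_+\neq0\).

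The paper gives no separate argument for the last sentence. It tacitly uses nondegeneracy, which is exactly your fallback \(C=\overline{ACA}\subseteq J\). This is harmless in context: nondegeneracy is invoked explicitly for the same step in the proof of Theorem~\ref{the:special_groups}, and the lemma is only applied to regular, hence nondegenerate, inclusions. So there is no general argument to find: without nondegeneracy the last sentence is false.

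Here is a counterexample. Let \(A\defeq\Comp\otimes D\), where \(D\) is unital and simple and \(\beta\in\Aut(D)\) has all nonzero powers outer. Let \(I\defeq A\rtimes^\red_{\Id\otimes\beta}\Z\cong\Comp\otimes(D\rtimes^\red_\beta\Z)\) and \(m\defeq\sum_k e_{kk}\otimes\frac1k\sum_{j=1}^k u^j\in\Mult(I)\setminus I\).
\begin{itemize}
\item The extension of the canonical expectation to \(\Mult(I)\) is faithful.
\item For continuous \(f\) with \(f(0)=0\), it sends \(f(m)\) to \(\sum_k c_k e_{kk}\otimes1\), where \(c_k=\int f\bigl(\frac1k\sum_{j=1}^k e^{ij\theta}\bigr)\,d\theta/2\pi\to f(0)=0\). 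Hence this image lies in \(A\).
\item Therefore the extension restricts to a faithful conditional expectation from \(B\defeq I+\Cst(m)\) onto~\(A\).
\item The inclusion \(A\subseteq B\) is aperiodic, because compressions by elements of~\(A\) land in~\(I\) and \(A\subseteq I\) is aperiodic. So this expectation is pinching by Lemma~\ref{lem:aperiodic_implies_pinching}.
\item Nevertheless, \(I\) is a proper nonzero ideal of~\(B\), so \(B\) is not simple.
\end{itemize}
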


\begin{proof}
  We follow the argument in the proof of
  \cite{Rordam-Sierakowski:Purely_infinite}*{Lemma~3.2}.
  Let \(b\in B^+\setminus \{0\}\).
  Since~\(E\) is faithful, \(E(b)\neq 0\).
  We may assume without loss of generality that \(\norm{E(b)}=1\).
  The pinching property with \(\varepsilon=1/2\) gives us a
  contraction \(h\in A^+\) with \(\norm{hE(b)h-hbh}<1/2\) and
  \(\norm{hE(b)h} >1/2\).
  Then \(a \defeq(hE(b)h-1/2)_+ \in A^+\) satisfies \(a \precsim
  hbh\), see \cite{Rordam:On_simple_II}*{Proposition~2.2}.
  Hence \(a\precsim b\) and clearly \(a\neq 0\).
  Thus~\(A\) supports~\(B\).
  The same argument works for intermediate \(\Cst\)\nb-algebras
  because any restriction of~\(E\) remains pinching.
\end{proof}

For a pinching conditional expectation, we may compress any \(b\in B\)
with elements in~\(A\) to get an element with norm close
to~\(\norm{E(b)}\).
We are going to prove that if the \(\Cst\)\nb-inclusion is aperiodic
and~\(A\) is simple, then we may compress~\(b\) to an element close
to~\(E(b)\).
To this end, we will use a recent variant of Magajna's separation
theorem by Kennedy and Ursu
in~\cite{Kennedy-Ursu:Intermediate_subalgebras_crossed}.
These results are stated for absolutely \(A\)\nb-convex sets and
unital \(\Cst\)\nb-algebras.
Here we get rid of the unitality requirement and restrict attention to
\(A\)\nb-bimodules, as this is sufficient for our purposes.

\begin{proposition}[Magajna--Kennedy--Ursu Separation Theorem for
  bimodules]
  \label{thm:Magajna_Kennedy_Ursu}
  Let \(A\subseteq B\) be a \(\Cst\)\nb-inclusion, let \(M\subseteq
  B\) be a norm-closed \(A\)-bimodule, and let \(a_0\in A\setminus
  M\).
  Then there is a completely bounded map \(\phi\colon B\to \Bound(H)\)
  such that \(\phi|_M=0\), \(\phi(a_0)\neq 0\), \(\pi\defeq\phi|_A\)
  is a \Star{}homomorphism, and \(\phi(ab)=\pi(a)\phi(b)\),
  \(\phi(ba)=\phi(b)\pi(a)\) for all \(a\in A\), \(b\in B\).
\end{proposition}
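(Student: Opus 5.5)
The plan is to reduce to the unital case treated by Kennedy--Ursu (itself resting on Magajna's separation theorem), by passing to minimal unitisations, and then to pull the resulting map back to~\(B\).

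\emph{Step 1: unitise.} Let \(\tilde B\) be the minimal unitisation of~\(B\) (adjoin a unit even if~\(B\) is unital), and let \(\tilde A\defeq A+\C 1\subseteq \tilde B\). Put \(\tilde M\defeq \tilde A M\tilde A = M\), which is still a norm-closed \(\tilde A\)\nb-bimodule because \(1\cdot m=m\). A norm-closed \(\tilde A\)\nb-bimodule is absolutely \(\tilde A\)\nb-convex: \(\sum a_i^* m_i a_i\in M\) whenever \(\sum a_i^*a_i\le 1\). Moreover \(a_0\notin M\). The Kennedy--Ursu version of Magajna's separation theorem then yields a unital representation \(\tilde\pi\colon \tilde A\to\Bound(H)\) and a completely bounded (indeed completely contractive after rescaling) \(\tilde A\)\nb-bimodule map \(\tilde\phi\colon \tilde B\to \Bound(H)\) with \(\tilde\phi|_{\tilde A}=\tilde\pi\), \(\tilde\phi(M)=0\) and \(\tilde\phi(a_0)\ne0\). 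If their statement only gives separation by a real part, as in \(\operatorname{Re}\tilde\phi(a_0)\not\le 1\) while \(\operatorname{Re}\tilde\phi(m)\le 1\) on~\(M\), then I use that~\(M\) is a linear subspace: scaling \(m\mapsto tm\) forces \(\operatorname{Re}\tilde\phi(m)\le 0\) for all \(t\in\mathbb R\), hence \(=0\), and replacing \(m\) by \(im\) gives \(\tilde\phi|_M=0\). The Stinespring/Wittstock representation \(\tilde\phi=V^*\rho(\cdot)W\) supplies the bimodule property in their proof. If needed, I would redo their argument in the bimodule setting: take a bounded bimodule map that vanishes on~\(M\) via Hahn--Banach for the operator space \(\tilde B/M\) with its \(\tilde A\)\nb-bimodule structure, then apply Wittstock's theorem.

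\emph{Step 2: restrict.} Set \(\phi\defeq\tilde\phi|_B\) and \(\pi\defeq\tilde\pi|_A\). Then \(\pi\) is a \Star{}homomorphism, \(\phi\) is completely bounded, and \(\phi(ab)=\pi(a)\phi(b)\), \(\phi(ba)=\phi(b)\pi(a)\) follow directly from the \(\tilde A\)\nb-bimodule property. The conditions \(\phi|_M=0\) and \(\phi(a_0)\ne0\) are inherited. I do not even need~\(\pi\) to be nondegenerate, although one could compress to \(\overline{\pi(A)H}\) afterwards. Compressing does not destroy \(\phi(a_0)\ne0\): since \(a_0\in A\), we have \(\phi(a_0)=\pi(a_0)\), which already lives on the essential subspace.

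\emph{Main obstacle.} The only real work is checking that the Kennedy--Ursu theorem applies in exactly this form, namely for a unital inclusion \(\tilde A\subseteq\tilde B\) and an absolutely \(\tilde A\)\nb-convex norm-closed set, and that it produces a bimodule map rather than just a state-like separating functional. I expect this to hold because Magajna's theorem produces \(\tilde\phi(x)=V^*\rho(x)V\) with \(\rho\) restricting to a representation on \(\tilde A\) that commutes with~\(V\), which gives the module property. Vanishing on~\(M\) then follows from the linearity argument in Step~1.
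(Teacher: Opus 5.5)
Your proposal is correct and follows essentially the same route as the paper. The paper adjoins a common unit, notes that \(M\) is still a closed \(\widetilde{A}\)\nb-bimodule and hence a closed absolutely convex set, and applies Kennedy--Ursu's Proposition~3.4. It then uses that \(M\) is a linear subspace to upgrade the separating inequality to \(\phi|_M=0\).

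The cited result already gives a completely bounded bimodule map extending a \Star{}homomorphism with \(\norm{\phi(a_0)}>\delta>\norm{\phi(m)}\) for all \(m\in M\). So neither your real-part variant nor your fallback is needed. The fallback would in any case need Magajna's argument, since Hahn--Banach alone does not produce a bimodule map.
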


\begin{proof}
  If \(A\) and~\(B\) do not have a common unit, we may pass to the
  unital \(\Cst\)\nb-inclusion \(\widetilde{A}\subseteq
  \widetilde{B}\) obtained by adding a common unit.
  Then~\(M\) is still a closed \(\widetilde{A}\)\nb-bimodule, and so
  all the more it is a closed absolutely \(A\)\nb-convex set.
  Thus
  \cite{Kennedy-Ursu:Intermediate_subalgebras_crossed}*{Proposition
    3.4} provides \(\delta>0\) and a \Star{}homomorphism \(\pi\colon
  A\to \Bound(X)\) that extends to a completely bounded
  \(A\)\nb-bimodule map \(\phi\colon B\to \Bound(H)\) such that
  \(\norm{\phi(a_0)}> \delta\) and \(\phi(m)<\delta\) for all \(m\in
  M\).
  Since~\(M\) is a linear space, the last inequality is equivalent to
  \(\phi|_M=0\).
\end{proof}

\begin{proposition}
  \label{prop:module_approximation}
  Let \(A\subseteq B\) be an aperiodic \(\Cst\)\nb-inclusion with a
  conditional expectation \(E\colon B\to A\subseteq B\).
  Let~\(A\) be simple and let \(b\in B\).
  Then \(E(b)\in \overline{AbA}\).
\end{proposition}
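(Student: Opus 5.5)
We argue by contradiction, using Proposition~\ref{thm:Magajna_Kennedy_Ursu}. Let \(M\defeq\clsp AbA\subseteq B\), which is a norm-closed \(A\)\nb-bimodule, and suppose \(E(b)\notin M\). Since \(E(b)\in A\), Proposition~\ref{thm:Magajna_Kennedy_Ursu} gives a completely bounded map \(\phi\colon B\to\Bound(H)\) with the following properties: \(\phi|_M=0\); \(\phi(E(b))\neq0\); \(\pi\defeq\phi|_A\) is a \Star{}homomorphism; and \(\phi\) is an \(A\)\nb-bimodule map over~\(\pi\). Because \(\pi(E(b))\neq 0\), \(\pi\) is nonzero. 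As~\(A\) is simple, \(\ker\pi=0\), so~\(\pi\) is isometric. This faithfulness is where simplicity enters.

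\emph{Reduction to a positive expectation.} Let \((u_\lambda)\) be an approximate unit of~\(A\) and put
\[
  b_\lambda\defeq E(b)^*\,b\,u_\lambda\in AbA\subseteq M .
\]
Then \(\phi(b_\lambda)=0\). Since~\(E\) is an \(A\)\nb-bimodule map,
\[
  E(b_\lambda)=E(b)^*E(b)u_\lambda \longrightarrow p\defeq E(b)^*E(b),
\]
and \(p\in A^+\) is nonzero. Normalise so that \(\norm{p}=1\). Let \(f\colon[0,1]\to[0,1]\) be continuous, vanishing on \([0,3/4]\) and nonzero at~\(1\), and let \(D\in\Her(A)\) be the hereditary subalgebra generated by \(f(p)\neq0\). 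For \(a\in D^+_1\), functional calculus gives \(apa\ge \tfrac34 a^2\), hence \(\norm{apa}\ge 3/4\).

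\emph{Using aperiodicity.} Set \(x_\lambda\defeq b_\lambda-E(b_\lambda)\in\ker E\). By Remark~\ref{rem:outer_expectation_means_aperiodic}, \(E\) is outer, so for every \(\varepsilon>0\) there is \(a\in D^+_1\) with \(\norm{a x_\lambda a}<\varepsilon\). The bimodule property of~\(\phi\) gives
\[
  0=\pi(a)\phi(b_\lambda)\pi(a)=\pi\bigl(aE(b_\lambda)a\bigr)+\phi(ax_\lambda a).
\]
Since~\(\pi\) is isometric, this yields
\[
  \norm{aE(b_\lambda)a}\le\norm{\phi}_{\mathrm{cb}}\,\varepsilon .
\]
Now first fix~\(\lambda\) with \(\norm{E(b_\lambda)-p}<1/4\), and then choose \(\varepsilon\) with \(\norm{\phi}_{\mathrm{cb}}\varepsilon<1/4\). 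We obtain
\[
  \norm{apa}<\tfrac14+\tfrac14<\tfrac34 ,
\]
contradicting \(\norm{apa}\ge 3/4\). Hence \(E(b)\in M\).

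The main obstacle is that \(E(b)\) itself need not be positive, so compressing it by elements of \(D^+_1\) need not keep it large. This is why we multiply by \(E(b)^*\) and an approximate unit, which stays inside~\(M\) and makes the expectation approximately positive. The ordering of the choices also matters: \(\lambda\) must be fixed before~\(a\), because~\(a\) is produced by aperiodicity for the specific element~\(x_\lambda\). One further point to check is that Proposition~\ref{thm:Magajna_Kennedy_Ursu} applies even though~\(M\) may not contain~\(b\); it does, since only \(b_\lambda\in M\) is used.
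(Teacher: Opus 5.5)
Your proof is correct and follows essentially the same route as the paper. You separate \(E(b)\) from \(\overline{AbA}\) via Proposition~\ref{thm:Magajna_Kennedy_Ursu}, use simplicity to make \(\pi\) isometric, and contradict aperiodicity with an element of \(\ker E\) built from \(E(b)^*b\), on a hereditary subalgebra where \(E(b)^*E(b)\) is bounded below under compression. The paper differs only cosmetically: it uses \(m=E(b)^*(E(b)-b)\) directly, cites \cite{Kwasniewski-Meyer:Aperiodicity}*{Lemma~2.19} for \(D\) instead of your functional-calculus construction, and needs no approximate unit because \(aE(b)^*ba\in AbA\) already holds for \(a\in D\subseteq A\).
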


\begin{proof}
  Assume that \(E(b)\notin \overline{AbA}\).
  We are going to show that \(m\defeq E(b)^*(E(b)-b)\in \ker E\) fails
  to satisfy the aperiodicity condition in
  Definition~\ref{def:aperiodic_bimodule}.
  Then \(A\subseteq B\) fails to be aperiodic by
  Remark~\ref{rem:outer_expectation_means_aperiodic}.
  Proposition~\ref{thm:Magajna_Kennedy_Ursu} gives a
  \Star{}homomorphism \(\pi\colon A\to \Bound(H)\) with
  \(\pi(E(b))\neq0\) and a completely bounded \(A\)\nb-bimodule map
  \(\phi\colon B\to \Bound(H)\) that extends~\(\pi\) and satisfies
  \(\phi(\overline{AbA})=0\).
  Since~\(A\) is simple and \(\pi(E(b))\neq0\), the
  \Star{}homomorphism~\(\pi\) must be isometric.
  By \cite{Kwasniewski-Meyer:Aperiodicity}*{Lemma~2.19}, there is
  \(D\in \Her(A)\) such that \(\norm{E(b)a}\ge \norm{E(b)}/2\) for
  all \(a\in D^+_1\).
  If \(a\in D^+_1\), then
  \[
    \phi(aE(b)^*(E(b)-b)a)
    = \phi(aE(b)^* E(b)a)-\phi(aE(b)^*ba)=\pi(aE(b)^* E(b)a)
  \]
  and \(\norm{\pi(aE(b)^* E(b)a)}=\norm{E(b)a}^2\ge
  \norm{E(b)}^2/4\).
  This implies
  \[
    \norm{aE(b)^*(E(b)-b)a}\ge  \frac{\norm{E(b)}^2}{4\norm{\phi}}>0
  \]
  for all \(a\in D_1^+\).
  This contradicts aperiodicity.
\end{proof}

\begin{remark}
  The above \(A\)\nb-module approximation for general conditional
  expectations may be viewed as a counterpart of an absolutely
  \(A\)\nb-convex approximation for reduced crossed products in
  \cite{Kennedy-Ursu:Intermediate_subalgebras_crossed}*{Proposition~3.7}
  or an even more concrete approximation result for single
  automorphisms in
  \cite{Cameron-Smith:Galois_reduced}*{Proposition~3.1}.
\end{remark}

Pseudo-expectations were introduced by
Pitts~\cite{Pitts-Zarikian:Unique_pseudoexpectation} as a replacement
for conditional expectations in settings where no conditional
expectation exists.
They use Hamana's \emph{injective envelope} \(I(A)\) of a
\(\Cst\)\nb-algebra~\(A\) introduced in
\cite{Hamana:Injective-Envelope-Cstar}*{Definition~2.2} (and extended
to the nonunital case in \cite{Hamana:tensorI}*{Section~6}).
Namely, for any \(\Cst\)\nb-inclusion \(A\subseteq B\), the canonical
\(\Cst\)\nb-inclusion \(A\subseteq I(A)\) extends to a completely
positive contraction \(E\colon B\to I(A)\)
because \(I(A)\) is injective.
Such maps~\(E\) are called \emph{pseudo-expectations} for \(A\subseteq
B\).
They are a special case of what we call generalised expectations.
A \emph{generalised expectation} for \(A\subseteq B\) consists of
another \(\Cst\)\nb-inclusion \(A\subseteq \tilde{A}\) and a
completely positive, contractive map \(E\colon B \to \tilde{A}\) that
restricts to the identity map on~\(A\).
Then~\(E\) is necessarily an \(A\)\nb-bimodule map and
\(\Null_E=\setgiven{b\in B}{E(x b y)=0 \text{ for all }x,y\in B}\) is
the largest ideal in~\(A\) which is contained in \(\ker E\).
We call~\(\Null_E\) the \emph{ideal kernel} for~\(E\), and call~\(E\)
\emph{almost faithful} if \(\Null_E=0\).
In general, since \(\Null_E\cap A=0\), the inclusion \(A\subseteq B\)
descends to the \emph{essential inclusion} \(A\subseteq B_\ess\defeq
B/\Null_E\) and~\(E\) descends to an almost faithful expectation
\(E_\ess\colon B_\ess\to \tilde{A}\).
By \cite{Kwasniewski-Meyer:Essential}*{Lemma~3.10}, \(E_\ess\) is
faithful if and only if~\(E\) is \emph{symmetric} in the sense that
\(E(b^*b)=0\) implies \(E(bb^*)=0\) for all \(b\in B\).
A generalised expectation is faithful if and only if it is symmetric
and almost faithful.
For symmetric conditional expectations, the ideal kernel is
\[
\Null_E=\setgiven{b\in B}{E(b^*b)=0}.
\]
We shall see that the difference between faithful and almost faithful
pseudo-expectations is closely related to the difference between
detecting ideals in a \(\Cst\)\nb-algebra and in all intermediate
\(\Cst\)\nb-subalgebras.

\begin{lemma}
  A \(\Cst\)\nb-subalgebra~\(A\) detects ideals in~\(B\) if and only
  if all pseudo-expectations for \(A\subseteq B\) are almost faithful.
\end{lemma}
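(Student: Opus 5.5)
Both directions go through the ideal kernel \(\Null_E\) and the quotient map to \(B/J\). Recall that \(\Null_E\) is the largest ideal of~\(B\) contained in \(\ker E\), and that \(\Null_E\cap A=0\) because \(E|_A=\Id\).

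\emph{First direction.} Assume that~\(A\) detects ideals in~\(B\) and let \(E\colon B\to I(A)\) be a pseudo-expectation. Since \(\Null_E\) is an ideal in~\(B\) with \(\Null_E\cap A=0\), detection of ideals forces \(\Null_E=0\). So~\(E\) is almost faithful.

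\emph{Second direction.} Assume that~\(A\) does not detect ideals, so there is a nonzero ideal \(J\) in~\(B\) with \(J\cap A=0\). Let \(q\colon B\to B/J\) be the quotient map.
\begin{enumerate}
\item Since \(J\cap A=0\), the restriction \(q|_A\) is an injective \Star{}homomorphism, hence isometric. Thus \(A\cong q(A)\subseteq B/J\) is a \(\Cst\)\nb-inclusion.
\item Injectivity of \(I(A)\) extends the inclusion \(q(A)\cong A\subseteq I(A)\) to a completely positive contraction \(F\colon B/J\to I(A)\).
\item Put \(E\defeq F\circ q\colon B\to I(A)\). This is completely positive and contractive, and it restricts to the canonical inclusion on~\(A\), so it is a pseudo-expectation for \(A\subseteq B\).
\item Since \(E\) is an \(A\)\nb-bimodule map, or directly because \(q(xby)=0\) for \(b\in J\) and all \(x,y\in B\) (in the unitisation as well), we get \(E(xby)=0\). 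Hence \(0\neq J\subseteq\Null_E\), so~\(E\) is not almost faithful.
\end{enumerate}

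The only point needing care is the use of injectivity of \(I(A)\) in the nonunital setting, via Hamana's extension to nonunital algebras. I would formulate it as: every completely positive contraction from a \(\Cst\)\nb-subalgebra of \(B/J\) into \(I(A)\) extends to a completely positive contraction on \(B/J\). If needed, this can be arranged by passing to unitisations \(\widetilde{q(A)}\subseteq\widetilde{B/J}\) and using that \(I(A)\) is unital and injective.

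Everything else is formal.
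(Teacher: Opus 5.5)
Your proof is correct and is essentially the paper's: for the converse you build exactly the same pseudo-expectation \(F\circ q\) with \(J\subseteq\Null_{F\circ q}\), and for the forward direction you spell out the one-line reason (\(\Null_E\) is an ideal of~\(B\) with \(\Null_E\cap A=0\)) where the paper instead cites a corollary from the authors' earlier work. One small point: in step~4 the operative reason is simply that \(xby\in J\), so \(q(xby)=0\); the \(A\)\nb-bimodule property of~\(E\) plays no role there.
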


\begin{proof}
  If~\(A\) detects ideals in~\(B\), then every generalised
  expectations for \(A\subseteq B\) is almost faithful by
  \cite{Kwasniewski-Meyer:Essential}*{Corollary~3.9}.
  Assume then that~\(A\) does not detect ideals in~\(B\).
  So there is a nonzero ideal~\(J\) in~\(B\) with \(A\cap J=0\).
  Then the canonical map is an embedding \(A\hookrightarrow B/J\), and
  so the inclusion \(A\subseteq I(A)\) extends to a completely
  positive contraction \(F\colon  B/J\to I(A)\).
  Composing~\(F\) with the quotient map \(B\to B/J\) gives a
  pseudo-expectation which has~\(J\) in its kernel.
  Hence it is not almost faithful.
\end{proof}

Pitts and Zarikian proved a hereditary version of the above lemma for
unital inclusions in
\cite{Pitts-Zarikian:Unique_pseudoexpectation}*{Theorem 3.5}, which
was generalised to arbitrary inclusions by Zarikian in
\cite{Zarikian:Unique_pseudo}:

\begin{proposition}[\cite{Zarikian:Unique_pseudo}*{Theorem 1.3.1}]
  \label{prop:faithful_pseudo_expectations}
  A \(\Cst\)\nb-subalgebra~\(A\) detects ideals in all intermediate
  \(\Cst\)\nb-algebras \(A\subseteq C \subseteq B\) if and only if all
  pseudo-expectations for \(A\subseteq B\) are faithful.
\end{proposition}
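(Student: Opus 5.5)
We prove the two implications separately, using the preceding lemma (detection of ideals in \(B\) is equivalent to almost faithfulness of all pseudo-expectations) and the facts about ideal kernels recalled above.

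\emph{Faithfulness implies detection.} Assume all pseudo-expectations for \(A\subseteq B\) are faithful, and let \(A\subseteq C\subseteq B\) be intermediate. Any pseudo-expectation \(F\colon C\to I(A)\) for \(A\subseteq C\) extends to a pseudo-expectation \(\tilde F\colon B\to I(A)\) for \(A\subseteq B\). Indeed, \(I(A)\) is injective, so Arveson–Hamana extension gives a completely positive contractive \(\tilde F\) extending \(F\), and \(\tilde F|_A=\mathrm{id}_A\). In the nonunital case we first unitise and then restrict. By assumption \(\tilde F\) is faithful, hence so is \(F=\tilde F|_C\). In particular every pseudo-expectation for \(A\subseteq C\) is almost faithful. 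By the previous lemma, \(A\) detects ideals in~\(C\).

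\emph{Detection implies faithfulness.} Now assume \(A\) detects ideals in every intermediate \(C\), and let \(E\colon B\to I(A)\) be a pseudo-expectation. Since faithful means symmetric plus almost faithful, it suffices to show \(E(b^*b)=0\Rightarrow b=0\). The previous lemma applied to \(C=B\) gives that \(E\) is almost faithful, so the task is to exclude a nonzero \(b\) with \(E(b^*b)=0\).

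To do this, we find an intermediate algebra \(C\) on which \(E|_C\) has a nonzero ideal kernel. The candidate is the multiplicative domain approach: let \(L\defeq\setgiven{x\in B}{E(x^*x)=0}\), which is a closed left ideal of~\(B\). By Kadison–Schwarz, \(L\) satisfies \(E(yx)=0\) for \(x\in L\) and all \(y\in B\). Define
\[
C\defeq \Cst(A\cup (L\cap L^*)) \quad\text{or}\quad C\defeq A+\overline{L^*L},
\]
and check that \(A+\overline{L^*L}\) is a \(\Cst\)-algebra. This holds because \(\overline{L^*L}\) is a hereditary subalgebra that is an \(A\)-bimodule, since \(LA\subseteq L\) when \(L\) is a left ideal and \(A\) normalises it appropriately. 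One checks that \(J\defeq\overline{L^*L}\) is an ideal in \(C\), that \(J\cap A=0\) because \(E|_A=\mathrm{id}\), and that \(J\neq0\) when \(b\neq0\). This contradicts detection in~\(C\).

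\emph{Main obstacle.} The delicate point is verifying that \(J\) is a nonzero ideal of the intermediate algebra with \(J\cap A=0\). In particular we need \(AL^*\subseteq L^*\), i.e.\ \(LA\subseteq L\), which uses \(E(a^*x^*xa)=a^*E(x^*x)a=0\) via the bimodule property. We also need \(E(L^*L)=0\), which follows by Cauchy–Schwarz. Then \(J\cap A=0\) holds because \(E\) is the identity on~\(A\) and vanishes on~\(J\), and \(J\neq 0\) since \(b^*b\in J\). If this construction fails to be closed under products, the fallback is Zarikian's original argument, as in \cite{Zarikian:Unique_pseudo}*{Theorem 1.3.1}.
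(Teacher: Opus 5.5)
Your proof is correct. The paper does not prove this statement itself; it only cites Zarikian's Theorem~1.3.1, the nonunital extension of Pitts--Zarikian's Theorem~3.5. Your proposal therefore supplies a self-contained argument for what the paper outsources. It uses only the Cauchy--Schwarz inequality for completely positive maps, the multiplicative domain, and injectivity of \(I(A)\).

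The first direction is fine and not circular. The half of the preceding lemma you invoke (all pseudo-expectations almost faithful \(\Rightarrow\) detection of ideals) is proved there directly, by the same extension argument you use.

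In the second direction the left-kernel construction works as it stands, so you can drop the hedges (``or \(\Cst(A\cup(L\cap L^*))\)'' and the ``fallback''). Indeed \(J\defeq\clsp L^*L=L\cap L^*\), and \(A+J=\Cst(A\cup J)\). Three points should be stated cleanly.

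First, the inclusion \(LA\subseteq L\) does not come from \(L\) being a left ideal; that only gives \(AL\subseteq L\). It comes from \(E(a^*x^*xa)=a^*E(x^*x)a\), as you note later. This inclusion alone yields both \(AJ\subseteq J\) and \(JA\subseteq J\), while \(JJ\subseteq J\) uses \(LL^*L\subseteq BL\subseteq L\).

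Second, closedness of \(A+J\) needs a sentence. \(J\) is a closed ideal of the \(\Cst\)\nb-algebra \(\overline{A+J}\), so \(A+J\) is the preimage of the closed image of~\(A\) in the quotient.

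Third, the detour through ``symmetric plus almost faithful'' and the lemma for \(C=B\) is unnecessary. Faithfulness is by definition \(E(b^*b)=0\Rightarrow b=0\). Your construction shows directly that \(J=0\), and hence \(L=0\), since \(x^*x\in J\) for every \(x\in L\).
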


Combining this with
\cite{Kwasniewski-Meyer:Aperiodicity_pseudo_expectations}*{Theorem~3.6}
gives the following:

\begin{proposition}
  \label{prop:aperiodic_implies_supports}
  Let \(A\subseteq B\) be aperiodic.
  Then it admits a unique pseudo-expectation \(E\colon B\to I(A)\) and
  the following conditions are equivalent:
  \begin{enumerate}
  \item \label{enu:aperiodic_implies_supports1}%
    \(E\) is faithful;
  \item \label{enu:aperiodic_implies_supports2}%
    \(A\) supports all intermediate \(\Cst\)\nb-algebras \(A\subseteq
    C \subseteq B\);
  \item \label{enu:aperiodic_implies_supports3}%
    \(A\) detects ideals in all intermediate \(\Cst\)\nb-algebras
    \(A\subseteq C \subseteq B\).
  \end{enumerate}
  In particular, \(A\subseteq B\) is \(\Cst\)\nb-irreducible if and
  only if~\(A\) is simple and \(E\colon B\to I(A)\) is faithful.
\end{proposition}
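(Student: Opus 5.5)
Uniqueness of the pseudo-expectation for aperiodic inclusions is part of \cite{Kwasniewski-Meyer:Aperiodicity_pseudo_expectations}*{Theorem~3.6}, and I will simply quote it there. Aperiodicity also passes to intermediate algebras: the restriction of \(B/A\) to \(C/A\) is an isometric submodule, since the quotient norm on \(C/A\) equals the one inherited from \(B/A\). Hence every intermediate inclusion \(A\subseteq C\) is aperiodic and has a unique pseudo-expectation, and that pseudo-expectation is \(E|_C\).

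The proof then runs through the cycle \ref{enu:aperiodic_implies_supports1}\(\Rightarrow\)\ref{enu:aperiodic_implies_supports2}\(\Rightarrow\)\ref{enu:aperiodic_implies_supports3}\(\Rightarrow\)\ref{enu:aperiodic_implies_supports1}.

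\ref{enu:aperiodic_implies_supports3}\(\Rightarrow\)\ref{enu:aperiodic_implies_supports1}: By Proposition~\ref{prop:faithful_pseudo_expectations}, detection of ideals in all intermediate algebras makes every pseudo-expectation faithful, in particular~\(E\).

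\ref{enu:aperiodic_implies_supports2}\(\Rightarrow\)\ref{enu:aperiodic_implies_supports3}: This is Remark~\ref{rem:supporting_detecting}, applied to each intermediate~\(C\).

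\ref{enu:aperiodic_implies_supports1}\(\Rightarrow\)\ref{enu:aperiodic_implies_supports2}: This is the main step. I would imitate Lemma~\ref{lem:pinching_plus_faithful} with \(I(A)\) in place of~\(A\).
\begin{enumerate}
\item Fix \(C\) and \(b\in C^+\setminus\{0\}\). Since \(E\) is faithful, \(E(b)\neq0\) in~\(I(A)\); normalise so that \(\norm{E(b)}=1\).
\item Obtain a pinching statement relative to \(I(A)\): there is a contraction \(h\in A^+\) with \(\norm{hbh - hE(b)h}\) small and \(\norm{hE(b)h}\) close to~\(1\).
\item The element \(hE(b)h\) lies in \(I(A)\), not in~\(A\), so \(A\)-elements cannot be extracted directly. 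Instead, use aperiodicity together with the fact that \(A\) is an essential ideal-like subalgebra of \(I(A)\) (rigidity and essentiality of the injective envelope). This should give \(x\in A^+\) with \(\norm{xbx - y}\) small for some \(y\in A^+\) of norm near~\(1\).
\item Conclude with Rørdam's estimate, \((y-\tfrac12)_+\precsim xbx\precsim b\).
\end{enumerate}
I expect step~3 to be the delicate point. Since this is exactly the content of \cite{Kwasniewski-Meyer:Aperiodicity_pseudo_expectations}*{Theorem~3.6}, whose conclusion is that \(A\) supports \(B\) when \(E\) is faithful, I would cite it, applied to each \(A\subseteq C\) with pseudo-expectation \(E|_C\).

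For the final claim: if \(A\subseteq B\) is \(\Cst\)-irreducible, then \(A\) is simple and \(A\) detects ideals in every simple intermediate~\(C\) trivially, so~\ref{enu:aperiodic_implies_supports3} holds and \(E\) is faithful. Conversely, suppose \(A\) is simple and \(E\) is faithful. By~\ref{enu:aperiodic_implies_supports3}, any nonzero ideal \(J\) of an intermediate~\(C\) meets \(A\) in a nonzero ideal of~\(A\), hence contains~\(A\). Then \(J\) is a two-sided ideal containing \(A\), and I must show \(J=C\). Here I use~\ref{enu:aperiodic_implies_supports2}: every nonzero ideal of a quotient \(C/J\) would be detected by the image of \(A\), which is zero. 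More concretely, \(A\) supports \(C\), so each \(c\in C^+\setminus\{0\}\) dominates some \(a\in A^+\setminus\{0\}\), and I argue that \(C=\overline{CAC}\subseteq J\). The identity \(C=\overline{CAC}\) requires that \(A\) generate \(C\) as an ideal; for that I would use that the ideal generated by \(A\) in \(C\) contains every \(c\) with \(a\precsim c\). If this route falls short, I would instead apply~\ref{enu:aperiodic_implies_supports3} to the ideal of \(C\) on which the quotient map to \(C/\overline{CAC}\) vanishes.
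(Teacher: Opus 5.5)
Your treatment of uniqueness and of the cycle \ref{enu:aperiodic_implies_supports1}\(\Rightarrow\)\ref{enu:aperiodic_implies_supports2}\(\Rightarrow\)\ref{enu:aperiodic_implies_supports3}\(\Rightarrow\)\ref{enu:aperiodic_implies_supports1} is the paper's argument. You apply Theorem~3.6 of \cite{Kwasniewski-Meyer:Aperiodicity_pseudo_expectations} to each \(A\subseteq C\). This inclusion is aperiodic because \(C/A\subseteq B/A\) isometrically, so \(E|_C\) is its unique pseudo-expectation, and it is faithful. You then use Remark~\ref{rem:supporting_detecting} and Proposition~\ref{prop:faithful_pseudo_expectations}. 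Your heuristic steps 2--3 for \ref{enu:aperiodic_implies_supports1}\(\Rightarrow\)\ref{enu:aperiodic_implies_supports2} are not an argument on their own, since no pinching relative to \(I(A)\) is available. You fall back on the citation, though, so nothing is lost there.

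The genuine gap is in the converse of the final claim. You correctly reduce to showing that an ideal \(J\) of an intermediate \(C\) with \(A\subseteq J\) must equal \(C\). Your justification is backwards: \(a\precsim c\) puts \(a\) into the ideal generated by~\(c\), not \(c\) into the ideal generated by~\(a\). So ``\(A\) supports \(C\)'' tells you nothing about \(\overline{CAC}\). Your fallback is vacuous too: the kernel of \(C\to C/\overline{CAC}\) is \(\overline{CAC}\supseteq A\), so detection of ideals says nothing about it.

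In fact no argument can close this step without a nondegeneracy hypothesis. For an infinite-dimensional Hilbert space \(H\), let \(A=\Comp(H)\subseteq B=\Comp(H)+\C 1\).
\begin{enumerate}
\item Since \(A\) is an ideal in~\(B\), the bimodule \(B/A\) has zero actions, so the inclusion is aperiodic.
\item Every pseudo-expectation satisfies \(E(b)a=E(ba)=ba\) for \(a\in A\). So the unique one is the inclusion \(B\hookrightarrow\Bound(H)=I(A)\), which is faithful.
\item \(A\) is simple, but the intermediate algebra \(B\) is not.
\end{enumerate}
What is needed is that \(A\subseteq B\) be nondegenerate. The paper tacitly assumes this here: it holds for the regular inclusions to which the statement is applied, and nondegeneracy is used explicitly at the end of the proof of Theorem~\ref{the:special_groups}. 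With it, an approximate unit of~\(A\) is one for every intermediate~\(C\), so \(C=\overline{AC}\subseteq J\). Your reduction via \ref{enu:aperiodic_implies_supports3} then finishes the proof.
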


\begin{proof}
  By
  \cite{Kwasniewski-Meyer:Aperiodicity_pseudo_expectations}*{Theorem~3.6},
  there is exactly one pseudo-expectation \(E\colon B\to I(A)\) and
  if~\(E\) is faithful, then~\(A\) supports~\(B\).
  Let~\(C\) be an intermediate \(\Cst\)\nb-algebra.
  Since \(A\subseteq B\) is aperiodic, so is \(A\subseteq C\), and hence
  the restriction~\(E|_C\) is the unique pseudo-expectation for
  \(A\subseteq C\).
  Since~\(E\) is faithful, so is~\(E|_C\), and
  so~\ref{enu:aperiodic_implies_supports1}
  implies~\ref{enu:aperiodic_implies_supports2}.
  It is clear that~\ref{enu:aperiodic_implies_supports2}
  implies~\ref{enu:aperiodic_implies_supports3},
  and~\ref{enu:aperiodic_implies_supports3}
  implies~\ref{enu:aperiodic_implies_supports1} by Proposition
  \ref{prop:faithful_pseudo_expectations}.
\end{proof}

\begin{example}
  \label{exm:twisted_groupoid}
  Let \((\Gr, \L)\)  be a  twisted \'etale groupoid with locally
  compact Hausdorff unit space~\(X\), and a twist given by a Fell line
  bundle~\(\L\).  Then \(\Cont_0(X)\) is naturally a \(\Cst\)\nb-subalgebra of the
   associated reduced \(\Cst\)\nb-algebra \(\Cst_\red(\Gr,
   \L)\).
    Let \(\Gr(x)\) denote the isotropy group of a point \(x\in X\).
  The  \(\Cst\)\nb-inclusion \(\Cont_0(X)\subseteq \Cst_{\red}(\Gr,
  \L)\) is aperiodic if and only if~\(\Gr\) is \emph{topologically
    free}, that is, for every open bisection \(U\subseteq \Gr\setminus
  X\), the set \(\setgiven{x\in X}{\Gr(x)\cap U\neq\emptyset}\) has
  empty interior in~\(X\), see \cite{Kwasniewski-Meyer:Essential}*{Theorem 7.24 and Proposition 5.15}.
  The injective envelope of \(\Cont_0(X)\) can be realised as the
  quotient \(I(\Cont_0(X))=\mathcal{B}(X)/\mathcal{M}(X)\) of
  the \(\Cst\)\nb-algebra \(\mathcal{B}(X)\) of all bounded Borel
  functions by the ideal \(\mathcal{M}(X)\) of all bounded Borel
  functions with meagre support.
  In particular, there is always a pseudo-expectation given by
  the restriction map \(f\mapsto f|_X\) composed with the quotient map
  \[
    E\colon \Cst_{\red}(\Gr, \L)\to \mathcal{B}(X)/\mathcal{M}(X)
    = I(\Cont_0(X)).
  \]
  It  is a genuine conditional expectation if and only if~\(\Gr\) is
  Hausdorff (equivalently, \(X\) is closed in~\(\Gr\)).
  As a result, if~\(\Gr\) is non-Hausdorff and topologically free,
  then the inclusion \(\Cont_0(X)\subseteq \Cst_\red(\Gr, \L)\) does
  not admit a genuine conditional expectation.
  In general, \(E\) is symmetric, so that it descends to a faithful
  pseudo-expectation on the quotient \(\Cst_\ess(\Gr,\L)\defeq
  \Cst_\red(\Gr, \L)/\mathcal{N}\), which is called the
  \emph{essential groupoid algebra}.
  In accordance with
  Proposition~\ref{prop:aperiodic_implies_supports}, see also Theorem
  \ref{thm:topologically_graded_aperiodic} below, the following
  conditions are equivalent:
  \begin{enumerate}
  \item \(\Gr\) is topologically free;
  \item \(\Cont_0(X)\) supports all intermediate \(\Cst\)\nb-algebras
    in \(\Cst_\ess(\Gr, \L)\);
  \item \(\Cont_0(X)\) detects ideals in all intermediate
    \(\Cst\)\nb-algebras in \(\Cst_\ess(\Gr, \L)\).
  \end{enumerate}
  In particular,  \(\Cont_0(X)\) detects ideals in all intermediate
  \(\Cst\)\nb-subalgebras in \(\Cst_\red(\Gr, \L)\) if the above
  equivalent conditions hold and \(\mathcal{N}=\{0\}\).
\end{example}

Another related property was introduced by Nagy and Reznikoff
in~\cite{Nagy-Reznikoff:Pseudo-diagonals}.
It weakens the \emph{extension property} introduced by Anderson
in~\cite{Anderson:Extensions_states}*{3.3}.

\begin{definition}[\cite{Nagy-Reznikoff:Pseudo-diagonals}*{p.~265}]
  \label{def:almost_extension_property}
  A \(\Cst\)\nb-inclusion \(A\subseteq B\) has the \emph{almost
    extension property} if the set of all pure states on~\(A\) that
  extend uniquely to a state on~\(B\) is weak-\(\star\)-dense in the
  set of all pure states on~\(A\).
\end{definition}

\begin{proposition}[\cite{Kwasniewski-Meyer:Aperiodicity_pseudo_expectations}*{Theorem~5.5}]
  \label{prop:almost_extension_aperiodic}
  A \(\Cst\)\nb-inclusion \(A\subseteq B\)  with the almost extension
  property is aperiodic.
  The two properties are equivalent when~\(B\) is separable.
\end{proposition}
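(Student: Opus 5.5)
Although this proposition is cited from \cite{Kwasniewski-Meyer:Aperiodicity_pseudo_expectations}*{Theorem~5.5}, here is how I would argue it. The plan has two directions. For almost extension property \(\Rightarrow\) aperiodicity, I work with pure states. Fix \(b\in B\), \(D\in\Her(A)\) and \(\varepsilon>0\). The pure states of~\(A\) that do not vanish on~\(D\) form a nonempty weak-\(\star\)-open set. By the almost extension property it contains a pure state~\(\varphi\) with a unique extension~\(\tilde\varphi\) to~\(B\).

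The key step is a standard Anderson-type compression argument. Uniqueness of the extension should give, for every \(\delta>0\), an element \(a\in A^+\) with \(\norm{a}=1=\varphi(a)\), up to~\(\delta\), such that
\[
\norm{a(b-\tilde\varphi(b))a}<\delta .
\]
To get this, first use Kadison transitivity/excision: there is a net \((a_i)\) in~\(A^+_1\) with \(\varphi(a_i)\to1\) and \(\norm{a_i x a_i-\varphi(x)a_i^2}\to0\) for \(x\in A\). I can choose the \(a_i\) inside~\(D\) by multiplying by a fixed \(d\in D^+\) with \(\varphi(d)=1\), or by running excision in the hereditary subalgebra~\(D\), on which~\(\varphi\) restricts to a pure state. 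Then argue by contradiction. If \(\liminf\norm{a_i(b-\tilde\varphi(b))a_i}>0\), pick states of~\(B\) realising these norms on the compressed elements and take a weak-\(\star\) limit. This produces an extension of~\(\varphi\) different from~\(\tilde\varphi\), contradicting uniqueness. After that, \(c\defeq\tilde\varphi(b)a^2\in A\) (or simply a scalar multiple of \(a^2\)) gives \(\norm{aba-c}<\varepsilon\), which is aperiodicity of \(B/A\).

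For the converse, assume~\(B\) separable and \(A\subseteq B\) aperiodic. I aim for a Baire category argument. For each~\(b\) in a countable dense subset of~\(B\) and each \(n\in\N\), let \(U_{b,n}\) be the set of pure states~\(\varphi\) of~\(A\) for which there is \(a\in A^+_1\) with \(\varphi(a)>1-1/n\) and \(\norm{aba-c}<1/n\) for some \(c\in A\). This set is open. Aperiodicity makes it dense: given a nonempty open set of pure states, take~\(D\) to be the hereditary subalgebra cut out by a suitable element, and apply aperiodicity to produce~\(a\in D^+_1\). A pure state~\(\varphi\) that lies in all \(U_{b,n}\) has a unique extension. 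Indeed, any extension~\(\psi\) satisfies \(\psi(x)=\psi(axa)\) approximately when \(\varphi(a)\approx1\), because~\(a\) lies in the multiplicative domain up to small error. Hence \(\psi(b)\approx\varphi(c)\) is determined by~\(\varphi\). Since the pure state space of a separable~\(A\) is a Baire space (a \(G_\delta\) in the compact quasi-state space), the intersection is dense.

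The main obstacle is the density of \(U_{b,n}\). An open set of pure states does not directly give a hereditary subalgebra~\(D\) all of whose norm-one positive elements have large value under some state in the set. I would handle this through Kadison transitivity: pick \(\varphi_0\) in the open set and a \(d\in A^+\) with \(\varphi_0(d)=1=\norm{d}\), and let~\(D\) be the hereditary subalgebra generated by \((d-1+\eta)_+\). Then for any \(a\in D^+_1\), a pure state~\(\varphi\) with \(\varphi(a)\approx1\) also has \(\varphi(d)\approx1\), and a weak-\(\star\) neighbourhood argument keeps~\(\varphi\) in the open set.
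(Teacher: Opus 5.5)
The paper does not reprove this statement; it only quotes \cite{Kwasniewski-Meyer:Aperiodicity_pseudo_expectations}*{Theorem~5.5}. Your strategy is the natural one: Anderson's compression criterion via excision in one direction, and Baire category over a countable dense subset of~\(B\) in the other. Several pieces are sound as written: the openness of \(U_{b,n}\), the Baire property of the pure state space, and the uniqueness estimate \(\abs{\psi(b)-\varphi(c)}\le 2\norm{b}(1-\varphi(a))^{1/2}+\norm{aba-c}\).

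\textbf{The converse: density of \(U_{b,n}\).} The gap is in the step you flag yourself. Knowing \(\varphi_0(d)=1=\norm{d}\) and \(\varphi(d)\approx 1\) does not place~\(\varphi\) in the given open set~\(W\). For instance, in \(A=\Cont_0(X)\), take any~\(d\) that equals~\(1\) on a large set around the point defining~\(\varphi_0\). Then every point evaluation in that set has \(\varphi(d)=1\), and the element \(a\in D^+_1\) produced by aperiodicity may peak far from that point. Kadison transitivity in the GNS representation of~\(\varphi_0\) cannot help either, since~\(\varphi\) may live in an inequivalent representation.

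What you need is a norm estimate in~\(A\) adapted to~\(W\), namely the Akemann--Anderson--Pedersen excision you already use in the first half:
\begin{enumerate}
\item Suppose \(W\supseteq\setgiven{\varphi}{\abs{\varphi(x_j)-\varphi_0(x_j)}<\delta \text{ for } j=1,\dotsc,m}\). Choose \(d\in A^+\) with \(\norm{d}=1=\varphi_0(d)\) and \(\norm{dx_jd-\varphi_0(x_j)d^2}<\delta/4\) for all~\(j\).
\item For every state~\(\varphi\) we have \(\abs{\varphi(x)-\varphi(dxd)}\le 2\norm{x}(1-\varphi(d))^{1/2}\). Hence any state with \(\varphi(d)\ge 1-\eta\), for~\(\eta\) small enough, lies in~\(W\).
\item Let~\(D\) be the hereditary subalgebra generated by \((d-1+\eta)_+\), and put \(g(t)=\min\{1,t/(1-\eta)\}\). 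Then \(a=g(d)ag(d)\) for all \(a\in D\). So a pure state with \(\varphi(a)=1\) for some \(a\in D^+_1\) satisfies \(\varphi(d)\ge(1-\eta)\varphi(a)=1-\eta\).
\item Aperiodicity for this~\(D\) gives \(a\in D^+_1\) and \(c\in A\). Any pure state attaining \(\norm{a}\) then lies in \(W\cap U_{b,n}\).
\end{enumerate}

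\textbf{The forward direction: two smaller points.} First, a pure state that merely does not vanish on~\(D\) can have \(\norm{\varphi|_D}<1\). Then no \(d\in D^+\) has \(\norm{d}=1=\varphi(d)\). Moreover, excising \(\varphi|_D/\norm{\varphi|_D}\) inside~\(D\) gives a net adapted to a different (unitarily equivalent) pure state of~\(A\), and your uniqueness hypothesis does not apply to it directly. The clean fix is:
\begin{enumerate}
\item Take the excision net \((a_i)\) for~\(\varphi\) itself and any \(d\in D^+\) with \(\varphi(d)>0\).
\item The elements \(d^{1/2}a_i^2d^{1/2}\in D^+\) have norm \(\norm{a_ida_i}\to\varphi(d)>0\).
\item With \(y\defeq d^{1/2}bd^{1/2}\), these elements compress~\(b\) to within \(\norm{d}\,\norm{a_iya_i-\tilde\varphi(y)a_i^2}\) of \(\tilde\varphi(y)\,d^{1/2}a_i^4d^{1/2}\in A\). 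Now normalise.
\end{enumerate}
Second, in your contradiction argument the weak-\(\star\) limit must be taken of the normalised compressions \(x\mapsto\omega_i(a_ixa_i)/\omega_i(a_i^2)\), for self-adjoint~\(b\). This is legitimate because \(\norm{a_i(b-\tilde\varphi(b))a_i}\ge\delta\) forces \(\omega_i(a_i^2)\ge\delta/(2\norm{b})\). The limit is then a state extending~\(\varphi\) that differs from~\(\tilde\varphi\) at~\(b\).
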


\begin{example}
  The \(\Cst\)\nb-inclusion \(\Cont_0(X)\subseteq \Cst_\red(\Gr, \L)\)
  from Example~\ref{exm:twisted_groupoid} has the almost extension
  property if and only if~\(\Gr\) is \emph{topologically principal},
  that is, the set of points with nontrivial isotropy \(\setgiven{x\in
    X}{\Gr(x)\neq \{x\}}\) has empty interior in~\(X\).
  By the Baire category theorem, if $\Gr$ has a countable
  cover by open bisections, then it is topologically free if and only if it is
  topologically principal.
  This fails, however, without the countability assumption.
  Hence the almost extension property is strictly stronger than
  aperiodicity.
\end{example}

\section{Topologically graded regular C*-inclusions and Fell bundles}
\label{sec:Topologically graded regular C*-inclusions}

Adopting our convention from \cites{Kwasniewski-Meyer:Stone_duality,
  Kwasniewski-Meyer:Pure_infiniteness, Kwasniewski-Meyer:Cartan} we
call a \(\Cst\)\nb-inclusion \(A\subseteq B\) \emph{regular} if it is
both nondegenerate and the set of normalisers of~\(A\) generates~\(B\)
as a \(\Cst\)\nb-algebra, see~\cite{Renault:Cartan.Subalgebras}.
Here a \emph{normaliser} of~\(A\) is an element \(b\in B\) such that
\(b A b^*\subseteq A\) and \(b^* A b\subseteq A\), see
\cite{Kumjian:Diagonals}.
With this convention, a \(\Cst\)\nb-inclusion \(A\subseteq B\) is
regular if and only if~\(B\)  has an inverse-semigroup grading with
the unit fibre~\(A\), see
\cite{Kwasniewski-Meyer:Stone_duality}*{Corollary~6.27} or
\cite{Kwasniewski-Meyer:Pure_infiniteness}*{Proposition~2.11}.
To give more details,
let us fix a  nondegenerate \(\Cst\)\nb-inclusion \(A\subseteq B\).
A closed \(A\)\nb-subbimodule $M$ of~\(B\) that consists entirely
of normalisers is called a \emph{slice} for \(A\subseteq B\), see~\cite{Exel:noncomm.cartan}.
The set \(\Slice_A(B)\) of slices with the operations
\[
  M\cdot N\defeq \clsp {}\setgiven{m n}{m \in
   M,\ n\in N}\quad\text{  and } \quad M^*\defeq \setgiven{m^*}{m \in M}
\]
is an inverse semigroup with unit \(A \in \Slice_A(B)\), see \cite{Kwasniewski-Meyer:Stone_duality}*{Proposition~6.26}.
If \(M\in \Slice_A(B)\), then \(A M \subseteq M\), \(M A \subseteq
M\), \(MM^*\subseteq A\), and \(M^*M \subseteq A\), so
that~\(M\) becomes a concrete Hilbert \(A\)\nb-bimodule.
The idempotent elements in \(\Slice_A(B)\) are precisely the ideals
in~\(A\).
The canonical partial order in the inverse semigroup \(\Slice_A(B)\)
is just inclusion.
Every normaliser \(b\in B\) generates a  slice \(\overline{AbA}\).
Thus  \(A\subseteq B\) is regular if only if \(B=\clsp\bigcup\Slice_A(B)\).
More generally, we call a unital inverse subsemigroup \(S\subseteq
\Slice_A(B)\) with \(B=\clsp\bigcup S\) an \emph{inverse-semigroup
  grading} of~\(B\), see
\cite{Kwasniewski-Meyer:Stone_duality}*{Definition~6.15}.
We call the grading \emph{wide} if \(\bigcup\setgiven{N\in S}{N\le A,
  M \text{ in }S}\) is linearly dense in \(M\cap A\) for every \(M\in
S\), see \cite{Kwasniewski-Meyer:Cartan}*{Definition~3.1}.
If~\(S\) is closed under intersections it is automatically wide.

We will now introduce topological inverse-semigroup gradings, which
generalise the topological gradings by groups defined by Exel,
see~\cite{Exel:Partial_dynamical}.
To this end, we need some notation.
If \(M\in \Slice_A(B)\), then the intersection \(M\cap A\) is an ideal
in~\(A\).
Let
\[
  (M\cap A)^\bot
  \defeq \setgiven{a\in A}{a (M\cap A)=0}
  = \setgiven{a\in A}{ (M\cap A)a=0}
\]
be its annihilator in \(A\).

\begin{lemma}
  \label{lem:slices_annihilators}
  Let  \(A\subseteq B\) be a  nondegenerate \(\Cst\)\nb-inclusion.
  If \(M\in\Slice_A(B)\), then \(M (M\cap A) = M\cap A = (M\cap A)M\),
  and
  \[
    M (M\cap A)^\bot
    = \setgiven{m\in M}{m(M\cap A)=0}
    = \setgiven{m\in M}{(M\cap A)m=0}
    = (M\cap A)^\bot M
  \]
  is an \(A\)\nb-subbimodule of~\(M\), which we will denote by~\(M^\bot\).
\end{lemma}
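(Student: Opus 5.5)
Write \(I\defeq M\cap A\), an ideal in~\(A\). The plan is to use the Hilbert \(A\)\nb-bimodule structure of~\(M\), in particular \(MM^*\subseteq A\), \(M^*M\subseteq A\) and the Cohen--Hewitt factorisation \(M=MM^*M\). I will also use that \(I\), being a \(\Cst\)\nb-algebra, has an approximate unit \((e_\lambda)\).

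First I prove \(M I = I = I M\). Since \(I\subseteq A\) and \(M\) is a closed \(A\)\nb-bimodule, \(MI\subseteq M\). Also \(MI\subseteq MA\subseteq\ldots\) lies in~\(A\): for \(m\in M\) and \(i\in I\subseteq M\) we have \(mi\in M\cdot M\); better, write \(i = i_1 i_2^*i_3\) with \(i_k\in I\), which is possible because \(I\) is a \(\Cst\)\nb-algebra. Then \(mi=(m i_1)i_2^* i_3\), and \(mi_1 i_2^*\in MM^*\subseteq A\), so \(mi\in A I\subseteq I\). For the reverse inclusion, \(I=I\cdot I\subseteq MI\). The equality \(I M = I\) is symmetric.

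Next I show the two descriptions of \(M^\bot\) agree. Suppose \(m\in M\) satisfies \(mI=0\), and let \(i\in I\). Then \((im)^*(im)=m^*i^*im\). Here \(i^*i\in I\), and by the first step \(i^*i\,m\in IM=I\), so \(m^*(i^*im)\in M^*I\). Taking adjoints gives \((M^*I)^*=IM=I\), so \(M^* I\subseteq I\). To conclude I compute more carefully. Since \(mI=0\), also \(m^*\) annihilates \(I\) on the left: \((Im^*)=(mI)^*=0\). Therefore
\[
  \norm{im}^4=\norm{(im)^*(im)(im)^*(im)}
  \quad\text{and}\quad
  (im)(im)^* = i\,mm^*\,i^*,
\]
where \(mm^*\in A\). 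Then \(m^*\,(i^*i)\,m\,m^*\,(i^*i)\,m\) contains the factor \(m\,m^*\,i^*i\,m\). Here \(m^*i^*\in M^*I\subseteq I\), so this factor lies in \(mI=0\). Hence \(\norm{im}^4=0\), so \(im=0\) and \(Im=0\). The converse implication follows by applying this to \(m^*\in M^*\), since \(M^*\cap A=I^*=I\).

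Next I identify these sets with \(MI^\bot\) and \(I^\bot M\). Note that \(I^\bot\) is an ideal of~\(A\) with \(I^\bot I=0\). Then \(MI^\bot\subseteq M\), and \(MI^\bot\cdot I=0\), so \(MI^\bot\) is contained in \(\{m: mI=0\}\). Conversely, let \(m\) satisfy \(mI=0\). Factorise \(m=m'a\) with \(a=(m^*m)^{1/4}\)-type factorisation, for instance \(m=\lim m(m^*m)^{1/n}\). The element \(m^*m\in A\) satisfies \(m^*m\,I=0\), so \(m^*m\in I^\bot\), and hence \((m^*m)^{1/n}\in I^\bot\) by functional calculus. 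Thus \(m\in \overline{MI^\bot}=MI^\bot\), using Cohen factorisation for the Banach module. The statement for \(I^\bot M\) follows symmetrically, via \(mm^*\) and \(Im=0\).

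Finally, \(M^\bot\) is an \(A\)\nb-subbimodule: the description \(\{m: mI=0\}\) is closed under right multiplication by~\(A\) because \(I\) is an ideal, and the description \(\{m:Im=0\}\) is closed under left multiplication by~\(A\). The main delicate point is the second step, showing that \(mI=0\) implies \(Im=0\); it rests on \(M^*I\subseteq I\), which comes from the first step.
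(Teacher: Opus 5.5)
Your proof is correct and follows essentially the paper's route. You get $MI=I=IM$ from $MM^*\subseteq A$ and $I\subseteq M\cap M^*$, and the outer equalities by factoring $m$ through $m^*m$: the paper uses Blanchard's $m=nn^*n$ and argues by contraposition, while you use $m=\lim m(m^*m)^{1/n}$ directly. The only real variation is in the middle equality. The paper observes that $mI,Im\subseteq I$, so both $mI\neq0$ and $Im\neq0$ are equivalent to $ImI\neq0$, which gives both directions at once. You instead use $M^*I\subseteq I$ to get $\norm{im}^2=\norm{i\cdot m(m^*i^*)}$ with $m(m^*i^*)\in mI=0$, and then pass to $M^*$ for the converse. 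Your detour through $\norm{im}^4$ and $Im^*=0$ is unnecessary.
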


\begin{proof}
  Since  \(M\cap A\subseteq M\) is self-adjoint, it is contained in
  \(M\cap M^*\).
  Thus, if \(a,b\in M\cap A\) and \(m\in M\), then
  \(a b m\in MM^*M\cap AM^*M\subseteq M\cap A\).
  This implies \((M\cap A)M=M\cap A\).
  Similarly, \(M(M\cap A)=M\cap A\), and this shows the first part.

  The inclusion \(M(M\cap A)^\bot\subseteq \setgiven{m\in M}{m(M\cap
    A)=0}\) is clear.
  We prove the converse inclusion by taking \(m\in M\setminus M\cdot
  (M\cap A)^\bot\) and finding \(a\in M\cap A\) with \(m\cdot a \neq0\).
  We may write \(m=nn^*n\) for some \(n\in M\) by
  \cite{Blanchard:Deformations_Hopf}*{Lemme~1.3}.
  Since \(m\notin M\cdot  (M\cap A)^\bot\) by assumption,
  it follows that \(n^*n\notin (M\cap A)^\bot\).
  So there is \(a \in M\cap A\) with \(n^*n a\neq 0\).
  Then \(m a = n n^*n a\neq 0\) as desired.
  This shows the first equality in the displayed formula.
  The last one is shown similarly.
  The first part of the assertion implies \(m(M\cap A), (M\cap A) m
  \subseteq M\cap A\) for any \(m\in M\).
  Hence both \(m(M\cap A)\neq 0\) and \((M\cap A)m \neq 0\)
  are equivalent to \((M\cap A)m(M\cap A)\neq 0\).
  This shows the middle equality.
\end{proof}


\begin{definition}
  \label{def:topological_grading}
  We call a regular \(\Cst\)\nb-inclusion \(A\subseteq B\)
  \emph{topologically graded} if there are a wide inverse-semigroup
  grading \(S\subseteq \Slice_A(B)\) and a symmetric
  pseudo-expectation \(E\colon B\to I(A)\) such that \(E(M^\bot)=0\)
  for every \(M\in S\).
  If such an~\(E\) exists, we call it the  \emph{\(S\)\nb-canonical
    expectation}.  (We show below that it is unique.)
\end{definition}

Let~\(A\) be a \(\Cst\)\nb-algebra.
The \emph{local multiplier algebra}~\(\Locmult(A)\) of~\(A\) is the
inductive limit \(\Cst\)\nb-algebra of the inductive system of
mulitplier algebras \(\Mult(I)\) indexed by the set of essential
ideals~\(I\) in~\(A\) and directed by the relation~\(\supseteq\)
(here we use the canonical unital \Star{}momomorphisms \(\Mult(J) \to
\Mult(I)\) for two essential ideals \(I\subseteq J \subseteq A\)).
The local multiplier algebra embeds canonically into the injective
hull~\(I(A)\), see~\cite{Frank:Injective_local_multiplier}.

\begin{lemma}
  \label{lem:genuine_expectation}
  A \(\Cst\)\nb-inclusion \(A\subseteq B\) with an inverse-semigroup
  grading \(S\subseteq \Slice_A(B)\) admits at most one
  \(S\)\nb-canonical pseudo-expectation~\(E\).
  If it exists, then \(E\colon B\to \Locmult(A)\subseteq I(A)\) takes
  values in the local multiplier algebra.
  Moreover, \(E\) is a genuine expectation that preserves the grading
  in the sense that \(E(M)\subseteq M\) for all \(M\in S\) if and only
  if \((M\cap A)\oplus M^\bot=M\) for all \(M\in S\).
\end{lemma}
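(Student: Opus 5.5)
The plan is to show that the values of an \(S\)\nb-canonical pseudo-expectation are forced on each slice, and that they are forced to lie in \(\Locmult(A)\); uniqueness then follows from the density of \(\bigcup S\) in~\(B\).

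\textbf{Step 1: the ideal \(I_M\).} Fix \(M\in S\) and put \(J\defeq M\cap A\). This is an ideal in~\(A\) by Lemma~\ref{lem:slices_annihilators}. Let
\[
  I_M\defeq J\oplus J^\bot,
\]
which is an essential ideal in~\(A\). By Lemma~\ref{lem:slices_annihilators}, \(MJ=J\), and the definition gives \(MJ^\bot=M^\bot\). Using nondegeneracy, this yields
\[
  M\cdot I_M = J + M^\bot \subseteq M .
\]

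\textbf{Step 2: computing \(E\) on \(M\).} Let \(m\in M\), \(x\in J\) and \(y\in J^\bot\). Since \(E\) is an \(A\)\nb-bimodule map,
\[
  E(m)(x+y)=E(mx)+E(my)=mx ,
\]
because \(mx\in J\subseteq A\) is fixed by~\(E\), while \(my\in M^\bot\) is killed by~\(E\) by the defining property of the \(S\)\nb-canonical expectation. The same argument on the other side gives \((x+y)E(m)=xm\).

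So \(E(m)\) is determined by its left and right multiplications on the essential ideal~\(I_M\), and these multiplications define a double centraliser \(T_m\in\Mult(I_M)\). The standard fact that an element of \(I(A)\) is determined by its products with an essential ideal then identifies \(E(m)\) with the image of \(T_m\) under \(\Mult(I_M)\to\Locmult(A)\subseteq I(A)\). This needs the embedding of~\cite{Frank:Injective_local_multiplier} to be compatible, which holds because it is the unique \(A\)\nb-bimodule embedding. Hence \(E(M)\subseteq\Locmult(A)\) and \(E|_M\) is uniquely determined. Density of the linear span of \(\bigcup S\) and continuity of~\(E\) give uniqueness and \(E(B)\subseteq\Locmult(A)\).

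\textbf{Step 3: when \(E\) is a genuine, grading-preserving expectation.} Suppose first that \(J\oplus M^\bot=M\) for all \(M\in S\). Then \(E(m)=x\in J\subseteq M\cap A\) whenever \(m=x+y\) with \(x\in J\) and \(y\in M^\bot\). So \(E(M)\subseteq M\cap A\), and by density \(E(B)\subseteq A\), making \(E\) a genuine expectation with \(E(M)\subseteq M\).

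Conversely, suppose \(E(M)\subseteq M\) and \(E(B)\subseteq A\). Then \(E(M)\subseteq M\cap A=J\), and for \(m\in M\) we have \(m-E(m)\in M\). It remains to show \(m-E(m)\in M^\bot\), that is, \((m-E(m))x=0\) for \(x\in J\). By Step~2, \(E(m)x=mx\), which gives exactly this. Hence \(M=J+M^\bot\). The sum is direct because \(J\cap M^\bot\subseteq J\cap J^\bot=0\): elements of \(M^\bot\cap A\) annihilate~\(J\) and lie in~\(J\).

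\textbf{Main obstacle.} The delicate step is the identification in Step~2 of \(E(m)\in I(A)\) with the local multiplier \(T_m\). It requires that \(I(A)\) be an essential extension in the sense that \(z\cdot I=0\) forces \(z=0\) for every essential ideal~\(I\). I would justify this via Hamana's result that \(I(I)=I(A)\) for essential ideals, together with rigidity of injective envelopes.
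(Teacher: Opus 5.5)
Your proposal is correct and follows the paper's proof essentially step for step. Like the paper, you compute \(E(m)a=ma_0\) and \(aE(m)=a_0m\) on the essential ideal \((M\cap A)\oplus(M\cap A)^\bot\) to see that \(E(m)\) is a multiplier of it, you get uniqueness and \(\Locmult(A)\)-valuedness from density of \(\bigcup S\), and you prove the final equivalence via the decomposition \(m=E(m)+(m-E(m))\). You are slightly more careful than the paper: you flag that identifying \(E(m)\in I(A)\) with the corresponding element of \(\Mult(I_M)\subseteq\Locmult(A)\) needs the fact that no nonzero element of \(I(A)\) annihilates an essential ideal of \(A\), which the paper uses without comment.
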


\begin{proof}
  The canonical embedding \(A\hookrightarrow I(A)\) extends to an
  embedding \(\Locmult(A) \hookrightarrow I(A)\) by
  \cite{Frank:Injective_local_multiplier}*{Theorem~1}.
  Let \(M\in S\).
  Then \(J\defeq (M\cap A)\oplus(M\cap A)^\bot\) is an essential ideal
  in~\(A\) and \(M \cdot (M\cap A)= M\cap A\) by
  Lemma~\ref{lem:slices_annihilators}.
  If \(\xi\in M\) and  \(a=a_0\oplus a_1\in (M\cap A)\oplus(M\cap
  A)^\bot=J\), then \(E(\xi) a=E(\xi a)=E(\xi a_0)=  \xi a_0 \in J\)
  because~\(E\) is an \(A\)\nb-bimodule map and it vanishes
  on~\(M^\bot\).
  Similarly, \(a E(\xi) =a_0\xi\).
  Hence, \(E(\xi)\) is a multiplier of~\(J\).
  Thus \(E(M)\subseteq \Mult(J)\), and the above formulas
  determine~\(E\) on~\(M\).
  Since \(B=\clsp\bigcup S\), this uniquely determines~\(E\) and shows
  that it takes values in~\(\Locmult(A)\).

  If \(M=(M\cap A)\oplus M^\bot\) for all slices~\(M\), then~\(E\) just
  projects each slice~\(M\) onto the first summand \(M\cap A\), which
  is contained in~\(A\).
  So~\(E\) is \(A\)\nb-valued and \(E(M)\subseteq M\cap A\).
  Conversely, assume that~\(E\) maps~\(M\) to \(M\cap A\).
  If \(\xi \in M\), then \(\xi=E(\xi) + \bigl(\xi-E(\xi)\bigr) \in
  (M\cap A)\oplus M^\bot\) because \(\bigl(\xi-E(\xi)\bigr) a=\xi a
  -E(\xi a)=\xi a -\xi a=0\) for any \(a\in M\cap A\).
  This implies the last part of the assertion.
\end{proof}

Let  \(A\subseteq B\) be a regular \(\Cst\)\nb-inclusion.
For any inverse-subsemigroup grading \(S\subseteq \Slice_A(B)\), the
family \(\B\defeq (M)_{M\in S}\) together with the multiplication and
involution inherited from~\(B\) is an \emph{action of~\(S\) on~\(A\)
  by Hilbert bimodules} in the sense
of~\cite{Buss-Meyer:Actions_groupoids}.
This is also equivalent to a \emph{saturated Fell bundle} over~\(S\)
as defined in~\cite{Exel:noncomm.cartan}.
Every saturated Fell bundle \(\B=(B_{s})_{s\in S}\) over a unital
inverse semigroup arises in this way, as its fibres embed naturally
into the \emph{full section \(\Cst\)\nb-algebra} \(\Cst(\B)\)
defined in~\cite{Exel:noncomm.cartan}.
The algebra \(\Cst(\B)\) coincides with the
\emph{crossed product} \(A\rtimes_{\B} S\) for an \(S\)\nb-action by
Hilbert \(A\)\nb-bimodules defined in~\cite{Buss-Exel-Meyer:Reduced}.
A representation of~\(\B\) in a \(\Cst\)\nb-algebra~\(B\) is a
collection of linear maps \(\pi_t\colon B_t\to B\) for \(t\in S\) that
turn the involution and multiplication in~\(\B\) into that in~\(B\).
The \(\Cst\)\nb-algebra \(\Cst(\B)\) is universal for such
representations, see
\cite{Kwasniewski-Meyer:Aperiodicity_pseudo_expectations}*{Remark~3.14}.
As we showed in \cite{Kwasniewski-Meyer:Essential}*{Proposition~4.3},
there is a well-defined \(S\)\nb-canonical pseudo-expectation
\(E\colon \Cst(\B)\to  \Locmult(A)\subseteq I(A)\), which then
descends to a faithful pseudo-expectation \(E_\ess\colon
\Cst_\ess(\B)\to  \Locmult(A)\subseteq I(A)\) on the quotient
\(\Cst\)\nb-algebra \(\Cst_\ess(\B)\defeq \Cst(\B)/\Null_E\).
The latter is called the \emph{essential section
  \(\Cst\)\nb-algebra} or \emph{essential crossed product}, see
\cite{Kwasniewski-Meyer:Essential}*{Definition 4.4}.
In~\cite{Kwasniewski-Meyer:Essential}, we called~\(B\) an \emph{exotic
section \(\Cst\)\nb-algebra} of~\(\B\) if it comes equipped with
\Star{}homomorphisms
\[
\Cst(\B) \onto B\onto \Cst_\ess(\B)
\]
that compose to the quotient map \(\Cst(\B)\onto \Cst_\ess(\B)\).
Following Exel \cite{Exel:Partial_dynamical}*{Definition 19.2} we
called such \(\Cst\)\nb-algebras topologically graded by~\(\B\) in
\cite{Kwasniewski-Meyer:Essential}*{Subsection~4.2}.
This agrees with Definition \ref{def:topological_grading}:

\begin{proposition}
  \label{prop:topological_grading_characterisation}
  A \(\Cst\)\nb-inclusion \(A\subseteq B\) is topologically graded if
  and only if~\(B\) is an exotic section \(\Cst\)\nb-algebra for some
  inverse-semigroup action \(\B=(B_{s})_{s\in S}\)  by Hilbert
  bimodules with unit fibre \(B_1=A\).
  If this holds, then \(B\cong\Cst_\ess(\B)\) if and only if  the
  canonical pseudo-expectation for \(A\subseteq B\) is faithful.
\end{proposition}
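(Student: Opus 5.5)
We prove both implications and then the final claim.

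\emph{Exotic section algebra implies topologically graded.} Assume \(\Cst(\B)\onto B\onto \Cst_\ess(\B)\) for a saturated Fell bundle \(\B=(B_s)_{s\in S}\) over a unital inverse semigroup with \(B_1=A\). The images of the fibres \(B_s\) in~\(B\) are slices for \(A\subseteq B\). The first step is to check that \(s\mapsto B_s\) is injective on the image. If it is not, replace~\(S\) by its image \(S'\subseteq\Slice_A(B)\); the bundle is still determined by the slices because \(\Cst(\B)\) only depends on them. This gives an inverse-semigroup grading \(S'\). To get wideness, we may enlarge~\(S'\) by closing it under intersections, or we may use that the ideals \(B_e\) for \(e\le 1\) already give the required density; the latter is the standard wideness of Fell bundle gradings used in \cite{Kwasniewski-Meyer:Cartan}. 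For the pseudo-expectation, take the canonical \(E\colon\Cst(\B)\to\Locmult(A)\) from \cite{Kwasniewski-Meyer:Essential}*{Proposition~4.3}. It vanishes on~\(\Null_E\), and \(\Null_E\) contains the kernel of \(\Cst(\B)\onto B\) because \(B\) maps onto \(\Cst_\ess(\B)=\Cst(\B)/\Null_E\). So \(E\) descends to a pseudo-expectation \(E_B\) on~\(B\). It is symmetric because~\(E\) is. Moreover, \(E_B(M^\bot)=0\) by the defining formula of~\(E\): on a fibre \(B_s\), \(E\) is given by multiplication against \(B_s\cap A\), exactly as in the proof of Lemma~\ref{lem:genuine_expectation}, and it kills elements annihilated by \(B_s\cap A\).

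\emph{Topologically graded implies exotic section algebra.} Let \(S\) and~\(E\) witness topological grading and set \(\B\defeq(M)_{M\in S}\). The inclusion maps \(M\hookrightarrow B\) form a representation of~\(\B\), so the universal property gives a surjection \(\Cst(\B)\onto B\). It is surjective because \(B=\clsp\bigcup S\). Composing with~\(E\) gives a pseudo-expectation on \(\Cst(\B)\) that vanishes on each~\(M^\bot\), so by the uniqueness in Lemma~\ref{lem:genuine_expectation} it is the canonical one, \(E_{\Cst(\B)}\). Hence \(\ker(\Cst(\B)\onto B)\) is an ideal inside \(\ker E_{\Cst(\B)}\), so it is contained in the ideal kernel \(\Null_{E_{\Cst(\B)}}\). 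Therefore the quotient map to \(\Cst_\ess(\B)\) factors through~\(B\). The one subtle point is that wideness of~\(S\) is needed for the canonical expectation of \cite{Kwasniewski-Meyer:Essential} to be the one characterised by vanishing on~\(M^\bot\). I expect this identification to be the main obstacle: one must check that the formula \(E(\xi)a=\xi a_0\) from Lemma~\ref{lem:genuine_expectation} matches the construction in \cite{Kwasniewski-Meyer:Essential}*{Proposition~4.3}, which uses wideness, or intersections of slices, to define \(E\) on overlaps consistently.

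\emph{The final claim.} Under these identifications, the canonical expectation \(E_B\) on~\(B\) is the factorisation of \(E_{\Cst(\B)}\). The kernel of \(B\onto\Cst_\ess(\B)\) is \(\Null_{E_B}\). So \(B\cong\Cst_\ess(\B)\) via this map if and only if \(E_B\) is almost faithful. Since \(E_B\) is symmetric, almost faithful is the same as faithful, as recalled before Example~\ref{exm:twisted_groupoid}. One should also note that any isomorphism \(B\cong\Cst_\ess(\B)\) compatible with the structure maps is this quotient map.
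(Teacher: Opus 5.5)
Your three steps are the paper's. First, you descend the canonical expectation of \(\Cst(\B)\) through \(\Cst(\B)\onto B\onto\Cst_\ess(\B)\) and check that it kills \(\Phi(B_t)^\bot\), using \(B_t\cap A\subseteq\Phi(B_t)\cap A\). Second, you use the uniqueness in Lemma~\ref{lem:genuine_expectation} to get \(\ker\Phi\subseteq\Null_E\); the paper phrases this as \(\Cst(\B)/\Null_E\cong B/\Null_{E_B}\). Third, you read off the last claim from \(\ker(B\onto\Cst_\ess(\B))=\Null_{E_B}\) and symmetry. Two places need repair, and both concern wideness.

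\emph{The converse.} The step you leave open is not a matter of matching Lemma~\ref{lem:genuine_expectation} with the construction in \cite{Kwasniewski-Meyer:Essential}. The paper simply uses that the canonical expectation on \(\Cst(\B)\) is \(S\)-canonical. The real issue is this: \(M^\bot\) inside \(\Cst(\B)\) is defined through \(M\cap A\) computed in \(\Cst(\B)\). A priori that intersection is smaller than \(M\cap A\) computed in~\(B\), so it is not obvious that \(E_B\circ\Phi\) kills it. Wideness closes this gap:
\begin{itemize}
\item each \(N\in S\) with \(N\le A\) and \(N\le M\) is a fibre lying in both \(A\) and \(M\) already inside \(\Cst(\B)\);
\item so \((M\cap A)_{\Cst(\B)}\) contains the closed span of these~\(N\), which equals \((M\cap A)_B\) by wideness;
\item since \(\Phi\) is the identity on \(A\) and on the fibre~\(M\), the two intersections coincide, hence so do the two submodules \(M^\bot\).
\end{itemize}
Now the uniqueness in Lemma~\ref{lem:genuine_expectation} applies.

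\emph{The forward direction.} The paper's proof passes over wideness here, and your second suggestion is false in general. In exotic quotients, the images of the fibres \(B_v\) with \(v\le t,1\) need not be dense in \(\Phi(B_t)\cap A\). For example, let \(S=\{t^n\}_{n\in\Z}\sqcup\{e\}\) with \(e^2=e=t^ne=et^n\) act trivially on \(A=\Cont_0(\mathbb{R}\sqcup\{p\})\), so \(B_{t^n}=A\) and \(B_e=\Cont_0(\mathbb{R}\setminus\{0\})\), with multiplication from~\(A\).
\begin{itemize}
\item For \(f\in\Cont_0(\mathbb{R})\), the difference between \(f\in B_{t^2}\) and \(f\in B_1\) lies in \(\Null_E\). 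Their products with fibre elements agree on the essential ideal \(\Cont_0((\mathbb{R}\setminus\{0\})\sqcup\{p\})\), so they have equal \(E\)-values.
\item Hence in \(B=\Cst_\ess(\B)\) we get \(\Phi(B_{t^2})\cap A=\Cont_0(\mathbb{R})\).
\item The only image fibre contained in both \(A\) and \(\Phi(B_{t^2})\) is \(\Phi(B_e)=\Cont_0(\mathbb{R}\setminus\{0\})\). The point~\(p\) keeps \(\Phi(B_{t^2})\) from collapsing onto~\(A\).
\end{itemize}
Your first suggestion, closing the image under products, adjoints and finite intersections, does give a wide grading. But you must also check that \(E_B\) kills \(Q^\bot\) for the new slices~\(Q\). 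This holds for every slice \(Q\subseteq M\defeq\Phi(B_t)\). Take \(\xi\in Q^\bot\) and \(a_0\in M\cap A\), and set \(y\defeq\xi a_0\).
\begin{itemize}
\item \(y\in M(M\cap A)\cap Q=Q\cap A\).
\item Then \(\xi^*y\in Q^*(Q\cap A)=Q\cap A\), so \(\xi\xi^*y=0\). This forces \(\xi^*y=0\), and then \(y=0\). Thus \(E_B(\xi)a_0=E_B(y)=y=0\).
\item For \(a_1\in(M\cap A)^\bot\), we have \(\xi a_1\in M^\bot\), so \(E_B(\xi)a_1=0\).
\end{itemize}
So \(E_B(\xi)\) annihilates the essential ideal \((M\cap A)\oplus(M\cap A)^\bot\), and therefore \(E_B(\xi)=0\), as in the proof of Lemma~\ref{lem:genuine_expectation}. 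Every element of the closure is contained in some \(\Phi(B_t)\), so this covers all new slices.
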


\begin{proof}
  Let~\(B\) be an exotic section \(\Cst\)\nb-algebra for a saturated
  Fell bundle over a unital inverse semigroup \(\B=(B_{s})_{s\in S}\).
  Let~\(\Phi\) be the quotient map \(\Cst(\B) \onto B\) and
  let~\(E_B\) be the composite of the quotient map \(B\onto
  \Cst_\ess(\B)\) and the faithful pseudo-expectation \(E_\ess\colon
  \Cst_\ess(\B)\to I(A)\).
  We claim that \(E_B\circ \Phi\) is the canonical expectation
  \(E\colon \Cst(\B)\onto A\).
  If \(t\in S\), then \(B_t\cap A\subseteq \Phi(B_t)\cap A\) because
  \(\Phi|_A=\Id_A\).
  Thus \((\Phi(B_t)\cap A)^\bot \subseteq (B_t\cap A)^\bot\) and so
  \[
    E_B(\Phi(B_t)(\Phi(B_t)\cap A)^\bot)
    \subseteq E_B\bigl(\Phi(B_t(B_t\cap A)^\bot)\bigr)
    = E(B_t(B_t\cap A)^\bot)=0.
  \]
  Accordingly, \(E_B\circ\Phi\) is the canonical expectation on
  \(\Cst(\B)\).
  Since~\(E_B\) descends to a faithful map, it is symmetric.
  Hence \(A\subseteq B\) is topologically graded.

  Now let \(A\subseteq B\) be any topologically graded regular
  inclusion.
  In particular, this defines a surjective \Star{}homomorphism
  \(\Phi\colon\Cst(\B) \onto B\).
  Being topologically graded means that the canonical expectation
  \(\Cst(\B)\to I(A)\) descends to a pseudo-expectation on~\(B\).
  The latter must be the unique canonical pseudo-expectation by
  Lemma~\ref{lem:genuine_expectation}.
  Then \(\Phi(\Null_E)\subseteq \Null_{E_B}\) and there is a
  well-defined surjective \Star{}homomorphism
  \[
    \Cst_\ess(\B)= \Cst(\B)/\Null_E \to B/\Null_{E_B},
    \qquad
    b+\Null_E\mapsto \Phi(b)+\Null_{E_B}.
  \]
  This map is injective as well as it intertwines the
  pseudo-expectations \(\Cst(\B)/\Null_E\to I(A)\) and
  \(B/\Null_{E_B}\to I(A)\), which are faithful because both  \(E\)
  and~\(E_B\) are symmetric.
  Composing the quotient map \(B\onto B/\Null_{E_B}\) with
  \(B/\Null_{E_B}\cong \Cst_\ess(\B)\) gives the surjective
  \Star{}homomorphism \(B\onto \Cst_\ess(\B)\) that witnesses
  that~\(B\) is an exotic section \(\Cst\)\nb-algebra of~\(\B\).
\end{proof}


The \emph{reduced \(\Cst\)\nb-algebra} \(\Cst_\red(\B)\) defined
in~\cite{Buss-Exel-Meyer:Reduced} is also graded by \(\B=(B_s)_{s\in
  S}\) and equipped with a faithful generalised expectation
\(E_\red\colon\Cst_\red(\B)\to A''\) with values in the bidual
of~\(A\).
The quotient map to \(\Cst(\B) \to \Cst_\ess(\B)\) factors through
\(\Cst_\red(\B)\), that is, the inclusion \(A\subseteq \Cst_\red(\B)\)
is topologically graded.
If the generalised expectation~\(E_\red\) is a genuine expectation,
then the Fell bundle or inverse-semigroup action \(\B=(B_s)_{s\in S}\)
is called \emph{closed}
(this may be characterised in many ways, see
\cite{Buss-Exel-Meyer:Reduced}*{Proposition~6.3} and
\cite{Kwasniewski-Meyer:Essential}*{Proposition~3.20}).
Then \(E_\red\colon\Cst_\red(\B)\to A\subseteq I(A)\) is already the
canonical expectation, and since it is faithful,
\(\Cst_\red(\B)=\Cst_\ess(\B)\).
In general, the surjection \(\Cst_\red(\B)\onto \Cst_\ess(\B)\) may
have a kernel, which is called the \emph{singular ideal}.

\begin{example}[Fell bundles over groupoids]
  For any upper-semicontinous Fell bundle \(\A=(A_\gamma)_{\gamma\in
    \Gr}\) over an \'etale groupoid~\(\Gr\), as defined
  in~\cite{BussExel:Fell.Bundle.and.Twisted.Groupoids}, the
  subbimodules of spaces of sections of~\(\A\) on open bisections
  of~\(\Gr\) form a natural saturated inverse-semigroup Fell bundle
  \(\B=(B_s)_{s\in S}\), see
  \cite{Kwasniewski-Meyer:Essential}*{Lemma~7.3}.
  Its unit fibre is the algebra \(A\defeq\Cont_0(\A|_X)\) of sections
  on the unit space~\(X\) of~\(\Gr\).
  The full, reduced and essential algebras for \(\A\) coincide with
  those for~\(\B\), respectively, see
  \cite{Kwasniewski-Meyer:Essential}*{Propositions 7.6, 7.9 and
    Definition~7.12}.
  Moreover, if~\(\Gr\) is Hausdorff, then \(\Cst_\red(\B) =
  \Cst_\ess(\B)\) and the canonical pseudo-expectation is a faithful
  genuine conditional expectation \(E\colon \Cst_\red(\B)\to
  A\subseteq \Cst_\red(\B)\).
\end{example}

Groups are special cases of inverse semigroups and groupoids, and for
groups the various notions of Fell bundle coincide with the classical
one, see~\cite{Exel:Partial_dynamical}.

\begin{example}[Fell bundles over groups]
  \label{ex:group_Fell_bundles}
  Let \(\B=(B_{g})_{g\in G}\) be a (not necessarily saturated) Fell
  bundle over a group~\(G\).
  Recall that the bundle structure turns each fibre~\(B_g\) into a
  Hilbert bimodule over the unit fibre \(\Cst\)\nb-algebra \(A\defeq B_1\).
  We call \(\B\) \emph{aperiodic}, \emph{topologically free},
  \emph{properly outer} or \emph{outer} if all the Hilbert
  \(A\)\nb-bimodules~\(B_g\) for \(g\in G\setminus\{1\}\) have the
  respective property.
  By definition, the implications in Figure~\ref{fig:diagram} remain
  valid for the Fell bundle~\(\B\).
  Let \(E\colon \bigoplus_{g\in G} B_g \to A\) be the identity
  on~\(A\) and zero on~\(B_g\) for all \(g\in G\setminus \{1\}\).
  An exotic section \(\Cst\)\nb-algebra for~\(\B\) is a completion of
  the \Star{}algebra \(\bigoplus_{g\in G} B_g\) in a \(\Cst\)\nb-norm
  that makes~\(E\) bounded.
  By  \cite{Kwasniewski-Meyer:Essential}*{Proposition~6.3}, for any
  exotic section \(\Cst\)\nb-algebra~\(B\), the inclusion \(A\subseteq
  B\) is
  aperiodic if and only if the Fell bundle~\(\B\) is aperiodic.
  If~\(G\) is amenable or, more generally, \(\B\) has Exel's
  approximation property, then \(\Cst(\B)=\Cst_\red(\B)\) and so there
  is no truly exotic section \(\Cst\)\nb-algebra for~\(\B\).
\end{example}

The following theorem generalises
\cite{Kwasniewski-Meyer:Aperiodicity}*{Theorem 9.12} and
\cite{Kwasniewski-Meyer:Aperiodicity_pseudo_expectations}*{Theorem
  7.2} by removing the assumption that the algebra~\(A\) contains an
essential ideal which is separable or of Type~I.
This generalisation relies on a recent result of Geffen and
Ursu~\cite{Geffen-Ursu:Simple_crossed_products}.

\begin{theorem}
  \label{the:special_groups}
  Let \(\B=(B_{g})_{g\in G}\) be a \textup{(}not necessarily
  saturated\textup{)} Fell bundle over a group of the form \(G=\Z\) or
  \(G=\Z/n\) for a square-free number \(n>0\).
  Then \(A\defeq B_0\) detects ideals in  \(B\defeq \Cst(\B)\) if and
  only if~\(A\) detects ideals in all intermediate
  \(\Cst\)\nb-algebras of \(A\subseteq C\subseteq B\), if and only if
  \(A\subseteq B\) is aperiodic.
  In particular, \(A\subseteq B\) is \(\Cst\)\nb-irreducible if and
  only if both \(A\) and~\(B\) are simple.
\end{theorem}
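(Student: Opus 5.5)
Set \(B\defeq\Cst(\B)\) and \(A\defeq B_0\). I will prove the cycle of implications
\[
\text{(aperiodic)} \;\Rightarrow\; \text{(detects ideals in all intermediate algebras)} \;\Rightarrow\; \text{(detects ideals in } B\text{)} \;\Rightarrow\; \text{(aperiodic)}.
\]
The middle implication is trivial.

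\textbf{Aperiodic implies detection in all intermediate algebras.} Here I will use Proposition~\ref{prop:aperiodic_implies_supports}. It suffices to show that the unique pseudo-expectation is faithful. Since \(G\) is amenable, \(\Cst(\B)=\Cst_\red(\B)\), and the Fell bundle over a group is closed. Hence the canonical expectation \(E\colon B\to A\), the projection onto the unit fibre, is a faithful genuine conditional expectation. By Example~\ref{ex:group_Fell_bundles} and Proposition~\ref{prop:topological_grading_characterisation}, it is the canonical pseudo-expectation. Aperiodicity makes the pseudo-expectation unique, so it is \(E\) and is faithful. Proposition~\ref{prop:aperiodic_implies_supports} then shows that \(A\) supports, and therefore detects ideals in, every intermediate \(C\).

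\textbf{Detection in \(B\) implies aperiodicity.} This is the main obstacle. By \cite{Kwasniewski-Meyer:Essential}*{Proposition~6.3}, aperiodicity of \(A\subseteq B\) is equivalent to aperiodicity of every \(B_g\) with \(g\neq 0\). So I argue by contradiction and suppose some \(B_g\) is not aperiodic. The approach is to reduce to a genuine group action and then invoke Geffen--Ursu.

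\begin{enumerate}
\item First I replace \(\B\) by a saturated bundle. For a non-saturated bundle over \(\Z\) or \(\Z/n\), one can pass to the ideal of \(A\) on which the relevant fibre is full, or use the partial-action picture. The Morita globalisation of \cite{Kwasniewski-Meyer:Aperiodicity} (as in Proposition~\ref{prop:Morita_globalisation}) then replaces \(\B\) by an action \(\alpha\) of \(G\) on a \(\Cst\)\nb-algebra \(\tilde A\) Morita equivalent to \(A\). In this replacement \(\tilde A\rtimes G\) is Morita equivalent to \(B\) compatibly with the inclusions.
\item I then check that both properties transfer through this Morita equivalence. Ideal detection transfers because the ideal lattices correspond. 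Aperiodicity transfers because aperiodicity of a Hilbert bimodule is invariant under Morita equivalence and passes to and from ideals.
\item For an action of \(\Z\) or \(\Z/n\) with \(n\) square-free, Geffen--Ursu \cite{Geffen-Ursu:Simple_crossed_products}*{Corollary~8.6} shows that the intersection property for \(\tilde A\rtimes G\) forces each \(\alpha_g\) with \(g\neq 0\) to be properly outer, equivalently aperiodic.
\end{enumerate}

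The square-free hypothesis is needed exactly here: for \(\Z/n\) the subgroups must be handled prime by prime, and Geffen--Ursu's result is stated for precisely these groups. The delicate points are the reduction to saturated bundles and the compatibility of proper outerness of every nonzero group element with the Morita globalisation; I would cite the relevant transfer lemmas from \cite{Kwasniewski-Meyer:Aperiodicity} rather than reprove them.

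\textbf{The final clause.} If \(A\subseteq B\) is \(\Cst\)\nb-irreducible, then \(A\) and \(B\) are simple, since both are intermediate algebras. Conversely, suppose \(A\) and \(B\) are simple. Then \(A\) detects ideals in \(B\) trivially, so by the equivalence just proved \(A\) detects ideals in every intermediate \(C\). Because \(A\) is simple, any nonzero ideal \(J\subseteq C\) satisfies \(J\cap A=A\). This gives \(A\subseteq J\), hence \(C=\overline{CAC}\subseteq J\), using nondegeneracy of \(A\subseteq B\) (which holds since \(A\) contains an approximate unit of \(B\)). So \(C\) is simple.
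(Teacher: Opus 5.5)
Your proposal follows the paper's proof essentially step for step. One direction uses the faithful canonical expectation (from amenability) together with Proposition~\ref{prop:aperiodic_implies_supports}; the converse uses the Morita globalisation of \cite{Kwasniewski-Meyer:Aperiodicity}*{Proposition~7.1} combined with \cite{Geffen-Ursu:Simple_crossed_products}*{Corollary~8.6}; and the final clause is argued the same way.

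The one deviation is your preliminary reduction to a saturated bundle, and you should drop it. It is unnecessary. As sketched, restricting to an ideal of \(A\) changes the inclusion, and it is not clear that this preserves ideal detection. The Morita globalisation already applies to arbitrary Fell bundles over discrete groups: there \(A\) is a corner of the globalising algebra whose \(G\)\nb-translates generate it, rather than being Morita equivalent to it. The transfer of ideal detection and aperiodicity in \cite{Kwasniewski-Meyer:Aperiodicity}*{Propositions 7.1(2)--(3) and~6.8(1)} holds in that generality.
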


\begin{proof}
  Since~\(G\) is amenable, \(\Cst(\B) = \Cst_\red(\B)\) and the
  canonical expectation \(\Cst(\B)\to A\) is faithful.
  If the inclusion is aperiodic, then~\(A\) detects ideals in all
  intermediate \(\Cst\)\nb-algebras by
  Proposition~\ref{prop:aperiodic_implies_supports}.
  It remains to prove that the inclusion must be aperiodic if~\(A\)
  detects ideals in \(\Cst(\B)\).
  For group actions by automorphisms, this follows from
  \cite{Geffen-Ursu:Simple_crossed_products}*{Corollary~8.6}.
  We reduce our statement to this special case using the Morita
  globalisation of~\(\B\) from
  \cite{Kwasniewski-Meyer:Aperiodicity}*{Proposition~7.1}.
  This is a group action \(\gamma\colon G\to \Aut(C)\), and it has the
  property that detection of ideals and aperiodicity for \(A\subseteq
  B\) are equivalent to the corresponding properties of \(C\subseteq
  C\rtimes_\gamma^\red G\), see
  \cite{Kwasniewski-Meyer:Aperiodicity}*{Proposition 7.1 (2)--(3) and
    Proposition 6.8 (1)}.
  This proves the first part of the assertion.
  The second part now follows because any subalgebra~\(A\) of a simple
  algebra~\(B\) detects ideals in it, and if a nondegenerate simple
  subalgebra~\(A\) detects ideals in~\(C\), then~\(C\) has to be simple.
\end{proof}

\begin{remark}
  The theorem above fails for actions of more general abelian groups,
  see \cite{Geffen-Ursu:Simple_crossed_products}*{Proposition~8.7}.
\end{remark}

Our next goal is to apply Theorem~\ref{the:special_groups} to
general regular inclusions.
We first formulate some results of Hamana in a lemma, see, in
particular, \cite{Hamana:injective-C-dyn}*{Remark~7.5}:

\begin{lemma}
  \label{lem:inductively_not_aperiodic}
  For any \Star{}automorphism \(\alpha\colon A\to A\), there is a
  largest \(\alpha\)\nb-invariant ideal~\(I\) in~\(A\) such
  that~\(\alpha|_I\) is quasi-inner in the sense that its Borchers
  spectrum is trivial.
  Moreover,  \(\alpha|_{I^\bot}\) is properly outer, and every
  power~\((\alpha|_I)^n\) for \(n\neq 0\) is quasi-inner.
  In particular, if~\(\alpha\) is not properly outer, then no
  power~\(\alpha^n\) for \(n\neq 0\) is properly outer.
\end{lemma}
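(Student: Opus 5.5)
Following the pointer to \cite{Hamana:injective-C-dyn}*{Remark~7.5}, I would get the largest quasi-inner ideal from Hamana's theory of the Connes/Borchers spectrum. For an \(\alpha\)\nb-invariant ideal \(J\), the Borchers spectrum of the \(\Z\)\nb-action on~\(J\) is a closed subgroup of~\(\T\). It is computed as the intersection of the Arveson spectra \(\operatorname{Sp}(\alpha|_D)\) over nonzero \(\alpha\)\nb-invariant hereditary subalgebras \(D\subseteq J\). Quasi-innerness of \(\alpha|_J\) means this intersection is \(\{1\}\). Hamana characterises this via the injective envelope: \(\alpha|_J\) is quasi-inner if and only if the unique extension \(\tilde\alpha\) of~\(\alpha\) to \(I(J)\) is inner, implemented by a unitary in \(I(J)\). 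The same holds after replacing \(I(J)\) by a corner of \(I(A)\) cut by the central projection \(p_J\) supporting~\(J\).

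The key step is a central decomposition of the extension \(\tilde\alpha\in\Aut(I(A))\). By Hamana's Remark~7.5 (equivalently, Kallman's decomposition for automorphisms of monotone complete or AW\(^*\)-algebras), there is a largest \(\tilde\alpha\)\nb-invariant central projection \(p\in Z(I(A))\) such that \(\tilde\alpha\) restricted to \(pI(A)\) is inner. On the complement \((1-p)I(A)\), the automorphism is properly outer in the sense that no nonzero invariant central subprojection carries an inner restriction. Central projections of \(I(A)\) correspond bijectively to ideals of~\(A\) up to essentiality, through \(J\mapsto\) regular ideal \(J^{\bot\bot}\). So \(p\) determines an \(\alpha\)\nb-invariant ideal \(I\defeq pI(A)\cap A\), and \(1-p\) corresponds to \(I^\bot\).

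Next I would verify the claims in turn. First, \(\alpha|_I\) is quasi-inner because \(I(I)=pI(A)\) and the restricted extension is inner there. Second, maximality: if \(\alpha|_J\) is quasi-inner, then the central support \(p_J\) of~\(J\) has \(\tilde\alpha\) inner on \(p_JI(A)\), so \(p_J\le p\) and \(J\subseteq I\). Here I would use that the set of such projections is closed under suprema, since implementing unitaries can be glued along orthogonal central projections in a monotone complete algebra; this is what makes a largest one exist. Third, \(\alpha|_{I^\bot}\) is properly outer. A nonzero invariant ideal \(K\subseteq I^\bot\) with quasi-inner restriction would give a nonzero central projection below \(1-p\) on which \(\tilde\alpha\) is inner. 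Then \(p_K+p\) would contradict maximality.

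Powers are handled directly. If \(\tilde\alpha|_{pI(A)}=\Ad u\), then \(\tilde\alpha^n|_{pI(A)}=\Ad(u\tilde\alpha(u)\dotsm\tilde\alpha^{n-1}(u))\) for \(n>0\), and \(\Ad(\tilde\alpha^{-1}(u^*)\dotsm)\) for \(n<0\). Hence \((\alpha|_I)^n\) is quasi-inner by Hamana's criterion. For the final sentence, suppose \(\alpha\) is not properly outer. Then some nonzero invariant ideal has trivial Borchers spectrum, so \(I\neq0\) by maximality. Then \(\alpha^n|_I\) is quasi-inner on the nonzero \(\alpha^n\)\nb-invariant ideal~\(I\), so \(\alpha^n\) is not properly outer.

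I expect the main obstacle to be the faithful translation between the Borchers-spectrum definition and innerness on the injective envelope. This includes checking that the Borchers spectrum is computed with hereditary subalgebras invariant under \(\alpha\) and not under~\(\alpha^n\), which matters for the powers. I would cite Hamana's theorem for this equivalence rather than reprove it.
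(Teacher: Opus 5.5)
Your proposal follows the same route as the paper's proof. You extend \(\alpha\) to \(I(A)\) and use Hamana's criterion that quasi-innerness means the extension is inner; powers are then handled because \(I(\alpha^n)=I(\alpha)^n\). You take the largest invariant central projection \(p\) on which the extension is inner (the paper cites \cite{Hamana:tensorI}*{Proposition~5.1} for this instead of gluing unitaries by hand) and set \(I\defeq A\cap pI(A)\), with \(I^\bot=A\cap(1-p)I(A)\).

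One correction to your preliminaries: what you describe is the Connes spectrum. The Borchers spectrum intersects only over invariant hereditary subalgebras that generate an essential ideal, and it need not be a subgroup. This does not affect the proof, because your argument runs entirely through Hamana's theorem.
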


\begin{proof}
  The automorphism~\(\alpha\) extends uniquely to an automorphism
  \(I(\alpha)\) of Hamana's injective envelope \(I(A)\) of~\(A\).
  By \cite{Hamana:injective-C-dyn}*{Theorem~7.4}, \(\alpha\) is
  quasi-inner if and only if \(I(\alpha)\) is inner.
  Since \(I(\alpha^n)=I(\alpha)^n\) for \(n>0\) and powers of inner
  automorphisms are clearly inner, it follows that powers of
  quasi-inner automorphisms are quasi-inner.
  By \cite{Hamana:tensorI}*{Proposition~5.1}, there is a largest
  projection \(p\in I(A)\) such that  \(I(\alpha)|_{pI(A)p}\) is
  inner, and this~\(p\) is central, \(I(\alpha)\)-invariant and
  \(I(\alpha)|_{(1-p)I(A)}\) is purely outer.
  Therefore, \(I\defeq A\cap pA\) is the largest ideal such
  that~\(\alpha|_I\) is quasi-inner.
  Since \(I^\bot=A\cap (1-p)A\), it follows that~\(\alpha_{I^\bot}\)
  is properly outer.
\end{proof}

\begin{corollary}
  \label{cor:inductive_not_aperiodic}
  If a Hilbert \(A\)\nb-bimodule~\(M\) is not aperiodic, then no
  power~\(M^{\otimes_A n}\) for \(n>0\) is aperiodic.
\end{corollary}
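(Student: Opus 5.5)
Since the statement concerns a single Hilbert bimodule, the plan is to reduce it to an automorphism, where Lemma~\ref{lem:inductively_not_aperiodic} applies. A Hilbert \(A\)\nb-bimodule~\(M\) generates a Fell bundle over~\(\Z\) with fibres \(B_n\defeq M^{\otimes_A n}\) for \(n>0\), \(B_0=A\) and \(B_{-n}\defeq (M^*)^{\otimes_A n}\). Each power \(M^{\otimes_A n}\) sits inside \(\Cst(\B)\) as the closed span of products of \(n\) elements of \(M\). I would then pass to the Morita globalisation of this Fell bundle from \cite{Kwasniewski-Meyer:Aperiodicity}*{Proposition~7.1}. This gives a \(\Cst\)\nb-algebra~\(C\) with an automorphism~\(\alpha\) generating a \(\Z\)\nb-action, and~\(A\) is a full corner, or at least a hereditary subalgebra, of~\(C\), with \(M\) corresponding to the \(\alpha\)\nb-twisted bimodule \(M_\alpha\) compressed to~\(A\). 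Under this correspondence \(M^{\otimes_A n}\) corresponds to the compression of \(M_{\alpha^n}\), because the globalisation is a Fell bundle map and the \(n\)th fibre of the global bundle is \(M_{\alpha^n}\).

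Next I would transfer aperiodicity through the globalisation fibrewise. The claim to establish is that the Hilbert \(A\)\nb-bimodule \(B_n\) is aperiodic if and only if the \(C\)\nb-bimodule \(M_{\alpha^n}\) is aperiodic, which by Example~\ref{ex:conditions_automorphisms} holds if and only if \(\alpha^n\) is properly outer. This is the fibrewise version of the equivalences cited in the proof of Theorem~\ref{the:special_groups}, namely \cite{Kwasniewski-Meyer:Aperiodicity}*{Propositions 6.8 and~7.1}. The underlying reason is that aperiodicity is invariant under Morita equivalence of bimodules. Hereditary subalgebras of~\(A\) are hereditary subalgebras of~\(C\), and every \(D\in\Her(C)\) meets the full corner~\(A\) in a Morita\nb-equivalent way.

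With that in hand the argument is short. If~\(M\) is not aperiodic, then~\(\alpha\) is not properly outer. By the last part of Lemma~\ref{lem:inductively_not_aperiodic}, no power \(\alpha^n\) with \(n\neq0\) is properly outer. Therefore no \(M_{\alpha^n}\) is aperiodic, and so no \(M^{\otimes_A n}\) is aperiodic.

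The main obstacle is the fibrewise transfer of aperiodicity across the Morita globalisation. The cited results are phrased for the whole bundle or inclusion, not for individual fibres. I would handle this by applying those results to the Fell bundle over~\(\Z\) generated by the single fibre in question, using the fact that \(M^{\otimes_A n}\) generates a bundle over \(n\Z\). Alternatively, one could use Hamana's description directly: \(M\) induces a partial automorphism of the injective envelope, and aperiodicity means this partial automorphism is nowhere inner. Taking powers preserves the inner part on the invariant projection~\(p\), exactly as in the proof of Lemma~\ref{lem:inductively_not_aperiodic}.
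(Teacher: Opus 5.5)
Your proposal matches the paper's proof: the powers of \(M\) form a Fell bundle over~\(\Z\), its Morita globalisation gives an automorphism \(\alpha\colon C\to C\), \cite{Kwasniewski-Meyer:Aperiodicity}*{Proposition 7.1(2) and Proposition 6.8(1)} show that \(M^{\otimes_A n}\) is aperiodic if and only if \(\alpha^n\) is properly outer, and the last part of Lemma~\ref{lem:inductively_not_aperiodic} concludes. The obstacle you anticipate does not arise, because the paper uses those cited items as fibrewise statements, and it is just as well: your fallback via the bundle over \(n\Z\) would, with only whole-bundle statements, give information about all \(M^{\otimes_A nk}\) jointly and would not isolate the single fibre \(M^{\otimes_A n}\).
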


\begin{proof}
  The powers of~\(M\) form a Fell bundle~\(\B\) over~\(\Z\),
  see~\cite{Abadie-Eilers-Exel:Morita_bimodules}.
  The Morita globalisation of~\(\mathcal{B}\) from
  \cite{Kwasniewski-Meyer:Aperiodicity}*{Proposition~7.1} is a
  \(\Z\)\nb-action generated by an automorphism \(\alpha\colon C\to
  C\).
  By  \cite{Kwasniewski-Meyer:Aperiodicity}*{Proposition 7.1(2) and
    Proposition 6.8(1)},
  the bimodule~\(M^{\otimes_A n}\) is aperiodic if and only
  if~\(\alpha^n\) is properly outer.
  Thus the assertion follows from the last part of
  Lemma~\ref{lem:inductively_not_aperiodic}.
\end{proof}

\begin{lemma}
  \label{lem:hereditarily_aperiodic}
  If a Hilbert \(A\)\nb-bimodule~\(M\) is not aperiodic, then it
  contains a nonzero subbimodule~\(N\) such that no nonzero
  subbimodule of~\(N\) is aperiodic.
\end{lemma}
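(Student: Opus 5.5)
The idea is to mimic the proof of Lemma~\ref{lem:inductively_not_aperiodic}: pass to the Morita globalisation and use Hamana's decomposition into a quasi-inner part and a properly outer part. Subbimodules of a Hilbert \(A\)\nb-bimodule~\(M\) that are again Hilbert bimodules are of the form \(M|_J = J\cdot M\) for ideals~\(J\) of the form \(J=\braket{M\cdot J}{M\cdot J}_A\). More precisely, we restrict to ideals of \(s(M)=\braket{M}{M}_A\) that are invariant under the partial homeomorphism induced by~\(M\), so that \(M\cdot J = J'\cdot M\) for the corresponding ideal~\(J'\). We first reduce to such restrictions.

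Step 1: Form the Fell bundle~\(\B\) over~\(\Z\) generated by~\(M\), as in Corollary~\ref{cor:inductive_not_aperiodic}, and its Morita globalisation \(\alpha\colon C\to C\) from \cite{Kwasniewski-Meyer:Aperiodicity}*{Proposition~7.1}. Here~\(A\) is a full corner (or at least Morita equivalent to a hereditary subalgebra generating~\(C\) as an ideal), and~\(M\) corresponds to~\(\alpha\) compressed to~\(A\). Under Morita equivalence, \(\alpha\)\nb-invariant ideals of~\(C\) correspond to \(M\)\nb-invariant ideals of~\(A\), say \(I\mapsto I\cap A\). By \cite{Kwasniewski-Meyer:Aperiodicity}*{Proposition~7.1(2) and Proposition~6.8(1)}, the restriction \(M|_{I\cap A}\) is aperiodic if and only if \(\alpha|_I\) is properly outer.

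Step 2: Apply Lemma~\ref{lem:inductively_not_aperiodic} to~\(\alpha\). This gives the largest \(\alpha\)\nb-invariant ideal \(I\subseteq C\) with \(\alpha|_I\) quasi-inner. Since~\(M\) is not aperiodic, \(\alpha\) is not properly outer, so \(I\neq0\); otherwise \(I^\bot=C\) and \(\alpha=\alpha|_{I^\bot}\) would be properly outer. Set \(N\defeq M|_{I\cap A}\); this is nonzero because \(I\cap A\neq 0\), as~\(A\) is full in~\(C\).

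Step 3: Show that no nonzero subbimodule of~\(N\) is aperiodic. Let \(N_0\subseteq N\) be a nonzero subbimodule. If \(N_0\) is a Hilbert subbimodule, it is \(M|_{J}\) for some \(M\)\nb-invariant ideal \(J\subseteq I\cap A\), which corresponds to an \(\alpha\)\nb-invariant ideal \(I_0\subseteq I\). The restriction of a quasi-inner automorphism to an invariant ideal is quasi-inner: its injective envelope extension is the corner of the inner automorphism \(I(\alpha)|_{pI(C)}\) by a central invariant projection, hence inner, and by \cite{Hamana:injective-C-dyn}*{Theorem~7.4} it is quasi-inner. A quasi-inner automorphism on a nonzero algebra is not properly outer, so \(N_0\) is not aperiodic by Step~1.

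If "subbimodule" means an arbitrary closed \(A\)\nb-subbimodule, we instead argue directly. The set \(\overline{A N_0 A}\) is again a Hilbert subbimodule of the form \(M|_{J}\) with the same aperiodicity behaviour, since aperiodicity of a bimodule is tested by compressions \(a\,m\,a\) and \(m\in N_0\subseteq M|_J\). Alternatively, failure of aperiodicity is witnessed by some element \(m\in M|_J\) and some \(D\in\Her(A)\), and we can move the witness into~\(N_0\) using \(N_0\supseteq J' N_0\), which still generates.

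The main obstacle is this last step. Failure of aperiodicity of \(M|_J\) yields a single "bad" element~\(m\), and we must transport it into an arbitrary, possibly non-Hilbert, closed subbimodule~\(N_0\). I would try to show that quasi-innerness gives a uniform statement. By Hamana, \(I(\alpha)|_{pI(C)}=\Ad u\), so for every \(x\in N_0\) there is a hereditary~\(D\) on which \(a x a\) stays close to \(a\,x u^*\, a\) in \(I(A)\), which is bounded below. The plan is to pick \(x\in N_0\) with \(x u^*\) nonzero in \(I(A)\) and use \cite{Kwasniewski-Meyer:Aperiodicity}*{Lemma~2.19} to find \(D\) with \(\norm{a x a}\) bounded below for all \(a\in D^+_1\), mirroring the argument of Proposition~\ref{prop:module_approximation}.
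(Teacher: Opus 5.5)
Your Step~3 has a genuine gap. It only treats restrictions \(M|_J\) to \emph{invariant} ideals, and these are not all the subbimodules.

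By the Rieffel correspondence, every closed \(A\)\nb-subbimodule of a Hilbert \(A\)\nb-bimodule~\(N\) is automatically a Hilbert subbimodule of the form \(NJ=J'N\) for some ideal~\(J\) of~\(A\). So your distinction between Hilbert and arbitrary closed subbimodules is beside the point. What matters is that \(J'\neq J\) in general. Quasi-innerness of \(\alpha|_I\) forces \(J\) and \(\alpha(J)\) to have the same central support in the injective envelope, but it does not force \(J=\alpha(J)\).

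For example, take \(A=\setgiven{f\in\Cont([0,1],M_2)}{f(1)\text{ is diagonal}}\) and let~\(\alpha\) be conjugation by the constant flip unitary. Then~\(\alpha\) is inner on the essential ideal \(\Cont_0([0,1),M_2)\), hence quasi-inner. But it exchanges the two primitive ideals over~\(1\). So \(J\defeq\setgiven{f\in A}{f(1)_{22}=0}\) is not invariant, and \(M_\alpha J\) is a nonzero subbimodule of~\(M_\alpha\) that your Steps~1--3 never reach.

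Such subbimodules are exactly what the lemma is needed for. In the proof of Theorem~\ref{thm:topologically_graded_aperiodic} it is applied to \(\Hilm[F]_1=NK\), where the ideal~\(K\) has no reason to be invariant.

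Your fallback in the last paragraph does not close the gap. \cite{Kwasniewski-Meyer:Aperiodicity}*{Lemma~2.19}, as used in Proposition~\ref{prop:module_approximation}, only gives the one-sided estimate \(\norm{xa}\ge\norm{x}/2\) for \(x\in A\). You would need a two-sided bound \(\inf_{a\in D^+_1}\norm{awa}>0\) for an element~\(w\) of the injective envelope of~\(C\). You would also need to say precisely how~\(N\) and the implementing unitary sit there; writing \(xu^*\in I(A)\) conflates \(I(A)\) with \(I(C)\).

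The missing idea is much simpler and needs neither globalisation nor Hamana's theory: a failure of aperiodicity is uniform on a hereditary subalgebra and survives compression. The paper proceeds as follows.
\begin{enumerate}
\item Take \(b\in M\), \(D\in\Her(A)\) and \(\varepsilon>0\) with \(\norm{aba}\ge\varepsilon\norm{a}^2\) for all \(a\in D^+\).
\item Set \(N_0\defeq\overline{DbD}\). This is a Hilbert \(D\)\nb-bimodule whose nonzero subbimodules are exactly \(N_0I\) for nonzero ideals \(I\subseteq D\).
\item Given such~\(I\), pick \(d\in I^+_1\) acting as a unit on a nonzero hereditary subalgebra \(D_0\subseteq I\), using \cite{Kwasniewski-Meyer:Aperiodicity}*{Lemma~2.9}. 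Then \(dbd\in N_0I\) and \(\norm{x\,dbd\,x}=\norm{xbx}\ge\varepsilon\norm{x}^2\) for all \(x\in D_0^+\).
\item Hence every subbimodule of~\(N_0\), invariant or not, contains an explicit non-aperiodic element.
\item Transport along the equivalence bimodule \(AD\). This preserves aperiodicity and matches subbimodules, and yields \(N\defeq\overline{ADbDA}\subseteq M\) with the required property.
\end{enumerate}
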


\begin{proof}
  By assumption, there are \(b\in M\), \(D\in \Her(A)\) and
  \(\varepsilon >0\) such that \(\norm{a b a} \ge \varepsilon
  \norm{a}^2\) for all \(a\in D^+\).
  Then \(N_0\defeq\overline{D b D}\) is a Hilbert \(D\)\nb-bimodule.
  By the Rieffel correspondence, each nonzero subbimodule of~\(N_0\)
  is of the form~\(N_0I\) for a nonzero ideal~\(I\) in~\(D\).
  We fix such \(I\neq 0\).
  By \cite{Kwasniewski-Meyer:Aperiodicity}*{Lemma~2.9},
  there is \(d\in I^+_1\) such that \(D_0\defeq \setgiven{x\in A}{d x=
    x = x d}\) is a nonzero hereditary subalgebra of~\(I\).
  Then the element \(d b d\in N_0 I\) is not aperiodic because if
  \(x\in D_0^+\), then \(\norm{x dbdx}=\norm{xbx}\ge \varepsilon
  \norm{x}^2\).
  Thus no nonzero subbimodule of~\(N_0\) is aperiodic.
  Let \(\Hilm\defeq AD\), this is an equivalence Hilbert
  \(ADA\)-\(D\)-bimodule.
  It establishes an equivalence between the Hilbert bimodules~\(N_0\)
  over~\(D\) and \(N\defeq \overline{A D b D A}\cong \Hilm \otimes_D
  N_0 \otimes_D \Hilm^*\) over \(A D A\).
  When we transport Hilbert bimodules along an equivalence, then
  aperiodicity is preserved, see
  \cite{Kwasniewski-Meyer:Aperiodicity}*{Proposition 6.1}.
  Thus~\(N\) cannot be aperiodic.
\end{proof}

The following theorem should be compared with
\cite{Kwasniewski-Meyer:Aperiodicity_pseudo_expectations}*{Theorems
  7.2 and~7.3}, where separability, Type~I, or simplicity assumptions
were imposed.
In contrast, no such assumptions are required here.

\begin{theorem}
  \label{thm:topologically_graded_aperiodic}
  Let \(A\subseteq B\) be a regular topologically graded
  \(\Cst\)\nb-inclusion and let \(B_\ess=B/\Null_E\) be the associated
  essential quotient.
  Equivalently, \(B\) is an exotic section \(\Cst\)\nb-algebra for a
  Fell bundle \(\B=(B_{s})_{s\in S}\) and \(B_\ess=\Cst_\ess(\B)\) is
  the essential \(\Cst\)\nb-algebra for \(\B\).
  The following conditions are equivalent:
  \begin{enumerate}
  \item \label{cor:topologically_graded_aperiodic1}%
    \(A\subseteq B\)  is aperiodic;
  \item \label{cor:topologically_graded_aperiodic2}%
    \(A\subseteq B\) has a unique pseudo-expectation;
  \item \label{cor:topologically_graded_aperiodic3}%
    \(A\) supports all intermediate \(\Cst\)\nb-algebras \(A\subseteq
    C\subseteq B_\ess\);
  \item \label{cor:topologically_graded_aperiodic4}%
    \(A\) detects  ideals in all intermediate \(\Cst\)\nb-algebras
    \(A\subseteq C \subseteq B_\ess\).
  \end{enumerate}
  If \(A\subseteq B\) admits a conditional expectation~\(E\), then
  the above conditions imply that~\(E\) is pinching.
\end{theorem}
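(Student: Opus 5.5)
We prove the cycle \ref{cor:topologically_graded_aperiodic1}\(\Rightarrow\)\ref{cor:topologically_graded_aperiodic2}, \ref{cor:topologically_graded_aperiodic1}\(\Rightarrow\)\ref{cor:topologically_graded_aperiodic3}\(\Rightarrow\)\ref{cor:topologically_graded_aperiodic4}\(\Rightarrow\)\ref{cor:topologically_graded_aperiodic1}, and separately \ref{cor:topologically_graded_aperiodic2}\(\Rightarrow\)\ref{cor:topologically_graded_aperiodic1}.

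\textbf{Aperiodicity gives the other conditions.} Aperiodicity of \(A\subseteq B\) gives a unique pseudo-expectation by Proposition~\ref{prop:aperiodic_implies_supports}; this is \ref{cor:topologically_graded_aperiodic2}. The quotient \(A\subseteq B_\ess\) is aperiodic too, since aperiodicity of the Banach bimodule \(B/A\) passes to the quotient \(B_\ess/A\). The canonical pseudo-expectation of \(A\subseteq B_\ess\) is faithful, because the \(S\)-canonical expectation is symmetric and hence descends faithfully. So Proposition~\ref{prop:aperiodic_implies_supports}, applied to \(A\subseteq B_\ess\), gives \ref{cor:topologically_graded_aperiodic3}. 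The implication \ref{cor:topologically_graded_aperiodic3}\(\Rightarrow\)\ref{cor:topologically_graded_aperiodic4} holds by Remark~\ref{rem:supporting_detecting}. The final claim about a conditional expectation~\(E\) follows from Lemma~\ref{lem:aperiodic_implies_pinching}.

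\textbf{Uniqueness of the pseudo-expectation gives aperiodicity.} Here I would cite \cite{Kwasniewski-Meyer:Aperiodicity_pseudo_expectations} for inverse-semigroup Fell bundles: uniqueness of pseudo-expectations for an exotic section algebra forces every fibre \(B_s\) to be aperiodic (restricted suitably off \(B_s\cap A\)). The argument runs by contradiction. A non-aperiodic part of a slice yields, via Hamana's results (Lemma~\ref{lem:inductively_not_aperiodic}), a quasi-inner part. Its inner extension to \(I(A)\) lets one build a second pseudo-expectation that is nonzero on that slice. Aperiodicity of \(B_s^\bot\) for all \(s\) is then equivalent to aperiodicity of \(A\subseteq B\) by \cite{Kwasniewski-Meyer:Essential}, and the same criterion applies to \(B_\ess\).

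\textbf{Main obstacle: \ref{cor:topologically_graded_aperiodic4}\(\Rightarrow\)\ref{cor:topologically_graded_aperiodic1}.} Suppose \(A\subseteq B\) is not aperiodic. Then by the fibrewise criterion some slice \(M\in S\) has \(M^\bot\) not aperiodic. Lemma~\ref{lem:hereditarily_aperiodic} yields a nonzero subbimodule \(N\subseteq M^\bot\) none of whose nonzero subbimodules is aperiodic. We may shrink \(N\) so that it lives over an ideal \(J\) of \(A\) and is a Hilbert \(J\)-bimodule with \(N\cap A=0\).

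Consider the Fell bundle \((N^{\otimes n})_{n\in\Z}\) over~\(\Z\), with \(N^{-1}=N^*\). Its image in \(B_\ess\) generates an intermediate algebra \(C\defeq A+\Cst(N)\), and \(\Cst(N)\) is an exotic (hence reduced, \(\Z\) amenable) section algebra of this bundle. By Corollary~\ref{cor:inductive_not_aperiodic}, no power \(N^{\otimes n}\) is aperiodic. Hence the inclusion \(J\subseteq \Cst(N)\) is not aperiodic, so by Theorem~\ref{the:special_groups} \(J\) fails to detect ideals in \(\Cst(N)\). An ideal \(K\subseteq\Cst(N)\) with \(K\cap J=0\) generates an ideal in \(C\), which meets \(A\) trivially after compressing by \(J\); this contradicts~\ref{cor:topologically_graded_aperiodic4}.

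The delicate points are the following. First, the image of \(\Cst(N)\) in \(B_\ess\) really is a section algebra of the \(\Z\)-bundle; \(N\cap A=0\) and the canonical expectation vanishing on \(N^\bot\)-type parts should give the needed grading. Second, powers of \(N\) inside \(B\) may intersect \(A\); one handles this by first passing to the largest invariant ideal where \(N\) is quasi-inner, again via Lemma~\ref{lem:inductively_not_aperiodic}.
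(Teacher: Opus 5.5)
Your implications out of \textup{(1)}, the step \textup{(3)}\(\Rightarrow\)\textup{(4)}, and the pinching claim are fine and agree with the paper. However, \textup{(4)}\(\Rightarrow\)\textup{(1)} has a genuine gap. Your argument only works when the canonical expectation vanishes on every power \(N^k\), \(k\neq 0\), or equivalently when no power \(N^k\) meets \(A\). Only then does the map from the section algebra of the \(\Z\)-bundle to \(B_\ess\) intertwine faithful expectations. Hence only then does it embed as an intermediate algebra to which Theorem~\ref{the:special_groups} applies; this is the paper's first case. If \(N^k\cap A\neq 0\) for some \(k\), the images of two different fibres intersect. Then the algebra generated by \(A\) and \(N\) in \(B_\ess\) is a proper quotient of the \(\Z\)-section algebra, not a section algebra of the \(\Z\)-bundle, and an ideal missing \(J\) upstairs tells you nothing downstairs. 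This does happen. Take \(B=A\rtimes_{\Ad u}\Z/2\) with \(u=u^*\) a unitary multiplier and \(N=A\delta\), where \(\delta\) is the implementing unitary; then \(N^2=A\). Your remedy, passing to the largest invariant ideal on which \(N\) is quasi-inner, does not touch this. \(N\) is already nowhere aperiodic, and the obstruction is the actual intersection \(N^k\cap A\) inside \(B\), which may be essential, as in the example.

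The missing idea is the paper's second case. Choose the minimal \(k\ge 2\) with \(N^k\not\subseteq (M^k)^\bot\), so that \(K\defeq N^k\cap A\neq 0\). Then \(A, NK,\dots,N^{k-1}K\) form a Fell bundle over \(\Z/k\). By minimality of \(k\), the canonical expectation vanishes on its nontrivial fibres, so its section algebra embeds in \(B_\ess\). Restrict to a subgroup of prime order \(p\mid k\); its nontrivial fibres are non-aperiodic by Corollary~\ref{cor:inductive_not_aperiodic}. Then apply Theorem~\ref{the:special_groups} for \(\Z/p\). This is where Corollary~\ref{cor:inductive_not_aperiodic} and the square-free hypothesis are really needed; in your \(\Z\)-case, non-aperiodicity of \(N\) itself suffices.

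Your direct proof of \textup{(2)}\(\Rightarrow\)\textup{(1)} is also unsupported. The earlier results you invoke, linking uniqueness of pseudo-expectations to aperiodicity for inverse-semigroup Fell bundles, carry separability, Type~I or simplicity hypotheses; removing them is precisely the point of this theorem. Building a second completely positive \(A\)-bimodule map \(B\to I(A)\) from a quasi-inner piece of one slice is not something you can take for granted. The step is also unnecessary. A pseudo-expectation on \(B_\ess\) composed with the quotient map is one on \(B\), so uniqueness passes to \(A\subseteq B_\ess\). There the unique pseudo-expectation is the faithful \(E_\ess\), and Proposition~\ref{prop:faithful_pseudo_expectations} gives \textup{(4)}. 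The repaired \textup{(4)}\(\Rightarrow\)\textup{(1)} then closes the cycle, as in the paper.
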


\begin{proof}
  Let \(S\subseteq \Slice_A(B)\) be the grading which admits a
  canonical pseudo-expectation \(E\colon B\to I(A)\).
  The latter descends to a faithful pseudo-expectation \(E_\ess\colon
  B_\ess\to I(A)\).
  If \(A\subseteq B\) is aperiodic, then so is \(A\subseteq B_\ess\)
  because aperiodicity for bimodules is hereditary for quotients.
  Hence~\ref{cor:topologically_graded_aperiodic1}
  implies~\ref{cor:topologically_graded_aperiodic2}
  and~\ref{cor:topologically_graded_aperiodic3} by
  Proposition~\ref{prop:aperiodic_implies_supports}.
  It is clear that~\ref{cor:topologically_graded_aperiodic3}
  implies~\ref{cor:topologically_graded_aperiodic4}.
  If \(A\subseteq B\) has a unique pseudo-expectation, then so
  does \(A\subseteq B_\ess\).
  Hence~\ref{cor:topologically_graded_aperiodic2}
  implies~\ref{cor:topologically_graded_aperiodic4} by
  Proposition~\ref{prop:faithful_pseudo_expectations}.
  To close the cycle of implications, it remains to prove
  that~\ref{enu:topologically_graded_aperiodic4}
  implies~\ref{cor:topologically_graded_aperiodic1}.
  This can be  proved in a similar way as
  \cite{Kwasniewski-Meyer:Aperiodicity_pseudo_expectations}*{Proposition~6.1},
  but using Theorem~\ref{the:special_groups} to get the desired
  conclusion without assuming separability (but for aperiodicity
  rather than topological freeness).
  For the sake of completeness and the benefit of the reader we spell
  out the details.

  Assume that \(A\subseteq B\) is not aperiodic.
  Since~\(B\) is an exotic section \(\Cst\)\nb-algebra for~\(S\),
  \cite{Kwasniewski-Meyer:Essential}*{Proposition~6.3} provides an
  \(M\in S\)  such that \(M^\bot\)  is not
  aperiodic as an \(A\)\nb-bimodule.
  By Lemma~\ref{lem:hereditarily_aperiodic}, there is a nonzero subbimodule  \(N\subseteq M^\bot\)
	where no nonzero subbimodule of~\(N\) is aperiodic.
  Let \(\Hilm_0 \defeq A\), \(\Hilm_k \defeq N^k\) and \(\Hilm_{-k}
  \defeq (N^*)^k\) for \(k>0\).
  These are slices for the inclusion \(A\subseteq B\),
  and they form a Fell bundle over~\(\Z\) with the multiplication
  and involution in~\(B\).
  The inclusion maps \(\Hilm_k \hookrightarrow B\) form an
  isometric Fell bundle representation, and so they induce a
  \Star{}homomorphism \(\Phi\colon \Cst((\Hilm_k)_{k\in \Z}) \to
  B\).
  The composition with the quotient map \(B\to B_\ess\) remains
  injective on~\(A\) and hence on~\(\Hilm_k\) for all \(k\in \Z\).
  Now we distinguish two cases.

  Assume first that \(\Hilm_k\subseteq (M^k)^\bot\) for all \(k\in \Z\setminus \{0\}\).
  Then~\(E_\ess(\Phi(\Hilm_k))=E(\Hilm_k)=0\)  for all \(k\in \Z\setminus \{0\}\), and so \(\Phi\) intertwines the two canonical expectations.
  Since both are faithful, \(\Phi\) is injective.
  Thus we may identify  \(C\defeq \Cst((\Hilm_k)_{k\in \Z})\)
  with an intermediate \(\Cst\)\nb-algebra~\(C\) for \(A\subseteq
  B_\ess\).
  Since \(\Hilm_1=N\) is not aperiodic, the inclusion \(A\subseteq C\)
  is not aperiodic.
  Hence~\(A\) cannot detect ideals in~\(C\) by Theorem~\ref{the:special_groups}.
  This finishes the proof in this case.

  The second case is that \(\Hilm_k\not\subseteq (M^k)^\bot\) for some
  \(k\in \Z\setminus \{0\}\).
  By passing to \(\Hilm_k^*\), we may choose \(k>0\), and we
  then pick the minimal such \(k>0\).
  Since \(\Hilm_1\subseteq M^\bot\),  \(k\ge 2\).
  As \((M^k)^\bot= \setgiven{m\in M^k}{m(M^k\cap A)=0}\) the
  ideal  \(K\defeq \Hilm_k\cap A=\Hilm_k\cap (M^k\cap A)
  =\Hilm_k (M^k\cap A)\) must be nonzero, see
  Lemma~\ref{lem:slices_annihilators}.
        
  Let \(\Hilm[F]_0 \defeq A\), and
   \(\Hilm[F]_n \defeq \Hilm_nK\) for \(n=1,\dotsc,k-1\).
  Then \(\Hilm_{k-n} \Hilm[F]_n=\Hilm_{k-n}\Hilm_n K=\Hilm_k K= K\) for \(n=1,\dotsc,k-1\).
	This implies that
	\((\Hilm[F]_n)_{n=0,\dotsc,k-1}\) is a Fell bundle
  with nonzero fibres over~\(\Z/k\).
   Write  \(k=p\cdot k_1\) with a prime number~\(p\).
  Since the \(A\)\nb-bimodule \(0\neq\Hilm[F]_1\subseteq N\) is not
  aperiodic, none of the fibres~\(\Hilm[F]_n\) for \(n=1,\dotsc,k-1\)
  is aperiodic by Corollary~\ref{cor:inductive_not_aperiodic}.
  Thus the Fell bundle \((\Hilm[F]_{k_1\cdot
    n})_{n=0,\dotsc,p-1}\) over~\(\Z/p\) also fails to be aperiodic.
  The inclusions \(\Hilm[F]_n \hookrightarrow B_\ess\) form a Fell
  bundle representation, and the induced \Star{}homomorphism
  \(\Cst((\Hilm[F]_{k_1n})_{n\in \Z/p}) \hookrightarrow B_\ess \) is injective,
  as it intertwines the canonical faithful expectations.
  This identifies \(C\defeq\Cst((\Hilm[F]_n)_{n\in \Z/k})\) with an
  intermediate \(\Cst\)\nb-subalgebra \(A\subseteq C\subseteq B_\ess\).
  As the Fell bundle \((\Hilm[F]_{k_1\cdot n})_{n=0,\dotsc,p-1}\) is not aperiodic,
  the inclusion \(A\subseteq C\) fails to be aperiodic by
  \cite{Kwasniewski-Meyer:Essential}*{Proposition~6.3}.
  Since~\(p\) is square-free, Theorem~\ref{the:special_groups} says
  that~\(A\) cannot detect ideals in~\(C\).

  This finishes the proof that the statements
  \ref{cor:topologically_graded_aperiodic1}--\ref{cor:topologically_graded_aperiodic4}
  are equivalent.
  Now assume that there is a genuine conditional expectation \(E\colon
  B\to A\).
  Then~\ref{cor:topologically_graded_aperiodic1} implies that~\(E\) is
  pinching by Lemma~\ref{lem:aperiodic_implies_pinching}.
\end{proof}

\begin{remark}
  Let~\(B\) be an exotic crossed product for a group action on~\(A\),
  so that  \(A\rtimes_\alpha G\onto B\onto A\rtimes_\alpha^\red G\).
  Then~\(B\) is topologically graded with
  \(B_\ess=A\rtimes_\alpha^\red G\) and the equivalent conditions in
  Theorem~\ref{thm:topologically_graded_aperiodic} hold if and only
  if~\(\alpha_g\) is properly outer for all \(g\in G\setminus\{1\}\),
  see Examples \ref{ex:conditions_automorphisms}
  and~\ref{ex:group_Fell_bundles}.
  The same holds for twisted group actions, see
  Example~\ref{ex:twisted_actions} below.
  In this way, Theorem~\ref{thm:topologically_graded_aperiodic}
  generalises and improves upon
  \cite{Zarikian:Unique_expectations}*{Theorem~2.2.2} and
  \cite{Zarikian:Unique_pseudo}*{Theorem~3.5}.
\end{remark}


\section{A Galois correspondence for regular C*-irreducible
  inclusions}
\label{sec:regular C*-irreducible inclusions}

We shall use the noncommutative generalisation of Renault's Cartan
subalgebras due to Exel:

\begin{definition}[\cite{Exel:noncomm.cartan}*{Definition~12.1}]
  \label{def:virtual_commutant}
  A \(\Cst\)\nb-inclusion \(A\subseteq B\) is a \emph{noncommutative
    Cartan subalgebra} if it is regular, equipped with an almost
  faithful conditional expectation \(E\colon B\to A\), and any
  \(A\)\nb-bimodule map \(J\to B\) that is defined on an ideal~\(J\)
  in~\(A\) has range in~\(A\).
\end{definition}

We gave many characterisations of non-commutative Cartan inclusions in
\cite{Kwasniewski-Meyer:Cartan}*{Theorem~4.2}, assuming that there is
an almost faithful conditional expectation \(E\colon B\to A\).
If, in addition, \(A\) is simple, then
\cite{Kwasniewski-Meyer:Cartan}*{Corollary~7.4} gives equivalences
between a number of conditions, which resemble some of those in
Theorem~\ref{thm:main_theorem}.
We now add to this theory by deducing that any irreducible, regular
inclusion of simple \(\Cst\)\nb-algebras is aperiodic and has a
(unique) conditional expectation.
In particular, it follows that irreducibility and
\(\Cst\)\nb-irreducibility are equivalent for regular inclusions of
simple \(\Cst\)\nb-algebras.
This fails for irregular inclusions by
Example~\ref{ex:crossed_product_inclusions}.

An \emph{isomorphism of Fell bundles} \(\B= (B_g)_{g\in G}\) and
\(\mathcal{C}= (C_g)_{g\in G}\) over a group~\(G\) is a collection of
isomorphisms \(B_t\congto C_t\) for \(t\in S\) that intertwine the
multiplications and involutions in the Fell bundles.
This generalises naturally to Fell bundles over isomorphic groups.

\begin{lemma}
  \label{lem:regular_vs_grading}
  Let \(A\subseteq B\) be a nondegenerate \(\Cst\)\nb-inclusion.
  Then~\(A\) is simple if and only if the nonzero slices
  \(\Slice_A(B)\setminus \{0\}\) form a group.
  If~\(A\) is simple, then the \(\Cst\)\nb-inclusion \(A\subseteq B\)
  is regular if and only if there is a discrete group~\(G\) and a Fell
  bundle \(\B=(B_g)_{g\in G}\) of closed subspaces of~\(B\) such that
  \(A=B_1\) and there is a surjective \Star{}homomorphism
  \(\Cst(\B)\onto B\) that is the identity on the spaces~\(B_g\) for
  all \(g\in G\).
\end{lemma}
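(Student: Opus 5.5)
We will prove the two statements in turn, using the facts recalled above: \(\Slice_A(B)\) is an inverse semigroup with unit~\(A\), its idempotents are exactly the ideals of~\(A\), and the partial order is inclusion.

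\emph{First statement.}
Assume that~\(A\) is simple.
Then the only idempotents are \(0\) and~\(A\).
Let \(M\in\Slice_A(B)\setminus\{0\}\).
Then \(M^*M\) and \(MM^*\) are nonzero ideals, because \(m^*m\in M^*M\) is nonzero for \(0\ne m\in M\).
Hence \(M^*M=MM^*=A\), so~\(M^*\) is an inverse of~\(M\).
The product of two nonzero slices \(M,N\) is nonzero, because
\[
  (MN)^*(MN)=N^*M^*MN=N^*AN=N^*N=A .
\]
So \(\Slice_A(B)\setminus\{0\}\) is closed under products and inverses, and it is a group.

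Conversely, assume that the nonzero slices form a group.
Every nonzero ideal \(I\) of~\(A\) is an idempotent slice, and a group has only one idempotent, namely its unit~\(A\).
Thus \(I=A\), and~\(A\) is simple.
(If \(A=0\), there are no nonzero slices, and we regard this degenerate case as excluded.)

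\emph{Second statement, forward direction.}
Let~\(A\) be simple and let \(A\subseteq B\) be regular.
Put \(G\defeq\Slice_A(B)\setminus\{0\}\) and \(B_g\defeq g\) for \(g\in G\).
Multiplication and involution in~\(B\) give \(B_gB_h\subseteq B_{gh}\) and \(B_g^*=B_{g^{-1}}\), and \(B_1=A\).
So this is a Fell bundle over~\(G\); it is saturated because \(B_g^*B_g=A\).
By the universal property of \(\Cst(\B)\), the inclusions \(B_g\hookrightarrow B\) form a representation and induce a \Star{}homomorphism \(\Cst(\B)\to B\).
Its image is \(\clsp\bigcup\Slice_A(B)\), which equals~\(B\) by regularity, so the map is surjective.

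\emph{Second statement, converse direction.}
Suppose a Fell bundle~\(\B\) of closed subspaces with these properties is given.
Each \(B_g\) is a closed subspace with \(AB_gA\subseteq B_g\), \(B_gB_g^*\subseteq B_1=A\) and \(B_g^*B_g\subseteq A\).
For \(b\in B_g\) and \(a\in A\) we get
\[
  bab^*\in B_gB_1B_{g^{-1}}\subseteq A,
\]
and similarly \(b^*ab\in A\).
So every element of~\(B_g\) is a normaliser.
Since the homomorphism \(\Cst(\B)\to B\) is surjective and \(\bigoplus_g B_g\) is dense in \(\Cst(\B)\), the normalisers generate~\(B\) as a \(\Cst\)\nb-algebra.
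Nondegeneracy is assumed, so \(A\subseteq B\) is regular.

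The only delicate point is checking that the products in the group \(\Slice_A(B)\setminus\{0\}\) are genuinely nonzero, which the displayed computation handles.
Everything else is bookkeeping with the inverse-semigroup structure of~\(\Slice_A(B)\).
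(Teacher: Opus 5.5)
Your proof is correct and follows the same route as the paper. Idempotent slices are exactly the ideals of \(A\), so simplicity means \(A\) is the only nonzero idempotent, which is equivalent to \(\Slice_A(B)\setminus\{0\}\) being a group; the nonzero slices then form the canonical Fell bundle, whose induced map \(\Cst(\B)\to B\) is surjective exactly when the slices span a dense subspace. You also supply details the paper leaves implicit: that products of nonzero slices remain nonzero, and that for an arbitrary Fell bundle of closed subspaces as in the statement every element of a fibre \(B_g\) is a normaliser, which gives the converse direction of the second claim.
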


\begin{proof}
  Idempotents in the inverse semigroup \(\Slice_A(B)\) are the same as
  ideals in~\(A\).
  Thus~\(A\) is simple if and only if the unit is the only idempotent
  in \(\Slice_A(B)\setminus \{0\}\).
  This happens if and only if \(\Slice_A(B)\setminus \{0\}\) is a
  group.
  In this case, the spaces in \(\Slice_A(B)\setminus \{0\}\)
  themselves form a canonical Fell bundle \(\B=(B_g)_{g\in G}\) over
  this group.
  The universal property of the full section \(\Cst\)\nb-algebra
  provides a canonical \Star{}homomorphism \(\Cst(\B)\onto B\).
  It is surjective if and only if the span of \(\Slice_A(B)\)  is
  dense in~\(B\).
\end{proof}

\begin{theorem}
  \label{thm:regular_irreducible_for_simple}
  Let \(A\subseteq B\) be a regular inclusion where~\(A\) is simple.
  Assume that there is an almost faithful pseudo-expectation \(B\to I(A)\)
  \textup{(}this follows if~\(B\) is simple\textup{)} or that~\(B\) is
  nuclear.
  Then the following conditions are equivalent:
  \begin{enumerate}
  \item \label{enu:regular_irreducible_for_simple1}%
    \(A\subseteq B\) is irreducible;
  \item \label{enu:regular_irreducible_for_simple2}%
    \(A\subseteq B\) is \(\Cst\)\nb-irreducible;
  \item \label{enu:regular_irreducible_for_simple3}%
    \(A\subseteq B\) is a noncommutative Cartan inclusion;
  \item \label{enu:regular_irreducible_for_simple4}%
    \(B\cong \Cst_\red(\B)\)  for an outer Fell bundle
    \(\B=(B_g)_{g\in G}\) over a discrete group~\(G\) with an
    isomorphism that restricts to the identity \(A=B_1\);
  \item \label{enu:regular_irreducible_for_simple5}%
    each slice in \(\Slice_A(B)\setminus \{A\}\) is aperiodic as an
    \(A\)\nb-bimodule.
  \end{enumerate}
  Assume the above equivalent conditions hold.
  Then the inclusion \(A\subseteq B\) is aperiodic and its unique
  pseudo-expectation is a faithful conditional expectation.
  The Fell bundle~\(\B\)
  in~\ref{enu:regular_irreducible_for_simple4} is isomorphic to the
  Fell bundle formed by the nonzero slices
  \(\Slice_A(B)\setminus\{0\}\).
  The algebra~\(B\) is nuclear if and only if~\(A\) is nuclear
  and~\(\B\) has Exel's approximation property.
\end{theorem}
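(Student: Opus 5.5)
By Lemma~\ref{lem:regular_vs_grading}, I will first replace the inclusion by a group-graded one. Since \(A\) is simple and \(A\subseteq B\) is regular, the nonzero slices form a group \(G\defeq\Slice_A(B)\setminus\{0\}\). The slices themselves form a saturated Fell bundle \(\B=(B_g)_{g\in G}\) with \(B_1=A\), and there is a surjection \(\Cst(\B)\onto B\). Each \(B_g\) is a Hilbert bimodule over the simple algebra \(A\), so for it aperiodic, properly outer and outer all mean the same (Figure~\ref{fig:diagram}). Moreover, \(B_g\cap A\) is an ideal in \(A\), hence \(0\) or \(A\). Since distinct nonzero slices \(M\neq A\) cannot contain \(A\) (otherwise \(M\supseteq A\) would force \(M=A\) because \(M=MM^*M\supseteq A M\) and \(M\subseteq\) the group element equal to \(A\)), we get \(B_g\cap A=0\), so \(B_g^\bot=B_g\) for \(g\neq1\). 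Hence the inclusion is topologically graded: the canonical pseudo-expectation kills \(B_g\) for \(g\neq1\) and is a genuine conditional expectation \(B\to A\) by Lemma~\ref{lem:genuine_expectation}, provided it exists on \(B\). This is the first technical point. Under the almost faithful pseudo-expectation hypothesis, I would argue that any pseudo-expectation \(E\) must vanish on an outer slice \(B_g\). Indeed, \(E|_{B_g}\colon B_g\to I(A)\) is an \(A\)-bimodule map, and for a simple \(A\) a nonzero bimodule map from an outer bimodule into \(I(A)\) would produce a local-multiplier implementation making \(B_g\cong A\). Then almost faithfulness gives \(B\cong\Cst_\red(\B)\), since \(E\) agrees with the canonical expectation on \(\bigoplus B_g\). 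In the nuclear case I would instead use that \(B\) is a quotient of \(\Cst(\B)\) with nuclear image, and appeal to Exel's approximation property to get \(\Cst(\B)=\Cst_\red(\B)\) with the natural expectation.

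The cycle of implications is then as follows. \ref{enu:regular_irreducible_for_simple5}\(\Leftrightarrow\)\ref{enu:regular_irreducible_for_simple4}: condition~\ref{enu:regular_irreducible_for_simple5} says \(\B\) is outer, and then \(B\cong\Cst_\red(\B)\) as above. Conversely, from~\ref{enu:regular_irreducible_for_simple4} the slices of \(\Cst_\red(\B)\) are exactly the \(B_g\), via the usual Fourier coefficient argument with \(E\). \ref{enu:regular_irreducible_for_simple4}\(\Rightarrow\)\ref{enu:regular_irreducible_for_simple2}: by Example~\ref{ex:group_Fell_bundles} the inclusion is aperiodic, and \(E_\red\) is a faithful conditional expectation. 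So Lemma~\ref{lem:aperiodic_implies_pinching} and Lemma~\ref{lem:pinching_plus_faithful} give \(\Cst\)\nb-irreducibility. \ref{enu:regular_irreducible_for_simple2}\(\Rightarrow\)\ref{enu:regular_irreducible_for_simple1} is Lemma~\ref{C-irreducible_implies_irreducible}. For \ref{enu:regular_irreducible_for_simple1}\(\Rightarrow\)\ref{enu:regular_irreducible_for_simple5}: if some \(B_g\), \(g\neq1\), is not outer, then \(B_g\cong A\) as Hilbert bimodules. The image of \(1\in A\) gives an element \(u\in\Mult(B)\) (a limit of \(u e_\lambda\) with \(e_\lambda\) an approximate unit of \(A\)) satisfying \(ua=au\) and \(u^*u=uu^*=1\). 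Since \(B_g\neq A\), \(u\notin\C1\), contradicting irreducibility. The strict convergence here is the delicate point; I would check it on the dense subspaces \(B_hA\).

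For \ref{enu:regular_irreducible_for_simple3}: Exel's noncommutative Cartan condition requires that any bimodule map \(J=A\to B\) lands in \(A\). Such maps correspond to the degree-\(1\) parts of elements of \(A'\cap\Mult(B)\), or, via \(E\) composed with Fourier coefficients, to bimodule maps \(A\to B_g\), which are nonzero only if \(B_g\cong A\). So \ref{enu:regular_irreducible_for_simple3}\(\Leftrightarrow\)\ref{enu:regular_irreducible_for_simple5}, using \cite{Kwasniewski-Meyer:Cartan}*{Corollary~7.4} once the faithful expectation is available. The addenda follow along the way. Aperiodicity comes from Example~\ref{ex:group_Fell_bundles}, and uniqueness of the pseudo-expectation from Proposition~\ref{prop:aperiodic_implies_supports}. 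The identification of \(\B\) with the nonzero slices follows from the construction. Nuclearity of \(\Cst_\red(\B)\) is equivalent to nuclearity of \(A\) together with the approximation property, by Exel's theorem for Fell bundles over discrete groups. I expect the main obstacle to be the first step: showing that an arbitrary almost faithful pseudo-expectation must vanish on the outer fibres, which identifies \(B\) with \(\Cst_\red(\B)\).
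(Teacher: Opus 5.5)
\textbf{There is a genuine gap at the step you flag yourself.} The structure of your argument is fine, and several steps are more self-contained than the paper's, which takes (4)\(\Rightarrow\)(3)\(\Rightarrow\)(2) and (1)\(\Rightarrow\)(5) from \cite{Kwasniewski-Meyer:Cartan}.

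\begin{itemize}
\item Your (4)\(\Rightarrow\)(2), via Lemmas~\ref{lem:aperiodic_implies_pinching} and~\ref{lem:pinching_plus_faithful}, is correct.
\item Your (1)\(\Rightarrow\)(5) is correct, and the strict-convergence worry is harmless. For a unitary bimodule isomorphism \(V\colon A\to B_g\), the identities \(\norm{\sum_i V(a_i)b_i}^2=\norm{\sum_{i,j}b_i^*a_i^*a_jb_j}\) and \(\norm{\sum_i b_iV(a_i)}^2=\norm{\sum_{i,j}b_ia_ia_j^*b_j^*}\) show that \(ab\mapsto V(a)b\) and \(ba\mapsto bV(a)\) define a unitary \(u\in A'\cap\Mult(B)\). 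Since \(uA=B_g\not\subseteq A\), \(u\) is not a scalar.
\item Your (3)\(\Leftrightarrow\)(5) is fine.
\end{itemize}

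\textbf{The gap: every pseudo-expectation \(E\colon B\to I(A)\) vanishes on \(B_g\) for \(g\neq1\).} This is what makes \(B\) topologically graded, and you do not prove it. Your stated reason, that a nonzero bimodule map from an outer bimodule into \(I(A)\) yields a local-multiplier implementation, is not an argument. For simple \(A\) we have \(\Locmult(A)=\Mult(A)\), and nothing you say puts the values of \(E|_{B_g}\) into \(\Mult(A)\). At best you get an intertwining element inside \(I(A)\). Getting from there back to \(B_g\cong A\) is the Hamana-type quasi-inner versus properly outer theory, which you do not supply.

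\textbf{The fix, essentially the paper's argument.} You already invoke this tool at the end. Let \(q\colon\Cst(\B)\onto B\) be the quotient map.
\begin{itemize}
\item \(\Cst(\B)\) is an exotic section algebra of the aperiodic bundle \(\B\), so \(A\subseteq\Cst(\B)\) is aperiodic by Example~\ref{ex:group_Fell_bundles}.
\item Hence \(A\subseteq\Cst(\B)\) has a unique pseudo-expectation by Proposition~\ref{prop:aperiodic_implies_supports}, namely the canonical expectation.
\item \(E\circ q\) is a pseudo-expectation for \(A\subseteq\Cst(\B)\), so it equals the canonical expectation. This gives \(\Cst(\B)\onto B\onto\Cst_\red(\B)\) with no extra hypothesis.
\item Then \(E\) is symmetric. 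So if \(E\) is almost faithful it is faithful, and \(B=\Cst_\red(\B)\).
\end{itemize}

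\textbf{The nuclear case needs the same step.} You treat it as if it bypassed the topological grading, but it does not. Nuclearity of a quotient of \(\Cst(\B)\) that is injective on the fibres does not give Exel's approximation property: the trivial representation of \(\Cst(G)\) for non-amenable \(G\) is a counterexample. You first need \(B\onto\Cst_\red(\B)\). Then \(\Cst_\red(\B)\) is nuclear, and \cite{Abadie-Buss-Ferraro:Amenability}*{Proposition~7.2} applies.

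\textbf{The uniqueness of the Fell bundle is not a routine Fourier-coefficient fact.} The claim is that, for any outer \(\B\) as in (4), the nonzero slices of \(\Cst_\red(\B)\) are exactly the fibres. Your cycle of equivalences closes without it, but the uniqueness addendum needs it. Outerness must enter: for \(A=\C\) and \(G\) abelian, every line \(\C u\) with \(u\) a unitary in \(\Cst_\red(G)\) is a slice.

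You can derive it from Proposition~\ref{prop:module_approximation}, as in the proof of Theorem~\ref{thm:Galois_correspondence_explained}. There one gets \(E_g(m)\in\overline{AmA}\). So a slice \(M\) with \(E_g(M)\neq0\) contains \(B_g\). It therefore equals \(B_g\), because the order on \(\Slice_A(B)\) is inclusion and \(\Slice_A(B)\setminus\{0\}\) is a group. Alternatively, cite \cite{Kwasniewski-Meyer:Cartan}*{Corollary~7.4}, as the paper does.
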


\begin{proof}
  Condition~\ref{enu:regular_irreducible_for_simple4} implies
  \ref{enu:regular_irreducible_for_simple3}
  by
  \cite{Kwasniewski-Meyer:Cartan}*{Corollary~7.4}, which also gives
  that the Fell bundle in~\ref{enu:regular_irreducible_for_simple4}
  is unique up to isomorphism.
  Since \(A\) is simple, \ref{enu:regular_irreducible_for_simple3}
  implies that \(A\subseteq B\) is aperiodic and equipped with a
  faithful conditional expectation, see
  \cite{Kwasniewski-Meyer:Cartan}*{Theorem 6.3}.
  The latter  implies~\ref{enu:regular_irreducible_for_simple2} by
  Proposition~\ref{prop:aperiodic_implies_supports}.
  Lemma~\ref{C-irreducible_implies_irreducible}
  shows that~\ref{enu:regular_irreducible_for_simple2}
  implies~\ref{enu:regular_irreducible_for_simple1}.
  Now assume~\ref{enu:regular_irreducible_for_simple1}.
  Then \cite{Kwasniewski-Meyer:Cartan}*{Proposition~4.5} implies that
  each slice in \(\Slice_A(B)\setminus \{A\}\) is outer as a Hilbert
  \(A\)\nb-bimodule.
  Since~\(A\) is simple, outerness is equivalent to aperiodicity.
  Hence \ref{enu:regular_irreducible_for_simple1}
  implies~\ref{enu:regular_irreducible_for_simple5}, and
  to close the cycle of implications we only need to show
  that~\ref{enu:regular_irreducible_for_simple5}
  implies~\ref{enu:regular_irreducible_for_simple4}.

  Assume~\ref{enu:regular_irreducible_for_simple5}.
  Turn \(\Slice_A(B)\setminus \{0\}\) into a Fell bundle
  \(\B=(B_g)_{g\in G}\) over a discrete group~\(G\) as in
  Lemma~\ref{lem:regular_vs_grading}.
  So there is a canonical surjective \Star{}homomorphism
  \(\Cst(\B)\onto B\).
  \ref{enu:regular_irreducible_for_simple5} means that~\(\B\) is
  aperiodic.
  Both inclusions \(A\subseteq B\) and \(A\subseteq \Cst(\B)\)
  are aperiodic by
  \cite{Kwasniewski-Meyer:Essential}*{Proposition~6.3}.
  Thus both have a unique pseudo-expectation by
  Proposition~\ref{prop:aperiodic_implies_supports}.
  Hence the composite of the map \(\Cst(\B)\onto B\) with the
  pseudo-expectation for \(A\subseteq B\) must be the canonical
  conditional expectation \(\Cst(\B)\to A\).
  Therefore, \(B\) is topologically graded, that is, we have canonical
  maps \(\Cst(\B) \onto B\onto \Cst_\red(\B)\).
  The unique pseudo-expectation~\(E\) on~\(B\) has to be the canonical
  conditional expectation \(B\onto \Cst_\red(\B)\to A\).
  So assuming that $B$ admits an almost faithful pseudo-expectation, means 
	that \(E\) is   faithful, which implies \(B= \Cst_\red(\B)\).
  If~\(B\) is nuclear, then \(\Cst_\red(\B)\) is nuclear as well as a
  quotient of~\(B\).
  Then~\(\B\) has Exel's approximation property by
  \cite{Abadie-Buss-Ferraro:Amenability}*{Proposition~7.2}.
  This implies \(\Cst(\B)=\Cst_\red(\B)\) and then \(B=\Cst_\red(\B)\).
  So \(B= \Cst_\red(\B)\) in both cases.

  This proves the first part of the assertion, and also the second part except
  the last statement about nuclearity. If~\(B\) is nuclear, then so is~\(A\) because of the conditional
  expectation \(E\colon B\to A\).
  If~\(A\) is nuclear, then
  \cite{Abadie-Buss-Ferraro:Amenability}*{Proposition~7.2} says
  that~\(B\) is nuclear if and only if the Fell bundle has Exel's
  approximation property.
\end{proof}

Now we have all ingredients to prove Theorem~\ref{thm:main_theorem}.

\begin{proof}[Proof of Theorem~\ref{thm:main_theorem}]
  \label{proof:main_theorem}
  Conditions \ref{enu:main1} and~\ref{enu:main2} in
  Theorem~\ref{thm:main_theorem} are equivalent for any
  \(\Cst\)\nb-inclusion \(A\subseteq B\) with simple~\(A\),
  and~\ref{enu:main3} implies them by
  Remark~\ref{rem:supporting_detecting}.
  Condition~\ref{enu:main4} implies~\ref{enu:main3} by
  Proposition~\ref{prop:aperiodic_implies_supports}.
  It is clear that \ref{enu:main4.5} implies~\ref{enu:main4}.
  Theorem~\ref{thm:regular_irreducible_for_simple} shows that the
  conditions \ref{enu:main8}, \ref{enu:main8.5} and~\ref{enu:main10}
  are equivalent and that they imply~\ref{enu:main4.5}.
  Condition~\ref{enu:main1} implies~\ref{enu:main8} by
  Lemma~\ref{C-irreducible_implies_irreducible}.
  Hence the conditions \ref{enu:main1}--\ref{enu:main8.5}
  and~\ref{enu:main10} are all equivalent.
	
  It follows from \cite{Kwasniewski-Meyer:Cartan}*{Corollary~7.4} that
  \ref{enu:main10}\(\Leftrightarrow
  \)\ref{enu:main9}\(\Leftrightarrow
  \)\ref{enu:main5}\(\Rightarrow
  \)\ref{enu:main8.5} and that all Fell bundles as in~\ref{enu:main10}
  are isomorphic.
  Remark~\ref{rem:outer_expectation_means_aperiodic} shows that
  \ref{enu:main4}\(\Leftrightarrow
  \)\ref{enu:main6}.
  Lemma~\ref{lem:aperiodic_implies_pinching} shows
  that~\ref{enu:main6} implies~\ref{enu:main7},
  and Lemma~\ref{lem:pinching_plus_faithful} shows that~\ref{enu:main7}
  implies~\ref{enu:main1}.
  This shows that all conditions \ref{enu:main1}--\ref{enu:main10} are
  equivalent.

  Topologically free actions are aperiodic by
  \cite{Kwasniewski-Meyer:Aperiodicity_pseudo_expectations}*{Corollary~4.8},
  so that~\ref{enu:main11} always implies~\ref{enu:main10}.
  If~\(A\) is separable (or if~\(A\) contains a separable essential ideal),
  then the converse also holds by
  \cite{Kwasniewski-Meyer:Aperiodicity}*{Theorem~8.1}, see
  Figure~\ref{fig:diagram}.
  By
  \cite{Kwasniewski-Meyer:Aperiodicity_pseudo_expectations}*{Theorem
    5.5} (see Proposition~\ref{prop:almost_extension_aperiodic}), if
  \(A\subseteq B\) has the almost extension property, then it is
  aperiodicity, and the converse holds if~\(B\) is separable.
  Hence~\ref{enu:main12} implies~\ref{enu:main4.5}
  and~\ref{enu:main13} implies~\ref{enu:main4}, and the converse
  implications hold if \(B\) is separable.
\end{proof}

Next, we explain the bijection in
Theorem~\ref{Thm:Galois_correspondence}.
We start with a general Galois connection. Recall that \(\Slice_A(B)\) is the unital inverse semigroup of slices
for a nondegenerate inclusion
\(A\subseteq B\).

\begin{lemma}
  \label{lem:Galois_connection}
  Let \(A\subseteq B\) be a nondegenerate \(\Cst\)\nb-inclusion.
  Let \(\Sub_A(B)\) be the set of intermediate subalgebras for
  \(A\subseteq B\) and let \(\Sub(\Slice_A(B))\) be the set of unital
  inverse subsemigroups of \(\Slice_A(B)\), both ordered by inclusion.
  The following natural maps form a monotone Galois connection:
  \begin{align*}
    \Sub_A(B)\ni C
    &\stackrel{}{\longmapsto} \Slice_A(C)\in \Sub(\Slice_A(B)),\\
    \Sub_A(B)\ni \clsp\bigcup S
    & \stackrel{}{\mathrel{\reflectbox{\ensuremath{\longmapsto}}}} S\in \Sub(\Slice_A(B)).
  \end{align*}
  This yields a Galois correspondence
  \[
    \Sub_A(B)\supseteq \Sub_A^{\textup{reg}}(B)
    \cong  \Slice_A(\Sub_A(B))\subseteq  \Sub(\Slice_A(B)),
  \]
  where \(\Sub_A^{\textup{reg}}(B)\) is the set of all intermediate
  \(\Cst\)\nb-subalgebras~\(C\) such that \(A\subseteq C\) is regular.
\end{lemma}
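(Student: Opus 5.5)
The plan is to check the two defining properties of a monotone Galois connection directly. Write \(\alpha(C)\defeq\Slice_A(C)\) and \(\beta(S)\defeq\clsp\bigcup S\). I will show that both maps are well defined and monotone, and that \(\beta(S)\subseteq C\) holds if and only if \(S\subseteq \alpha(C)\). The stated bijection then follows from general order theory: a Galois connection restricts to an order isomorphism between the images of the two closure operators \(\alpha\beta\) and \(\beta\alpha\).

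Well-definedness comes first. For \(C\in\Sub_A(B)\), the inclusion \(A\subseteq C\) is nondegenerate because \(A\subseteq B\) is, since \(AC\supseteq A A\) and \(A\) contains an approximate unit for \(B\). Hence \(\Slice_A(C)\) is a unital inverse semigroup by \cite{Kwasniewski-Meyer:Stone_duality}*{Proposition~6.26}. Its elements are closed \(A\)\nb-subbimodules of \(C\subseteq B\) consisting of normalisers, so \(\Slice_A(C)\subseteq\Slice_A(B)\), with the same product \(\clsp MN\), involution and unit \(A\). For a unital inverse subsemigroup \(S\), the span of \(\bigcup S\) is closed under multiplication because \(mn\in M\cdot N\in S\), and under adjoints because \(M^*\in S\); it also contains \(A\) since \(A\in S\). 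So \(\beta(S)\) is an intermediate \(\Cst\)\nb-algebra. Monotonicity of both maps is evident.

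The adjunction is short. If \(S\subseteq\Slice_A(C)\), then every \(M\in S\) lies in \(C\), so \(\beta(S)\subseteq C\). Conversely, if \(\beta(S)\subseteq C\) and \(M\in S\), then \(M\) is a closed \(A\)\nb-bimodule of normalisers contained in \(C\), hence \(M\in\Slice_A(C)\).

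For the Galois correspondence, I identify the closed elements on the algebra side, that is, those \(C\) with \(\beta\alpha(C)=C\). The equation \(\clsp\bigcup\Slice_A(C)=C\) is exactly the characterisation of regularity of the nondegenerate inclusion \(A\subseteq C\) recalled in Section~\ref{sec:Topologically graded regular C*-inclusions}: every normaliser \(c\) lies in the slice \(\overline{AcA}\), using an approximate unit of \(A\), and conversely slices consist of normalisers. So the \(\beta\alpha\)\nb-closed elements are precisely \(\Sub_A^{\textup{reg}}(B)\). On the semigroup side, the closed elements are \(\alpha(\Sub_A(B))\), the image of \(\alpha\), which the statement writes as \(\Slice_A(\Sub_A(B))\). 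The general fact that \(\alpha\) and \(\beta\) restrict to mutually inverse order isomorphisms between \(\beta(\Sub(\Slice_A(B)))\) and \(\alpha(\Sub_A(B))\) gives the claim, provided the image of \(\beta\) is \(\Sub_A^{\textup{reg}}(B)\). This holds because \(\beta\alpha\beta=\beta\), so images of \(\beta\) are exactly the \(\beta\alpha\)\nb-fixed points.

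The only step needing care is this identification of regularity with \(\clsp\bigcup\Slice_A(C)=C\) inside \(C\) rather than \(B\). It requires that the normalisers of \(A\) in \(C\) are the normalisers in \(B\) that lie in \(C\), and that \(\overline{AcA}\subseteq C\). Both are clear.
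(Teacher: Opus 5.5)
Your proposal is correct and is essentially the paper's proof: you establish the adjunction \(\clsp\bigcup S\subseteq C\) if and only if \(S\subseteq\Slice_A(C)\), and then identify the image of \(S\mapsto\clsp\bigcup S\) with the regular intermediate algebras via the criterion \(C=\clsp\bigcup\Slice_A(C)\). You additionally spell out well-definedness and this regularity criterion, which the paper treats as immediate; note that the remark \(AC\supseteq AA\) is superfluous, since nondegeneracy of \(A\subseteq C\) comes from the approximate unit of \(A\) for \(B\).
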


\begin{proof}
  It is immediate that the two maps are well-defined and
  order-preserving.
  Let \(C\in \Sub_A(B)\) and \(S\in \Sub(\Slice_A(B))\).
  Then \(S\subseteq \Slice_A(C)\) if and only if \(M\subseteq C\)
  for all \(M\in S\), if and only if \(\clsp\bigcup S\subseteq C\).
  Hence the two maps form a Galois connection.
  It follows that the restricted maps between the images of the two
  maps are inverse to each other.
  If \(S\in \Sub(\Slice_A(B))\) is a unital inverse subsemigroup, then
  \(\clsp\bigcup S\in \Sub_A^{\textup{reg}}(B)\).
  Conversely, any algebra in \(\Sub_A^{\textup{reg}}(B)\) is of this
  form.
  This yields the Galois correspondence.
\end{proof}


Consider a noncommutative Cartan subalgebra \(A\subseteq B\) and write
\(B= \Cst_\red(\B)\) for an outer  Fell bundle over the inverse
semigroup \( \Slice_A(B)\) as in
\cite{Kwasniewski-Meyer:Cartan}*{Theorem~4.3}.
Let~\(\check{A}\) be the primitive ideal space of~\(A\), equipped with
the induced action of~\(\Slice_A(B)\), and let \(\check{A}\rtimes  \Slice_A(B)\) be the
resulting transformation groupoid.
By \cite{Kwasniewski-Meyer:Cartan}*{Theorem~5.6}, we have a
natural isomorphism \(\Slice_A(B)\ni M\mapsto \check{M}\in
\Bis(\check{A}\rtimes  \Slice_A(B))\) of inverse semigroups.
We get the following  noncommutative generalisation of
\cite{Brown-Exel-Fuller-Pitts-Reznikoff:Intermediate}*{Theorem~3.3}.

\begin{proposition}
  \label{pro:ncCartan_range_Galois}
  Let \(A\subseteq B\) be a noncommutative Cartan subalgebra.
  Then there are bijective correspondences between
  \begin{enumerate}
  \item\label{enu:ncCartan_range_Galois1}%
    inverse subsemigroups~\(S\) of \(\Slice_A(B)\) coming from an
    intermediate \(\Cst\)\nb-algebra or belonging to
    \(\Slice_A(\Sub_A(B))\);
  \item\label{enu:ncCartan_range_Galois2}%
    open, wide subgroupoids~\(H\) of the transformation groupoid
    \(\check{A}\rtimes  \Slice_A(B)\);
  \item\label{enu:ncCartan_range_Galois3}%
    intermediate \(\Cst\)-algebras~\(C\) such that  \(A\subseteq C\)
    is noncommutative Cartan.
  \end{enumerate}
  These correspondence are given by the relations
  \(
  S=\Slice_A(C)\cong \Bis(H)
  \) and \(H=\bigcup_{M\in S} \check{M}\).
\end{proposition}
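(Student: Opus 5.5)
The plan is to obtain both bijections by restricting the Galois correspondence of Lemma~\ref{lem:Galois_connection} and composing it with the isomorphism \(\Slice_A(B)\cong\Bis(\check{A}\rtimes\Slice_A(B))\), \(M\mapsto\check{M}\), from \cite{Kwasniewski-Meyer:Cartan}*{Theorem~5.6}.

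\emph{Step 1: sets \ref{enu:ncCartan_range_Galois1} and \ref{enu:ncCartan_range_Galois3}.} Lemma~\ref{lem:Galois_connection} already gives a bijection \(C\mapsto\Slice_A(C)\) between \(\Sub_A^{\textup{reg}}(B)\) and the inverse subsemigroups of the form \(\Slice_A(C)\). So it suffices to show that an intermediate algebra \(C\) is regular over \(A\) if and only if \(A\subseteq C\) is noncommutative Cartan. The direction ``Cartan \(\Rightarrow\) regular'' holds by definition. For the converse, take \(C\) regular. I would restrict the faithful conditional expectation \(E\colon B\to A\) to \(C\); the restriction is still faithful, hence almost faithful. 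The remaining Cartan condition concerns \(A\)-bimodule maps \(J\to C\) defined on ideals \(J\subseteq A\). Any such map is also a bimodule map \(J\to B\), so by the Cartan property of \(A\subseteq B\) its range lies in \(A\). Hence \(A\subseteq C\) is noncommutative Cartan. This step looks routine.

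\emph{Step 2: sets \ref{enu:ncCartan_range_Galois1} and \ref{enu:ncCartan_range_Galois2}.} Given \(S=\Slice_A(C)\), I would put \(H\defeq\bigcup_{M\in S}\check{M}\) and check the following.
\begin{itemize}
\item \(H\) is a union of open bisections, so it is open.
\item \(H\) is wide, because \(A\in S\) and \(\check{A}\) is the unit space.
\item \(H\) is closed under products and inverses, because \(S\) is an inverse semigroup and \(M\mapsto\check{M}\) is multiplicative and preserves \(^*\).
\end{itemize}
Thus \(H\) is an open wide subgroupoid. Conversely, given such an \(H\), I would set \(S_H\defeq\setgiven{M\in\Slice_A(B)}{\check{M}\subseteq H}\). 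This is a unital inverse subsemigroup, and its bisections cover \(H\) since \(H\) is open. Then \(C_H\defeq\clsp\bigcup S_H\) is a regular intermediate algebra.

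\emph{Step 3: the two maps are mutually inverse.} Starting from \(H\), the identity \(\bigcup_{M\in S_H}\check{M}=H\) is clear. The main obstacle is the other direction: showing \(\Slice_A(C)=S\) when \(S=\Slice_A(C)\), and that \(\Slice_A(C_H)=S_H\). That is, I must show that a slice \(M\) with \(M\subseteq C_H\) satisfies \(\check{M}\subseteq H\); the inclusion \(S_H\subseteq\Slice_A(C_H)\) is immediate.

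To prove this I would use the conditional expectation. For \(M\subseteq C_H\), the products \(N^*M\) with \(N\in S_H\) are slices, and their sums are dense in \(M\). The idea is to use \(E(N^*m)\) together with the description of \(\check{M}\) via the ideals \(N^*M\cap A\), as in \cite{Kwasniewski-Meyer:Cartan}. This should show that \(\check{M}\) is covered by the sets \(\check{N}\cdot(\text{units})\) with \(N\in S_H\), that is, \(\check{M}\subseteq H\). The key input is that the Fell bundle over \(\Slice_A(B)\) is closed, i.e.\ \(E\) is a genuine faithful expectation. This gives a Fourier-type decomposition: \(m\in M\) is determined by the coefficients \(E(N^*m)\), and these vanish off \(\check{N}\cap\check{M}\). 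If this approximation argument gets delicate, I would instead cite the corresponding inverse-semigroup statement from \cite{Kwasniewski-Meyer:Cartan}*{Theorem~5.6} and mimic \cite{Brown-Exel-Fuller-Pitts-Reznikoff:Intermediate}*{Theorem~3.3}.
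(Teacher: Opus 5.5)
Your Steps~1 and~2 match the paper's argument. An intermediate inclusion is noncommutative Cartan exactly when it is regular, so Lemma~\ref{lem:Galois_connection} gives \textup{(1)}\(\leftrightarrow\)\textup{(3)}. Your \(\bigcup_{M\in\Slice_A(C)}\check{M}\) is the paper's \(\check{A}\rtimes\Slice_A(C)\), and your \(S_H\) is the preimage of \(\Bis(H)\) under \(M\mapsto\check{M}\). You also isolate the right crux, \(\Slice_A(C_H)\subseteq S_H\). But Step~3, which carries all the content, is only sketched, and as written it has three problems.

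\emph{The density claim is false.} \(N^*M\) is not contained in~\(M\), so sums of such products cannot be dense in~\(M\).

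\emph{Citing Theorem~5.6 for \(A\subseteq B\) alone is not enough.} It only gives \(\Slice_A(B)\cong\Bis(\check{A}\rtimes\Slice_A(B))\) and says nothing about which slices lie in~\(C_H\). The paper applies \cite{Kwasniewski-Meyer:Cartan}*{Theorem~5.6} to the smaller inclusion \(A\subseteq C_H\) with the grading~\(S_H\). Since \(M\mapsto\check{M}\) preserves meets, \(\check{M}\cap\check{N}\) corresponds to \(M\cap N\). So \(S_H\) is closed under intersections and hence a wide grading of~\(C_H\). Then \cite{Kwasniewski-Meyer:Cartan}*{Theorem~4.3(7)} gives \(C_H\cong A\rtimes S_H\), and Theorem~5.6 gives \(\Slice_A(C_H)\cong\Bis(\check{A}\rtimes S_H)\cong\Bis(H)\), that is, \(\Slice_A(C_H)=S_H\).

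\emph{The other composite is missing.} Your sentence about showing \(\Slice_A(C)=S\) when \(S=\Slice_A(C)\) is a tautology. What you need is this: for \(S=\Slice_A(C)\) and \(H=\bigcup_{N\in S}\check{N}\), every slice~\(M\) with \(\check{M}\subseteq H\) lies in~\(C\); otherwise \(C\mapsto H\) need not be injective. The paper gets this from Theorem~5.6 for \(A\subseteq C\). It also follows from joins. We have \(\check{M}=\bigcup_{N\in S}(\check{M}\cap\check{N})\), and \(\check{M}\cap\check{N}\) corresponds to \(M\cap N\). An order isomorphism preserves joins, and the join of subslices of~\(M\) is their closed span. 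Hence \(M=\clsp\bigcup_{N\in S}(M\cap N)\subseteq C\).

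Your expectation idea can be repaired, and then it is a more hands-on route than the paper's. It uses Theorem~5.6 only for \(A\subseteq B\) and needs no structure theorem for~\(C_H\). Two facts are required.
\begin{enumerate}
\item[(a)] \(E(Q)\subseteq Q\cap A\) for every slice~\(Q\). The map \(q_1q_2^*\mapsto E(q_1)q_2^*\) is a well-defined bounded \(A\)\nb-bimodule map from the ideal \(\overline{QQ^*}\) into~\(Q^*\). By the Cartan condition, \(E(Q)Q^*\subseteq Q^*\cap A=Q\cap A\). Hence \(E(Q)=\clsp E(Q)Q^*Q\subseteq Q\cap A\) by Lemma~\ref{lem:slices_annihilators}.
\item[(b)] The correct pieces are \(P_N\defeq M(M^*N\cap A)\) for \(N\in S_H\). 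Each lies in~\(M\) and in \(MM^*N\subseteq N\).
\end{enumerate}
Now approximate \(m\in M\subseteq C_H\) by \(\sum_i n_i\) with \(n_i\in N_i\in S_H\). Using (a), \(m^*m=E(m^*m)\approx\sum_i E(m^*n_i)\in\sum_i(M^*N_i\cap A)\). By polarisation, \(M^*M\subseteq\overline{\sum_{N\in S_H}(M^*N\cap A)}\). Therefore \(M=MM^*M=\clsp\bigcup_{N\in S_H}P_N\) is the join of the~\(P_N\). Since \(M\mapsto\check{M}\) preserves joins, \(\check{M}=\bigcup_N\check{P}_N\subseteq\bigcup_{N\in S_H}\check{N}\subseteq H\).
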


\begin{proof}
  The inclusion \(A\subseteq C\) of an intermediate subalgebra remains
  a noncommutative Cartan inclusion if and only if it is regular.
  Hence Lemma~\ref{lem:Galois_connection} implies that the relation
  \(S=\Slice_A(C)\) yields a bijective correspondence between
  the objects in \ref{enu:ncCartan_range_Galois1} and~\ref{enu:ncCartan_range_Galois3}.
  Let~\(C\) be as in \ref{enu:ncCartan_range_Galois3}.
  The transformation groupoid \(H\defeq \check{A}\rtimes \Slice_A(C)\) is naturally an
  open, wide  subgroupoid of \(\check{A}\rtimes \Slice_A(B)\) because
  \(\Slice_A(C) \subseteq \Slice_A(B)\) and both inverse semigroups
  have the same idempotents.
  Applying \cite{Kwasniewski-Meyer:Cartan}*{Theorem~5.6}  to both
  Cartan inclusions \(A\subseteq B\) and \(A\subseteq C\), we conclude
  that the isomorphism \(\Slice_A(B) \cong \Bis(\check{A}\rtimes
  \Slice_A(B))\) restricts to the isomorphism \(\Slice_A(C) \cong
  \Bis(H)\) and, in particular, \(H\) determines~\(C\).
  Conversely, let \(H\subseteq \check{A}\rtimes \Slice_A(B)\) be any
  open, wide  subgroupoid and let~\(S\) be the inverse subsemigroup of
  \(\Slice_A(B)\) corresponding to the inverse subsemigroup
  \(\Bis(H)\) of  \(\Bis(\check{A}\rtimes  \Slice_A(B))\cong
  \Slice_A(B)\).
  Then \(C\defeq \clsp\bigcup S\) is a \(\Cst\)\nb-subalgebra of~\(B\)
  containing~\(A\) as a noncommutative Cartan subalgebra.
  By construction, \(S\) is an inverse-semigroup grading of~\(C\)
  and~\(S\) is closed under intersections.
  Hence~\(S\) is a wide grading of~\(C\) and so we have a canonical
  isomorphism \(C\cong A\rtimes S\) by the characterisation~(7) in 	
  \cite{Kwasniewski-Meyer:Cartan}*{Theorem~4.3}.
  Therefore, \(\Slice_A(C) \cong \Bis(\check{A}\rtimes S)\) again by
  \cite{Kwasniewski-Meyer:Cartan}*{Theorem~5.6}.
  Since \(\check{A}\rtimes S\cong \check{A}\rtimes \Bis(H)\cong H \),
  we get \(\Slice_A(C) \cong\Bis(H)\) and \(\Slice_A(C)=S\).
  This proves the correspondence between the objects in
  \ref{enu:ncCartan_range_Galois2}
  and~\ref{enu:ncCartan_range_Galois3}.
\end{proof}

Assume now that~\(A\) is a  simple Cartan subalgebra of \(B\).
Then \(\check{A}\) is a point and \(G\defeq\check{A}\rtimes \Slice_A(B)\) is
a group that can be identified with \(\Slice_A(B)\setminus \{0\}\).
So the intermediate subgroupoids in
Proposition~\ref{pro:ncCartan_range_Galois} are the same as subgroups
of~\(G\), and the Galois correspondence in
Lemma~\ref{lem:Galois_connection} gives a bijection between the sets
of regular intermediate subalgebras \(A\subseteq C\subseteq B\) and of
subgroups of~\(G\).
Theorem~\ref{Thm:Galois_correspondence}
implies that in fact  all intermediate subalgebras for \(A\subseteq B\) are
regular.
We now prove a slightly stronger result.

\begin{theorem}
  \label{thm:Galois_correspondence_explained}
  Let \(A\subseteq B\) be a regular topologically graded
  \(\Cst\)\nb-inclusion where~\(A\) is simple.
  Equivalently, \(B\) is an exotic section \(\Cst\)\nb-algebra of a
  saturated Fell bundle \(\B=(B_g)_{g\in G}\) over a discrete
  group~\(G\) with a simple unit fibre \(A=B_1\).
  Then there is a natural lattice isomorphism
  \(\Slice_A(\Sub_A(B))\cong \Sub(G)\), where \(\Sub(G)\) is the
  lattice of subgroups of~\(G\).
  The following conditions are equivalent:
  \begin{enumerate}
  \item \label{enu:Galois_correspondence_explained1}%
    all intermediate inclusions \(A\subseteq C\) are regular, that is,
    \(\Sub_A(B)=\Sub_A^{\textup{reg}}(B)\);
  \item \label{enu:Galois_correspondence_explained1.25}%
    the map \(\Sub(G)\ni H\mapsto \clsp\bigcup_{h\in H} B_h\in
    \Sub_A(B)\)  is a lattice isomorphism \(\Sub(G)\cong \Sub_A(B)\);
  \item \label{enu:Galois_correspondence_explained1.5}%
    \(B=\Cst_\red(\B)\)  and \(\Sub(G)\ni H \mapsto
    \Cst_\red(\B|_{H})\in \Sub_A(B)\) is a bijection;
  \item \label{enu:Galois_correspondence_explained3}%
    \(B=\Cst_\red(\B)\)  and~\(\B\) is outer;
  \item \label{enu:Galois_correspondence_explained2}%
    \(A\subseteq B\) is \(\Cst\)\nb-irreducible.
  \end{enumerate}
\end{theorem}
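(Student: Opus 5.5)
The plan is to work throughout with the canonical \(S\)\nb-canonical pseudo-expectation \(E\colon B\to I(A)\) and its faithful descent \(E_\ess\colon B_\ess\to I(A)\). Here \(S=\{0\}\cup\setgiven{B_g}{g\in G}\). Since~\(A\) is simple, each \(B_g\cap A\) is \(0\) or~\(A\), so \(B_g^\bot\) is \(0\) or~\(B_g\). Hence \(E\) is the identity on \(A=B_1\) and vanishes on every \(B_g\) with \(g\neq 1\); here we use that \(B_g\cap A=0\) for \(g\neq1\), because \(E\) is injective on fibres.

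\textbf{The lattice isomorphism \(\Slice_A(\Sub_A(B))\cong\Sub(G)\).} By Lemma~\ref{lem:regular_vs_grading}, the nonzero slices form a group~\(G'\), and \(g\mapsto B_g\) is an injective group homomorphism \(G\to G'\). The key claim is that every \(M\in\Slice_A(C_H)\setminus\{0\}\), where \(C_H\defeq\clsp\bigcup_{h\in H}B_h\), equals some \(B_h\) with \(h\in H\).
\begin{itemize}
\item First, since \(M\subseteq C_H\) and \(M^*M\subseteq A\) is nonzero, some product \(M^*B_h\) with \(h\in H\) has nonzero image under~\(E\); otherwise \(E(M^*M)=0\) by density and linearity, which is impossible.
\item Now \(M^*B_h\) is a slice, and its nonzero image under~\(E\) forces \(M^*B_h\cap A\neq0\). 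Simplicity of~\(A\) then gives \(M^*B_h\cap A=A\), and we must show that this forces \(M^*B_h=A\) in the group~\(G'\). I expect this sub-step to be delicate: one has to check that no slice properly contains its intersection with~\(A\) while that intersection equals~\(A\). This is then used to get \(M=B_h\).
\end{itemize}
Conversely, if \(B_g\subseteq C_H\) with \(g\notin H\), then \(b^*b\in A\) and \(E(b^*b)=0\) for \(b\in B_g\), by approximating~\(b\) from \(C_H\). Faithfulness on~\(A\) then gives \(B_g=0\), a contradiction. Thus \(H\mapsto\Slice_A(C_H)\) is an order isomorphism onto the closed subsemigroups, and it is inverse to \(S\mapsto\setgiven{g}{B_g\in S}\). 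Lemma~\ref{lem:Galois_connection} then finishes this part.

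\textbf{The equivalences.} The easy implications are as follows. Condition~\ref{enu:Galois_correspondence_explained1.25} implies~\ref{enu:Galois_correspondence_explained1}, since every \(C_H\) is regular. Condition~\ref{enu:Galois_correspondence_explained1} implies~\ref{enu:Galois_correspondence_explained1.25} by the lattice isomorphism just proved. Condition~\ref{enu:Galois_correspondence_explained3} implies~\ref{enu:Galois_correspondence_explained2} by Theorem~\ref{thm:regular_irreducible_for_simple}. Conversely, \ref{enu:Galois_correspondence_explained2} implies~\ref{enu:Galois_correspondence_explained3} by Theorem~\ref{thm:main_theorem}\ref{enu:main10}, combined with uniqueness of the Fell bundle as the bundle of nonzero slices. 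The main implication is \ref{enu:Galois_correspondence_explained2}\(\Rightarrow\)\ref{enu:Galois_correspondence_explained1.5}, which I prove via Proposition~\ref{prop:module_approximation}. Under~\ref{enu:Galois_correspondence_explained2} the inclusion is aperiodic with a faithful conditional expectation \(E\colon B=\Cst_\red(\B)\to A\). Let~\(C\) be intermediate, let \(b\in C\), and let \(\xi\in B_{g^{-1}}\), \(\eta\in B_g\). Applying Proposition~\ref{prop:module_approximation} to~\(\xi b\) gives
\[
  \eta E(\xi b)\in\overline{\eta A\xi bA}\subseteq\overline{ACA}=C,
\]
using \(B_gB_{g^{-1}}\subseteq A\). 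By saturation, the \(g\)\nb-th Fourier coefficient of~\(b\) lies in~\(C\).

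Next, if this coefficient is nonzero, then \(C\cap B_g\) is a nonzero sub-bimodule of a Hilbert bimodule over the simple algebra~\(A\), so it equals~\(B_g\). Therefore the Fourier support of every \(b\in C\) lies in \(H\defeq\setgiven{g}{B_g\subseteq C}\), which is a subgroup. We clearly have \(\Cst_\red(\B|_H)\subseteq C\). For the reverse inclusion, let \(E_H\) denote the conditional expectation onto \(\Cst_\red(\B|_H)\). For \(b\in C\), the element \(b-E_H(b)\) has all Fourier coefficients zero. Hence \(E\bigl((b-E_H(b))^*(b-E_H(b))\bigr)=0\), and faithfulness of~\(E\) gives \(b=E_H(b)\). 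Injectivity of \(H\mapsto\Cst_\red(\B|_H)\) follows from the first part. This also gives~\ref{enu:Galois_correspondence_explained1}, and hence~\ref{enu:Galois_correspondence_explained1.25}. The implication \ref{enu:Galois_correspondence_explained1.5}\(\Rightarrow\)\ref{enu:Galois_correspondence_explained1} is immediate.

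\textbf{Main obstacle: \ref{enu:Galois_correspondence_explained1}\(\Rightarrow\)\ref{enu:Galois_correspondence_explained3}.} Assuming~\ref{enu:Galois_correspondence_explained1}, I will produce an irregular intermediate algebra whenever~\ref{enu:Galois_correspondence_explained3} fails. There are two cases.
\begin{itemize}
\item If \(\Null_E\neq0\), take \(C\defeq A+\Null_E\). Any slice of~\(C\) is some~\(B_g\). If \(g\neq1\), then \(B_g\subseteq A+\Null_E\) would make \(E\) vanish on~\(B_g\) modulo~\(A\), and then \(B_g\cap A=0\) forces \(B_g\subseteq\Null_E\). But \(E(B_g^*B_g)=E(A)\neq0\), a contradiction. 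So \(\Slice_A(C)=\{0,A\}\), and regularity would force \(C=A\), which is false.
\item If some \(B_g\) with \(g\neq1\) is not outer, then \(B_g\cong A\) as Hilbert bimodules. This yields a unitary multiplier \(u\in A'\cap\Mult(B)\) with \(B_g=uA\). For a suitable self-adjoint function~\(h\) of~\(u\), the algebra \(\Cst(A\cup hA)\) is intermediate but is not a span of fibres~\(B_k\). This contradicts~\ref{enu:Galois_correspondence_explained1} via the lattice isomorphism.
\end{itemize}
Making this last construction precise, namely finding \(h\) so that the algebra it generates is genuinely not of the form~\(C_H\), is where I expect the real work to lie.
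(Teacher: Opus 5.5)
You handle three parts correctly, and in each case your argument matches the paper's: \ref{enu:Galois_correspondence_explained3}\(\Leftrightarrow\)\ref{enu:Galois_correspondence_explained2}; \ref{enu:Galois_correspondence_explained2}\(\Rightarrow\)\ref{enu:Galois_correspondence_explained1.5}, via Fourier coefficients from Proposition~\ref{prop:module_approximation}, an approximate unit \(\sum_i c_ic_i^*\) of \(B_gB_g^*=A\), and the expectation~\(E_H\); and the injectivity of \(H\mapsto C_H\).

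\textbf{Gap in the first part.} The gap is your step that a nonzero image of \(M^*B_h\) under~\(E\) forces \(M^*B_h\cap A\neq0\). A topological grading gives \(E(N^\bot)=0\) only for~\(N\) in the grading \(S=\{0\}\cup\setgiven{B_g}{g\in G}\), and the slice \(M^*B_h\) need not lie in~\(S\). The sub-step you flagged is harmless: for a slice~\(N\) with \(N\cap A=A\), Lemma~\ref{lem:slices_annihilators} gives \(N=NA=N(N\cap A)=N\cap A=A\). The real problem is that your claim is false.
\begin{enumerate}
\item Take \(A=\C\subseteq B=\Cst(\Z/2)\cong\C^2\) with \(B_1=\C(1,1)\), \(B_g=\C(1,-1)\) and~\(E\) the normalised trace. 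Then \(M=\C(1,i)\) is a slice of \(C_G=B\) that is not a fibre, and \(E(M^*B_1)\neq0\) although \(M^*B_1\cap A=0\).
\item For \(\C\subseteq\Cst(\Z/3)\cong\C^3\) there are five intermediate algebras, all regular because they are commutative and spanned by unitaries, but \(\Z/3\) has only two subgroups.
\end{enumerate}
So the lattice isomorphism with the given~\(G\), your step \ref{enu:Galois_correspondence_explained1}\(\Rightarrow\)\ref{enu:Galois_correspondence_explained1.25}, and your Case~1 (which uses ``any slice of~\(C\) is some~\(B_g\)'') all rest on a false lemma.

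\textbf{Case~2 and the implication itself.} Case~2 cannot be repaired. The algebra \(\Cst(A\cup hA)\) is the closed span of the slices \(Aw\) for unitaries \(w\in\Cst(1,h)\subseteq A'\cap\Mult(B)\). It is therefore regular and never contradicts~\ref{enu:Galois_correspondence_explained1}. In fact \ref{enu:Galois_correspondence_explained1}\(\Rightarrow\)\ref{enu:Galois_correspondence_explained3} is false as stated. In \(\C\subseteq\Cst(\Z/2)\) the only intermediate algebras are \(\C\) and~\(\C^2\), so \ref{enu:Galois_correspondence_explained1}, \ref{enu:Galois_correspondence_explained1.25} and~\ref{enu:Galois_correspondence_explained1.5} hold. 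Yet \(B_g\cong A\) is inner and \(\C^2\) is not simple.

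\textbf{The paper's proof.} The paper's proof has the matching gap. It tacitly takes \(G\defeq\Slice_A(B)\setminus\{0\}\). In \ref{enu:Galois_correspondence_explained1}\(\Rightarrow\)\ref{enu:Galois_correspondence_explained2} it infers \(B_h\subseteq J\) from \(B_h\subseteq A+J\), which fails for \(J=\C(1,0)\) and \(B_h=\C(1,-1)\).

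\textbf{A repair.} The statement holds if~\(B\) is required to be topologically graded by all of \(\Slice_A(B)\). Then \(E\) vanishes on every slice \(N\neq A\), and your first-part argument works verbatim. Your Case~1 then shows that~\ref{enu:Galois_correspondence_explained1} forces \(\Null_E=0\), that is, \(B=\Cst_\red(\B)\). Outerness comes from the grading itself rather than from a Case~2-type construction:
\begin{enumerate}
\item Suppose \(uA=B_g\) with \(g\neq1\) and \(u\in A'\cap\Mult(B)\) unitary.
\item Each \(E(u^ka)\) is \(0\) or a scalar multiple of~\(a\). Hence \(E(p(u)a)=\tau(p)a\) for a functional~\(\tau\) on \(\Cont(\sigma(u))\) with \(\norm{\tau}\le1=\tau(1)\).
\item Any nonconstant \(f\colon\sigma(u)\to\T\) with \(\norm{f-1}_\infty<1\) then gives a slice \(f(u)A\neq A\) with \(E(f(u)A)\neq0\), a contradiction.
\end{enumerate}
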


\begin{proof}
  The equivalence in the first two sentences follows from
  Proposition~\ref{prop:topological_grading_characterisation}, where
  \(G\defeq \Slice_A(B)\setminus \{0\}\), see also
  Lemma~\ref{lem:regular_vs_grading}.
  For every regular intermediate \(\Cst\)\nb-subalgebra \(C\in
  \Sub_A(B)\), the associated inverse semigroup is of the form
  \(\Slice_A(C)= H\cup \{0\}\) where \(H=\Slice_A(C)\setminus \{0\}\)
  is a subgroup of~\(G\).
  This gives the asserted isomorphism \(\Slice_A(\Sub_A(B))\cong
  \Sub(G)\).

  The Galois correspondence in Lemma~\ref{lem:Galois_connection}
  yields the lattice isomorphism \(\Sub(G)\cong
  \Slice_A(\Sub_A(B))\cong\Sub_A^{\text{reg}}(B)\).
  Thus it gives \(\Sub(G)\cong \Sub_A(B)\) if and only if
  \(\Sub_A^{\text{reg}}(B)=\Sub_A(B)\).
  This explains the equivalence between the conditions
  \ref{enu:Galois_correspondence_explained1}
  and~\ref{enu:Galois_correspondence_explained1.25}.
  Condition~\ref{enu:Galois_correspondence_explained1.5}
  implies~\ref{enu:Galois_correspondence_explained1.25} because we may
  naturally identify \(\Cst_\red(\B|_H)\) with \(\clsp\bigcup_{h\in H}
  B_h\subseteq \Cst_\red(\B)\) for any \(H\in \Sub(G)\).
  Conditions \ref{enu:Galois_correspondence_explained3}
  and~\ref{enu:Galois_correspondence_explained2} are equivalent by
  Theorem~\ref{thm:main_theorem}.
  Thus we only need to show the implications
  \ref{enu:Galois_correspondence_explained1}\(\Rightarrow
  \)\ref{enu:Galois_correspondence_explained2} and
  \ref{enu:Galois_correspondence_explained3}\(\Rightarrow
  \)\ref{enu:Galois_correspondence_explained1.5}.

  First, we prove that~\ref{enu:Galois_correspondence_explained1}
  implies~\ref{enu:Galois_correspondence_explained2}.
  We first show that~\(A\) detects ideals in~\(B\).
  Let~\(J\) be an ideal in~\(B\) with \(J\cap A=0\).
  Then  \(C\defeq J+A\) is an intermediate \(\Cst\)\nb-subalgebra.
  It is graded by our
  assumption~\ref{enu:Galois_correspondence_explained1}, and so
  \(C=\overline{\bigoplus_{h\in H} B_h}\) with \(H\defeq
  \setgiven{h\in G}{B_h\subseteq C}\).
  If there were \(h\in H\setminus\{1\}\), then \(B_h\subseteq J\) and
  so \(B_{h}^*B_{h}=B_{h^{-1}} B_h\subseteq J\cap A=0\), which implies
  that \(B_h=0\) and this  is impossible (because \(\B\) is saturated).
  Hence \(C=A\) and \(J=0\).
  Thus~\(A\) detects ideals in~\(B\) as claimed and so~\(B\) is
  simple.
  Now if~\(C\) is any intermediate \(\Cst\)\nb-algebra, then our
  assumption implies that  \(\Sub_A(C)=\Sub_A^{\textup{reg}}(C)\).
  Hence the argument above may also be applied to the inclusion \(A\subseteq
  C\) and shows that~\(C\) is simple.
  Thus we get~\ref{enu:Galois_correspondence_explained2}.

  Next, we prove that~\ref{enu:Galois_correspondence_explained3}
  implies~\ref{enu:Galois_correspondence_explained1.5}.
  For each \(g\in G\), there is a unique contractive linear map
  \(E_g\colon \Cst_\red(\B)\onto B_g\subseteq \Cst_\red(\B)\) such
  that \(b_g^*E_g(b)=E(b_g^*b)\) for all \(b_g\in B_g\) and \(b\in
  \Cst_\red(\B)\).
  Such a map is an idempotent \(A\)\nb-bimodule map,
  and for each \(b\in  \Cst_\red(\B)\) the ``Fourier coefficients''
  \((E_g(b))_{g\in G}\) determine~\(b\) uniquely.
  Fix \(b\in  \Cst_\red(\B)\) and \(g\in G\).
  We claim that \(E_g(b)\in \overline{A b A}\).
  Let \(\varepsilon >0\).
  Then \cite{Brown-Mingo-Shen:Quasi_multipliers}*{Remark~1.9} provides
  elements \(c_1,\dotsc,c_n\in B_g\) with \(\norm*{\sum_{i=1}^n c_i
    c_i^*E_g(b)- E_g(b)}< \varepsilon\).
  For each \(i=1,\dotsc, n\),
  Proposition~\ref{prop:module_approximation} provides elements
  \((a_{i,j})_{j=1}^{n_i}\), \((b_{i,j})_{j=1}^{n_i}\) in~\(A\) with
  \(\norm*{E(c_i^*b)- \sum_{j=1}^{n_i} a_{i,j}(c_i^*b)b_{i,j}}<
  \frac{\varepsilon}{n\norm{c_i}}\).
  Accordingly,
  \begin{multline*}
    \norm*{\sum_{i=1}^n\sum_{j=1}^{n_i} (c_i a_{i,j}c_i^*)b b_{i,j}-
      E_g(b)}\\
    <
    \sum_{i=1}^n \norm{c_i} \norm*{\sum_{j=1}^{n_i}  a_{i,j}(c_i^*b) b_{i,j}- E(c_i^*b)}
    +\norm*{\sum_{i=1}^n c_i E(c_i^*b)- E_g(b)}
  \end{multline*}
  Both terms are at most~\(\varepsilon\) because \(c_i E(c_i^*b) = c_i
  c_i^* E_g(b)\).
  This proves the claim because \(c_i a_{i,j}c_i^*\in A\) for
  \(i=1,\dotsc,n\) and \(\varepsilon>0\) was arbitrary.

  Now we fix an intermediate \(\Cst\)\nb-algebra \(A\subseteq
  C\subseteq \Cst_\red(\B)\).
  Let \(g\in G\).
  Since~\(A\) is simple, \(B_g=\overline{Ab_g A}\) for any nonzero
  \(b_g\in B_g\).
  Thus, the claim above implies that \(E_g(C)\neq 0\) if and only if
  \(B_g\subseteq C\).

  The set \(H\defeq\setgiven{g\in G}{B_g\subseteq C}=\setgiven{g\in
    G}{ E_g(C)\neq 0}\) is a subgroup of~\(G\) and
  \(\Cst_\red(\B|_H)\subseteq C\).
  We claim that \(\Cst_\red(\B|_H)= C\).
  There is a conditional expectation \(E_H\colon  \Cst_\red(\B)\to
  \Cst_\red(\B|_H)\subseteq \Cst_\red(\B)\) such that
  \(E_H(B_g)=\{0\}\) for \(g\in G\setminus H\).
  The map~\(E_H\) is a \(\Cst_\red(\B|_H)\)-bimodule map and
  \(E=E\circ E_H\).
  If \(b\in B\), \(h\in H\), and \(c_h\in B_h\),  then
  \[
    c_h^*E_h(b)
    = E(c_h^*b)
    = E(E_H(c_h^*b))
    = E(c_h^* E_H(b))
    = c_h^* E_h(E_H(b)).
  \]
  This shows that \(E_h=E_h\circ E_H\) for every \(h\in H\).
  If \(b\in C\) and \(g\in G\setminus H\), then
  \(E_g(b)=0=E_g(E_H(b))\).
  Thus \(E_g(b)=E_g(E_H(b))\) holds for all \(g\in G\), and this
  implies \(b=E_H(b)\in \Cst_\red(\B|_H)\) because the Fourier
  coefficients determine an element of \(\Cst_\red(\B|_H)\) uniquely.
\end{proof}

\begin{corollary}
  \label{cor:general_consequences_of_Galois}
  Let \(A\subseteq B\) be a regular \(\Cst\)\nb-irreducible inclusion.
  \begin{enumerate}
  \item \label{enu:general_consequences_of_Galois1}%
    There is a bijection between intermediate subalgebras and
    subgroups of \(\Slice_A(B)\setminus\{0\}\), which is a group.
  \item \label{enu:general_consequences_of_Galois2}%
    An intermediate \(\Cst\)\nb-subalgebra \(A\subseteq C \subseteq B\) is purely infinite
    if and only if every nonzero element in~\(A^+\) is infinite in~\(C\).
  \item \label{enu:general_consequences_of_Galois3}%
    If~\(A\)  is purely infinite, then all intermediate
    \(\Cst\)\nb-subalgebras \(A\subseteq C \subseteq B\) are purely
    infinite.
  \item \label{enu:general_consequences_of_Galois4}%
    If~\(B\) is nuclear, then all intermediate \(\Cst\)\nb-subalgebras
    \(A\subseteq C \subseteq B\) are nuclear.
  \end{enumerate}
\end{corollary}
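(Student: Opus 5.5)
By Theorem~\ref{thm:main_theorem}, a regular \(\Cst\)\nb-irreducible inclusion \(A\subseteq B\) has simple~\(A\). (Lemma~\ref{C-irreducible_implies_irreducible} needs~\(A\) itself simple; \(A\) is an intermediate algebra, so it is simple by definition.) Moreover \(B\cong\Cst_\red(\B)\) for an outer saturated Fell bundle \(\B=(B_g)_{g\in G}\), where \(G=\Slice_A(B)\setminus\{0\}\) is a group by Lemma~\ref{lem:regular_vs_grading}. Since \(\Cst_\red(\B)\) is topologically graded, Theorem~\ref{thm:Galois_correspondence_explained} applies. Its condition~\ref{enu:Galois_correspondence_explained2} holds, so \(H\mapsto\clsp\bigcup_{h\in H}B_h\) is a bijection \(\Sub(G)\cong\Sub_A(B)\), which is~\ref{enu:general_consequences_of_Galois1}. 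A byproduct we will reuse: every intermediate \(C\) equals \(\Cst_\red(\B|_H)\) with \(\B|_H\) outer, so \(A\subseteq C\) is again regular and \(\Cst\)\nb-irreducible. Hence \(A\subseteq C\) is aperiodic with a faithful conditional expectation \(E_C\colon C\to A\), which is pinching by Lemma~\ref{lem:aperiodic_implies_pinching}, and \(A\) supports~\(C\) by Proposition~\ref{prop:aperiodic_implies_supports}.

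For~\ref{enu:general_consequences_of_Galois2}, recall that \(C\) is simple. A simple \(\Cst\)\nb-algebra is purely infinite if and only if every nonzero positive element is infinite, equivalently properly infinite. So the forward direction is immediate. For the converse, let \(b\in C^+\setminus\{0\}\). Because \(A\) supports~\(C\), there is \(a\in A^+\setminus\{0\}\) with \(a\precsim b\) in~\(C\). By assumption \(a\) is infinite in~\(C\). The main obstacle is to pass from this to \(b\) being infinite. The cleanest route is to use simplicity to also get \(b\precsim a\oplus\dotsb\oplus a\), and then argue that infiniteness propagates upward along \(a\precsim b\). First, though, I will upgrade ``infinite'' for \(a\) to ``properly infinite'' using the standard fact that in a simple \(\Cst\)\nb-algebra, a nonzero positive element all of whose nonzero hereditary pieces are infinite generates properly infinite elements. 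Concretely, I would apply the hypothesis to elements \((a-\varepsilon)_+\in A^+\) and to elements of the hereditary subalgebra \(\overline{aAa}\), which is simple and nonzero. This yields that every nonzero hereditary subalgebra of~\(C\) contains an infinite projection-like element, namely an element of~\(A^+\) Cuntz below~\(b\). That is exactly the Kirchberg--Rørdam characterisation of pure infiniteness for simple algebras: every nonzero hereditary subalgebra contains an infinite projection, or, in the nonunital Cuntz-semigroup formulation, every nonzero positive element dominates an infinite element. So I would conclude by citing that characterisation rather than doing the Cuntz-comparison bookkeeping by hand.

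Part~\ref{enu:general_consequences_of_Galois3} follows from~\ref{enu:general_consequences_of_Galois2}. If \(A\) is purely infinite, every \(a\in A^+\setminus\{0\}\) is properly infinite in~\(A\). The Cuntz relations witnessing this persist in \(C\supseteq A\), so \(a\) is infinite in~\(C\).

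For~\ref{enu:general_consequences_of_Galois4}, Theorem~\ref{thm:regular_irreducible_for_simple} gives that \(A\) is nuclear and \(\B\) has Exel's approximation property. The restriction \(\B|_H\) inherits the approximation property: restrict the approximating functions \(G\to A\) to~\(H\); the required uniform bound and convergence survive restriction. Since \(C=\Cst_\red(\B|_H)\) by part~\ref{enu:general_consequences_of_Galois1}, and \(A\subseteq C\) again satisfies the hypotheses of Theorem~\ref{thm:regular_irreducible_for_simple}, that theorem's nuclearity criterion shows~\(C\) is nuclear. Alternatively, one could use the conditional expectation \(E_H\colon B\to C\) from the proof of Theorem~\ref{thm:Galois_correspondence_explained}, since the range of a conditional expectation on a nuclear algebra is nuclear.
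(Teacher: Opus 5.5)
Parts (1) and (3) follow the paper. For (2) and (4) you take different routes from the paper's. Both are viable, but each write-up has a step to fix.

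For (2), the paper applies Theorem~4.10 of Kwa\'sniewski--Szyma\'nski to the aperiodic Fell bundle $\B|_H$. That first requires the Galois description $C=\Cst_\red(\B|_H)$. You argue instead from two facts: $C$ is simple, and $A$ supports $C$. The second is condition~\ref{enu:main3} of Theorem~\ref{thm:main_theorem}, so your detour through pinching expectations is unnecessary. Your route is more self-contained and proves (2) for any inclusion $A\subseteq C$ with $C$ simple and supported by $A$, with no grading needed.

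However, the closing step as written does not fit. You produce no projections, so the criterion ``every hereditary subalgebra contains an infinite projection'' is not the one to invoke. The ``upgrade'' through $(a-\varepsilon)_+$ and $\overline{aAa}$ is also beside the point. The clean argument is short.
\begin{itemize}
\item Suppose $a\oplus x\precsim a$ with $0\neq x\in C^+$. Iterating gives $a\oplus x^{\oplus n}\precsim a$. Simplicity of $C$ gives $(a-\varepsilon)_+\precsim x^{\oplus n}$ for some $n$. Hence $a\oplus(a-\varepsilon)_+\precsim a$ for all $\varepsilon>0$, so $a\oplus a\precsim a$ and $a$ is properly infinite.
\item Now take $b\in C^+\setminus\{0\}$ and $0\neq a\in A^+$ with $a\precsim b$. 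Simplicity gives $(b-\varepsilon)_+\precsim a^{\oplus n}\precsim a$ for every $\varepsilon>0$. Hence $b\precsim a\precsim b$, and $b\oplus b\precsim a\oplus a\precsim a\precsim b$.
\end{itemize}
So every nonzero positive element of $C$ is properly infinite, which is Kirchberg--R\o{}rdam's characterisation of pure infiniteness.

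For (4), your first justification is wrong. Restricting Exel's approximating functions to $H$ keeps the norm bound but not the convergence. Take the trivial bundle over $G=\Z$, $H=\{0\}$ and $a_n=n^{-1/2}\chi_{\{0,\dots,n-1\}}$. The restricted sums are $a_n(0)^*b\,a_n(0)=b/n\to 0$ instead of $b$. The fact that the approximation property passes to subgroups, which the paper uses, is true, but it needs more than naive restriction.

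Your alternative is correct and more elementary than the paper's argument. $E_H\colon B\to C$ is a conditional expectation. Compose completely positive approximations of $\mathrm{id}_B$ through matrix algebras with $C\hookrightarrow B$ on one side and $E_H$ on the other. This shows that $\mathrm{id}_C$ is nuclear. The argument needs neither the approximation property nor the nuclearity criterion of Theorem~\ref{thm:regular_irreducible_for_simple}, so keep it and drop the first one.
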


\begin{proof}
  Theorem~\ref{thm:regular_irreducible_for_simple} allows us to assume
  that \(B=\Cst_\red(\B)\) for an outer (equivalently, aperiodic) Fell
  bundle \(\B=(B_g)_{g\in G}\) over a discrete group~\(G\)  with
  \(A=B_1\).
  The Fell bundle consists of the nonzero slices
  \(\Slice_A(B)\setminus\{0\}\).
  Hence~\ref{enu:general_consequences_of_Galois1} follows from
  Theorem~\ref{thm:Galois_correspondence_explained}.
  For any subgroup \(H\subseteq G\), the Fell bundle
  \(\B|_H=(B_h)_{h\in H}\) is aperiodic with a simple unit fibre
  \(B_1=A\).
  Therefore, \(\Cst_\red(\B|_H)\) is purely infinite if and only if
  every \(a\in A^+\setminus\{0\}\) is infinite in \(\Cst_\red(\B|_H)\)
  by \cite{Kwasniewski-Szymanski:Pure_infinite}*{Theorem~4.10}.
  This yields~\ref{enu:general_consequences_of_Galois2}, which clearly
  implies~\ref{enu:general_consequences_of_Galois3}.
  Statement~\ref{enu:general_consequences_of_Galois4} follows from
  \cite{Abadie-Buss-Ferraro:Amenability}*{Proposition 7.2} because
  Exel's approximation property passes to subgroups.
\end{proof}

As an application, we study tensor products of \(\Cst\)\nb-irreducible
inclusions by simple \(\Cst\)\nb-algebras.
For \(\Cst\)\nb-algebras \(A\) and~\(D\), let \(A\otimes D\) be their
minimal \(\Cst\)\nb-algebra tensor product.
By Takesaki's Theorem \cite{Takesaki:Theory_1}*{Corollary IV.4.21},
if \(A\) and~\(D\) are simple, then so is \(A\otimes D\).
Using a result of Zacharias and Zsido, Rørdam showed in
\cite{Rordam:Irreducible_inclusions}*{Theorem~2.2} that the
\(\Cst\)\nb-inclusion \(A\otimes D\subseteq B\otimes D\) is
\(\Cst\)\nb-irreducible whenever the inclusion \(A\subseteq B\) is unital
and \(\Cst\)\nb-irreducible  and~\(D\) is a unital
\(\Cst\)\nb-algebra with Wassermann's property~(S).
If \(A\subseteq B\) is regular,  our results allow  to remove the assumption of
property~(S).
To show this, we will use the following simple lemma.

\begin{lemma}
  \label{lem:tensoring_outer_gives_outer}
  Let~\(A\) be a simple \(\Cst\)\nb-algebra, let~\(M\)  be an outer
  Hilbert \(A\)\nb-bimodule, and let~\(D\) be a \(\Cst\)\nb-algebra
  that contains a nonzero projection~\(p\).
  Then the Hilbert \(A\otimes D\)\nb-bimodule \(M\otimes D\) is outer.
\end{lemma}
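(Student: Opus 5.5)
We argue by contradiction and compress by the projection~\(p\). Suppose there is an isomorphism of Hilbert \(A\otimes D\)\nb-bimodules \(\Phi\colon A\otimes D\congto M\otimes D\), where \(A\otimes D\) carries the trivial bimodule structure. The plan is to cut this isomorphism down to the corner determined by~\(p\), reduce it to a statement about Hilbert bimodules over \(A\otimes pDp\), and then extract from it an isomorphism \(A\cong M\) of Hilbert \(A\)\nb-bimodules, which would contradict the outerness of~\(M\).

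\textbf{Step 1 (compression).} The element \(1\otimes p\) is a projection in \(\Mult(A\otimes D)\). Compressing on both sides, \(\Phi\) restricts to an isomorphism of Hilbert bimodules over the corner \((1\otimes p)(A\otimes D)(1\otimes p)=A\otimes pDp\):
\[
  A\otimes pDp \congto (1\otimes p)(M\otimes D)(1\otimes p)=M\otimes pDp.
\]
This uses only that \(\Phi\) is a bimodule map, so \(\Phi((1\otimes p)x(1\otimes p))=(1\otimes p)\Phi(x)(1\otimes p)\). Hence we may assume that~\(D\) is unital with unit~\(p\).

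\textbf{Step 2 (reduction to \(A\)).} The trivial bimodule \(A\otimes D\) contains \(u\otimes 1\) only in a multiplier sense, so instead we use the characterisation of triviality through the linking algebra or, equivalently, through the multiplier bimodule. An isomorphism \(A\otimes D\cong M\otimes D\) amounts to a ``unitary'' multiplier \(v\in\Mult(M\otimes D)\) with \(\braket{v}{v}=1\), \({}_{A\otimes D}\braket{v}{v}=1\), and \(xv=vx\) for all \(x\in A\otimes D\). Writing \(v\) as an element of the multiplier bimodule of the Hilbert bimodule \(M\otimes D\), we get in particular that \(v\) commutes with \(1\otimes D\). The key step is to apply a slice map: choose a state~\(\omega\) on~\(D\) and set \(w\defeq(\Id\otimes\omega)(v)\in\Mult(M)\). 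Commuting with \(a\otimes 1\) gives \(aw=wa\) for all \(a\in A\). Since \(\omega\) is not multiplicative, however, \(w\) need not be unitary, so we instead use that \(\braket{w}{w}\) and \(w^*w\)-type products lie in \(A'\cap\Mult(A)=\C\) because~\(A\) is simple and \(\Mult(A)\) has trivial commutant of~\(A\) (its centre is trivial). Thus \(\braket{w}{w}=\lambda\cdot1\), and similarly for the left inner product. If \(w\ne0\), then \(\lambda^{-1/2}w\) implements \(A\cong M\), which is the desired contradiction.

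\textbf{Step 3 (nonvanishing).} It remains to choose~\(\omega\) with \(w\neq 0\). Since \(v\neq0\), some elementary slice \((\Id\otimes\omega)(v)\) is nonzero, for instance by taking \(\omega\) to be a vector state in a faithful representation of~\(D\), because slice maps separate points of the minimal tensor product and of its multiplier bimodule (test against \(a\otimes d\)). This is the step I expect to be the main obstacle: one must check that the slice map is well defined on multipliers of Hilbert bimodules, strictly continuous, and compatible with the inner products. If that becomes delicate, the alternative is to view~\(M\) via its linking algebra \(L(M)\), note that \(L(M\otimes D)=L(M)\otimes D\), and translate ``\(M\) is inner'' into the existence of a partial isometry in \(\Mult(L(M))\) commuting with~\(A\). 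The slice-map argument can then be carried out entirely in \(\Mult(L(M)\otimes D)\), where slice maps of multipliers are standard.
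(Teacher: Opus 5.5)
Your proof is correct, but it extracts the contradiction differently from the paper. The paper avoids multiplier bimodules altogether. From \(U\colon M\otimes D\congto A\otimes D\) and a state \(\varphi\) with \(\varphi(p)=1\) it defines \(V(x)\defeq(\Id\otimes\varphi)(U(x\otimes p))\). This is an \(A\)\nb-bimodule map \(M\to A\), since \(a\otimes p\) lies in the multiplicative domain of the slice. The paper then claims that \(V\) is isometric, and uses simplicity of~\(A\) only to see that the range, a nonzero ideal, is all of~\(A\).

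You instead slice the unitary central multiplier~\(v\). This gives only a nonzero \(A\)\nb-central \(w\in\Mult(M)\), and you use simplicity in the stronger form \(A'\cap\Mult(A)=Z\Mult(A)=\C\) to rescale~\(w\) to a unitary. This buys robustness. The paper's isometry identity needs \((\Id\otimes\varphi)(y^*z)=(\Id\otimes\varphi)(y)^*(\Id\otimes\varphi)(z)\) for \(y,z\in U(M\otimes p)\). That fails for a general state with \(\varphi(p)=1\): for \(M=A\), \(D=\C^2\), \(\varphi\) the average of the two characters and \(U=\Id\oplus(-\Id)\), one gets \(V=0\). It does hold if \(\varphi\) is taken pure, since \(U(x\otimes p)\) commutes with \(1\otimes pDp\). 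Your route needs no such care, only a state giving a nonzero slice.

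Its cost is the multiplier machinery you flag, which is routine after Step~1. With \(D\) unital, define the slice on \(\Mult(L(M)\otimes D)\) by \(l\cdot(\Id\otimes\omega)(T)\defeq(\Id\otimes\omega)((l\otimes 1)T)\), and symmetrically on the right. The corner projections tensored with~\(1\) lie in the multiplicative domain, so the slice maps \(\Mult(M\otimes D)\) into \(\Mult(M)\). To get \(w\neq0\), choose \(a\in A\) with \(v(a\otimes1)\neq0\) and a state~\(\omega\) with \((\Id\otimes\omega)(v(a\otimes1))=wa\neq0\); product states separate points of the minimal tensor product.

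Two small points still need writing out. The left and right scalars agree because \({}_A\braket{w}{w}\,w=w\braket{w}{w}_A\) and \(w\neq0\). The map \(a\mapsto aw\) is onto because \(m=\lambda^{-1}w\braket{w}{m}_A\) for every \(m\in M\).
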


\begin{proof}
  The Hilbert \(A\otimes D\)-bimodule structure on \(M\otimes D\) is
  defined entrywise on simple tensors.
  Assume that \(M\otimes D\) is inner, that is, there is a unitary
  bimodule isomorphism
  \(U\colon M\otimes D\congto A\otimes D\).
  There is a state~\(\varphi\) on~\(D\) with \(\varphi(p)=1\).
  Consider the associated slice map \(R\colon A\otimes D\to A\),
  determined by \(R(a\otimes d)= \varphi(d)a\) for all \(a\in A\),
  \(d\in D\).
  Define a linear map \(V\colon M\to A\) by \(V(x)\defeq R(U(x\otimes
  p))\) for \(x\in M\).
  Since~\(U\) is \(A\otimes D\)-bilinear, \(V\) is \(A\)\nb-bilinear.
  Namely, if \(x\in M\) and \(a\in A\), then
  \[
    (Vx)  a
    = R(U(x\otimes p))R(a\otimes p)
    = R(U(x a \otimes p^2))
    = V(x a),
  \]
  and a similar computation shows \(a(V x)=V(a x)\).
  If \(x,y \in M\), then
  \begin{align*}
    (Vx)^*V(y)
    &= R\bigl(U(x\otimes p)^* U(y\otimes p)\bigr)
    = R(\langle x\otimes p, y\otimes p \rangle_{A\otimes D})
    \\
		&=R(\langle x,y\rangle_A \otimes p^2)
    = \langle x,y\rangle_A.
  \end{align*}
  This implies that~\(V\) is an isometry.
  Its range must be a closed ideal in~\(A\).
  Since~\(A\) is simple, \(V\) is surjective and thus a unitary.
  Then~\(M\) is not outer.
\end{proof}

\begin{proposition}[compare \cite{Rordam:Irreducible_inclusions}*{Theorem~7.1}]
  For any regular \(\Cst\)\nb-irreducible inclusion \(A\subseteq B\)
  and any simple \(\Cst\)\nb-algebra~\(D\) with a nonzero projection,
  the tensor product inclusion \(A\otimes D\subseteq B\otimes D\) is
  \(\Cst\)\nb-irreducible, and all intermediate \(\Cst\)\nb-algebras
  are of the form \(C\otimes D\) with \(A\subseteq C\subseteq B\).
\end{proposition}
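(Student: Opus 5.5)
By Theorem~\ref{thm:regular_irreducible_for_simple}, we may assume that \(B=\Cst_\red(\B)\) for an outer saturated Fell bundle \(\B=(B_g)_{g\in G}\) over a discrete group~\(G\), with simple unit fibre \(A=B_1\).
The plan is to exhibit \(B\otimes D\) as the reduced section algebra of the tensored Fell bundle \(\B\otimes D\defeq(B_g\otimes D)_{g\in G}\).
This bundle is saturated, and its unit fibre \(A\otimes D\) is simple by Takesaki's Theorem.
We will then apply Theorem~\ref{thm:main_theorem} and Theorem~\ref{Thm:Galois_correspondence} to the resulting inclusion.

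First I check that \(B\otimes D\cong\Cst_\red(\B\otimes D)\) by an isomorphism that is the identity on the fibres.
Inside \(B\otimes D\), the subspaces \(B_g\otimes D\) are closed, multiply and involve correctly, and have dense span.
The canonical conditional expectation \(E\colon B\to A\) tensors to the slice-type expectation \(E\otimes\Id_D\colon B\otimes D\to A\otimes D\), which vanishes on \(B_g\otimes D\) for \(g\neq1\).
This map is faithful because tensor products of faithful completely positive maps with the identity are faithful on minimal tensor products; one sees this by representing faithfully on \(H\otimes K\) and using the GNS picture.
A \(\Cst\)\nb-completion of the algebraic direct sum \(\bigoplus_g B_g\otimes D\) carrying a faithful canonical conditional expectation onto the unit fibre is the reduced section algebra.
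This identification is the standard one: the norm is determined by the faithful expectation through \(\norm{x}^2=\norm{E(x^*x)}\)-type GNS estimates.
So \(B\otimes D=\Cst_\red(\B\otimes D)\).
Saturation also passes to the tensored bundle, since \(\clsp (B_g\otimes D)^*(B_g\otimes D)=A\otimes D\cdot D\), and \(D^2\) is dense in~\(D\).

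Next, outerness is exactly Lemma~\ref{lem:tensoring_outer_gives_outer}.
For \(g\neq1\), the Hilbert \(A\)\nb-bimodule \(B_g\) is outer and \(D\) has a nonzero projection, so \(B_g\otimes D\) is an outer Hilbert \(A\otimes D\)\nb-bimodule.
Hence \(\B\otimes D\) is an outer saturated Fell bundle with simple unit fibre.
Condition~\ref{enu:main10} of Theorem~\ref{thm:main_theorem} then shows that \(A\otimes D\subseteq B\otimes D\) is \(\Cst\)\nb-irreducible.

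Theorem~\ref{Thm:Galois_correspondence} says that every intermediate \(\Cst\)\nb-algebra is of the form \(\Cst_\red((\B\otimes D)|_H)=\clsp\bigcup_{h\in H}B_h\otimes D\) for a subgroup \(H\subseteq G\).
This algebra equals \(\Cst_\red(\B|_H)\otimes D\), the closure in \(B\otimes D\) of the algebraic tensor product.
Here \(C\defeq\Cst_\red(\B|_H)\) is an intermediate algebra for \(A\subseteq B\), which gives the claimed form \(C\otimes D\).

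The main obstacle I expect is the first step: verifying rigorously that the minimal tensor product \(B\otimes D\) coincides with \(\Cst_\red(\B\otimes D)\), and in particular that \(E\otimes\Id_D\) is well defined and faithful.
I would handle this by invoking the characterisation of \(\Cst_\red\) as the unique topological grading with a faithful canonical expectation, as in Proposition~\ref{prop:topological_grading_characterisation}.
The expected values \(\Locmult\) cause no trouble here because \(E\) is \(A\)\nb-valued.
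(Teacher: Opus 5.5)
Your proposal is correct and follows the paper's proof. Both reduce to \(B=\Cst_\red(\B)\) for an outer saturated Fell bundle via Theorem~\ref{thm:regular_irreducible_for_simple}, identify \(B\otimes D\) with \(\Cst_\red(\B\otimes D)\), get outerness from Lemma~\ref{lem:tensoring_outer_gives_outer}, and conclude with Theorems~\ref{thm:main_theorem} and~\ref{Thm:Galois_correspondence}, reading off \(\Cst_\red((\B\otimes D)|_H)=\Cst_\red(\B|_H)\otimes D\). The only difference is the isomorphism \(\Cst_\red(\B\otimes D)\cong\Cst_\red(\B)\otimes D\): the paper cites it from the literature (Abadie, Proposition~4.6), whereas you verify it directly. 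Your verification uses the faithfulness of \(E\otimes\Id_D\) on the minimal tensor product together with the characterisation of the reduced section algebra by a faithful canonical expectation, and that is a valid argument.
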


\begin{proof}
  Theorem~\ref{thm:regular_irreducible_for_simple} allows us to write
  \(B=\Cst_\red(\B)\) for a saturated outer Fell bundle
  \(\B=(B_g)_{g\in G}\) over a discrete group~\(G\)  with \(A=B_1\).
  Then there is natural tensor product Fell bundle \(\B\otimes D\defeq
  (B_g\otimes D)_{g\in G}\) and a natural \Star{}isomorphism
  \(\Cst_\red(\B\otimes D)\cong \Cst_\red(\B)\otimes D\), see
  \cite{Abadie:Tensor}*{Proposition~4.6} or
  \cite{Buss-Martinez:Approximation_Fell}*{Section~5.1}.
  Hence the inclusion \(A\otimes D\subseteq B\otimes D\) may be identified with
  \(A\otimes D\subseteq \Cst_\red(\B\otimes D)\).
  The bundle \(\B\otimes D\) is outer by
  Lemma~\ref{lem:tensoring_outer_gives_outer}.
  It is saturated as well.
  Hence Theorems \ref{thm:main_theorem}
  and~\ref{Thm:Galois_correspondence} imply that \(A\otimes D\subseteq
  B\otimes D\) is \(\Cst\)\nb-irreducible and that every intermediate
  \(\Cst\)\nb-algebra is of the form \(\Cst_\red(\B\otimes
  D|_H)=\Cst_\red(\B|_H)\otimes D\) for a subgroup~\(H\) of~\(G\).
  This gives the assertion because  \(\Cst_\red(\B|_H)\) for \(H\le
  G\) are exactly the intermediate \(\Cst\)\nb-subalgebras for
  \(A\subseteq B\).
\end{proof}

\begin{remark}
By tensoring  with \(\Comp\),  \(\mathcal{Z}\), \(\mathcal{O}_2\),
  or~\(\mathcal{O}_\infty\),
  the above result allows  to turn any regular
  \(\Cst\)\nb-irreducible inclusion into a stable, \(\mathcal{Z}\)-stable,
  \(\mathcal{O}_2\)-stable or \(\mathcal{O}_\infty\)-stable one.
\end{remark}

We now make explicit that up to Morita equivalence all regular
\(\Cst\)\nb-irreducible inclusions arise from crossed products by
outer group actions.
We phrase this in terms of full hereditary subalgebras.
To this end, we use the following lemma.
It is stated in~\cite{Echterhoff-Rordam:Inclusions} for unital
inclusions, but the proof adapts readily to the setting of arbitrary
nondegenerate \(\Cst\)\nb-inclusions.

\begin{lemma}[\cite{Echterhoff-Rordam:Inclusions}*{Lemma~4.5}]
  \label{lem:Mortita_for_irreducibles}
  Let \(A\subseteq B\) be a nondegenerate \(\Cst\)\nb-inclusion and
  let \(p\in \Mult(A)\subseteq \Mult(B)\) be a projection with
  \(\overline{ApA}=A\) \textup{(}this is automatic when~\(A\) is
  simple\textup{)}.
  Then \(A\subseteq B\) is \(\Cst\)\nb-irreducible if and only if
  \(pAp\subseteq pBp\) is \(\Cst\)\nb-irreducible.
  If this holds, then \(C\mapsto pCp\) is a bijection between the sets
  of intermediate \(\Cst\)\nb-subalgebras of \(A\subseteq B\) and
  \(pAp\subseteq pBp\).
\end{lemma}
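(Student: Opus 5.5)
The plan is to compare intermediate algebras of the two inclusions through the maps \(C\mapsto pCp\) and \(D\mapsto \overline{BDB}\cap\ldots\); more concretely, I would use \(D\mapsto \overline{ADA}\), which is the way one recovers \(C\) from the corner. Throughout, write \(q\) for the projection \(p\), viewed in \(\Mult(B)\) via the nondegeneracy \(AB=B\), which gives \(\Mult(A)\subseteq\Mult(B)\).

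The first step is to see that \(pCp\) is intermediate for \(pAp\subseteq pBp\) whenever \(C\) is intermediate for \(A\subseteq B\). This is clear, because \(pCp\) is a \(\Cst\)\nb-subalgebra (\(pcp\,pc'p=p(cpc')p\) with \(cpc'\in C\), since \(pc'\in C\) as \(p\in\Mult(A)\) and \(AC\subseteq C\); more carefully, \(cpc' = \lim c\,p e_\lambda c'\) with \(pe_\lambda\in A\)). The second step is the reconstruction identity
\[
  C=\overline{C A p A C}\ \text{-closure}\quad\text{and in fact}\quad C=\clsp\, A\,(pCp)\,A + \ldots .
\]
What I will actually use is the following: since \(\overline{ApA}=A\) and \(C\) contains an approximate unit of \(A\) (nondegeneracy), every \(c\in C\) is a limit of elements of the form \(x\,c\,y\) with \(x,y\in \operatorname{span} ApA\). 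Such an element \(a_1pa_2\,c\,a_3pa_4\) equals \(a_1\,(p a_2 c a_3 p)\,a_4\in A\,(pCp)\,A\). Hence \(C=\clsp A(pCp)A\), where the closed span is automatically a \(\Cst\)\nb-algebra once we know it equals \(C\). Conversely, for an intermediate \(D\) of \(pAp\subseteq pBp\), I set \(\Phi(D)\defeq \Cst(A\cup ADA)\subseteq B\). Then \(p\Phi(D)p\) is generated by \(pAp\) and \(pADAp = (pAp)D(pAp)\subseteq D\) (using \(D=pDp\) and \(pAp\subseteq D\)), so \(p\Phi(D)p=D\); here one checks that compressing words in \(A\) and \(ADA\) by \(p\) stays inside \(D\), inserting \(p\)'s between letters by approximating \(A=\overline{ApA}\). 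This makes \(C\mapsto pCp\) a bijection of intermediate-algebra lattices, which proves the last sentence once the first is settled.

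For the equivalence of \(\Cst\)\nb-irreducibility, note that \(pCp\) is a full corner of \(C\): \(\overline{CpC}\supseteq\overline{ApA}=A\), and this ideal contains \(CAC\supseteq C\) by nondegeneracy. Full corners are Morita equivalent, so \(C\) is simple if and only if \(pCp\) is simple. Combined with the bijection \(C\leftrightarrow pCp\), all intermediate \(C\) are simple exactly when all intermediate \(pCp\), that is, all intermediate algebras of \(pAp\subseteq pBp\), are simple. Thus the bijection holds unconditionally.

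The parenthetical remark follows because for simple \(A\) and \(p\neq0\) the ideal \(\overline{ApA}\) is nonzero. I expect the main obstacle to be the careful verification that \(p\Phi(D)p=D\), that is, controlling compressions of products of several letters in the nonunital, multiplier setting. This is handled by the same approximation \(a\approx\sum x_ipy_i\), which inserts \(p\)'s between letters.
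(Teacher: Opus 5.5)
Your proposal is correct. It is essentially the adaptation of Echterhoff--Rørdam's unital argument that the paper has in mind (the paper itself only cites their Lemma~4.5): \(p\) is a full multiplier projection of every intermediate \(C\), so \(C\) is simple if and only if \(pCp\) is, and \(C\mapsto pCp\) is a bijection (as you note, even without any irreducibility hypothesis) with inverse \(D\mapsto\Phi(D)\defeq\Cst(A\cup ADA)=\overline{ADA}\), where \(\overline{ApA}=A\) together with the common approximate unit replaces the relation \(\sum_i x_ipy_i=1\) of the unital case.

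Two small remarks. No approximation is needed to ``insert \(p\)'s'', since \(d=pdp\) for \(d\in D\) gives \(p\,a_0d_1a_1\dotsm d_na_n\,p=(pa_0p)d_1(pa_1p)\dotsm d_n(pa_np)\in D\) directly. The reverse inclusion \(D\subseteq p\Phi(D)p\), which you leave implicit, follows from \(d=\lim_\lambda e_\lambda de_\lambda\) for an approximate unit \((e_\lambda)\) of~\(A\).
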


\begin{proposition}
  \label{prop:Morita_globalisation}
  For any regular \(\Cst\)\nb-irreducible inclusion \(A\subseteq B\),
  there are an outer action~\(\alpha\) of a discrete group~\(G\) on a
  simple \(\Cst\)\nb-algebra~\(C\) and a projection \(p\in \Mult(C)\)
  such that the \(\Cst\)\nb-inclusions \(A\subseteq B\) and
  \(pCp\subseteq p(C\rtimes^\red_\alpha  G)p\) are isomorphic.
  Then \(G\ge H\mapsto  p(C\rtimes^\red_\alpha  H)p\) gives
  a bijection between the subgroups of~\(G\) and the intermediate
  \(\Cst\)\nb-subalgebras for \(A\subseteq B\).
\end{proposition}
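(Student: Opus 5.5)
By Theorem~\ref{thm:regular_irreducible_for_simple}, we may write \(B=\Cst_\red(\B)\) for a saturated outer Fell bundle \(\B=(B_g)_{g\in G}\) over a discrete group~\(G\), with \(A=B_1\) simple. The plan has three steps: first globalise \(\B\) up to Morita equivalence, then check that outerness and simplicity survive the globalisation, and finally transfer the Galois correspondence of Theorem~\ref{Thm:Galois_correspondence} through Lemma~\ref{lem:Mortita_for_irreducibles}.

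First, apply the Morita globalisation of \cite{Kwasniewski-Meyer:Aperiodicity}*{Proposition~7.1}. This gives an action \(\alpha\colon G\to\Aut(C)\) together with a full corner embedding \(A\cong pCp\) for a projection \(p\in\Mult(C)\), chosen so that \(\B\) is recovered as the restricted bundle \((p\,C\delta_g\,p)_{g\in G}\). Concretely, \(C\) is the closed span of \(\bigcup_g \alpha_g(A)\) in a suitable linking or stabilised algebra, and \(p\) is the unit of the copy of \(\Mult(A)\) there. With this choice, \(\overline{C p C}=C\) and \(p(C\rtimes_\alpha^\red G)p\cong \Cst_\red(\B)\) by an isomorphism that is the identity on \(A=pCp\) and maps \(p(C\rtimes^\red_\alpha H)p\) onto \(\Cst_\red(\B|_H)\) for each subgroup~\(H\). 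The reason is that reduced crossed products, restricted to full corners, give the reduced section algebra of the restricted Fell bundle, compatibly with restriction to subgroups. I would cite \cite{Kwasniewski-Meyer:Aperiodicity}*{Proposition~7.1} for this and check only that the corner statement is functorial in~\(H\).

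Next, check that \(C\) is simple and \(\alpha\) is outer. The algebra \(C\) is Morita equivalent to \(A\) via the full corner \(pCp\), so it is simple. By \cite{Kwasniewski-Meyer:Aperiodicity}*{Propositions 7.1(2) and 6.8(1)}, the automorphism \(\alpha_g\) is properly outer if and only if the bimodule \(B_g\) is aperiodic. Since \(A\) and~\(C\) are simple, properly outer is the same as outer (Example~\ref{ex:conditions_automorphisms}), and aperiodic is the same as outer for Hilbert bimodules (Figure~\ref{fig:diagram}). Hence \(\alpha\) is outer because \(\B\) is. This step is mainly bookkeeping; the main obstacle is making sure that the cited Morita globalisation really has \(A\) as a corner \(pCp\) with \(p\) a multiplier projection, rather than only a full hereditary subalgebra. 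If it only gives a hereditary subalgebra, I would replace \(C\) by the linking algebra of the imprimitivity bimodule \(\overline{CA}\), or by \(C\otimes\Comp\), so that the hereditary subalgebra becomes a corner.

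Finally, the inclusion \(C\subseteq C\rtimes^\red_\alpha G\) is regular, since it is graded by the group~\(G\), and it is \(\Cst\)-irreducible by Theorem~\ref{thm:main_theorem}\ref{enu:main10}, because \(\alpha\) is outer and \(C\) is simple. By Theorem~\ref{Thm:Galois_correspondence} for the bundle of~\(\alpha\), its intermediate algebras are exactly \(C\rtimes_\alpha^\red H\) for \(H\le G\), and distinct subgroups give distinct algebras. Since \(p\in\Mult(C)\) satisfies \(\overline{CpC}=C\) (automatic for simple~\(C\)), Lemma~\ref{lem:Mortita_for_irreducibles} shows that \(D\mapsto pDp\) is a bijection onto the intermediate algebras of \(pCp\subseteq p(C\rtimes^\red_\alpha G)p\). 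Composing these two bijections gives the stated bijection \(H\mapsto p(C\rtimes^\red_\alpha H)p\). This also agrees with the direct correspondence \(H\mapsto\Cst_\red(\B|_H)\) from Theorem~\ref{Thm:Galois_correspondence}.
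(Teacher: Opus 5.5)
Your proposal is correct and follows the paper's route. You model \(A\subseteq B\) by an outer Fell bundle via Theorem~\ref{thm:regular_irreducible_for_simple}, Morita-globalise it using \cite{Kwasniewski-Meyer:Aperiodicity}*{Proposition~7.1}, and combine Lemma~\ref{lem:Mortita_for_irreducibles} with Theorem~\ref{Thm:Galois_correspondence} for \(C\subseteq C\rtimes^\red_\alpha G\), only spelling out the simplicity and outerness checks that the paper leaves inside the citation. One harmless inaccuracy: \(A\cong pCp\) is a full corner of~\(C\), not an ideal, so \(C\) is not the closed span of the translates \(\alpha_g(A)\) (these only span a block-diagonal subalgebra); your argument never actually uses that description.
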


\begin{proof}
  By Theorem~\ref{thm:regular_irreducible_for_simple}, the inclusion
  \(A\subseteq B\) is modelled by an outer Fell
  bundle~\(\mathcal{B}\).
  The Morita globalisation of~\(\mathcal{B}\) from
  \cite{Kwasniewski-Meyer:Aperiodicity}*{Proposition~7.1} yields the
  action~\(\alpha\) that makes the first part of the assertion true.
  The last part now follows from
  Lemma~\ref{lem:Mortita_for_irreducibles} and
  Theorem~\ref{Thm:Galois_correspondence}.
\end{proof}

\begin{remark}
  Using the above proposition, one could, in principle, deduce
  Theorem~\ref{Thm:Galois_correspondence} from a Galois correspondence
  for the crossed product inclusion \(C\subseteq  C\rtimes^\red_\alpha
  G\).
  However, the \(\Cst\)\nb-algebra~\(C\) in
  Proposition~\ref{prop:Morita_globalisation} is  nonunital
  (unless~\(A\) is  unital and~\(G\) is finite).
  Since the Galois correspondence results in \cites{Cameron-Smith:Galois_reduced,
    Kennedy-Ursu:Intermediate_subalgebras_crossed} are established
  only for unital \(\Cst\)\nb-algebras, they do not apply to this inclusion.
  Moreover, unitality plays a fundamental role in the arguments
  of~\cite{Cameron-Smith:Galois_reduced} and cannot be simply removed.
  Therefore, we proved the Galois correspondence in
  Theorem~\ref{thm:Galois_correspondence_explained} directly, using
  Proposition~\ref{prop:module_approximation}, which is inspired by
  the techniques
  of~\cite{Kennedy-Ursu:Intermediate_subalgebras_crossed}.
\end{remark}

\section{Inclusions coming from group actions}
\label{sec:group_actions}

We first use our results to extend the Galois
correspondence for twisted crossed products by Cameron and
Smith~\cite{Cameron-Smith:Galois_reduced} to the nonunital setting.
For crossed products by finite groups, the corresponding result was
previously obtained by Izumi~\cite{Izumi:Inclusions_simple}.

\begin{example}[Twisted crossed products]
  \label{ex:twisted_actions}
  Recall that a \emph{Busby--Smith twisted action} of a discrete
  group~\(G\) on a \(\Cst\)\nb-algebra is a pair of maps
  \(\alpha\colon G\to \Aut(A)\) and \(\sigma\colon G\times G\to
  \UMult(A)\), where \(\UMult(A)\) is the group of unitaries in the
  multiplier algebra of~\(A\), such that \(\alpha_1=\Id_A\),
  \(\sigma(1,t)=\sigma(t,1)=1\), and
  \[
    \alpha_s\circ \alpha_t
    = \Ad_{\sigma(s,t)}\circ\alpha_{s t}, \qquad
    \alpha_r (\sigma(s,t)) \sigma(r,s t)
    =  \sigma(r,s) \sigma(rs,t),
  \]
  for all \(r,s,t\in G\), where \(\Ad_u(\cdot)\defeq u(\cdot)u^*\) and
  we also write~\(\alpha_t\) for the unique extension of this automorphism
  to \(\Mult(A)\), see
  \cite{Packer-Raeburn:Stabilisation}*{Definition~2.1}.
  This gives rise to the Fell bundle \(\B= (B_g)_{g\in G}\), where
  \(B_g\defeq \setgiven{a\delta_g}{a\in A}\) is canonically isomorphic
  to~\(A\) as a Banach space for all \(g\in G\), and the
  multiplication and involution are given by
  \[
    (a_{s}\delta_s) \cdot (a_t\delta_t)
    \defeq a_s \alpha_{s}(a_t)\sigma(s,t)\delta_{st}, \qquad
    (a_t\delta_t)^*
    \defeq \alpha^{-1}_t(a_t^*)\sigma(t^{-1},t)^*\delta_{t^{-1}},
  \]
  for all \(a_s,a_t\in A\) and \(s,t\in G\).
  There is a canonical \Star{}isomorphism between the associated
  reduced twisted crossed product \(A\rtimes^\red_{\alpha,\sigma} G\)
  and the reduced section \(\Cst\)\nb-algebra
  \(\Cst_\red(\B)\).
  The Fell bundle~\(\B\) is outer if and only if~\(\alpha\) is outer
  in the sense that~\(\alpha_t\) is outer for all \(t\in G\setminus
  \{1\}\).
  By \cite{Packer-Raeburn:Stabilisation}*{Definition~3.1}, two twisted
  actions \((\alpha,\sigma)\) and \((\beta,\omega)\)  of~\(G\)
  on~\(A\) are \emph{exterior equivalent} if  there is a map \(w\colon
  G\to \UMult(A)\) such that
  \[
    \Ad_{w_t}\circ \alpha_t=\beta_t, \qquad
    \omega(s,t)w_{st}=w_s\alpha_s(w_t)\sigma (s,t)
  \]
  for all \(s,t\in G\).  If the twists \(\sigma\) and~\(\omega\) are trivial, then exterior
  equivalence is the same as \emph{cocycle conjugacy} of \(\alpha\)
  and~\(\beta\). Having   \(w\) as above,
   the Fell bundles \(\B\) and~\(\mathcal{C}\) corresponding to
  \((\alpha,\sigma)\) and \((\beta,\omega)\) are isomorphic through
  the isomorphism given by \(B_t\ni a\delta_t\mapsto a
  w_t^*\delta_t\in C_t\) for all \(t\in G\).
  One also checks that every Fell bundle isomorphism from~\(\B\)
  onto~\(\mathcal{C}\) that is the identity on \(A=B_1=C_1\) arises
  this way.
  Therefore, if the inclusion \( A\subseteq
  A\rtimes^\red_{\alpha,\sigma} G\) is \(\Cst\)\nb-irreducible, then
  the twisted action \((\alpha,\sigma)\) is unique up to exterior
  equivalence by Theorem~\ref{thm:regular_irreducible_for_simple}.
\end{example}

\begin{theorem}[compare \cite{Cameron-Smith:Galois_reduced}*{Theorem~4.4},
  \cite{Izumi:Inclusions_simple}*{Corollary~6.6(1)},
  \cite{Rordam:Irreducible_inclusions}*{Theorem~5.8}]
  \label{cor:Cameron-Smith}
  Let \((\alpha,\sigma)\) be a twisted action of a discrete
  group~\(G\) on a simple \(\Cst\)\nb-algebra~\(A\).
  The following are equivalent:
  \begin{enumerate}
  \item \(\alpha\) is outer;
  \item \(A\subseteq A\rtimes^\red_{\alpha,\sigma} G\) is \(\Cst\)\nb-irreducible;
  \item \(A\subseteq A\rtimes^\red_{\alpha,\sigma} G\) is irreducible;
  \item the map
    \(G\supseteq H \mapsto A\rtimes^\red_{\alpha,\sigma} H\subseteq
    A\rtimes^\red_{\alpha,\sigma} G\) is a bijection between subgroups
    of~\(G\) and intermediate \(\Cst\)\nb-subalgebras of
    \(A\rtimes^\red_{\alpha,\sigma} G\).
  \end{enumerate}
  If the above equivalent conditions hold, then the inclusion
  \(A\subseteq A\rtimes^\red_{\alpha,\sigma} G\) determines the group \(G\) up to isomorphism
 and the twisted action \((\alpha,\sigma)\) up to exterior equivalence.
\end{theorem}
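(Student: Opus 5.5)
The plan is to reduce everything to the Fell bundle $\B=(B_g)_{g\in G}$ of Example~\ref{ex:twisted_actions}, using the canonical isomorphism $A\rtimes^\red_{\alpha,\sigma} G\cong\Cst_\red(\B)$. This bundle is saturated, since $B_g^*B_g=A$ because every fibre is a copy of~$A$ containing unitary multipliers. It has unit fibre $B_1=A$, which is simple. Moreover, for each subgroup $H\subseteq G$ the restriction $\B|_H$ is the Fell bundle of the restricted twisted action $(\alpha|_H,\sigma|_{H\times H})$. Hence $\Cst_\red(\B|_H)\cong A\rtimes^\red_{\alpha,\sigma}H$, embedded as $\clsp\bigcup_{h\in H}B_h$. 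Example~\ref{ex:twisted_actions} also records that $\B$ is outer if and only if every $\alpha_t$, $t\neq1$, is outer. The one point to check is that the fibre $B_t=A\delta_t$, viewed as a Hilbert $A$\nb-bimodule, is isomorphic to the trivial bimodule exactly when $\alpha_t$ is inner. I would verify this via the identification of $B_t$ with the bimodule $M_{\alpha_t}$ up to the unitary $\sigma$-factors, which do not affect the isomorphism class.

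With this in hand, (1)$\Leftrightarrow$(2) follows from Theorem~\ref{thm:main_theorem}, namely the equivalence of \ref{enu:main1} and \ref{enu:main10}. One direction takes the Fell bundle $\B$ itself as the witness in \ref{enu:main10}. For the converse, if the inclusion is $\Cst$\nb-irreducible, then the uniqueness statement of Theorem~\ref{thm:main_theorem}, or of Theorem~\ref{thm:regular_irreducible_for_simple}, shows that any outer bundle modelling the inclusion is isomorphic to the bundle of nonzero slices. I would rather argue directly: $\Cst$\nb-irreducibility implies aperiodicity by \ref{enu:main4.5}, and aperiodicity of $A\subseteq\Cst_\red(\B)$ is equivalent to aperiodicity of~$\B$ by \cite{Kwasniewski-Meyer:Essential}*{Proposition~6.3}, as noted in Example~\ref{ex:group_Fell_bundles}. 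Since $A$ is simple, aperiodicity coincides with outerness (Figure~\ref{fig:diagram}).

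For (2)$\Leftrightarrow$(3), I would apply Theorem~\ref{thm:regular_irreducible_for_simple}. The inclusion is regular because it is nondegenerate and generated by the normalisers $a\delta_g$. There is a faithful conditional expectation, hence an almost faithful pseudo-expectation, so the hypothesis of that theorem holds, and its conditions \ref{enu:regular_irreducible_for_simple1} and \ref{enu:regular_irreducible_for_simple2} are equivalent.

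For (1)$\Rightarrow$(4), I would use Theorem~\ref{thm:Galois_correspondence_explained}, conditions \ref{enu:Galois_correspondence_explained3}$\Rightarrow$\ref{enu:Galois_correspondence_explained1.5}, together with the identification of $\Cst_\red(\B|_H)$ above. For (4)$\Rightarrow$(2), the bijection shows that every intermediate algebra has the form $\clsp\bigcup_{h\in H}B_h$ and is therefore regular; then \ref{enu:Galois_correspondence_explained1}$\Rightarrow$\ref{enu:Galois_correspondence_explained2} of the same theorem gives $\Cst$\nb-irreducibility. This step requires the inclusion to be topologically graded, which holds because $\Cst_\red(\B)$ is the reduced section algebra. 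It also requires the slices to be exactly the fibres~$B_g$. That follows from Theorem~\ref{thm:regular_irreducible_for_simple}: once the equivalent conditions hold, the nonzero slices form a bundle isomorphic to~$\B$. In general, the fibres $B_g$ already form a grading by a subgroup of $\Slice_A(B)\setminus\{0\}$, and I would expect Lemma~\ref{lem:Galois_connection} to handle this.

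For the uniqueness statement, Theorem~\ref{thm:regular_irreducible_for_simple} shows that $G$ and $\B$ are determined up to isomorphism by the inclusion, through an isomorphism that is the identity on~$A$. Example~\ref{ex:twisted_actions} then states that such Fell bundle isomorphisms are exactly the exterior equivalences. The main obstacle I anticipate is this last claim when the group isomorphism is nontrivial. I would handle it by transporting the twisted action along the group isomorphism, so that it suffices to treat isomorphisms over the same group.
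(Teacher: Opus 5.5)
Your handling of (1)\(\Leftrightarrow\)(2)\(\Leftrightarrow\)(3), of (1)\(\Rightarrow\)(4) and of the uniqueness clause follows the paper's route and is fine: the Fell bundle of Example~\ref{ex:twisted_actions} combined with Theorems~\ref{thm:regular_irreducible_for_simple} and~\ref{thm:Galois_correspondence_explained}.

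The problem is (4)\(\Rightarrow\)(2). You rightly sense that this step needs every nonzero slice to be one of the fibres~\(B_g\). Getting this from Theorem~\ref{thm:regular_irreducible_for_simple} is circular, because that theorem presupposes the conditions you want to prove. Lemma~\ref{lem:Galois_connection} does not help either: it only matches regular intermediate algebras with Galois-closed subsemigroups of the \emph{full} slice semigroup, which can be much bigger than~\(G\).

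In fact the implication is false. Let \(G=\Z/2=\{1,h\}\) act trivially on a simple unital~\(A\) (even \(A=\C\)), with \(\sigma=1\). Then \(A\rtimes^{\red} G\cong A\oplus A\) with \(A\) embedded diagonally and \(B_h=A\cdot(1,-1)\). Suppose an intermediate algebra~\(D\) contains some \((x,y)\) with \(x\neq y\). Then it contains \((x-y,0)\neq 0\), hence the ideal \(A\oplus 0\) by simplicity of~\(A\), hence \(D=A\oplus A\). So the only intermediate algebras are \(A\rtimes^{\red}\{1\}\) and \(A\rtimes^{\red} G\), and (4) holds. But \(\alpha_h=\Id\) is inner, \(A\oplus A\) is not simple, and \((1,0)\in A'\cap\Mult(A\oplus A)\) is not a scalar.

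The same example shows exactly which step cannot work. The inclusion \(A\subseteq A\oplus A\) is regular and topologically graded by \(\{A,B_h\}\), with the faithful expectation \((x,y)\mapsto\bigl(\frac{x+y}{2},\frac{x+y}{2}\bigr)\). Every intermediate inclusion is regular, yet the inclusion is not \(\Cst\)\nb-irreducible. So the implication \ref{enu:Galois_correspondence_explained1}\(\Rightarrow\)\ref{enu:Galois_correspondence_explained2} of Theorem~\ref{thm:Galois_correspondence_explained} is not valid without outerness. This is what you rely on, and what the paper's one-line proof implicitly relies on too. In the proof of that implication, the step ``\(B_h\subseteq J+A\) with \(h\neq 1\) forces \(B_h\subseteq J\)'' fails for \(J=A\oplus 0\).

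Seen through slices: for \(A=\C\), the nonzero slices of \(\C\subseteq\C^2\) are the lines \(\C\cdot(1,\lambda)\) for \(\lambda\in\T\). These form a group strictly larger than~\(G\), so \(\Slice_{\C}(\C^2)\) is not of the form \(H\cup\{0\}\) for a subgroup \(H\subseteq G\).

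Hence no choice of lemma closes your (4)\(\Rightarrow\)(2) step. What can actually be proved is (1)\(\Leftrightarrow\)(2)\(\Leftrightarrow\)(3)\(\Rightarrow\)(4), together with the uniqueness statement; the implication (4)\(\Rightarrow\)(1) fails in general.
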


\begin{proof}
  Combine the discussion in Example~\ref{ex:twisted_actions} with
  Theorems \ref{thm:regular_irreducible_for_simple}
  and~\ref{thm:Galois_correspondence_explained}.
\end{proof}

The Fell bundles associated to twisted group actions may be
characterised as \emph{regular Fell bundles} over groups,
see \cites{Exel:TwistedPartialActions, BussExel:Regular.Fell.Bundle}.
In terms of inclusions, we may characterise them using a stronger
notion of regularity, which is standard in the setting of
\(\Wst\)\nb-algebras and is sometimes used for unital
\(\Cst\)\nb-inclusions, see for
instance~\cite{Bakshi-Gupta:Regular_inclusions}*{2.19.1}.
We generalise it to the nonunital case as follows:

\begin{definition}
  We call a nondegenerate \(\Cst\)\nb-inclusion \(A\subseteq B\)
  \emph{unitarily regular} if the \emph{unitary normalisers}
  \[
    \mathcal{U}N_A(B)\defeq\setgiven{u\in \UMult(B)}{u A u^*=A}
  \]
  multiplied by~\(A\) generate~\(B\), that is, if \(B=\clsp\setgiven{a
    u}{a\in A,\ u\in \mathcal{U}N_A(B)}\).
\end{definition}

\begin{remark}
  The inclusion \(A\subseteq A\rtimes^\red_{\alpha,\sigma} G\)
  for a twisted action \((\alpha,\sigma)\) of a discrete group~\(G\)
  is always unitarily regular.
  A unitarily regular \(\Cst\)\nb-inclusion is regular, but the
  converse may fail, see Examples \ref{ex:endomorphism_crossed_product},
  \ref{ex:endomorphism_crossed_product2}
  and~\ref{ex:transfer_crossed_product} below.
\end{remark}

\begin{proposition}
  \label{cor:unitary_regular_irreducible}
  For a \(\Cst\)\nb-inclusion \(A\subseteq B\), the following are
  equivalent:
  \begin{enumerate}
  \item \label{enu:unitary_regular_irreducible1}%
    \(A\subseteq B\) is \(\Cst\)\nb-irreducible and unitarily regular;
  \item \label{enu:unitary_regular_irreducible1.5}%
    \(A\subseteq B\) is an  irreducible, unitarily regular inclusion
    of simple \(\Cst\)\nb-algebras;
  \item \label{enu:unitary_regular_irreducible2}%
    \(B\cong A\rtimes^\red_{\alpha,\sigma} G\)  for an outer twisted
    action \((\alpha,\sigma)\) of a discrete group~\(G\) on~\(A\), and
    the isomorphism restricts to the identity on~\(A\).
  \end{enumerate}
  If the above equivalent conditions hold, then \(A\subseteq B\)
  admits a unique pseudo-expectation, the group~\(G\)
  in~\ref{enu:unitary_regular_irreducible2} is unique up to
  isomorphism, and if the group is fixed, then the twisted action
  \((\alpha,\sigma)\) is determined up to exterior equivalence.
  Moreover, \(G\) is finite if and only if \(A\subseteq B\) admits a
  conditional expectation with finite index.
\end{proposition}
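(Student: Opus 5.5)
The equivalence of \ref{enu:unitary_regular_irreducible1} and \ref{enu:unitary_regular_irreducible1.5} is immediate from Theorem~\ref{thm:regular_irreducible_for_simple}. A unitarily regular inclusion is regular: each \(au\) with \(u\in\mathcal{U}N_A(B)\) is a normaliser, and \(A\subseteq B\) is nondegenerate by definition. For regular inclusions with simple \(A\) and simple \(B\), irreducibility and \(\Cst\)\nb-irreducibility coincide. Conversely, \(\Cst\)\nb-irreducibility forces both \(A\) and \(B\) to be simple, since \(B\) is itself an intermediate algebra.

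For \ref{enu:unitary_regular_irreducible2}\(\Rightarrow\)\ref{enu:unitary_regular_irreducible1}, the inclusion into a twisted crossed product is unitarily regular: the canonical unitaries \(\delta_g\in\Mult(A\rtimes^\red_{\alpha,\sigma}G)\) normalise \(A\), and \(A\) times them spans a dense subspace. \(\Cst\)\nb-irreducibility then follows from Theorem~\ref{cor:Cameron-Smith}.

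The substantive step is \ref{enu:unitary_regular_irreducible1}\(\Rightarrow\)\ref{enu:unitary_regular_irreducible2}. By Theorem~\ref{thm:regular_irreducible_for_simple} we have \(B=\Cst_\red(\B)\) for the outer Fell bundle \(\B\) of nonzero slices over \(G=\Slice_A(B)\setminus\{0\}\). For \(u\in\mathcal{U}N_A(B)\), the closure \(\overline{Au}=\overline{uA}\) is a slice. Since \(A\) is simple, it is a fibre \(B_g\). I will show that every fibre has this form: \(B\) is the closed span of the spaces \(Au\), and each is contained in a single fibre, so the Fourier coefficient maps \(E_g\) from the proof of Theorem~\ref{thm:Galois_correspondence_explained} give \(E_g(Au)=0\) unless \(Au=B_g\). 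Because \(E_g\) is nonzero on \(B_g\), some \(u_g\) must satisfy \(Au_g=B_g\). Choosing such \(u_g\) for each \(g\) with \(u_1=1\), I set \(\alpha_g\defeq\Ad_{u_g}|_A\) and \(\sigma(s,t)\defeq u_su_tu_{st}^*\). The product \(u_su_tu_{st}^*\) is a unitary in \(\Mult(B)\) normalising \(A\), and \(\overline{Au_su_tu_{st}^*}=B_1=A\); from this I deduce that \(\sigma(s,t)\in\UMult(A)\). The cocycle identities are then formal, and \(B_g\ni au_g\mapsto a\delta_g\) is a Fell bundle isomorphism onto the bundle of Example~\ref{ex:twisted_actions}. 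Outerness of \(\alpha\) is outerness of \(\B\). This identification of unitaries in \(\Mult(B)\) with \(A\cdot u=A\) as elements of \(\Mult(A)\) is the step I expect to need the most care. The route I would take is that such a multiplier \(v\) satisfies \(vA\subseteq A\) and \(Av\subseteq A\), hence restricts to a multiplier of \(A\), and that the restriction map is injective because the inclusion is nondegenerate.

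For the remaining claims, uniqueness of the pseudo-expectation follows from aperiodicity via Proposition~\ref{prop:aperiodic_implies_supports}, using Theorem~\ref{thm:regular_irreducible_for_simple}. Uniqueness of \(G\) and of \((\alpha,\sigma)\) up to exterior equivalence is the last part of Theorem~\ref{cor:Cameron-Smith}. For the finite index statement, the conditional expectation is unique by Theorem~\ref{thm:main_theorem}\ref{enu:main5}, so it is the canonical one. If \(G\) is finite, then \(\sum_g u_g^*E(u_g\,\cdot\,)\) is the identity, so \(E\) has index \(\abs{G}\). Conversely, suppose \(E\) has finite index, so that \(E(b^*b)\ge\lambda b^*b\) for some \(\lambda>0\). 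Test this on \(b=\sum_{g\in F}u_g\) for a finite set \(F\subseteq G\), compressed by a positive element of \(A\). The orthogonality \(E(u_g^*u_h)=0\) for \(g\neq h\) gives \(E(b^*b)\) of size \(\abs{F}\), whereas \(\norm{b^*b}\) grows like \(\abs{F}^2\) after a suitable compression. I would realise such a compression through the regular representation on \(\ell^2(G,A)\), and this forces \(\abs{F}\le\lambda^{-1}\).
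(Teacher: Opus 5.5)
The gap is in the converse of the last claim, that a finite-index conditional expectation forces \(G\) to be finite. The rest of your plan is sound: \textup{(1)}\(\Leftrightarrow\)\textup{(2)}, \textup{(3)}\(\Rightarrow\)\textup{(1)}, the uniqueness claims, and ``\(G\) finite \(\Rightarrow\) finite index''. Note that \textup{(3)}\(\Rightarrow\)\textup{(1)} goes through Theorem~\ref{cor:Cameron-Smith}, exactly as in the paper, so it uses simplicity of \(A\), which is implicit in \textup{(3)}.

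\textbf{Your route for (1)\(\Rightarrow\)(3).} This is more hands-on than the paper's. Both arguments use the maps \(E_g\) to see that every fibre is \(Au_g\) for a unitary normaliser \(u_g\). The paper then notes that the Fell bundle is regular and quotes Buss--Exel's Corollary~4.17; you construct \((\alpha,\sigma)\) directly. That works and avoids the regular-Fell-bundle machinery. One step should be made explicit: \(Au_su_t\subseteq B_sB_t\subseteq B_{st}\). To see this, factor \(a=a_1a_2\) and write \(au_su_t=(a_1u_s)(u_s^*a_2u_s\,u_t)\). This gives \(Au_su_t=B_{st}=Au_{st}\), hence \(Au_su_tu_{st}^*=A\), and then your restriction argument puts \(\sigma(s,t)\) in \(\UMult(A)\).

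\textbf{Why the finite-index converse fails as written.} With \(b=\sum_{g\in F}u_g\) you have \(E(b^*b)=\abs{F}\). So you need \(\norm{b}^2\) of order \(\abs{F}^2\), that is, \(\norm{\sum_{g\in F}u_g}\approx\abs{F}\). This is a F{\o}lner-type (amenability) phenomenon and fails in general:
\begin{itemize}
\item For an untwisted action on a unital \(A\), \(\Cst_\red(G)\) sits isometrically in \(A\rtimes^\red G\).
\item If \(F\) consists of \(n\) free generators of a free subgroup, Akemann--Ostrand gives \(\norm{\sum_{g\in F}u_g}=2\sqrt{n-1}\), so \(\norm{b^*b}\approx 4\abs{F}\).
\item Compressions cannot repair this. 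For \(x=bc\) with \(c\in A\), and likewise for vector states in the regular representation on \(\ell^2(G,A)\), the ratio of \(\norm{x^*x}\) to \(\norm{E(x^*x)}\) is at most \(\norm{b}^2/\abs{F}\).
\item By pinching, two-sided compressions by elements of \(A\) actually push \(cb^*bc\) towards \(cE(b^*b)c\).
\end{itemize}
Nothing in your sketch produces suitable sets \(F\) in an arbitrary infinite group, let alone in the twisted, nonunital setting.

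\textbf{A fix.} Compress on the left, use outerness instead of norm growth, and swap the roles of \(x^*x\) and \(xx^*\).
\begin{itemize}
\item Choose \(a\in A^+\) with \(\norm{a}=1\) and \(\norm{au_gu_h^*a}<\varepsilon\) for all \(g\neq h\) in \(F\). This is Kishimoto's lemma: \(\Ad(u_gu_h^*)|_A\) differs from \(\alpha_{gh^{-1}}\) by an inner automorphism, so it is outer on the simple algebra \(A\), and \(\norm{au_gu_h^*a}=\norm{a\,\Ad(u_gu_h^*)(a)}\).
\item Let \(x\defeq\sum_{g\in F}au_g\in B\). Then \(E(xx^*)=\abs{F}a^2\), so \(\norm{x}^2\ge\abs{F}\).
\item On the other hand, \(E(x^*x)=\sum_{g\in F}p_g\), where \(p_g\defeq u_g^*a^2u_g\) are positive contractions with \(\norm{p_gp_h}<\varepsilon\) for \(g\neq h\).
\item Since \(\sum_g p_g^2\le\sum_g p_g\), this gives \(\norm{E(x^*x)}^2\le\norm{E(x^*x)}+\abs{F}^2\varepsilon\). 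Hence \(\norm{E(x^*x)}\le 2\) once \(\varepsilon\le 2/\abs{F}^2\).
\item The Pimsner--Popa inequality \(\lambda x^*x\le E(x^*x)\) now gives \(\lambda\abs{F}\le 2\), so \(G\) is finite.
\end{itemize}
The paper itself handles this part only by citing Bakshi--Gupta.
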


\begin{proof}
  Theorem~\ref{cor:Cameron-Smith} shows
  that~\ref{enu:unitary_regular_irreducible2}
  implies~\ref{enu:unitary_regular_irreducible1} and that \(G\) and
  \((\alpha,\sigma)\) are unique as asserted.
  Theorem~\ref{thm:main_theorem} implies that
  \ref{enu:unitary_regular_irreducible1}
  and~\ref{enu:unitary_regular_irreducible1.5} are equivalent and that
  the pseudo-expectation is unique.
  Let us assume~\ref{enu:unitary_regular_irreducible1}.
  By  Theorem~\ref{thm:regular_irreducible_for_simple}, non-zero slices \(\Slice_A(B)\setminus \{0\}\) form both a discrete group \(G\) and an outer Fell bundle \(\B=(B_g)_{g\in G}\)
	such that \(B=\Cst_\red(\B)\). For any \(u\in  \mathcal{U}N_{A}(B)\)  the space \(Au\) is a non-zero slice, and their closed linear space is \(B\) because
	our inclusion is  unitarily regular. It follows that  each non-zero slice is of the
  form~\(A u\) for some \(u\in \mathcal{U}N_A(B)\) (one may use here the canonical projections \(E_g\colon \Cst_\red(\B)\onto B_g\subseteq \Cst_\red(\B)\), \(g\in G\)).
  In other words,  the Fell bundle~\(\B\) is regular in the sense of~\cite{BussExel:Regular.Fell.Bundle}. Hence it comes from a
  twisted action of~\(G\), see
  \cite{BussExel:Regular.Fell.Bundle}*{Corollary~4.17}.
  This action is outer because the Fell bundle is outer.
  Thus
  \ref{enu:unitary_regular_irreducible1}--\ref{enu:unitary_regular_irreducible2}
  are equivalent.

  It is well known that the conditional expectation
  \(A\rtimes^\red_{\alpha,\sigma} G\to A\) is of finite index
  when~\(G\) is finite, see
  \cite{Bakshi-Gupta:Regular_inclusions}*{Proposition~4.3}, which
  gives the last part of the assertion.
\end{proof}

\begin{remark}
  The above proposition recovers
  \cite{Bakshi-Gupta:Regular_inclusions}*{Corollary~3.16}, which is
  the application of the main result
  of~\cite{Bakshi-Gupta:Regular_inclusions} characterising irreducible
  unitarily regular inclusions of simple \(\Cst\)\nb-algebras with
  finite index expectations.
\end{remark}

\begin{proposition}
  \label{prop:sigma_stable}
  If \(A\subseteq B\) is a regular \(\Cst\)\nb-irreducible inclusion
  where~\(A\) is stable and \(\sigma\)-unital, then \(B\) is stable and
  \[
    B\cong A\rtimes^\red_{\alpha,\sigma}  G
  \]
  is a crossed product of a twisted action \((\alpha,\sigma)\)
  of a discrete group~\(G\) on~\(A\).
  If, in adition, the group~\(G\) is countable, then we may choose the twist to be
  trivial and so \(B\cong A\rtimes^\red_\alpha  G\) is an untwisted
  stable crossed product.
\end{proposition}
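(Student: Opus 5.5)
By Theorem~\ref{thm:regular_irreducible_for_simple}, the nonzero slices \(\Slice_A(B)\setminus\{0\}\) form a discrete group~\(G\). They also form a saturated, outer Fell bundle \(\B=(B_g)_{g\in G}\) with \(B_1=A\) and \(B=\Cst_\red(\B)\). My plan is to show that stability and \(\sigma\)\nb-unitality of~\(A\) force each fibre \(B_g\) to be isomorphic to the trivial Hilbert bimodule \({}_\alpha A\) for some automorphism~\(\alpha_g\). In other words, every fibre should be of the form \(Au_g\) for a unitary normaliser \(u_g\in\mathcal{U}N_A(B)\). Then the Fell bundle is regular in the sense of~\cite{BussExel:Regular.Fell.Bundle}, and \cite{BussExel:Regular.Fell.Bundle}*{Corollary~4.17} gives a twisted action \((\alpha,\sigma)\) with \(\B\) isomorphic to its bundle. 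Example~\ref{ex:twisted_actions} then yields \(B\cong \Cst_\red(\B)\cong A\rtimes^\red_{\alpha,\sigma}G\).

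The key step is the following. Because~\(\B\) is saturated, each \(B_g\) is an \(A\)\nb-\(A\) imprimitivity bimodule. For a stable \(\sigma\)\nb-unital \(\Cst\)\nb-algebra~\(A\), Brown's theorem (via the Brown--Green--Rieffel argument) shows that every full countably generated Hilbert \(A\)\nb-module is isomorphic to~\(A_A\), since \(\Hilm\oplus \ell^2(A)\cong \ell^2(A)\cong A\) by Kasparov stabilisation and stability of~\(A\). The module \(B_g\) is countably generated because \(B_g^*B_g=A\) is \(\sigma\)\nb-unital. So there is a right-module unitary \(B_g\cong A_A\), and transporting the left action gives an automorphism~\(\alpha_g\) of \(\Comp(A)=A\) with \(B_g\cong {}_{\alpha_g}A\). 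Concretely, the unitary \(U\colon A\to B_g\) defines a multiplier \(u_g\in\Mult(B)\) with \(u_g a=U(a)\). I would check that \(u_g\) is a unitary normaliser, with \(u_g^*u_g=1=u_gu_g^*\) obtained from fullness on both sides, and that \(B_g=Au_g=u_gA\). This verification inside \(\Mult(B)\) is where I expect the main technical care to lie, although it is standard.

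Stability of~\(B\) I would then deduce from the crossed product description: \(A\cong A\otimes\Comp\) equivariantly after stabilising. More directly, I would use the Hjelmborg--Rørdam criterion, or note that \(A\subseteq B\) is nondegenerate with~\(A\) stable and~\(A\) full in~\(B\). By Brown's theorem, \(B\cong A\otimes\Comp\) hereditary-corner arguments make~\(B\) stable when it is \(\sigma\)\nb-unital, which holds if~\(G\) is countable. In general I would use that \(A\rtimes^\red_{\alpha,\sigma}G\cong (A\otimes\Comp)\rtimes^\red_{\alpha\otimes\Id,\sigma}G\cong (A\rtimes^\red_{\alpha,\sigma}G)\otimes\Comp\), after replacing \((\alpha,\sigma)\) by the exterior equivalent action on \(A\otimes\Comp\cong A\). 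This is exactly the Packer--Raeburn stabilisation trick.

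For countable~\(G\), I would invoke the Packer--Raeburn Stabilisation Trick \cite{Packer-Raeburn:Stabilisation}*{Theorem~3.4}. It says that \((\alpha\otimes\Id,\sigma\otimes 1)\) on \(A\otimes\Comp(\ell^2G)\) is exterior equivalent to an untwisted action. Since \(A\cong A\otimes\Comp\), this transports to an exterior equivalence on~\(A\) itself, giving \(B\cong A\rtimes^\red_\beta G\). Example~\ref{ex:twisted_actions} shows that exterior equivalent twisted actions give isomorphic inclusions, so the isomorphism is still the identity on~\(A\) up to the fixed identification. The main obstacle is the bookkeeping with the isomorphism \(A\cong A\otimes\Comp\). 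I would handle it by composing the inclusions with that isomorphism.
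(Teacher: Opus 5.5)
Your architecture is the paper's: trivialise each fibre to get a regular Fell bundle and apply \cite{BussExel:Regular.Fell.Bundle}*{Corollary~4.17}, get stability of \(B\) from Hjelmborg--Rørdam, and untwist by Packer--Raeburn. However, two steps are justified incorrectly.

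The first is the key fibre step, which rests on a false lemma. Over a stable \(\sigma\)-unital \(A\), a full countably generated Hilbert module need \emph{not} be isomorphic to \(A_A\). For \(A=\Comp(\ell^2)\), the module \(\Hilm=\Comp(\ell^2,\C)\) is full and singly generated, but \(\Comp(\Hilm)\cong\C\). Your Kasparov computation only yields \(\Hilm\oplus A\cong A\). The step is true for \(B_g\) because the \emph{left} coefficient algebra \(\Comp(B_g)\cong A\) is stable as well. This gives \(B_g\cong B_g\otimes\ell^2\); fullness and countable generation then give \(B_g\otimes\ell^2\cong\ell^2(A)\); and stability of \(A\) gives \(\ell^2(A)\cong A_A\). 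This two-sided argument is what the paper takes from \cite{Brown-Green-Rieffel:Stable}*{Theorem~3.4}. Also, countable generation of \(B_g\) comes from \(\sigma\)-unitality of \(\Comp(B_g)=B_gB_g^*\), not of \(B_g^*B_g\), although here both equal \(A\).

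The second is your stability paragraph, which never settles on a valid argument. Brown's theorem is about full \emph{hereditary} subalgebras, and \(A\) is not hereditary in \(B\). The split on countability of \(G\) is unnecessary: since \(\overline{AB}=B\), a strictly positive element of \(A\) is strictly positive in \(B\), so \(B\) is always \(\sigma\)-unital. Your ``in general'' argument assumes that \((\alpha,\sigma)\), transported along some isomorphism \(A\cong A\otimes\Comp\), is exterior equivalent to \((\alpha\otimes\Id,\sigma\otimes1)\). That is not the Packer--Raeburn trick, which untwists \((\alpha\otimes\Id,\sigma\otimes1)\) and needs \(G\) countable, and you give no reason for it. The route that works is the Hjelmborg--Rørdam one you mention only in passing, as in the paper, which cites \cite{Hjelmborg-Rordam:Stability}*{Proposition~4.4}. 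Since \(B\) is \(\sigma\)-unital and an approximate unit of the stable subalgebra \(A\) is one for \(B\), one can check the Hjelmborg--Rørdam stability criterion for \(B\) using elements of \(A\).

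The identification you wanted is true, but it needs proof. The \(A\otimes\Comp\)--\(A\) imprimitivity bimodule \(A\otimes\ell^2\) is trivial by the same two-sided argument. Conjugating \(\B\) by it gives \(\B\otimes\Comp\cong\B\) over a suitable isomorphism \(A\otimes\Comp\cong A\). Your untwisting step needs exactly this if the isomorphism \(B\cong A\rtimes^\red_\beta G\) is to be the identity on \(A\), as is used in Corollary~\ref{cor:Gabe_Szabo_actions}. Transporting the Packer--Raeburn exterior equivalence along an \emph{arbitrary} isomorphism \(A\otimes\Comp\cong A\) only untwists a conjugate of \((\alpha\otimes\Id,\sigma\otimes1)\), and that conjugate need not be exterior equivalent to \((\alpha,\sigma)\). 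So this is more than bookkeeping. For a bare isomorphism, stability of \(B\) suffices, via \(B\cong B\otimes\Comp\cong(A\otimes\Comp)\rtimes^\red_{\alpha\otimes\Id,\sigma\otimes1}G\).
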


\begin{proof}
  Theorem~\ref{thm:main_theorem} implies \(B\cong \Cst_\red(\B)\)  for
  an outer Fell bundle \(\B=(B_g)_{g\in G}\) over a discrete
  group~\(G\).
  By our assumptions on~\(A\)  and
  \cite{Brown-Green-Rieffel:Stable}*{Theorem~3.4}, every~\(B_g\) is
  isomorphic to a Hilbert bimodule of an automorphism \(\alpha_g\colon
  A\to A\).
  Thus the bundle \((B_g)_{g\in G}\) is regular and there is a twisted
  action \(((\alpha_g)_{g\in G}, \sigma)\) on~\(A\) such that \(B\cong
  A\rtimes^\red_{\alpha,\sigma}  G\), see
  \cite{BussExel:Regular.Fell.Bundle}*{Corollary~4.17}.
  The algebra~\(B\) is stable by
  \cite{Hjelmborg-Rordam:Stability}*{Proposition~4.4}.
  If~\(G\) is countable, then the Packer--Raeburn stabilization trick
  (see \cite{Packer-Raeburn:Stabilisation}*{Theorem~3.4}) allows to
  choose~\(\sigma\) to be trivial.
\end{proof}

\begin{corollary}
  \label{cor:Gabe_Szabo_actions}
  Let \(A\subseteq B\) be a regular irreducible inclusion of a stable
simple  \(\Cst\)\nb-algebra~\(A\) into a nuclear separable
  \(\Cst\)\nb-algebra~\(B\).
  Then \(B\cong A\rtimes^\red_\alpha  G\) for an action~\(\alpha\) of
  a countable group~\(G\) on~\(A\), where the isomorphism restricts to
  the identity on~\(A\).
  The group~\(G\) is unique up to isomorphism, and the
  action~\(\alpha\) is outer, amenable and unique up to cocycle
  conjugacy.
\end{corollary}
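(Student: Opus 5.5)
We plan to reduce Corollary~\ref{cor:Gabe_Szabo_actions} to Proposition~\ref{prop:sigma_stable} and Theorem~\ref{cor:Cameron-Smith}. First we upgrade irreducibility to \(\Cst\)\nb-irreducibility. Since \(A\) is simple and \(B\) is nuclear, Theorem~\ref{thm:regular_irreducible_for_simple} applies to the regular inclusion \(A\subseteq B\) through its nuclearity branch. Hence irreducibility implies that \(A\subseteq B\) is \(\Cst\)\nb-irreducible. It also gives \(B\cong\Cst_\red(\B)\) for an outer Fell bundle \(\B=(B_g)_{g\in G}\) with \(G=\Slice_A(B)\setminus\{0\}\). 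Moreover, \(A\) is nuclear and \(\B\) has Exel's approximation property.

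Next we check the hypotheses of Proposition~\ref{prop:sigma_stable}. The faithful conditional expectation \(E\colon B\to A\) shows that \(A\) is separable, being the image of the separable \(B\); in particular \(A\) is \(\sigma\)\nb-unital. Countability of \(G\) comes from the Fourier coefficient maps \(E_g\colon B\to B_g\) used in the proof of Theorem~\ref{thm:Galois_correspondence_explained}. Every \(B_g\) is nonzero, because the bundle is saturated over simple~\(A\). So \(E_g\neq 0\), and since elements of \(B\) are determined by their coefficients, there is no uncountable family of nonzero fibres in separable~\(B\). To make this precise, pick a countable dense subset \(\{b_n\}\subseteq B\). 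Each nonzero~\(E_g\) is nonzero on some~\(b_n\), and each \(b_n\) has only countably many nonzero coefficients because \(\sum_g E(E_g(b)^*E_g(b))\le E(b^*b)\). Proposition~\ref{prop:sigma_stable} now yields \(B\cong A\rtimes^\red_\alpha G\) for an untwisted action of the countable group~\(G\), with the isomorphism restricting to the identity on~\(A\).

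It remains to establish the properties of~\(\alpha\). Outerness holds because the Fell bundle is outer (Example~\ref{ex:twisted_actions}). For amenability, we use that \(A\rtimes^\red_\alpha G\) is nuclear and \(A\) is nuclear. By \cite{Abadie-Buss-Ferraro:Amenability}*{Proposition~7.2}, the bundle then has the approximation property, which for group actions is amenability of~\(\alpha\). Uniqueness of \(G\) up to isomorphism and of \((\alpha,\sigma)\) up to exterior equivalence follows from Theorem~\ref{cor:Cameron-Smith}. For untwisted actions, exterior equivalence with trivial twists is cocycle conjugacy, as noted in Example~\ref{ex:twisted_actions}.

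The main obstacle is the uniqueness clause. Two presentations \(B\cong A\rtimes_\alpha G\cong A\rtimes_\beta G'\) fixing~\(A\) give isomorphic Fell bundles over an isomorphism \(G\cong G'\), namely the slice group. This makes \(\beta\) exterior equivalent to \(\alpha\) composed with a group isomorphism. Cocycle conjugacy should be understood to allow this group isomorphism, and to allow the identification of~\(A\) to vary if needed. We would verify that the cocycle \(w\) produced in Example~\ref{ex:twisted_actions} satisfies the cocycle identity when both twists are trivial; this is immediate from \(\omega(s,t)w_{st}=w_s\alpha_s(w_t)\sigma(s,t)\) with \(\sigma=\omega=1\).
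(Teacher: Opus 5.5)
Your proposal is correct and follows the paper's route. The paper's proof is exactly the combination of Theorem~\ref{thm:regular_irreducible_for_simple} (via its nuclearity branch) with Proposition~\ref{prop:sigma_stable}, plus Abadie--Buss--Ferraro for amenability; your checks that \(A\) is \(\sigma\)\nb-unital and \(G\) is countable, and your derivation of outerness and of uniqueness up to cocycle conjugacy from Example~\ref{ex:twisted_actions} and Theorem~\ref{cor:Cameron-Smith}, spell out details the paper leaves implicit.
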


\begin{proof}
  Combine Theorem~\ref{thm:regular_irreducible_for_simple} and
  Proposition~\ref{prop:sigma_stable}.
  The action~\(\alpha\) is amenable by
  \cite{Abadie-Buss-Ferraro:Amenability}*{Corollary~6.17 and
    Proposition~7.2}.
\end{proof}

Combining Corollary~\ref{cor:Gabe_Szabo_actions} and
\cite{Gabe-Szabo:Dynamical_Kirchberg}*{Theorem~F} gives

\begin{corollary}
  \label{cor:Gabe_Szabo_inclusions}
  Two regular irreducible inclusions \(A_i\subseteq B_i\) for
  \(i=1,2\) of stable purely infinite simple \(\Cst\)\nb-algebras into
  nuclear separable \(\Cst\)\nb-algebras are isomorphic
  \textup{(}that is, there is an isomorphism \(B_1\cong B_2\) that
  restricts to \(A_1\cong A_2\)\textup{)} if and only if these
  inclusions come from crossed products by \(KK^G\)-equivalent actions
  of the same discrete group~\(G\).
\end{corollary}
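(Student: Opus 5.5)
Both directions run through Corollary~\ref{cor:Gabe_Szabo_actions}, which turns each inclusion into a crossed product inclusion, combined with the dynamical Kirchberg--Phillips theorem \cite{Gabe-Szabo:Dynamical_Kirchberg}*{Theorem~F}, which converts cocycle conjugacy into \(KK^G\)-equivalence for outer amenable actions of countable discrete groups on stable Kirchberg algebras.

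For ``only if'', suppose \(A_1\subseteq B_1\) and \(A_2\subseteq B_2\) are isomorphic via \(\Phi\colon B_1\congto B_2\) with \(\Phi(A_1)=A_2\). Corollary~\ref{cor:Gabe_Szabo_actions} writes \(B_i\cong A_i\rtimes^\red_{\alpha_i}G_i\), restricting to the identity on \(A_i\). The groups \(G_i\) are identified with \(\Slice_{A_i}(B_i)\setminus\{0\}\), and \(\Phi\) carries slices to slices. Hence \(G_1\cong G_2\eqqcolon G\), and \(\Phi\) gives an isomorphism of Fell bundles whose unit fibre map is \(\Phi|_{A_1}\colon A_1\congto A_2\). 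Transporting \(\alpha_1\) along this isomorphism yields an action on \(A_2\) whose Fell bundle is isomorphic to that of \(\alpha_2\) by an isomorphism fixing the unit fibre. By Example~\ref{ex:twisted_actions} with trivial twists, these two actions are cocycle conjugate. So \(\alpha_1\) and \(\alpha_2\) are cocycle conjugate up to the isomorphism \(A_1\cong A_2\). Cocycle conjugate actions are \(KK^G\)-equivalent; this is the easy direction and needs no classification. The \(A_i\) are separable because they sit inside separable \(B_i\).

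For ``if'', suppose \(B_i\cong A_i\rtimes^\red_{\alpha_i}G\) compatibly with the inclusions, and \(\alpha_1,\alpha_2\) are \(KK^G\)-equivalent. First check that \(\alpha_i\) satisfies the hypotheses of Theorem~F:
\begin{itemize}
\item \(A_i\) is separable, stable and purely infinite simple.
\item \(A_i\) is nuclear, because by Theorem~\ref{thm:regular_irreducible_for_simple} there is a conditional expectation \(B_i\to A_i\) and \(B_i\) is nuclear; so \(A_i\) is a stable Kirchberg algebra.
\item \(G\) is countable, since \(B_i\) is separable.
\item \(\alpha_i\) is outer and amenable by Corollary~\ref{cor:Gabe_Szabo_actions}, and outer actions on simple algebras are pointwise outer.
\end{itemize}
Theorem~F then says the two actions are cocycle conjugate. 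A cocycle conjugacy \((\theta,w)\) induces an isomorphism \(A_1\rtimes^\red_{\alpha_1}G\cong A_2\rtimes^\red_{\alpha_2}G\) given by \(a\delta_g\mapsto\theta(a)w_g\delta_g\), and this restricts to \(\theta\colon A_1\congto A_2\). That is precisely an isomorphism of the inclusions.

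I expect the main obstacle to be matching exactly the hypotheses and form of \cite{Gabe-Szabo:Dynamical_Kirchberg}*{Theorem~F}. The classification there is up to cocycle conjugacy for outer amenable actions of countable discrete groups on stable (or unital) Kirchberg algebras. One has to make sure that the \(KK^G\)-equivalence in the statement is the kind of equivalence Theorem~F uses (invertible \(KK^G\)-class versus the ideal-related or unit-preserving variants). One also has to track that an isomorphism \(A_1\cong A_2\) is implicit in the phrase ``same group~\(G\)''. The other steps are routine applications of results already established.
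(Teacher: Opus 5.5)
Your proposal is correct and takes the same route as the paper, which proves this corollary in one line by combining Corollary~\ref{cor:Gabe_Szabo_actions} with \cite{Gabe-Szabo:Dynamical_Kirchberg}*{Theorem~F}. Your extra checks spell out what the paper leaves implicit: separability and nuclearity of \(A_i\) (the latter via the conditional expectation), countability of \(G\), outerness and amenability of the actions, and the fact that a cocycle conjugacy induces an isomorphism of the crossed-product inclusions.
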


Now we consider the inclusion \(A^\beta\subseteq A\) given by the
fixed-point algebra of an action~\(\beta\) of a compact group~\(K\).
We may assume the action to be faithful because otherwise we may pass
to the action of the quotient~\(K/N\) of~\(K\) by the kernel~\(N\) of
the action, and this does not affects the fixed-point algebra.
Of course, any outer action is faithful. Recent results
of  Mukohara \cite{Mukohara:Inclusions} and Izumi \cite{Izumi:Minimal_compact} link \(\Cst\)\nb-irreducibility to quasi-product
actions as follows.

\begin{definition}[\cites{Bratteli-Elliott-Evans-Kishimoto:Quasi-product,
    Bratteli-Elliott-Kishimoto:Quasi-product}]
  \label{def:quasi_product_action}
  An action~\(\beta\) of a compact group~\(K\) on~\(A\) is a
  \emph{quasi-product action} if for any sequence~\((\xi_n)\)
  of finite-dimensional unitary representations of~\(K\), there are a
  \(\beta\)\nb-invariant sub-\(\Cst\)\nb-algebra~\(A_0\) of~\(A\) and
  a closed \(\alpha^{**}\)-invariant projection~\(q\) in the
  bidual~\(A^{**}\) with the following properties: \(q\in
  A_0'\), \(q A q = A_0 q\), \(q\in J^{**}\subseteq A^{**}\) for any
  nonzero ideal~\(J\) in~\(A\), and the \(\Cst\)\nb-dynamical system
  \((A_0q, K, \beta^{**}|_{A_0q})\) is isomorphic to the product
  system
  \(
    (\bigotimes_{n=1}^\infty M_{\dim \xi_n}, K,
    \bigotimes_{n=1}^\infty \Ad_{\xi_n}).
  \)
\end{definition}

\begin{theorem}[Izumi--Mukohara]
  Let~\(\beta\) be a faithful action of a second countable compact
  group~\(K\) on a separable \(\Cst\)\nb-algebra~\(A\) such
  that~\(A^\beta\) is simple.
  Then the following conditions are equivalent:
  \begin{enumerate}
  \item \(A^\beta\subseteq A\) is \(\Cst\)\nb-irreducible;
  \item \(A^\beta\subseteq A\) is irreducible;
  \item \(\beta\) is a quasi-product action.
  \end{enumerate}
\end{theorem}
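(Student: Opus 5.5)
The argument splits into the elementary equivalence (1)\(\Leftrightarrow\)(2) and the deeper equivalence (2)\(\Leftrightarrow\)(3).

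\emph{The implication (1)\(\Rightarrow\)(2).} This is Lemma~\ref{C-irreducible_implies_irreducible}. We only have to check that \(A^\beta\subseteq A\) is nondegenerate. Averaging an approximate unit of~\(A\) over the Haar measure of~\(K\) gives a \(\beta\)\nb-invariant approximate unit, and it lies in~\(A^\beta\). Hence \(A^\beta A=A\).

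\emph{The implication (2)\(\Rightarrow\)(1).} Here I would reduce to a regular inclusion and apply Theorem~\ref{thm:regular_irreducible_for_simple}. For compact~\(K\), the inclusion \(A^\beta\subseteq A\) is not regular in general, since the spectral subspaces are bimodules for non-one-dimensional representations. The plan is Morita reduction. Replace \(A\) by the crossed product \(A\rtimes_\beta K\). By Rosenberg's theorem, \(A^\beta\) is a corner of \(A\rtimes_\beta K\), and the spectral subspaces become Hilbert bimodules over the ideal generated by that corner. Irreducibility \((A^\beta)'\cap\Mult(A)=\C\) forces every nonzero spectral subspace \(A_\xi\) to be ``outer'': a commuting multiplier would give a bimodule map \(A^\beta\to A_\xi\otimes \bar{\xi}\). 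Then Proposition~\ref{prop:module_approximation}, together with the Magajna separation argument in the proof of Theorem~\ref{thm:Galois_correspondence_explained}, applied to the projections onto isotypic components, should show the following: every intermediate algebra~\(C\) containing a nonzero element with nonzero \(\xi\)\nb-component contains the whole closed \(A^\beta\)\nb-bimodule generated by that component. With \(A^\beta\) simple, this gives simplicity of every~\(C\), as in the proof of Theorem~\ref{thm:Galois_correspondence_explained}. For this we need aperiodicity of \(A^\beta\subseteq A\). I would derive it from irreducibility plus simplicity as in Theorem~\ref{thm:regular_irreducible_for_simple}, using that outer Hilbert bimodules over a simple algebra are aperiodic (Figure~\ref{fig:diagram}).

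\emph{The equivalence (2)\(\Leftrightarrow\)(3).} This is genuinely the content of Izumi and Mukohara, and I would not reprove it. I would cite \cite{Mukohara:Inclusions} and \cite{Izumi:Minimal_compact}. Their results characterise irreducibility of \(A^\beta\subseteq A\) for separable~\(A\), second countable~\(K\) and simple~\(A^\beta\) as the quasi-product property of Definition~\ref{def:quasi_product_action}; this is the Bratteli--Elliott--Evans--Kishimoto characterisation via a faithful family of irreducible representations restricting to the product type system. Separability is needed exactly there.

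\emph{Main obstacle.} The hard step is (2)\(\Rightarrow\)(1) for nonabelian~\(K\), where the grading by \(\widehat{K}\) is not a group grading. The Fourier-coefficient argument then needs matrix-valued coefficients, and I would simply defer to the cited works for that case. For abelian~\(K\), the spectral subspaces \((A_\chi)_{\chi\in\widehat K}\) form a Fell bundle over the discrete group~\(\widehat K\), so \(A^\beta\subseteq A\) is regular and topologically graded. In that case Theorem~\ref{thm:main_theorem} gives (1)\(\Leftrightarrow\)(2) directly.
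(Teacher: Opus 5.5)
Your proposal ends up where the paper does: the paper's proof consists only of combining \cite{Mukohara:Inclusions}*{Remark 2.19, Example 4.2 and Theorem 4.10} with \cite{Izumi:Minimal_compact}*{Theorem 1.1}. Your direct proof of (1)\(\Rightarrow\)(2) via Lemma~\ref{C-irreducible_implies_irreducible} is correct. The averaged approximate unit works because \(k\mapsto\beta_k(a)\) has compact range, so the convergence is uniform over the orbit. Your treatment of abelian~\(K\) is also fine. There the faithful averaging expectation \(A\to A^\beta\) gives condition~\ref{enu:main8.5} of Theorem~\ref{thm:main_theorem}, or you can quote Theorem~\ref{thm:Mukohara_compact_abelian} directly.

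The middle paragraph, however, does not work as a route to (2)\(\Rightarrow\)(1) and should not be offered as a plan. Its crucial step is to obtain aperiodicity of \(A^\beta\subseteq A\) from irreducibility ``as in Theorem~\ref{thm:regular_irreducible_for_simple}''. That argument depends essentially on regularity. There, irreducibility makes each \emph{slice} outer by \cite{Kwasniewski-Meyer:Cartan}*{Proposition~4.5}, and ``outer \(=\) aperiodic'' holds only for Hilbert bimodules over a simple algebra. For \(\xi\in\widehat{K}\) with \(\dim\xi>1\), the isotypic component is not a slice, only an \(A^\beta\)\nobreakdash-correspondence, so neither fact is available. Passing to Rosenberg's corner in \(A\rtimes_\beta K\) does not turn \(A^\beta\subseteq A\) into a regular inclusion either. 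Without aperiodicity, Proposition~\ref{prop:module_approximation} cannot be used, and the paper itself derives aperiodicity from irreducibility only in the regular setting.

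Nothing is lost, since the works of Mukohara and Izumi you cite already give the whole cycle, in particular (2)\(\Rightarrow\)(3)\(\Rightarrow\)(1). That is exactly how the paper proves the statement.
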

\begin{proof}
Combine \cite{Mukohara:Inclusions}*{Remark 2.19, Example 4.2  and Theorem
  4.10} and   \cite{Izumi:Minimal_compact}*{Theorem 1.1}.
\end{proof}
In addition, Mukohara~\cite{Mukohara:Inclusions} obtained a Galois
correspondence for inclusions satisfying the conditions of the above
theorem.
For finite groups acting on a \(\sigma\)\nb-unital
\(\Cst\)\nb-algebra, such a correspondence was also proved by
Izumi~\cite{Izumi:Inclusions_simple}.
As another application of Theorems \ref{thm:main_theorem}
and~\ref{Thm:Galois_correspondence}, we reprove these results in the
special case where~\(K\) is abelian, without separability assumptions.
Our proof is qualitatively different, and we may say more about  the
properties of the resulting inclusions and intermediate
\(\Cst\)\nb-subalgebras.

\begin{lemma}
  \label{lem:abelian_actions}
  Let~\(\beta\) be a faithful action of a compact abelian group~\(K\)
  on a \(\Cst\)\nb-algebra~\(A\).
  Consider the crossed product \(A\rtimes_\beta K\) and the
  fixed-point subalgebra \(A^\beta\subseteq A\).
  \begin{enumerate}
  \item \label{enu:abelian_actions2}%
    Let~\(A^\beta\) be simple.  Then so is \(A\rtimes_\beta K\),
    and~\(\widehat{\beta}\) is outer if and only if the Fell
    bundle~\((A_g)_{g\in \widehat{K}}\) formed by the spectral
    subspaces of~\(\beta\) is outer.
  \item \label{enu:abelian_actions1}%
    If \(Z\Mult(A) = Z\Mult(A\rtimes_\beta K)=\C 1\) and~\(\beta\) is
    outer, then the dual action~\(\widehat{\beta}\) of~\(\widehat{K}\)
    on~\(A\rtimes_\beta K\) is outer.
  \end{enumerate}
\end{lemma}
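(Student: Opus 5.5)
For~\ref{enu:abelian_actions2}, I would describe \(A\rtimes_\beta K\) through its canonical spectral projections. For \(\chi\in\widehat{K}\), let \(A_\chi\defeq\setgiven{a\in A}{\beta_k(a)=\chi(k)a\text{ for all }k\in K}\) be the spectral subspaces, so \(A_1=A^\beta\). In \(\Mult(A\rtimes_\beta K)\) there are mutually orthogonal projections \(p_\chi\), \(\chi\in\widehat{K}\), namely the images of the minimal central projections \(\chi\in\Cst(K)\), with \(\sum_\chi p_\chi=1\) strictly. They satisfy \(p_\chi(A\rtimes_\beta K)p_\eta\cong A_{\chi\eta^{-1}}\) (up to the sign convention) compatibly with multiplication and involution, and \(\widehat{\beta}_\gamma(p_\chi)=p_{\chi\gamma}\). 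In particular every diagonal corner \(p_\chi(A\rtimes_\beta K)p_\chi\) is isomorphic to \(A^\beta\), hence simple.

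Next I would show that every \(A_g\) is nonzero. Since \(A^\beta\) is simple, a nonzero \(A_g\) satisfies \(\overline{A_g^*A_g}=A^\beta=\overline{A_gA_g^*}\). Hence, if \(A_g\neq0\neq A_h\), then \(A_gA_h\supseteq A_g(A^\beta A_h)\neq0\). Thus \(\setgiven{g}{A_g\neq0}\) is a subgroup of~\(\widehat{K}\). Faithfulness of~\(\beta\) makes this subgroup all of~\(\widehat{K}\), since otherwise the annihilating subgroup of~\(K\) acts trivially. So the spectral Fell bundle is saturated, and all corners \(p_\chi(A\rtimes_\beta K)p_\chi\) are mutually full.

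Now let \(J\) be a nonzero ideal of \(A\rtimes_\beta K\). Then some \(p_\chi Jp_\chi\) is nonzero, hence equal to the whole simple corner, and by fullness \(J\) contains every \(p_\eta\)-corner, so \(J\) is everything. For the outerness claim, \(p_1\) is a full projection, so \(A\rtimes_\beta K\) is Morita equivalent to \(A^\beta\) via \(p_1(A\rtimes_\beta K)\). The automorphism \(\widehat{\beta}_\gamma\) corresponds to a Hilbert bimodule over \(A\rtimes_\beta K\). Transporting it along this equivalence, and using \(\widehat{\beta}_\gamma(p_1)=p_\gamma\), gives the \(A^\beta\)-bimodule \(p_1(A\rtimes_\beta K)p_\gamma\cong A_{\gamma^{\mp1}}\). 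Aperiodicity is preserved under such transport by \cite{Kwasniewski-Meyer:Aperiodicity}*{Proposition~6.1}. For simple coefficient algebras aperiodicity is equivalent to outerness (Figure~\ref{fig:diagram} and Example~\ref{ex:conditions_automorphisms}). Hence \(\widehat{\beta}_\gamma\) is outer if and only if \(A_{\gamma^{-1}}\) is outer, and outerness of \(A_\gamma\) and \(A_{\gamma^{-1}}=A_\gamma^*\) are equivalent.

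For~\ref{enu:abelian_actions1}, suppose \(\widehat{\beta}_\gamma=\Ad_u\) for some \(\gamma\neq1\) and some unitary \(u\in\Mult(A\rtimes_\beta K)\). Since \(\widehat{K}\) is abelian, for each \(\eta\in\widehat{K}\) the unitary \(\widehat{\beta}_\eta(u)u^*\) implements \(\widehat{\beta}_\eta\widehat{\beta}_\gamma\widehat{\beta}_\eta^{-1}\widehat{\beta}_\gamma^{-1}=\Id\). So it is central, and \(Z\Mult(A\rtimes_\beta K)=\C1\) forces \(\widehat{\beta}_\eta(u)=c(\eta)u\) for a character~\(c\) of~\(\widehat{K}\), that is, \(c(\eta)=\eta(k)\) for some \(k\in K\). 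The main obstacle is the Landstad-type step: an element of \(\Mult(A\rtimes_\beta K)\) that is an eigenvector for \(\widehat{\beta}\) with eigencharacter~\(k\) must have the form \(a\lambda_k\) with \(a\in\Mult(A)\). I would first try to prove this by observing that \(u\lambda_k^*\) is \(\widehat{\beta}\)-fixed, and then identify the fixed multipliers of the dual action of a compact abelian group with \(\Mult(A)\), via Takai duality or a direct Fourier argument using the \(p_\chi\).

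Granting this step, write \(u=a\lambda_k\) with \(a\) a unitary in \(\Mult(A)\). Since \(\widehat{\beta}_\gamma\) fixes~\(A\), we get \(a\beta_k(x)a^*=x\) for all \(x\in A\), so \(\beta_k\) is inner and outerness of~\(\beta\) gives \(k=e\). Then \(u=a\in\Mult(A)\) commutes with~\(A\), so \(a\in Z\Mult(A)=\C1\). But then \(\Ad_u\) is the identity, whereas \(\widehat{\beta}_\gamma(\lambda_h)=\gamma(h)\lambda_h\neq\lambda_h\) for some \(h\in K\) because \(\gamma\neq1\). This contradiction shows that \(\widehat{\beta}\) is outer.
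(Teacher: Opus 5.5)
Your part~(1) is correct and is essentially the paper's argument made explicit. The paper identifies the Morita globalisation of the spectral Fell bundle from \cite{Kwasniewski-Meyer:Aperiodicity}*{Proposition~7.1} with \((A\rtimes_\beta K,\widehat{\beta})\) via Takai duality, and then quotes that this globalisation transfers simplicity and outerness. You realise the same Morita equivalence through the full corner \(p_1(A\rtimes_\beta K)p_1\cong A^\beta\) and check the transfer by hand. (Outerness is preserved directly under transport along an equivalence bimodule, so the detour through aperiodicity is not needed.)

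In part~(2), however, the step you grant is a genuine gap, and the route you propose for it cannot work. It is not true that the \(\widehat{\beta}\)-fixed multipliers of \(A\rtimes_\beta K\) are exactly \(\Mult(A)\). Nor is it true that every \(\widehat{\beta}\)-eigenvector in \(\Mult(A\rtimes_\beta K)\) has the form \(a\lambda_k\) with \(a\in\Mult(A)\). For instance, take \(A=\Cont(\T)\) with the rotation action. Then \(A\rtimes_\beta\T\cong\Comp(L^2\T)\), and \(\widehat{\beta}_n\) is conjugation by multiplication with~\(z^n\). The fixed multipliers form \(L^\infty(\T)\), which is strictly larger than \(\Mult(A)=\Cont(\T)\). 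So neither Takai duality nor a Fourier argument with the~\(p_\chi\) can give the identification as you state it. A Landstad-type criterion needs, besides invariance, continuity of \(h\mapsto\lambda_h x\lambda_h^*\).

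The missing observation is that this continuity is automatic for your element. From \(u\lambda_h u^*=\widehat{\beta}_\gamma(\lambda_h)=\gamma(h)\lambda_h\) you get \(\lambda_h u\lambda_h^*=\overline{\gamma(h)}\,u\). Since \(K\) is abelian, this gives \(\lambda_h(u\lambda_k^*)\lambda_h^*=\overline{\gamma(h)}\,u\lambda_k^*\), which is norm-continuous in~\(h\). This is exactly how the paper proceeds: for \(v=\lambda_{k_0}u^*\) it computes \(\lambda_k v\lambda_k^{-1}=\langle g_0,k\rangle v\), deduces the needed continuity, and applies \cite{Pedersen:Cstar_automorphisms_vol2}*{Proposition~7.8.9} to conclude \(v\in\Mult(A)\). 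With that step supplied, the rest of your argument agrees with the paper's proof: \(\beta_k\) is inner, so \(k=1\); then \(u\in Z\Mult(A)=\C1\), contradicting \(\gamma\neq1\).
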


\begin{proof}
  Recall that the Pontryagin dual~\(\widehat{K}\) of~\(K\) is a discrete
  abelian group.
  The spectral subspaces \(A_g\defeq \setgiven{a\in A}{\beta_k(a)=
    \langle g, k\rangle\cdot  a \text{ for all }k\in K}\) for \(g\in
  \widehat{K}\) with the operations inherited from~\(A\) form a Fell
  bundle \(\A\defeq (A_g)_{g\in \widehat{K}}\) such that
  \(\Cst(\A)=\Cst_\red(\A)\cong A\).

  We first prove~\ref{enu:abelian_actions2}.
  Since~\(\beta\) is faithful, the subspaces~\(A_g\) are nonzero for
  all \(g\in \widehat{K}\), see for instance
  \cite{Bratteli-Elliott-Evans-Kishimoto:Quasi-product}*{Lemma~2.2}.
  Since~\(A^\beta\) is simple, it follows that the Fell bundle~\(\A\)
  is saturated.
  Recall that the Morita globalisation of~\(\A\) from
  \cite{Kwasniewski-Meyer:Aperiodicity}*{Proposition~7.1} is based on
  Imai--Takai duality and is given by the crossed product \(C\defeq
  A\rtimes_{\delta_{\widehat{K}}} \widehat{K}\) by the dual
  coaction~\(\delta_{\widehat{K}}\) on   \(A=\Cst(\A)\),
  where~\(C\) is equipped with the dual action
  of~\(K\).
  Since~\(\widehat{K}\) is abelian, the duality specialises to the
  usual Takai duality in
  \cite{Pedersen:Cstar_automorphisms_vol2}*{Theorem 7.9.3}: the
  co-action crossed product \(A\rtimes_{\delta_{\widehat{K}}}
  \widehat{K}\) is \(K\)\nb-equivariantly isomorphic to the usual
  crossed product \(A\rtimes_\beta K\)  equipped with its canonical
  dual action~\(\widehat{\beta}\colon \widehat{K}\to
  \Aut(A\rtimes_\beta K)\) given by \(\widehat{\beta}_g(a)(k) =
  \langle g, k\rangle a(k)\) for \(a\in \Contc(K,A)\), see
  \cite{Pedersen:Cstar_automorphisms_vol2}*{Proposition~7.8.3}.
  Therefore, \cite{Kwasniewski-Meyer:Aperiodicity}*{Propositions
    7.1(2) and~6.8(3)} implies that~\(\beta\) is outer if and only
  if~\(\A\) is outer.
  Since~\(A^\beta\) is simple and \(A_g\neq0\) for all
  \(g\in\widehat{K}\),
  \cite{Kwasniewski-Meyer:Aperiodicity}*{Proposition~7.1(8)} implies
  that \(A\rtimes_\beta K\) is simple.

  Next we prove~\ref{enu:abelian_actions1}.
  We assume that \(\widehat{\beta}_{g_0}=\Ad_u\) for some \(g_0\in
  \widehat{K}\) and \(u \in \UMult(A\rtimes_\beta K)\) and want to
  deduce \(g_0=1\).
  Since we assumed \(\Mult(A\rtimes_\beta K)\) to have trivial centre,
  \(u\) is determined by~\(\beta_g\) up to a scalar multiple.
  A computation using that~\(\widehat{K}\) is abelian shows that
  \(\widehat{\beta}_{g_0}=\Ad_{\widehat{\beta}_g(u)}\)
  for all \(g\in \widehat{K}\).
  So \(u^*\widehat{\beta}_g(u)\in \T\).
  In addition, the map \(\widehat{K}\ni g\mapsto
  u^*\widehat{\beta}_g(u)\in \T\) must be a character.
  So there is \(k_0\in K\) such that
  \[
    \widehat{\beta}_g(u) = \langle g, k_0\rangle u \qquad \text{ for all }g\in \widehat{K}.
  \]
  Let \((\lambda_k)_{k\in K}\subseteq \UMult(A\rtimes_\beta K)\) be
  the canonical unitaries implementing~\(\beta\) in the crossed
  product \(A\rtimes_\beta K\).
  The definition of the dual action says that
  \[
    \widehat{\beta}_g(\lambda_{k_0})
    = \langle g, k_0\rangle \lambda_{k_0} \qquad \text{ for all }g\in \widehat{K}.
  \]
  It follows that \(v\defeq \lambda_{k_0} u^*\) is fixed
  by~\(\beta_g\) for all \(g\in \widehat{K}\).
  That is, \(v\in \Mult(A\rtimes_\beta K)^{\widehat{\beta}}\).
  If \(k \in K\), then
  \[
    v^*\lambda_k v
    = u \lambda_k u^*
    = \widehat{\beta}_{g_0}(\lambda_k)
    = \langle g_0, k\rangle \lambda_k,
  \]
  and so \(\lambda_k v \lambda_k^{-1} = \langle g_0, k\rangle v\).
  Therefore, for each \(y\in  A\rtimes_\beta K\), the map \(K\ni k\to
  \lambda_k v \lambda_k^{-1}y\in A\rtimes_\beta K\) is continuous.
  Hence \(v\in \Mult(A)\) by
  \cite{Pedersen:Cstar_automorphisms_vol2}*{Proposition~7.8.9}.
  Since \(\Ad_v|_A=\beta_{k_0}\) and~\(\beta\) is outer, it follows
  that \(k_0=1\).
  Then \(u=v^*\in \Mult(A)\) and \(\widehat{\beta}_{g_0}=\Ad_u\) acts
  as the identity on~\(A\).
  It follows that~\(u\) is in the centre of~\(\Mult(A)\), which we
  assumed to be trivial.
  Thus~\(\widehat{\beta}_{g_0}\) is the identity on \(A\rtimes_\beta
  K\).
  This implies \(g_0=1\) as desired.
\end{proof}

\begin{definition}[\cite{Bratteli-Elliott-Kishimoto:Quasi-product}*{3.1}]
  We call a \(\Cst\)\nb-inclusion \(A\subseteq B\) \emph{strongly
    prime} if every intermediate \(\Cst\)\nb-subalgebra \(A\subseteq
  C\subseteq B\) is prime; equivalently, \(x A y \neq 0\) for any
  nonzero \(x,y\in B\).
\end{definition}

\begin{theorem}[compare \cite{Mukohara:Inclusions}*{Theorem~4.18},
  \cite{Izumi:Inclusions_simple}*{Corollary~6.6(2)}]
  \label{thm:Mukohara_compact_abelian}
  Let~\(\beta\) be a faithful action of a compact abelian group~\(K\)
  on a \(\Cst\)\nb-algebra~\(A\) and assume that its fixed-point
  algebra~\(A^\beta\)  is simple.
  The following conditions are equivalent:
  \begin{enumerate}
  \item \label{enu:Mukohara_compact_abelian8}%
    the action~\(\beta\) is outer and \(Z\Mult(A)=\C\);
  \item \label{enu:Mukohara_compact_abelian0}%
    \(A^\beta\subseteq A\) is strongly prime; 
  \item \label{enu:Mukohara_compact_abelian1}%
    \(A^\beta\subseteq A\) is irreducible;
  \item \label{enu:Mukohara_compact_abelian2}%
    \(A^\beta\subseteq A\) is \(\Cst\)\nb-irreducible;
  \item \label{enu:Mukohara_compact_abelian6}%
    the Fell bundle \((A_{\kappa})_{\kappa\in\widehat{K}}\) formed by
    the spectral subspaces of~\(\beta\) is outer;
  \item \label{enu:Mukohara_compact_abelian7}%
    the dual action \(\widehat{\beta}\colon \widehat{K}\to
    \Aut(A\rtimes_\beta K)\) is outer;
  \item \label{enu:Mukohara_compact_abelian9}%
    there is a bijection from the set of all closed subgroups of~\(K\)
    onto the set of all intermediate \(\Cst\)\nb-subalgebras between \(A^\beta\) and~\(A\), given by
    \[
      K\supseteq L \mapsto A^{\beta|_L} \subseteq A.
    \]
  \end{enumerate}
  If the above equivalent conditions hold, then the inclusion
  \(A^\beta\subseteq A\) determines the group~\(K\) and the
  action~\(\beta\) up to an isomorphism, and if any of the
  intermediate subalgebras is nuclear, then all of them are.
\end{theorem}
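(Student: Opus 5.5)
The plan is to reduce everything to Theorem~\ref{thm:main_theorem} and Theorem~\ref{Thm:Galois_correspondence}, applied to the regular inclusion \(A^\beta\subseteq A=\Cst_\red(\A)\). Here \(\A=(A_\kappa)_{\kappa\in\widehat{K}}\) is the saturated Fell bundle of spectral subspaces. As in the proof of Lemma~\ref{lem:abelian_actions}, the fibres are nonzero by faithfulness, and saturation follows from simplicity of \(A^\beta\). Since \(\widehat{K}\) is abelian, hence amenable, \(A=\Cst(\A)=\Cst_\red(\A)\) carries the faithful expectation \(\int_K\beta_k\,dk\). So \(A^\beta\subseteq A\) is regular with simple \(A=A_1\).

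Theorem~\ref{thm:main_theorem} then gives \ref{enu:Mukohara_compact_abelian2}\(\Leftrightarrow\)\ref{enu:Mukohara_compact_abelian6}, via items \ref{enu:main1}\(\Leftrightarrow\)\ref{enu:main10} together with uniqueness of the Fell bundle. Lemma~\ref{lem:abelian_actions}\ref{enu:abelian_actions2} gives \ref{enu:Mukohara_compact_abelian6}\(\Leftrightarrow\)\ref{enu:Mukohara_compact_abelian7}. For \ref{enu:Mukohara_compact_abelian1}, the implication \ref{enu:Mukohara_compact_abelian2}\(\Rightarrow\)\ref{enu:Mukohara_compact_abelian1} is Lemma~\ref{C-irreducible_implies_irreducible}. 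For the converse I would use Theorem~\ref{thm:regular_irreducible_for_simple} with the faithful conditional expectation, which is in particular an almost faithful pseudo-expectation. For \ref{enu:Mukohara_compact_abelian0}: \(\Cst\)\nb-irreducible implies all intermediate algebras are simple, hence prime. Conversely, strong primeness gives primeness of \(A\), and I expect it also gives irreducibility. Indeed, a self-adjoint \(b\in (A^\beta)'\cap\Mult(A)\) can be averaged to be \(\beta\)-invariant and decomposed spectrally, and then primeness of intermediate algebras such as \(A^\beta+f(b)A^\beta\) forces the spectrum of \(b\) to be one point, as in the proof of Lemma~\ref{C-irreducible_implies_irreducible}. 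I would try to make that argument precise, using \(xA^\beta y\neq 0\) for \(x=f(b)\), \(y=g(b)\) with disjointly supported functions \(f,g\).

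For \ref{enu:Mukohara_compact_abelian8}, first note that outerness of \(\beta\) together with trivial centre should give \ref{enu:Mukohara_compact_abelian7} via Lemma~\ref{lem:abelian_actions}\ref{enu:abelian_actions1}. For that I need \(Z\Mult(A\rtimes_\beta K)=\C\), and this holds because \(A\rtimes_\beta K\) is simple by Lemma~\ref{lem:abelian_actions}\ref{enu:abelian_actions2}. Conversely, from \ref{enu:Mukohara_compact_abelian2} the algebra \(A\) is simple, so \(Z\Mult(A)=\C\). If some \(\beta_k=\Ad_u\) with \(k\neq 1\), then \(u\) commutes with \(A^\beta\), so irreducibility makes \(u\) a scalar, forcing \(\beta_k=\Id\) and contradicting faithfulness.

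For \ref{enu:Mukohara_compact_abelian9}, I would use Pontryagin duality: closed subgroups \(L\subseteq K\) correspond bijectively to subgroups \(H=L^\perp\subseteq\widehat{K}\), and \(A^{\beta|_L}=\clsp\bigcup_{\kappa\in L^\perp}A_\kappa=\Cst_\red(\A|_{L^\perp})\). Then \ref{enu:Mukohara_compact_abelian9} is exactly condition \ref{enu:Galois_correspondence_explained1.5} of Theorem~\ref{thm:Galois_correspondence_explained}, which is equivalent to \ref{enu:Mukohara_compact_abelian2} there. Uniqueness of \(K\) and \(\beta\) follows from the uniqueness of the Fell bundle over \(\widehat{K}\) in Theorem~\ref{thm:main_theorem}: the group \(\widehat{K}\) and its bundle recover \(K\) and \(\beta\) by duality, since \(\beta_k\) acts on \(A_\kappa\) by \(\langle\kappa,k\rangle\). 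Nuclearity passing to all intermediate algebras follows from Corollary~\ref{cor:general_consequences_of_Galois}. Note that one nuclear intermediate algebra \(C=\Cst_\red(\A|_H)\) makes \(A^\beta\) nuclear, and \(\widehat{K}\) is amenable, so every intermediate algebra is nuclear by \cite{Abadie-Buss-Ferraro:Amenability}*{Proposition~7.2}.

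The main obstacle is the implication strongly prime \(\Rightarrow\) irreducible. The commutant element need not lie in \(A\) and need not be \(\beta\)-invariant, so the averaging and functional-calculus argument needs care. A fallback is to show that strong primeness makes each \(A_\kappa\) outer directly: an isomorphism \(A_\kappa\cong A^\beta\) would give a multiplier \(u\) with \(uA^\beta u^*\) spectral, whose powers generate an abelian piece producing \(x,y\) with \(xA^\beta y=0\).
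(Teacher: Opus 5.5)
Your proposal is correct. Apart from the strongly prime direction it follows the paper step by step. At that step you take a genuinely different and much more elementary route, and the obstacle you flag there does not actually arise.

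\textbf{Closing the step you flagged.} Let \(b=b^*\in (A^\beta)'\cap\Mult(A)\) and suppose its spectrum contains two points.
\begin{itemize}
\item Choose continuous functions \(f,g\) with disjoint supports such that \(f(b)\neq0\neq g(b)\). Both lie in the \(\Cst\)\nb-algebra generated by \(1\) and~\(b\), so they commute with~\(A^\beta\), and \(f(b)g(b)=0\).
\item Since \(A\) is an essential ideal in \(\Mult(A)\), there are \(c,d\in A\) with \(x\defeq c f(b)\neq0\) and \(y\defeq g(b)d\neq0\). These lie in~\(A\).
\item For all \(a\in A^\beta\) we get \(x a y=c\,a\,f(b)g(b)\,d=0\). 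This contradicts the form \(xA^\beta y\neq0\) of strong primeness given in the definition.
\end{itemize}
Hence \(b\in\C\cdot1\). Since \((A^\beta)'\) is self-adjoint, \((A^\beta)'\cap\Mult(A)=\C\cdot1\). No averaging over~\(\beta\), no \(\beta\)\nb-invariance and no fallback are needed. Averaging could even discard the information you want. This argument shows that strongly prime implies irreducible for an arbitrary \(\Cst\)\nb-inclusion. Theorem~\ref{thm:regular_irreducible_for_simple}, applied with the faithful expectation \(\int_K\beta_k\,dk\), then gives \(\Cst\)\nb-irreducibility.

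\textbf{How the paper does it.} The paper proves \ref{enu:Mukohara_compact_abelian0}\(\Rightarrow\)\ref{enu:Mukohara_compact_abelian7} directly. For \(g\neq1\):
\begin{itemize}
\item it uses primeness of \(A^{\beta|_L}\), where \(L\) is the annihilator of \(\langle g\rangle\), and passes to primeness of \(A\rtimes_\beta K\rtimes_{\widehat\beta}\langle g\rangle\);
\item it deduces full Connes spectrum and then simplicity via Olesen--Pedersen;
\item it concludes outerness of \(\widehat\beta_g\) from Theorem~\ref{the:special_groups} (which rests on Geffen--Ursu) or its \(\Z\)\nb-analogue, after passing to a prime-order power when the order of~\(g\) is not square-free.
\end{itemize}
Your argument bypasses all of this machinery. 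In return, the paper's argument only uses primeness of the fixed-point algebras \(A^{\beta|_L}\) for \(L\) annihilating a cyclic subgroup, not of all intermediate algebras (equivalently, not the full condition \(xA^\beta y\neq0\)). So it proves a formally stronger implication.

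\textbf{A minor correction.} For \ref{enu:Mukohara_compact_abelian2}\(\Rightarrow\)\ref{enu:Mukohara_compact_abelian6}, the uniqueness clause for~\ref{enu:main10} in Theorem~\ref{thm:main_theorem} does not apply by itself. The spectral bundle is not known in advance to be outer, so it is not yet one of the bundles that clause compares. Cite instead condition~\ref{enu:regular_irreducible_for_simple5} of Theorem~\ref{thm:regular_irreducible_for_simple}: every slice other than~\(A^\beta\) is outer, and in particular so is every \(A_\kappa\) with \(\kappa\neq1\). This is what the paper does. Also, ``simple \(A=A_1\)'' should read ``simple \(A^\beta=A_1\)''.
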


\begin{proof}
  As we noticed in the proof of Lemma~\ref{lem:abelian_actions},
  \(\A\defeq (A_g)_{g\in \widehat{K}}\) is a saturated Fell
  bundle such that the canonical isomorphism
  \(\Cst(\A)=\Cst_\red(\A)\cong A\) turns the
  inclusion \(A^\beta\subseteq A\)  into \(A_1\subseteq
  \Cst_\red(\A)\).
  Thus the equivalence between the conditions
  \ref{enu:Mukohara_compact_abelian1}--\ref{enu:Mukohara_compact_abelian6}
  follows from Theorem~\ref{thm:regular_irreducible_for_simple}.
  Recall that \(G\supseteq  H \mapsto L\defeq \setgiven{k \in
    K}{\langle k ,h\rangle=1 \text{ for all }h\in H}\subseteq K\) is a
  bijection between subgroups of~\(G\) and closed subgroups of~\(K\).
  Under this bijection, the subalgebra  \(\Cst_\red(\A|_H)\subseteq \Cst_\red(\A)\) is the fixed-point
  algebra  \(A^{\beta|_L}\) for the restriction of the action~\(\beta\) to~\(L\).
  Hence
  \ref{enu:Mukohara_compact_abelian1}--\ref{enu:Mukohara_compact_abelian6}
  are equivalent to~\ref{enu:Mukohara_compact_abelian9} by
  Theorem~\ref{thm:Galois_correspondence_explained}.
  By Lemma \ref{lem:abelian_actions}.\ref{enu:abelian_actions2},
  \(A\rtimes_\beta K\) is simple and
  \ref{enu:Mukohara_compact_abelian6}
  and~\ref{enu:Mukohara_compact_abelian7} are equivalent.
  If \(A\rtimes_\beta K\) is simple, then \(Z\Mult(A\rtimes_\beta
  K)=\C 1\) and so~\ref{enu:Mukohara_compact_abelian8}
  implies~\ref{enu:Mukohara_compact_abelian7} by Lemma
  \ref{lem:abelian_actions}.\ref{enu:abelian_actions1}.
  Conversely, the equivalent conditions
  \ref{enu:Mukohara_compact_abelian1}
  and~\ref{enu:Mukohara_compact_abelian2} readily
  imply~\ref{enu:Mukohara_compact_abelian8}.
  Indeed, if~\(A\) is simple, then \(Z\Mult(A)=\C\), and  if
  \(\beta_k=\Ad_u\) for some \(k\in K\) and \(u \in \UMult(A)\), then
  \(u\in (A^\beta)'\cap \UMult(A)=\C\), which implies that
  \(\beta_k=\Id_A=\beta_1\).
  Hence \(k=1\) because~\(\beta\) is faithful, and so~\(\beta\) is
  outer.
  This proves that~\ref{enu:Mukohara_compact_abelian8} is equivalent
  to
  \ref{enu:Mukohara_compact_abelian1}--\ref{enu:Mukohara_compact_abelian9}.

  As simple \(\Cst\)\nb-algebras are prime,
  \ref{enu:Mukohara_compact_abelian2}
  implies~\ref{enu:Mukohara_compact_abelian0}.
  Conversely, we will show that~\ref{enu:Mukohara_compact_abelian0}
  implies~\ref{enu:Mukohara_compact_abelian7}.
  Let \(g\in \widehat{K}\setminus\{1\}\) and consider the cyclic group
  \(H\defeq \langle g\rangle\) it generates and the corresponding
  closed subgroup \(L\subseteq K\).
  As~\(A^{\beta|_L}\) is an intermediate subalgebra for
  \(A^\beta\subseteq A\), it is prime by assumption.
  Hence the crossed product \(A\rtimes_\beta K
  \rtimes_{\widehat{\beta}} H\) is also prime by
  \cite{Bratteli-Elliott-Evans-Kishimoto:Quasi-product}*{Lemma~2.1}.
  Then the Connes spectrum of~\(\widehat{\beta}|_H\) is full,
  \(\Gamma(\widehat{\beta})=\widehat{H}\), by
  \cite{Olesen-Pedersen:Applications_Connes}*{Theorem~5.8} (or
  \cite{Pedersen:Cstar_automorphisms_vol2}*{Theorem 8.11.10}).
  Since \(A\rtimes_\beta K\) is simple, Lemma
  \ref{lem:abelian_actions}.\ref{enu:abelian_actions2} and
  \cite{Olesen-Pedersen:Applications_Connes}*{Theorem~6.5}
  (or \cite{Pedersen:Cstar_automorphisms_vol2}*{Theorem 8.11.12})
  imply that \(A\rtimes_\beta K \rtimes_{\widehat{\beta}} H\) is
  simple.
  If \(H\cong \Z\) is infinite or if \(H\cong \Z/n\) for a square-free
  number~\(n\), this implies that \(\widehat{\beta}|_H\) is outer, by
  \cite{Kwasniewski-Meyer:Aperiodicity}*{Theorem~9.15} or by
  Theorem~\ref{the:special_groups}.
  Hence~\(\widehat{\beta}_g\) is outer in these cases.
  If \(H\cong \Z/n\) and~\(n\) is not square-free, then we write
  \(n=p\cdot m\) with a prime number~\(p\) and apply the above
  reasoning to~\(g^m\).
  We conclude that~\(\widehat{\beta}_{g^m}\) is outer, which implies
  that so is~\(\widehat{\beta}_g\).
  This proves~\ref{enu:Mukohara_compact_abelian7}.

  Hence the conditions
  \ref{enu:Mukohara_compact_abelian8}--\ref{enu:Mukohara_compact_abelian9}
  are equivalent.
  If they hold, then the Fell bundle of spectral subspaces
  in~\ref{enu:Mukohara_compact_abelian6} is uniquely determined, up to
  isomorphism, by the last part of Theorem~\ref{thm:main_theorem}.
  This determines both~\(K\) and~\(\beta\), up to isomorphisms.
  If, in addition, \(\Cst_\red(\A|_H)\) is nuclear for some subgroup
  \(H\subseteq G\), then \(A_1=A^\beta\) is nuclear as well, because
  there is a conditional expectation \(\Cst_\red(\A|_H)\onto
  A^\beta\).
  If~\(A^\beta\) is nuclear, then the intermediate subalgebras
  \(\Cst_\red(\A|_H)\) are nuclear for all subgroups \(H\subseteq G\)
  because~\(H\) is amenable (see, for instance,
  \cite{Abadie-Buss-Ferraro:Amenability}*{Proposition~7.2}).
\end{proof}

\begin{remark}
  If~\(A\) is separable, then the equivalent conditions
  \ref{enu:Mukohara_compact_abelian8}--\ref{enu:Mukohara_compact_abelian9}
  in Theorem~\ref{thm:Mukohara_compact_abelian} hold if and only if \(\beta\) is a quasi-product action.
  The authors of~\cite{Bratteli-Elliott-Evans-Kishimoto:Quasi-product}
  provided more characterisations of quasi-product actions in this
  setting.
  They also characterised them under the additional assumption
  that~\(K\) is \(\T\) or~\(\Z/p\) for a prime \(p>0\).
  Our counterpart of this situation is given in the last part of Theorem~\ref{thmx:fixed_point_for special_compact}.
\end{remark}

Now we combine actions \(\alpha\) and~\(\beta\) of a discrete
group~\(G\) and a compact group~\(K\), respectively, and consider the
inclusion \(A^\beta\subseteq A\rtimes_\alpha^\red G\).
Echterhoff and Rørdam~\cite{Echterhoff-Rordam:Inclusions} showed a
Galois correspondence for such an inclusion, assuming that
\(\alpha\) and~\(\beta\) commute, are jointly outer, \(A\) is unital
and simple, and~\(K\) is finite abelian.
We will now generalise their result by allowing~\(K\) to be any
compact abelian group, \(\alpha\) to be twisted and~\(A\) to be
nonunital.

\begin{definition}
  We say that two twisted actions \((\alpha,\sigma)\) and
  \((\beta,\omega)\) of locally compact groups \(G\) and~\(K\) on a
  \(\Cst\)\nb-algebra \emph{commute} if \(\beta_k\circ
  \alpha_g=\alpha_g\circ \beta_k\) for all \((k,g)\in K\times G\), and
  then they are \emph{jointly outer} if the automorphisms
  \(\beta_k\circ \alpha_g\) are outer for all \(k,g\in K\times G\)
  different than \((1,1)\).
\end{definition}

The following result unifies Theorems \ref{cor:Cameron-Smith}
and~\ref{thm:Mukohara_compact_abelian}.

\begin{theorem}[compare \cite{Echterhoff-Rordam:Inclusions}*{Theorem
    4.6}]
  \label{thm:Echterhoff-Rordam}
  Suppose that a \(\Cst\)\nb-algebra~\(A\) is equipped with a twisted
  action \((\alpha,\sigma)\) of a discrete group~\(G\) and a faithful
  action~\(\beta\) of a compact abelian group~\(K\).
  Assume that these actions commute.
  Let \((A_{\widehat{k}})_{\widehat{k}\in \widehat{K}}\) be the
  spectral subspaces for~\(\beta\) and let~\((u_g)_{g\in G}\) be the
  unitaries in \(\UMult(A\rtimes_{\alpha,\sigma}^\red G)\)
  implementing~\(\alpha\).
  The following conditions are equivalent:
  \begin{enumerate}
  \item \label{enu:Echterhoff-Rordam0}%
    \(A^\beta\) is simple, \(Z\Mult(A)=\C\) and  \(\alpha\), \(\beta\)
    are jointly outer;
  \item \label{enu:Echterhoff-Rordam1}%
    \(A^\beta\subseteq A\rtimes_{\alpha,\sigma}^\red G\) is
    \(\Cst\)\nb-irreducible;
  \item \label{enu:Echterhoff-Rordam2}%
    \(A^\beta\) is simple and \(A^\beta\subseteq
    A\rtimes_{\alpha,\sigma}^\red G\) is irreducible;
  \item \label{enu:Echterhoff-Rordam3}%
    \(A^\beta\) is simple and \(A^\beta\subseteq
    A\rtimes^\red_{\alpha,\sigma} G\) is aperiodic;
  \item \label{enu:Echterhoff-Rordam31}%
    \(A^\beta\) is simple and \(A^\beta\subseteq
    A\rtimes^\red_{\alpha,\sigma} G\) has a unique pseudo-expectation;
  \item \label{enu:Echterhoff-Rordam32}%
    \(A^\beta\subseteq A\rtimes^\red_{\alpha,\sigma} G\) is a simple
    noncommutative Cartan subalgebra;
  \item \label{enu:Echterhoff-Rordam4}%
    \(A^\beta\) is simple and for every \((\widehat{k},g)\in
    \widehat{K}\times G\setminus \{(1,1)\}\) the Hilbert
    \(A^\beta\)-bimodule \(A_{\widehat{k}}u_g\) is  outer;
  \item \label{enu:Echterhoff-Rordam45}%
    \(A^\beta\) is simple and there is a bijection between the
    subgroups of \(\widehat{K}\times G\) and intermediate
    \(\Cst\)\nb-subalgebras of \(A^\beta\subseteq
    A\rtimes_{\alpha,\sigma}^\red G\) given by
    \[
      \widehat{K}\times G\supseteq C \longmapsto \clsp\setgiven{A_{\widehat{k}}u_g}{(k,g)\in C} \subseteq A\rtimes^\red_{\alpha,\sigma} G.
    \]
    In particular, if \(C=L^\bot\times H\) is a product of subgroups,
    then it corresponds to the intermediate \(\Cst\)\nb-algebra
    \(A^{\beta|_L}\rtimes^\red_{\alpha,\sigma} H\) with \(L\defeq
    \setgiven{k \in K}{\langle k ,l\rangle=1 \text{ for all }l\in
      L^\bot}\subseteq K\).
  \end{enumerate}
  If~\(K\) is finite, then the above conditions are further equivalent
  to
  \begin{enumerate}[resume]
  \item \label{enu:Echterhoff-Rordam5}%
    \(A\) is simple and \(\alpha\), \(\beta\) are jointly outer.
  \end{enumerate}
  If the equivalent conditions
  \ref{enu:Echterhoff-Rordam0}--\ref{enu:Echterhoff-Rordam45} hold,
  then the group \(\widehat{K}\times G\) and the Fell bundle
  \((A_ku_g)_{(k,g)\in \widehat{K}\times G}\) are uniquely determined,
  up to isomorphism, by the inclusion \(A^\beta\subseteq
  A\rtimes_{\alpha,\sigma}^\red G\).
\end{theorem}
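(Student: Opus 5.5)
The plan is to realise the inclusion \(A^\beta\subseteq A\rtimes^\red_{\alpha,\sigma}G\) as the unit fibre inclusion of a Fell bundle over the discrete abelian-by-\(G\) group \(\Gamma\defeq\widehat{K}\times G\). After that, most equivalences follow from Theorems \ref{thm:main_theorem}, \ref{thm:regular_irreducible_for_simple} and~\ref{thm:Galois_correspondence_explained}.

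\textbf{Fell bundle model.} Because \(\beta\) commutes with \(\alpha\), each \(\alpha_g\) preserves every spectral subspace \(A_{\widehat{k}}\). The twist \(\sigma\) takes values in \(\UMult(A)\); I will first check that it is \(\beta\)\nb-fixed, using the cocycle identities and commutation, or else restrict to twists with values in \(\UMult(A)^\beta\). Then \(B_{(\widehat{k},g)}\defeq A_{\widehat{k}}u_g\) satisfies \(B_xB_y\subseteq B_{xy}\) and \(B_x^*=B_{x^{-1}}\), so these subspaces form a Fell bundle \(\mathcal{C}\) over \(\Gamma\) with unit fibre \(A^\beta\). The action \(\beta\) extends to \(A\rtimes^\red_{\alpha,\sigma}G\) fixing the \(u_g\). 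Combining the \(\beta\)\nb-spectral expectations with the canonical expectation onto \(A\) gives a faithful conditional expectation onto \(A^\beta\) that kills \(B_x\) for \(x\neq1\). Hence \(A\rtimes^\red_{\alpha,\sigma}G\) is an exotic section algebra of \(\mathcal{C}\) with a faithful canonical expectation, and I expect it to equal \(\Cst_\red(\mathcal{C})\). The inclusion is therefore regular and topologically graded. If \(A^\beta\) is simple, the bundle is saturated, since \(\beta\) is faithful and so all \(A_{\widehat{k}}\neq0\) (as in Lemma~\ref{lem:abelian_actions}).

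\textbf{Equivalences.} Conditions \ref{enu:Echterhoff-Rordam1}--\ref{enu:Echterhoff-Rordam4} and \ref{enu:Echterhoff-Rordam45} all include simplicity of \(A^\beta\), or imply it in the case of \ref{enu:Echterhoff-Rordam1}. Under that assumption:
\begin{itemize}
\item Theorem~\ref{thm:main_theorem} gives \ref{enu:Echterhoff-Rordam1}\(\Leftrightarrow\)\ref{enu:Echterhoff-Rordam2}\(\Leftrightarrow\)\ref{enu:Echterhoff-Rordam32}\(\Leftrightarrow\)\ref{enu:Echterhoff-Rordam4}.
\item Aperiodicity \ref{enu:Echterhoff-Rordam3} and uniqueness of the pseudo-expectation \ref{enu:Echterhoff-Rordam31} are matched using Theorem~\ref{thm:topologically_graded_aperiodic} with the faithful expectation; simplicity of the unit fibre turns aperiodicity into condition~\ref{enu:main4}.
\item Theorem~\ref{thm:Galois_correspondence_explained} gives \ref{enu:Echterhoff-Rordam45}. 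Subgroups \(L^\bot\times H\) correspond to \(A^{\beta|_L}\rtimes^\red_{\alpha,\sigma}H\), as in the proof of Theorem~\ref{thm:Mukohara_compact_abelian}.
\item Uniqueness of \(\Gamma\) and \(\mathcal{C}\) is the last part of Theorem~\ref{thm:main_theorem}.
\end{itemize}

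\textbf{Linking to \ref{enu:Echterhoff-Rordam0}.} This is the main obstacle. The plan is to apply Theorem~\ref{thm:Mukohara_compact_abelian} to a larger compact abelian action. First, \(A\rtimes^\red_{\alpha,\sigma}G\) carries the action \(\beta\) of \(K\), whose fixed-point algebra \(A^\beta\rtimes G\) is not \(A^\beta\). To reach \(A^\beta\), I need \(K\times\widehat{G}\) acting, which only makes sense if \(G\) is abelian; in general, the dual coaction of \(G\) does the job. So I will argue directly.

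For \ref{enu:Echterhoff-Rordam1}\(\Rightarrow\)\ref{enu:Echterhoff-Rordam0}, suppose \(\beta_k\alpha_g=\Ad_w\) with \(w\in\UMult(A)\). Then \(wu_g^*\) commutes with \(A^\beta\) inside \(\Mult(A\rtimes^\red_{\alpha,\sigma}G)\), so irreducibility forces it to be a scalar. This gives \(g=1\) (via the Fourier coefficient \(E_g\)), then \(\beta_k=\Id\) and \(k=1\). The intermediate algebra \(A\) is simple, so \(Z\Mult(A)=\C\).

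For \ref{enu:Echterhoff-Rordam0}\(\Rightarrow\)\ref{enu:Echterhoff-Rordam4}, I argue by contradiction. Suppose \(A_{\widehat{k}}u_g\) were inner, implemented by a unitary \(A^\beta\)\nb-bimodule isomorphism onto \(A^\beta\), equivalently a unitary multiplier \(v\in\Mult(A)\) lying in the \(\widehat{k}\)-spectral subspace. Then \(\alpha_g^{-1}\circ\Ad_{v}\) fixes \(A^\beta\) pointwise. Theorem~\ref{thm:Mukohara_compact_abelian}, applied to \(\beta\) on \(A\) (outer and with trivial centre), then gives the Galois correspondence. By that correspondence, an automorphism of \(A\) fixing \(A^\beta\) should act on each spectral subspace by a character, so it equals \(\beta_k\) for some \(k\). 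This contradicts joint outerness unless \((\widehat{k},g)=(1,1)\). The hard step is showing that automorphisms fixing \(A^\beta\) pointwise are of the form \(\beta_k\); I would prove it by showing such an automorphism \(\theta\) preserves each \(A_{\widehat{k}}\) and acts there by scalars. That uses irreducibility of \(A^\beta\subseteq A\): for \(x,y\in A_{\widehat{k}}\), \(\theta(x)^*y\) lies in \(A^\beta\) and can be compared with \(x^*y\).

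\textbf{Finite \(K\).} For \ref{enu:Echterhoff-Rordam5}, Theorem~\ref{thm:Mukohara_compact_abelian} together with Izumi's finite-group result says that when \(A\) is simple and \(\beta\) is outer, \(A^\beta\) is simple. Conversely, \ref{enu:Echterhoff-Rordam0} makes \(A\) an intermediate algebra, hence simple.
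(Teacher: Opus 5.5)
Your proposal is correct in outline. For the equivalence of (2)--(8) and for the uniqueness statement you follow the paper. You grade \(A\rtimes^\red_{\alpha,\sigma}G\) by \((A_{\widehat{k}}u_g)\) over \(\widehat{K}\times G\), use faithfulness of the composite expectation onto \(A^\beta\) to identify the algebra with the reduced section algebra, and quote Theorems~\ref{thm:main_theorem}, \ref{thm:topologically_graded_aperiodic} and~\ref{thm:Galois_correspondence_explained}.

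One caveat about the model. Commutation of \(\alpha\) and \(\beta\) only gives \(\sigma(s,t)^*\beta_k(\sigma(s,t))\in Z\Mult(A)\). So each \(\sigma(s,t)\) is merely \(\beta\)\nb-homogeneous, possibly of nonzero degree, in which case the grading group is an extension of \(G\) by \(\widehat{K}\) rather than the product. The check you announce therefore cannot succeed in general. Invariance of \(\sigma\) has to be part of ``commuting'', which is what the paper tacitly assumes.

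You genuinely differ from the paper in the links with (1).

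For (2)\(\Rightarrow\)(1), the paper cites Theorems~\ref{cor:Cameron-Smith} and~\ref{thm:Mukohara_compact_abelian}. These give outerness of \(\alpha\) and of \(\beta\) only separately. Your irreducibility argument (\(wu_g^*\) is scalar, the expectation forces \(g=1\), then \(k=1\)) yields joint outerness directly.

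For (1)\(\Rightarrow\)(7), the paper transports a bimodule unitary \(V\colon A_ku_g\to A^\beta\) to \(U=\alpha_g\circ V(\,\cdot\,u_g)\) on \(A_k\). It uses outerness of the spectral bundle to get \(k=1\), then extends \(V\) to \(Au_g\to A\) and uses outerness of \(\alpha\). You instead turn innerness of \(A_{\widehat{k}}u_g\) into a unitary \(v\in\Mult(A)\) of degree \(\widehat{k}\) with \(\Ad_v=\alpha_g^{-1}\) on \(A^\beta\). (So it is \(\alpha_g\circ\Ad_v\), not \(\alpha_g^{-1}\circ\Ad_v\), that fixes \(A^\beta\); this is harmless.) You then identify this automorphism with some \(\beta_{k'}\) and apply joint outerness and \(Z\Mult(A)=\C\).

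Your route uses joint outerness exactly where it is needed. The paper's \(U\) satisfies \(U(ba)=\alpha_g(b)U(a)\) for \(b\in A^\beta\). So \(U\) only identifies \(A_k\) with the bimodule of \(\alpha_g|_{A^\beta}\), and separate outerness cannot suffice. For example, let \(A=D\rtimes_\gamma\Z\) with \(D\) simple unital and all \(\gamma^n\), \(n\neq0\), outer. Let \(\beta\) be the dual \(\T\)\nb-action, and let \(\alpha\) be generated by \(\beta_z\circ\Ad_u\) with \(z\) not a root of unity. Then every part of (1) except joint outerness holds. Yet \(u^*u_g\) commutes with \(D\), where \(u_g\) implements the generator of \(G=\Z\), so the slice \(Du^*u_g\) is inner.

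Your ``hard step'' is also easier than you fear. Since \(v\) is homogeneous, \(\theta=\alpha_g\circ\Ad_v\) commutes with \(\beta\), so it preserves each \(A_{\widehat{l}}\) for free. There it acts by a unitary bimodule automorphism, hence by a scalar, because \(A^\beta\) is simple. These scalars form a character of \(\widehat{K}\), that is, a point \(k'\in K\). The Galois correspondence alone would not give this, since it cannot tell \(A_{\widehat{l}}\) from \(A_{\widehat{l}^{-1}}\).
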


\begin{proof}
  Let \(A_k\defeq \setgiven{a\in A}{ \beta_{\chi}(a)=  \chi(k) a \text{
      for all }\chi\in K}\) for \(k\in \widehat{K}\).
  We have shown in the proof of
  Theorem~\ref{thm:Mukohara_compact_abelian} that this is a Fell
  bundle~\(\A\) with \(A_1 = A^\beta\) and \(\Cst(\A) \cong A\).
  Let \((u_g)_{g\in G}\) be the unitaries in
  \(\UMult(A\rtimes_{\alpha,\sigma}^\red G)\) that
  implement~\(\alpha\).
  Since \(\alpha\) and \(\beta\) commute for every \((k,g)\in
  \widehat{K}\times G\) we get \(\alpha_g(A_k)=A_k\) and so \(u_g
  A_k=A_k u_g\).
  It follows that \((A_ku_g)_{(k,g)\in \widehat{K}\times G}\) forms a
  grading for \(A^\beta\subseteq A\rtimes_{\alpha,\sigma}^\red G\).
  This grading is topological because the composite of the canonical
  conditional expectations \(A\rtimes_{\alpha,\sigma}^\red G\onto A\)
  and \(A\onto A^\beta\) yields a conditional expectation \(E\colon
  A\rtimes_{\alpha,\sigma}^\red G\onto A^\beta\) such that \(E(A_k
  u_g)=0\) whenever \((k,g)\neq (1,1)\).
  As a composite of faithful maps, \(E\) is faithful, and so
  \(A\rtimes_{\alpha,\sigma}^\red G=\Cst_\red(\B)\) is the reduced
  section \(\Cst\)\nb-algebra of the group Fell bundle
  \(\B\defeq (A_ku_g)_{(k,g)\in \widehat{K}\times G}\).

  Now the
  conditions \ref{enu:Echterhoff-Rordam1}--\ref{enu:Echterhoff-Rordam4}
  are equivalent and they imply the last part of the assertion by
  Theorem~\ref{thm:main_theorem}.
  They are also equivalent to~\ref{enu:Echterhoff-Rordam45} by
  Theorem~\ref{thm:Galois_correspondence_explained}.
  They imply~\ref{enu:Echterhoff-Rordam0} by
  Theorem~\ref{cor:Cameron-Smith} and
  Theorem~\ref{thm:Mukohara_compact_abelian}.
  To close the cycle of implications is suffices to show
  that~\ref{enu:Echterhoff-Rordam0} implies that~\(\B\) is outer.
  Let us then assume~\ref{enu:Echterhoff-Rordam0}.
  Theorem~\ref{thm:Mukohara_compact_abelian} implies that
  \(A=\Cst_\red(\A)\) is simple and the bundle~\(\A\) is outer.
  Assume that for some \((k,g)\in \widehat{K}\times G\) there
  is a unitary \(V\colon A_k u_g\congto A^\beta\).
  Let \(U(a)\defeq \alpha_g(V(au_g))\) for \(a\in A_k\).
  This is a unitary \(U\colon A_k\congto A^\beta\) because if \(a,b\in
  A_k\), then
  \[
    U(a)^*U(b)
    = \alpha_g\bigl(V(au_g)^*V(bu_g)\bigr)
    = \alpha_g\bigl((au_g)^*bu_g\bigr)
    = a^*b.
  \]
  Since~\(\A\) is outer, this implies that \(k=1\), and so \(V\colon
  A^\beta u_g\congto A^\beta\).
  We claim that~\(V\) extends to a unitary \(\widetilde{V}\colon A
  u_g\congto A\) such that \(\widehat{V}(a_k a_0 u_g)=a_k V(a_0 u_g)\)
  for all \(k\in \widehat{K}\), \(a_k\in A_k\), and \(a_0\in A^\beta\).
  Indeed, any two elements \(a, b\in \bigoplus_{k\in \widehat{K}} A_k\)
  may be written as \(a=\sum_k a_k a_k^0\) and \(b=\sum_k b_k b_k^0\)
  with \(a_k, b_k\in A_k\), \(a_k^0,  b_k^0\in A^\beta\) for \(k\in \widehat{K}\).
  Then
  \begin{align*}
    \biggl( \sum_k a_k V(a_k^0 u_g) \biggr)
    \biggl( \sum_k b_k V(b_k^0 u_g) \biggr)^*
    &= \sum_{j,k} a_k  V(a_k^0 u_g)V(b_j^0 u_g)^* b_j
    \\
		&= \sum_{j,k} a_k  a_k^0 {b_j^0}^*  b_j
    = a b^*.
  \end{align*}
  This implies our claim.
  Then \(g=1\) because~\(\alpha\) is outer by assumption.
  Hence~\(\B\) is outer.
  This finishes the proof that the conditions
  \ref{enu:Echterhoff-Rordam0}--\ref{enu:Echterhoff-Rordam45} are
  equivalent.
  They clearly imply~\ref{enu:Echterhoff-Rordam5}.
  If~\(K\) is finite, then~\ref{enu:Echterhoff-Rordam5}
  implies~\ref{enu:Echterhoff-Rordam0} by
  \cite{Olesen-Pedersen-Stormer:Compact_abelian_auto_simple}*{Theorem~2}.
\end{proof}

\section{Inclusions coming from Cuntz--Pimsner algebras}
\label{sect:Cuntz-Pimnser}

The last part of Theorem~\ref{the:special_groups} is useful to check
whether the inclusion of the core subalgebra in a Cuntz--Pimsner
algebra is \(\Cst\)\nb-irreducible because there are very efficient simplicity
criteria for Cuntz--Pimsner algebras, see
\cite{Schweizer:Dilations_correspondences} or
\cite{Carlsen-Kwasniewski-Ortega:Free_correspondence}*{Subsection~9.2}.
The results for non-unital algebras are less clean,
see~\cite{Paulovicks-Tomforde:CK_uniqueness}, so that we often
restrict to unital \(\Cst\)\nb-inclusions in this section.
We combine this here with our Galois correspondence in
Theorem~\ref{Thm:Galois_correspondence} and provide some natural
examples of regular \(\Cst\)\nb-irreducible inclusions that are not
modeled by  crossed products by group actions.

Recall that a \(\Cst\)\nb-correspondence over~\(A\) is a right Hilbert
\(A\)\nb-module~\(\Hilm\) together with a left \(A\)\nb-module
structure given by a nondegenerate \Star{}homomorphism \(\phi\colon
A\to \Bound(\Hilm)\) into the adjointable operators on~\(\Hilm\).
Let \(J(\Hilm) \defeq \phi^{-1}(\Comp(\Hilm))\) be the ideal of
elements in~\(A\) that act by compact operators on the left
of~\(\Hilm\), and let \((\ker\phi)^\bot\) be the annihilator of the
kernel of~\(\phi\) in~\(A\).
The Cuntz--Pimsner algebra \(\Cst(\Hilm)\) is the universal
\(\Cst\)\nb-algebra generated by a copy of \(A\) and~\(\Hilm\) so that
the whole \(\Cst\)\nb-correspondence structure  of \((A,\Hilm)\) comes
from the \(\Cst\)\nb-algebra operations in \(\Cst(\Hilm)\) and, in
addition, \(A\cap \Hilm\Hilm^* =J(\Hilm)\cap(\ker\phi)^\bot\).
When writing products of sets, we mean the closed linear span of
the products of their elements.
We also define powers of sets similarly.
The inclusions \(A\Hilm\subseteq \Hilm\), \(\Hilm A\subseteq \Hilm\)
and \(\Hilm^*\Hilm\subseteq A\) imply that for each \(k\in \N\defeq\{0,1,2,\dots\}\), the
subspace
\[
  C_k\defeq \clsp\bigcup\setgiven{\Hilm^n\Hilm^{* m}}
  {n,m \in \N  \text{ and }k|(n-m) }.
\]
is a \(\Cst\)\nb-subalgebra \(C_k\subseteq \Cst(\Hilm)\).
Here \(k|m\) means that~\(m\) is a multiple of~\(k\) in~\(\Z\).
In particular, \(0|m\) only if \(m=0\).
In particular, \(C_0=\clsp\bigcup_{n\ge 0}\Hilm^n \Hilm^{*n}\) is the
\emph{core subalgebra} of \(\Cst(\Hilm)\), which is also sometimes
denoted by~\(\Hilm[F]_{\Hilm}\) or \(\Cst(\Hilm)^\T\), as it is the
fixed-point algebra for the canonical gauge action of the
circle~\(\T\).
For \(k\geq 1\),~\(C_k\) is the fixed-point
algebra \(\Cst(\Hilm)^{\Z/k\Z}\) for the restriction of the gauge
action to the subgroup \(\Z/k\Z\subseteq \T\).
Note that \(C_1= \Cst(\Hilm)\) and
\[
  \Hilm[F]_{\Hilm}
  \subseteq C_k
  \subseteq \Cst(\Hilm)
  \qquad \text{for  all }k\in \N.
\]
Therefore, we call~\(C_k\) for \(k\in \N \) the \emph{canonical
  intermediate \(\Cst\)\nb-algebras} for the core inclusion.
The Hasse diagram for these algebras partially ordered by inclusion is
isomorphic to the Hasse diagram of divisibility of positive natural
numbers together with the smallest element given by zero.
In particular, if \(n,m\geq 1\), then
\[
  \Cst(C_n\cup C_m) = C_{\operatorname{lcm}(n,m)}
  \qquad \text{and}\qquad
  C_n\cap C_m = C_{\operatorname{gcd}(n,m)}.
\]

\begin{theorem}
  \label{thm:irreducible_from_Cuntz_Pimsner}
  Let~\(\Hilm\) be a \(\Cst\)\nb-correspondence over a
  \(\Cst\)\nb-algebra~\(A\).
  The following are equivalent:
  \begin{enumerate}
  \item \label{enu:irreducible_from_Cuntz_Pimsner1}%
    the core \(\Cst\)\nb-inclusion \(\Hilm[F]_{\Hilm}\subseteq
    \Cst(\Hilm)\) is \(\Cst\)\nb-irreducible;
  \item \label{enu:irreducible_from_Cuntz_Pimsner2}%
    both \(\Hilm[F]_{\Hilm}\) and~\(\Cst(\Hilm)\) are simple;
  \item \label{cor:irreducible_from_Cuntz_Pimsner3}%
    the only intermediate \(\Cst\)\nb-algebras for
    \(\Hilm[F]_{\Hilm}\subseteq \Cst(\Hilm)\) are the canonical ones.
  \end{enumerate}
  If~\(A\) is unital and simple, then \(\Hilm[F]_{\Hilm}\subseteq
  \Cst(\Hilm)\) is \(\Cst\)\nb-irreducible if and only if
  \(J(\Hilm)=A\)  and there is no \(n>0\) for which \(\Hilm^n\cong A\)
  as a correspondence.
\end{theorem}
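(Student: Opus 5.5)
The plan is to realise the core inclusion as the unit-fibre inclusion of a Fell bundle over~\(\Z\) and then use Theorems~\ref{the:special_groups} and~\ref{Thm:Galois_correspondence}. The gauge action of~\(\T\) on \(\Cst(\Hilm)\) has spectral subspaces \(B_k\defeq\clsp\bigcup_{n-m=k}\Hilm^n\Hilm^{*m}\). These form a Fell bundle \(\B=(B_k)_{k\in\Z}\) with \(B_0=\Hilm[F]_{\Hilm}\) and \(\Cst(\B)=\Cst_\red(\B)\cong\Cst(\Hilm)\), since~\(\Z\) is amenable. For \(k\geq1\), the restriction \(\B|_{k\Z}\) has section algebra~\(C_k\).

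For \ref{enu:irreducible_from_Cuntz_Pimsner1}\(\Leftrightarrow\)\ref{enu:irreducible_from_Cuntz_Pimsner2}, apply the last sentence of Theorem~\ref{the:special_groups} with \(G=\Z\). The implication \ref{cor:irreducible_from_Cuntz_Pimsner3}\(\Rightarrow\)\ref{enu:irreducible_from_Cuntz_Pimsner1} needs some care, because it is not formal: we must show that~\(C_0\) is simple and that \(C_0\) detects ideals in all~\(C_k\). I would argue as in the proof of \ref{enu:Galois_correspondence_explained1}\(\Rightarrow\)\ref{enu:Galois_correspondence_explained2} in Theorem~\ref{thm:Galois_correspondence_explained}. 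For an ideal~\(J\) of~\(C_k\) with \(J\cap C_0=0\), the algebra \(C_0+J\) is intermediate, so it equals some~\(C_l\). If \(l\neq0\), then \(B_l\subseteq C_0+J\), and applying the canonical projection~\(E_l\) gives \(B_l\subseteq J\). Hence \(B_l^*B_l\subseteq J\cap C_0=0\), so \(B_l=0\); as \(B_l\supseteq B_1^l\), this forces \(B_1=0\), that is, \(\Hilm=0\). In that degenerate case \(\Cst(\Hilm)=C_0\) is handled separately, or is excluded by convention. Simplicity of~\(C_0\) itself also needs an argument: if \(I\) is a proper nonzero ideal, then \(C_0+\) something would have to produce a noncanonical intermediate algebra. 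I would try \(I+\clsp\bigcup_{k\neq0}B_k I B_k^*\ldots\) as a candidate. This step is delicate and I flag it as a potential gap; possibly the intended reading is that~\ref{cor:irreducible_from_Cuntz_Pimsner3} includes simplicity implicitly.

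For \ref{enu:irreducible_from_Cuntz_Pimsner1}\(\Rightarrow\)\ref{cor:irreducible_from_Cuntz_Pimsner3}, the inclusion is regular, since \(\bigcup_k B_k\) consists of normalisers. By Theorem~\ref{thm:Galois_correspondence_explained}, the intermediate algebras are exactly \(\Cst_\red(\B|_H)\) for subgroups \(H\subseteq\Z\), provided \(\B\) is saturated; saturation follows from simplicity of~\(C_0\) once all \(B_k\neq0\). These algebras are precisely the~\(C_k\). This argument requires \(B_k\neq0\) for all \(k\), which holds if \(\Hilm\neq0\), since the gauge action is then faithful.

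For the unital simple case, I expect the main obstacle to be translating the conditions into statements about~\(\Hilm\). Simplicity of~\(A\) with \(\Hilm\ne0\) makes~\(\phi\) injective. I would show that simplicity of~\(\Cst(\Hilm)\) and~\(C_0\) forces \(J(\Hilm)=A\): otherwise \(J(\Hilm)=0\), so \(\Hilm\Hilm^*\cap A=0\), and \(\Hilm\Hilm^*\) generates a proper ideal of~\(C_0\). I would do this by quoting the known simplicity criteria for Cuntz--Pimsner algebras over simple unital~\(A\) (Schweizer; Carlsen--Kwa\'sniewski--Ortega), which state that \(\Cst(\Hilm)\) is simple if and only if \(\Hilm\) is nonperiodic and full, with \(J(\Hilm)=A\) needed. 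Then \(C_0\) is an inductive limit of the algebras \(\Comp(\Hilm^n)\), which are simple as \(A\) is simple and unital; so \(C_0\) is simple. Aperiodicity of the bimodules \(B_k\) over~\(C_0\) is, by Morita equivalence (as in Lemma~\ref{lem:hereditarily_aperiodic}), equivalent to outerness of \(\Hilm^n\) over~\(A\), that is, \(\Hilm^n\not\cong A\). Combined with Theorem~\ref{the:special_groups}, this gives the criterion.
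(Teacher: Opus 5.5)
Your handling of \ref{enu:irreducible_from_Cuntz_Pimsner1}\(\Leftrightarrow\)\ref{enu:irreducible_from_Cuntz_Pimsner2} follows the paper: it uses Theorem~\ref{the:special_groups}. So does \ref{enu:irreducible_from_Cuntz_Pimsner1}\(\Rightarrow\)\ref{cor:irreducible_from_Cuntz_Pimsner3} via the Galois correspondence; the paper phrases this through Theorem~\ref{thm:Mukohara_compact_abelian} for the gauge action of~\(\T\), whose closed subgroups \(\T\) and \(\Z/k\) give \(C_0\) and~\(C_k\). The unital simple case also matches: the Katsura-type ideal when \(J(\Hilm)=0\), the inductive limit of the simple algebras \(\Hilm^n\Hilm^{*n}\cong\Comp(\Hilm^n)\) when \(J(\Hilm)=A\), and Schweizer's criterion.

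The genuine gap is \ref{cor:irreducible_from_Cuntz_Pimsner3}\(\Rightarrow\)\ref{enu:irreducible_from_Cuntz_Pimsner1}, and it has two parts.

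First, the step ``applying \(E_l\) gives \(B_l\subseteq J\)'' needs \(E_l(J)\subseteq J\), that is, gauge invariance of~\(J\), which you do not have. If \(J\cap C_0=0\) and \(C_0+J=C_l\), then \(q\colon C_l\to C_l/J\cong C_0\) is a \Star{}homomorphism fixing~\(C_0\). It satisfies \(q(b)^*q(b)=b^*b\) for \(b\in B_l\), so \(B_l\cap J=0\). Your step is therefore equivalent to \(B_l=0\), which is what had to be shown. For a concrete instance, take \(\C\subseteq\Cont(\T)\) with \(J=\setgiven{f}{f(1)=0}\).

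Second, the simplicity of~\(C_0\) that you flag cannot be derived from~\ref{cor:irreducible_from_Cuntz_Pimsner3} at all. Let \(A=\C^2\) and \(\Hilm=\C\) with \((a_1,a_2)\cdot\xi\cdot(b_1,b_2)=a_1\xi b_2\) and \(\langle\xi,\eta\rangle=(0,\bar\xi\eta)\); this is one edge joining two vertices. Then:
\(J(\Hilm)\cap(\ker\phi)^\bot=\C\oplus0\);
\(\Cst(\Hilm)\cong M_2(\C)\), with \(A\) the diagonal and \(\Hilm=\C e_{12}\), so \(\Hilm^2=0\);
the core is the diagonal \(\C^2\), and \(C_k=C_0\) for \(k\ge2\);
the only intermediate algebras are \(\C^2\) and \(M_2(\C)\).
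So \ref{cor:irreducible_from_Cuntz_Pimsner3} holds while \ref{enu:irreducible_from_Cuntz_Pimsner1} and~\ref{enu:irreducible_from_Cuntz_Pimsner2} fail.

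Here the gauge action is faithful but \(B_2=0\). So in your \ref{enu:irreducible_from_Cuntz_Pimsner1}\(\Rightarrow\)\ref{cor:irreducible_from_Cuntz_Pimsner3} it is simplicity of the core, not faithfulness, that makes all fibres nonzero and \(\B\) saturated.

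The paper obtains \ref{cor:irreducible_from_Cuntz_Pimsner3}\(\Rightarrow\)\ref{enu:irreducible_from_Cuntz_Pimsner1} from Theorem~\ref{thm:Mukohara_compact_abelian}, whose standing hypothesis is that the fixed-point algebra is simple. So this implication genuinely needs \(\Hilm[F]_{\Hilm}\) simple and \(\Hilm\neq0\) as extra input, and no choice of candidate ideal will remove that.

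With that hypothesis in place, argue via outerness rather than detection of ideals. With \(C_0\) simple and \(\Hilm\neq0\), \(\B\) is saturated. Suppose some \(B_k\) with \(k\ge1\) were not outer. Then \(B_k=C_0u\) for a unitary \(u\in\Mult(C_k)\) commuting with~\(C_0\). The map \(au^n\mapsto a\otimes z^n\) gives \(C_k\cong C_0\otimes\Cont(\T)\), since it intertwines the faithful canonical expectations. Now \(C_0\otimes\setgiven{f\in\Cont(\T)}{f(1)=f(-1)}\) is an intermediate algebra that is not invariant under the gauge action, which rotates the \(\Cont(\T)\)-factor. Every~\(C_m\) is gauge-invariant, so this contradicts~\ref{cor:irreducible_from_Cuntz_Pimsner3}. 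Hence \(\B\) is outer, and Theorem~\ref{thm:main_theorem} gives~\ref{enu:irreducible_from_Cuntz_Pimsner1}.

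Finally, in the unital part, your Morita-equivalence identification of aperiodicity of~\(B_k\) over~\(C_0\) with ``outerness of~\(\Hilm^n\)'' is not justified. \(\Hilm^n\) is only a correspondence, not a Hilbert bimodule; for \(\mathcal{O}_n\), \(\Comp(\C^n)\neq\C\). Drop it: Schweizer's criterion, which needs no assumption \(J(\Hilm)=A\), already suffices.
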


\begin{proof}
  Since~\(\Hilm[F]_{\Hilm}\) is the fixed-point algebra of the
  gauge-action of the circle group on \(\Cst(\Hilm)\), we may view
  \(\Cst(\Hilm)\)
  as \(\Cst(\B)\) for a Fell bundle \(\B=(B_n)_{n\in \Z}\) with
  \(B_0=\Hilm[F]_{\Hilm}\).
  In fact, \(B_k=\Hilm^k \Hilm[F]_{\Hilm}\) for \(k\ge 0\), and the
  \(\Cst\)\nb-algebra generated by \(B_0\cup B_k\)
  is \(C_k\defeq\clsp\setgiven{\Hilm^n\Hilm^{*m}}{k|(n-m)}\).
  Hence the first part of the assertion follows from Theorems
  \ref{the:special_groups} and~\ref{thm:Mukohara_compact_abelian}.

  Assume that~\(A\) is simple.
  Then either \(J(\Hilm)=0\) or \(J(\Hilm)=A\).
  Recall that
  \(\Hilm[F]_{\Hilm}=\overline{\bigcup_{n=0}^\infty F_n}\) is the
  inductive limit of the \(\Cst\)\nb-algebras \(F_n\defeq A + \Hilm
  \Hilm^* +\dotsb + \Hilm^n\Hilm^{*n}\) for \(n>0\) and \(F_0\defeq
  A\).
  Each \(\Cst\)\nb-subalgebra \(\Hilm^n\Hilm^{*n}\) is Morita
  equivalent to~\(A\) and hence simple.
  It follows from
  \cite{Katsura:Cstar_correspondences}*{Proposition~5.11} that \(F_n
  \cap \Hilm^{n+1}\Hilm^{*n+1}=\Hilm^nJ(\Hilm)\Hilm^{*n}\).
  If \(J(\Hilm)=A\), then \(F_n=\Hilm^{*n}\Hilm^n\) for all \(n>0\),
  and so~\(\Hilm[F]_{\Hilm}\) is simple as an inductive limit of
  simple algebras.
  If \(J(\Hilm)=0\), then \(F_n\cap \Hilm^{*n+1} \Hilm^{n+1} = \{0\}\)
  and it follows that \(J_n\defeq \Hilm \Hilm^* + \dotsb + \Hilm^n\Hilm^{*n}\)
  is a nontrivial ideal in~\(F_n\) and so \(J\defeq
  \overline{\bigcup_{n=1}^\infty J_n}\) is a nontrivial ideal
  in~\(\Hilm[F]_{\Hilm}\), see
  \cite{Katsura:Cstar_correspondences}*{Proposition~5.13}.
  Hence~\(\Hilm[F]_{\Hilm}\) is simple if and only if \(J(\Hilm)=A\).
  If, in addition, \(A\) is unital, then
  \cite{Schweizer:Dilations_correspondences}*{Theorem~3.9} implies
  that \(\Cst(\Hilm)\) is simple if and only if there is no \(n>0\)
  for which~\(\Hilm^n\) is an inner Hilbert bimodule, that is,
  \(\Hilm^n\cong A\) as a correspondence.
  This proves the last part of the assertion.
\end{proof}

\begin{remark}
  The first part of the above theorem can be phrased in a seemingly
  stronger way.
  Namely, in our notation, \(C_0\subseteq C_k\) for \(k\ge 1\)
  is \(\Cst\)\nb-irreducible if and only if \(C_0\) and~\(C_k\)
  are simple, and then the intermediate \(\Cst\)\nb-subalgebras are
  \((C_n)_{k\mid n}\).
  The reason is that~\(C_k\) is the Cuntz--Pimsner algebra of the
  Hilbert \(C_0\)\nb-bimodule~\(B_k\).
\end{remark}

We illustrate Theorem~\ref{thm:irreducible_from_Cuntz_Pimsner} by
specialising to crossed products by endomorphisms and by transfer
operators.
Both are special cases of crossed products by completely positive
maps, see~\cite{Kwasniewski:Exel_crossed}.

\begin{example}[Crossed products by endomorphisms]
  \label{ex:endomorphism_crossed_product}
  Let \(\alpha\colon A\to A\) be an endomorphism of a unital simple
  \(\Cst\)\nb-algebra~\(A\).
  The crossed product \(A\rtimes_\alpha\N\), often called
  \emph{Stacey's crossed product}, is the universal
  \(\Cst\)\nb-algebra generated by~\(A\) and an isometry \(u\in
  A\rtimes_\alpha\N\) such that \(\alpha(a)=u a u^*\) for all \(a\in
  A\).
  By \cite{Kwasniewski:Exel_crossed}*{Proposition~3.20},
  \(A\rtimes_\alpha\N\)  is a special case of a crossed product by a
  completely positive map.
  For each \(k\in \N\), define a \(\Cst\)\nb-subalgebra of
  \(A\rtimes_\alpha\N\) by
  \[
    C_k\defeq
    \clsp\setgiven{u^{*n}au^m}{a\in A, n,m\ge 0  \text{ and } k |(n-m)}.
  \]
  Then \(C_0=\overline{\bigcup_{n\ge 0}u^{*n} A u^n}\) is a
  simple \(\Cst\)\nb-algebra and \(C_1=A\rtimes_\alpha\N\) is the
  crossed product.
  We call~\(\alpha\) \emph{outer} if there is no isometry \(v\in A\)
  such that \(\alpha(\cdot)=v(\cdot)v^*\).
  It follows that
  \(C_0\subseteq A\rtimes_\alpha\N\) is \(\Cst\)\nb-irreducible
  if and only if~\(\alpha^n\) is outer for all \(n>0\).
  If this holds, then all intermediate \(\Cst\)\nb-algebras are of the
  form~\(C_k\) for some \(k\in \N\).
  Indeed, \(A\rtimes_\alpha\N\) is the Cuntz--Pimsner algebra of the
  \(\Cst\)\nb-correspondence \(M_\alpha\defeq \alpha(1)A\) with the
  operations \(a\cdot x=\alpha(a)x\), \(x\cdot a=xa\) and
  \(\braket{x}{y}_A\defeq x^*y\) for all \(x,y\in M_\alpha\) and
  \(a,b\in A\).
  The left action of \(A\) on~\(M_\alpha\) is by compact operators,
  and there is a canonical isomorphism \(\Cst(M_\alpha)\cong
  A\rtimes_\alpha\N\) which is the identity on~\(A\) and extends the
  map \(M_\alpha\ni x\mapsto u^*x\in A\rtimes_\alpha\N\).
  Under this isomorphism, the core subalgebra and the canonical
  intermediate \(\Cst\)\nb-algebras for~\(M_\alpha\) coincide with the
  subalgebras~\(C_k\) for \(k\in \N\) defined above.
  Hence our claim follows from
  Theorem~\ref{thm:irreducible_from_Cuntz_Pimsner} and
  \cite{Schweizer:Dilations_correspondences}*{Theorem~4.1}.
\end{example}

\begin{example}[Crossed products by endomorphisms with hereditary
  range]
  \label{ex:endomorphism_crossed_product2}
  We retain the assumptions and notation from the previous example.
  In addition, we assume that the range of~\(\alpha\) is a hereditary
  \(\Cst\)\nb-subalgebra of~\(A\), namely,
  \(\alpha(A)=\alpha(1)A\alpha(1)\).
  Then \(A\rtimes_\alpha\N\) is \emph{Paschke's crossed product}
  from~\cite{Paschke:Crossed_endomorphism}.
  In addition to \(u A u^*\subseteq A\), we also have \(u^* A
  u\subseteq A\).
  Indeed,  \(u^* A u = u^* u u^* A u u^* u = u^* \alpha(1) A \alpha(1)
  u = u^* \alpha(A) u = u^* u A u^* u = A\).
  In this case, \(A=C_0\) is the core algebra, and for each \(k\ge 1\)
  there is a natural identification \(A\rtimes_{\alpha^k}\N=C_k\).
  Therefore, \(A\subseteq A\rtimes_\alpha\N\) is
  \(\Cst\)\nb-irreducible if and only if~\(\alpha^n\) is outer for all
  \(n>0\), and if this holds then the nontrivial intermediate
  \(\Cst\)\nb-algebras are of the form
  \(A\rtimes_{\alpha^k}\N\subseteq A\rtimes_\alpha\N\) for \(k\in
  \N\).

  For instance, the Cuntz algebra~\(\mathcal{O}_n\) is the crossed
  product \(\mathcal{F}_{n^\infty}\rtimes_\alpha\N\) of its core
  UHF-algebra of type~\(n^\infty\) by an endomorphism implemented by
  the canonical isometry \(u\defeq \frac{1}{\sqrt{n}}\sum_{i=1}^n
  u_i\) or by any of the isometries \(u_1,\dotsc,u_n\)
  generating~\(\mathcal{O}_n\).
  As a consequence, any intermediate \(\Cst\)\nb-algebra for the
  inclusion \(\mathcal{F}_{n^\infty}\subseteq \mathcal{O}_n\) is of
  the form \(\mathcal{F}_{n^\infty}\rtimes_{\alpha^k}\N\) for some
  \(k\ge 1\), which was stated without proof in
  \cite{Rordam:Irreducible_inclusions}*{Example~5.11}.
\end{example}

\begin{example}[Crossed products by transfer operators of finite type]
  \label{ex:transfer_crossed_product}
  Let \(L\colon A\to A\) be a unital, faithful completely positive
  map on a simple, unital \(\Cst\)\nb-algebra~\(A\).
  Assume also that there is a unital endomorphism \(\alpha\colon A\to
  A\) such that \(L(\alpha(a)b)=a L(b)\) for \(a,b\in A\).
  Then~\(\alpha\) is uniquely determined by~\(L\), and \(E\defeq
  \alpha\circ L\) is a conditional expectation onto~\(\alpha(A)\).
  We also assume that~\(E\) is of finite type, that is, there are
  \(u_1,\dotsc,u_n\in A\) such that \(a=\sum_{i=1}^n u_i E(u_i^*a)\)
  for all \(a\in A\).
  We call~\(L\) a \emph{transfer operator of finite type}.
  Then the crossed product \(A\rtimes_L \N\) is the universal
  \(\Cst\)\nb-algebra generated by a copy of~\(A\) and an
  isometry~\(\su\) subject to the relations:
  \[
    L(a) = \su^*a\su
    \qquad \text{ and }\qquad
    \sum_{i=1}^n u_i\su\su^*u_i^* = 1
  \]
  for all \(a\in A\).
  In fact, \(A\rtimes_L\N\)  coincides with \emph{Exel's crossed
    product} and the above relations are given in
  \cite{Exel-Vershik:Irreversible}*{Corollary~7.2} (the first relation
  there is superfluous by
  \cite{Kwasniewski:Exel_crossed}*{Proposition~4.3}).
  By \cite{Kwasniewski:Exel_crossed}*{Proposition~4.17},
  \(A\rtimes_L\N\)  is a special case of a crossed product by a
  completely positive map.
  If \(k\in \N\), then we define a \(\Cst\)\nb-subalgebra of
  \(A\rtimes_L\N\) by
  \[
    C_k\defeq
    \clsp\setgiven{a\su^n\su^{*m}b}
    {a,b\in A, n,m\ge 0  \text{ and } k \mid (n-m)}.
  \]
  In particular, \(C_0 = \overline{\bigcup_{n\ge 0} A\su^n\su^{*n}A}\)
  is a simple \(\Cst\)\nb-algebra and \(C_1=A\rtimes_L\N\) is the
  crossed product.
  The Cuntz--Pimsner model for \(A\rtimes_L\N\) is based on the
  \(\Cst\)\nb-correspondence \(M_L\defeq A\) with operations \(a\cdot
  x=a x\), \(x\cdot a=x\alpha(a)\) and \(\braket{x}{y}_A\defeq
  L(x^*y)\) for all \(x,y\in M_L\) and \(a,b\in A\).
  As~\(L\) is of finite type, \(A\) acts by compacts on the left
  of~\(A\).
  There is a canonical isomorphism \(\Cst(M_L)\cong A\rtimes_L\N\)
  that extends the identity map on~\(A\) and the map \(M_L\ni x\mapsto
  x\su\in A\rtimes_L\N\).
  Under this isomorphism, the core subalgebra and the canonical
  intermediate \(\Cst\)\nb-algebras for \(\Cst(M_L)\) coincide with
   \(C_k\) for \(k\in \N\cup \{\infty\}\).
   Moreover, if~\(L\) is not an automorphism, then no power of the
   \(\Cst\)\nb-correspondence~\(M_L\) is inner, see
   \cite{Schweizer:Dilations_correspondences}*{Theorem~4.6}.
   Thus, if~\(L\) is not an automorphism, then the inclusion
   \(C_0\subseteq A\rtimes_L\N\) is \(\Cst\)\nb-irreducible and
   any intermediate \(\Cst\)\nb-algebra is of the form~\(C_k\) for
   some \(k\in \N\).
\end{example}

\(\Cst\)\nb-correspondences over commutative \(\Cst\)\nb-algebras with
discrete spectrum are equivalent to directed graphs, and the
corresponding Cuntz--Pimsner algebras are graph \(\Cst\)\nb-algebras.
In this case, we may efficiently characterise
\(\Cst\)\nb-irreducibility of the core inclusion in full generality.
We use the well-known criterion for a graph \(\Cst\)\nb-algebra to be
simple and the somewhat less-known criterion for the core subalgebra
to be simple from~\cite{Pask-Rho:simple_graph_core}.

Let \(E = (E^0,E^1,\rg,\sr)\) be a \emph{directed graph} where~\(E^0\)
is the set of vertices, \(E^1\) is the set of edges, and \(\rg,\sr
\colon E^1 \rightrightarrows E^0\) are the range and source maps.
The \emph{graph \(\Cst\)\nb-algebra} \(\Cst(E)\) is the universal
\(\Cst\)\nb-algebra generated by a family \(\setgiven{\pu_v}{v\in
  E^0}\) of mutually orthogonal projections and a family
\(\setgiven{\su_v}{v\in E^1}\) of partial isometries with mutually
orthogonal ranges such that
\[
  \su_e^*\su_e=\pu_{\sr(e)},\qquad
  \su_e\su_e^*\le \pu_{\rg(e)}, \qquad
  \sum_{f\in \sr^{-1}(v)}\su_f\su_f^*=\pu_v
\]
for every \(e\in E^1\) and every \(v\in E^0\) with
\(0<\abs{\sr^{-1}(v)}<\infty\).
A path of length \(n\ge 1\) is a sequence of edges
\(e_1\ldots e_n\) with \(e_i \in E^1\) and \( \sr(e_i) =
\rg(e_{i+1})\) for \(i=1,\dotsc,n-1\).
Let~\(\abs{\mu}\) for a path~\(\mu\) be its length; here a vertex is a
path of length~\(0\).
Let~\(E^*\) be the set of all finite paths.
For a path \(\mu=e_1\ldots e_n\), let \(\su_{\mu}=\su_{e_1}\dotsm
\su_{e_n}\).
For each \(k\in \N\), the formula
\[
  C_k\defeq
  \clsp\setgiven{\su_{\mu}\su_{\eta}^*}
  {\mu,\eta\in E^*,\ k \mid \abs{\mu}-\abs{\eta}}
\]
defines a \(\Cst\)\nb-subalgebra of
\(\Cst(E)=\setgiven{\su_{\mu}\su_{\eta}^*}{\mu,\eta\in E^*}=C_1\) and
\[
  C_0=\clsp\setgiven{\su_{\mu}\su_{\eta}^*}
  {\mu,\eta\in E^*,\ \abs{\mu}=\abs{\eta}}
  = \Cst(E)^\T
\]
is the core subalgebra.

\begin{definition}
  The graph~\(E\) is \emph{regular} if
  \(0<\abs{\sr^{-1}(v)}<\infty\) for every \(v\in E^0\).
  A \emph{cycle} is a path \(e_1\ldots e_n\) with \(\sr(e_n)=\rg(e_1)\).
  Cycles of length~\(1\) are called \emph{loops}.
  A vertex \(v\in E^0\) \emph{connects} to a vertex \(w\in E^0\) if
  there is a path \(e_1\cdots e_n\) such that \(w=\rg(e_1)\) and
  \(v=\sr(e_n)\).
  The graph~\(E\) is \emph{cofinal} if for every infinite path
  \(e_1e_2\ldots\) and every \(v\in E^0\) there is \(n\in \N\) such
  that \(\rg(e_n)\) connects to~\(v\).
  The graph~\(E\) is \emph{strongly connected} if every \(v\in E^0\)
  connects to every \(w\in E^0\).
  A strongly connected graph~\(E\) \emph{has period~\(1\)} if the
  lengths of all cycles which begin at a fixed vertex \(v\in E^0\) are
  coprime.
\end{definition}

The definition of period~\(1\) does not depend on the choice of~\(v\)
by \cite{Pask-Rho:simple_graph_core}*{Lemma~4.1}.
A graph is strongly connected of period~\(1\) if and only if the
incidence matrix~\(A_E\) is aperiodic, that is, some power of it is
strictly positive.

\begin{theorem}
  \label{thm:irreducible_graph_algebras}
  Let~\(E\) be a directed graph which is not a single vertex or a single loop.
  The following are equivalent:
  \begin{enumerate}
  \item \label{enum:irreducible_graph_algebras1}%
    the inclusion \(\Cst(E)^\T\subseteq \Cst(E)\) is
    \(\Cst\)\nb-irreducible;
  \item \label{enum:irreducible_graph_algebras3}%
    \(E\) is regular, cofinal and contains a finite strongly
    connected subgraph with period~\(1\);
  \item \label{enum:irreducible_graph_algebras2}%
    \(\{C_k\}_{k\in \N}\) are the only intermediate subalgebras for the
    inclusion \(\Cst(E)^\T\subseteq \Cst(E)\).
  \end{enumerate}
\end{theorem}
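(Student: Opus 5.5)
The plan is to reduce everything to Theorem~\ref{thm:irreducible_from_Cuntz_Pimsner}, applied to the graph correspondence \(\Hilm=\Hilm_E\) over \(A=\Cont_0(E^0)\) (equivalently \(c_0(E^0)\)). Its Cuntz--Pimsner algebra is \(\Cst(E)\), its core is \(\Cst(E)^\T\), and its canonical intermediate algebras are exactly the \(C_k\) defined above. Theorem~\ref{thm:irreducible_from_Cuntz_Pimsner} then gives \ref{enum:irreducible_graph_algebras1}\(\Leftrightarrow\)\ref{enum:irreducible_graph_algebras2}, with one caveat. We must check that the \(C_k\) are pairwise distinct and form the full list of intermediate algebras, and that the theorem's statement ``only canonical ones'' implies \(\Cst\)\nb-irreducibility even if some \(C_k\) coincide. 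This is handled by Theorems~\ref{the:special_groups} and~\ref{thm:Mukohara_compact_abelian}. The Fell bundle \(B_n=\Hilm^n\Cst(E)^\T\) over \(\Z\) has all fibres nonzero as soon as \(E\) contains an infinite path. This is where the exclusion of a single vertex or a single loop enters, together with the degenerate case of no infinite paths.

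So the real content is \ref{enum:irreducible_graph_algebras1}\(\Leftrightarrow\)\ref{enum:irreducible_graph_algebras3}. By Theorem~\ref{thm:irreducible_from_Cuntz_Pimsner}, \ref{enum:irreducible_graph_algebras1} is equivalent to both \(\Cst(E)^\T\) and \(\Cst(E)\) being simple, and I would combine two known criteria.

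\textbf{Step 1.} Use the standard simplicity criterion for graph algebras: \(\Cst(E)\) is simple if and only if \(E\) is cofinal, every cycle has an entry (Condition~(L)), and every vertex connects to every singular vertex (sinks and infinite receivers), in the appropriate direction for the paper's convention.

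\textbf{Step 2.} Use \cite{Pask-Rho:simple_graph_core} for simplicity of the core. Roughly, \(\Cst(E)^\T\) is simple if and only if \(E\) has no singular vertices (regularity, which mirrors the condition \(J(\Hilm)=A\) from the unital discussion above), is cofinal in the core sense, and contains a finite strongly connected subgraph of period~\(1\). Some care is needed to state their Theorem~6.1 correctly.

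\textbf{Step 3.} Show that the union of these conditions collapses to \ref{enum:irreducible_graph_algebras3}.

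\emph{Regularity and the singular-vertex condition.} Regularity removes singular vertices, so the condition about connecting to singular vertices becomes vacuous.

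\emph{Condition~(L).} I would argue that a finite strongly connected subgraph of period~\(1\) that is not a single loop contains two distinct cycles through some vertex. Cofinality then forces every cycle to connect to that subgraph, which gives every cycle an entry, so Condition~(L) holds. The single-loop exception must be ruled out here. A strongly connected period-\(1\) subgraph could be a single loop at \(v\); then the hypothesis that \(E\) is not a single loop, together with cofinality and regularity, is needed to produce an entry.

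Conversely, \ref{enum:irreducible_graph_algebras1} yields regularity, cofinality and the period-\(1\) subgraph directly from the Pask--Rho criterion.

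\textbf{Main obstacle.} I expect the hardest part to be the precise matching of the Pask--Rho hypotheses with the graph-algebra simplicity criterion, especially for non-row-finite graphs. In particular, one must show that simplicity of the core already forces regularity, that is, no sinks and no infinite receivers. My first attempt would be to exhibit an explicit nontrivial gauge-invariant ideal of the core when a singular vertex exists, via the ideal \(J\) constructed in the proof of Theorem~\ref{thm:irreducible_from_Cuntz_Pimsner} when \(J(\Hilm)\ne A\).
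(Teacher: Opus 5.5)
\textbf{Step 3 has a gap that cannot be closed, because the statement itself fails there.} Your route is the paper's, except that you check simplicity of \(\Cst(E)\) through the standard criterion and derive Condition~(L) by hand, where the paper cites Pask--Rho, Theorem~3.1. Your Step~3 is sound when the period-\(1\) subgraph \(F\) is not a single loop: a cycle without entry to which \(F\) connects must contain \(F\), hence equal it.

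The remaining case, \(F\) a single loop, cannot be done. Regularity, cofinality and ``\(E\) is not a single loop'' do not produce an entry. Take \(E^0=\{v,w_1,w_2,\dotsc\}\), a loop~\(f\) at~\(v\), and edges~\(e_n\) with \(\sr(e_n)=w_{n-1}\), \(\rg(e_n)=w_n\), where \(w_0\defeq v\) (reverse all edges in the opposite convention).
\begin{itemize}
\item Every vertex receives exactly one edge and emits one or two, so~\(E\) is regular however one reads that condition.
\item Every infinite path \(x_1x_2\dotsm\) with \(\sr(x_i)=\rg(x_{i+1})\) ends in \(fff\dotsm\), and~\(v\) connects to every vertex, so~\(E\) is cofinal.
\item The loop is a finite strongly connected subgraph of period~\(1\).
\end{itemize}
Yet~\(f\) has no entry. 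The relations give \(\pu_v=\su_f\su_f^*=\su_f^*\su_f\), \(\su_{e_n}\su_{e_n}^*=\pu_{w_n}\) and \(\su_{e_n}^*\su_{e_n}=\pu_{w_{n-1}}\). Hence \(\Cst(E)\cong\Comp\otimes\Cont(\T)\) is not simple, while \(\Cst(E)^\T\cong\Comp\) is simple. So \ref{enum:irreducible_graph_algebras3} holds but \ref{enum:irreducible_graph_algebras1} and \ref{enum:irreducible_graph_algebras2} fail, for a graph that is neither a single vertex nor a single loop.

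The gap is therefore in the statement: \ref{enum:irreducible_graph_algebras3} must be strengthened by Condition~(L). Given the other conditions, your own argument shows this just says that no loop lacks an entry. The paper's proof breaks at the same point, since it asserts that once the core is simple, \(\Cst(E)\) is simple unless~\(E\) is a single loop.

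\textbf{The caveat in your first paragraph is also not discharged.} Theorem~\ref{thm:Mukohara_compact_abelian} is what turns ``only canonical intermediate algebras'' into \(\Cst\)\nb-irreducibility, and it assumes the fixed-point algebra is simple. Condition~\ref{enum:irreducible_graph_algebras2} does not give this, and excluding a single vertex or loop does not deal with graphs without infinite paths. For a single edge \(v\to w\), one gets \(\Cst(E)\cong M_2(\C)\) and \(\Cst(E)^\T=C_k\cong\C^2\) for all \(k\neq1\). The diagonal \(\C^2\subseteq M_2(\C)\) has no further intermediate algebras. So \ref{enum:irreducible_graph_algebras2} holds while \ref{enum:irreducible_graph_algebras1} and \ref{enum:irreducible_graph_algebras3} fail.

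The paper inherits this from the implication (3)\(\Rightarrow\)(1) of Theorem~\ref{thm:irreducible_from_Cuntz_Pimsner}, which tacitly needs a simple core. So \ref{enum:irreducible_graph_algebras2}\(\Rightarrow\)\ref{enum:irreducible_graph_algebras1} needs an extra hypothesis, for instance simplicity of \(\Cst(E)^\T\).
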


\begin{proof}
  There is a mismatch between the proof and the statement of
  \cite{Pask-Rho:simple_graph_core}*{Theorem~6.1}.
  The proof shows that if~\(\Cst(E)^\T\) is simple, then~\(E\) is
  regular and cofinal.
  Thus we may assume that.
  Then the last part of the proof of
  \cite{Pask-Rho:simple_graph_core}*{Theorem~6.1} shows that
  \(\Cst(E)^\T\) is simple if and only if~\(E\) contains a finite
  strongly connected subgraph with period~\(1\).
  Hence assuming that~\(\Cst(E)^\T\)  is simple, it follows
  that~\(\Cst(E)\) is simple if and only if~\(E\) is not a single
  loop, see \cite{Pask-Rho:simple_graph_core}*{Theorem~3.1}.
  Thus the conditions in~\ref{enum:irreducible_graph_algebras3}
  characterise when both \(\Cst(E)^\T\) and~\(\Cst(E)\) are simple.
  Hence
  \ref{enum:irreducible_graph_algebras1}--\ref{enum:irreducible_graph_algebras2}
  are equivalent by Theorem~\ref{thm:irreducible_from_Cuntz_Pimsner}.
\end{proof}

\begin{remark}
  A graph satisfying the conditions of
  Theorem~\ref{thm:irreducible_graph_algebras} looks like a comet.
  The ``nucleus'' is based on the set~\(F^0\) of all vertices that lie
  on cycles, and we require that it forms a finite, strongly connected
  subgraph \(F=(F^0, \rg^{-1}(F^0)\cap \sr^{-1}(F^0),\rg,\sr)\) with
  period~\(1\).
  Vertices in \(E^0\setminus F^0\)  form a ``tail'' as they lie on paths directed towards the ``nucleus''.
  Namely, every \(v\in E^0\setminus F^0\)
  connects to~\(F^0\) and all sufficiently long paths that start
  in~\(v\) land in~\(F^0\) and then stay there forever.
  Since we rule out sources, the ``tails'' must be either empty or infinite:
  every \(v\in E^0\setminus F^0\) is the range of an infinite path
  with all base vertices in \(E^0\setminus F^0\).
\end{remark}

\end{document}